\definecolor{shadecolor}{gray}{0.875}
\numberwithin{equation}{section}
\theoremstyle{plain}
\newtheorem{prop}{Proposition}[section]
\newtheorem{theo}[prop]{Theorem}
\newtheorem{coro}[prop]{Corollary}
\newtheorem{lemm}[prop]{Lemma}
\theoremstyle{definition}
\newtheorem{defi}[prop]{Definition}
\newtheorem{ques}[prop]{Question}
\newtheorem{conj}[prop]{Conjecture}
\newtheorem*{mconj}{Manin's Conjecture}
\newtheorem{rema}[prop]{Remark}
\newtheorem{exam}[prop]{Example}
\def\ra{\rightarrow}
\def\bR{{\mathbb R}}
\def\vol{\mathrm{Vol}}
\def\Chow{\mathrm{Chow}}
\def\Hilb{\mathrm{Hilb}}
\def\Supp{\mathrm{Supp}}
\def\Sym{\mathrm{Sym}}
\author{Brian Lehmann}
\address{Department of Mathematics \\
Boston College  \\
Chestnut Hill, MA \, \, 02467}
\email{lehmannb@bc.edu}
\author{Sho Tanimoto }
\address{Department of Mathematical Sciences\\
University of Copenhagen\\
Universitetspark 5\\
2100 Copenhagen $\emptyset$\\
Denmark}
\email{sho@math.ku.dk}
\title[The geometry of thin exceptional sets]{On the geometry of thin exceptional sets in Manin's conjecture}
\begin{document}
\date{\today}

\begin{abstract}
Manin's Conjecture predicts the rate of growth of rational points of a bounded height after removing those lying on an exceptional set.  We study whether the exceptional set in Manin's Conjecture is a thin set using the minimal model program and boundedness of log Fano varieties. 
\end{abstract}

\maketitle

\section{Introduction} 
\label{secct:intro}

Let $X$ be a smooth projective variety defined over a number field $F$ and let $\mathcal L=(L, \|\cdot\|)$ be a big and nef adelically metrized line bundle on $X$ with associated height function $H_{\mathcal{L}}$.  For any subset $Q \subset X(F)$, define the counting function
$$
N(Q,\mathcal L, B):=\#\{ x\in Q \, |\, H_{\mathcal L}(x)\le B\}.
$$
Manin's Conjecture predicts that the asymptotic behavior of the counting function as $B$ increases is controlled by certain geometric invariants of $(X,\mathcal{L})$.  Let $\Lambda_{\mathrm{eff}}(X) \subset \mathrm{NS}(X)_{\mathbb R}$ denote the cone of pseudo-effective divisors.
Define
\begin{equation*}
a(X,L) = \min \{ t\in \bR \mid t[L] + [K_X] \in \Lambda_{\mathrm{eff}}(X) \}.
\end{equation*}
and
\begin{align*}
b(F, X,L) = & \textrm{ the codimension of the minimal} \\
& \textrm{supported face of }  \Lambda_{\mathrm{eff}}(X) \textrm{ containing} \\
& \textrm{the numerical class } a(X, L)[L] + [K_X].
\end{align*}
Recall that a thin subset of $X(F)$ is a finite union $\cup_{i} \pi_{i}(Y_{i}(F))$, where each $\pi_{i}: Y_{i} \to X$ is a morphism that is generically finite onto its image and admits no rational section.  The following version of Manin's Conjecture was first suggested by Peyre in \cite[Section 8]{Peyre03} and explicitly stated in \cite[Conjecture 1.4]{BL16} and \cite[Conjecture 1.1]{LeRudulier}: 
\begin{mconj}
Let $X$ be a smooth projective variety over a number field $F$ with ample anticanonical class $-K_X$.  
Let $\mathcal{L} = (L,\Vert \cdot \Vert)$ be a big and nef adelically metrized line bundle on $X$. 
There exists a thin set $Z \subset X(F)$
such that one has
\begin{equation}
\label{eqn:manin}
N(X(F) \smallsetminus Z, 
\mathcal L, B)
\sim c(F, X(F) \smallsetminus Z, 
\mathcal{L})B^{a(X,L)} \log(B)^{b(F, X,L)-1}, \quad B\ra \infty,
\end{equation}
where $c(F, X(F) \smallsetminus Z, 
\mathcal{L})$ is Peyre's constant as in \cite{Peyre} and \cite{BT}.
\end{mconj}

See \cite{Peyre16} for a different version of Manin's conjecture using the notion of freeness.
Previous statements of the conjecture (\cite{BM}, \cite{Peyre}, \cite{BT}) gave a ``closed set'' version: instead of removing a thin set $\cup_{i} \pi_{i}(Y_{i}(F))$, they removed only the points lying on a closed subset $V(F)$.  It turns out that the closed set version of Manin's Conjecture is false.  All known counter-examples to the closed set version of Manin's Conjecture for $(X,L)$ arise from two kinds of geometric incompatibilities:
\begin{itemize}
\item a Zariski dense set of subvarieties $Y$ such that $$(a(Y,L|_{Y}),b(F, Y,L|_{Y})) > (a(X,L),b(F, X,L))$$ in the lexicographic order (as in \cite{BT-cubic}), or
\item a generically finite dominant morphism $f: Y \to X$ such that $$(a(Y,f^{*}L),b(F, Y,f^{*}L)) > (a(X,L),b(F, X,L))$$ in the lexicographic order (as in \cite{LeRudulier}).
\end{itemize}
In either case, the expected growth rate of points of bounded height on $Y$ is larger than that on $X$ and the pushforward of these points can form a dense set on $X$.

\bigskip

The following conjecture predicts that such geometric incompatibilities can not obstruct the thin set version of Manin's Conjecture.

\begin{conj} \label{mainconj}
Let $X$ be a smooth uniruled variety over a number field $F$ and let $L$ be a big and nef $\mathbb{Q}$-divisor on $X$.  Consider $F$-morphisms $f: Y \to X$ where $Y$ is a smooth projective variety and $f$ is generically finite onto its image.  As we vary over all such morphisms $f$ such that either $f^{*}L$ is not big or
\begin{equation*}
(a(Y,f^{*}L),b(F, Y,f^{*}L)) > (a(X,L),b(F, X,L))
\end{equation*}
in the lexicographic order, the points
\begin{equation*}
\bigcup_{f} f(Y(F))
\end{equation*}
are contained in a thin subset of $X(F)$.
\end{conj}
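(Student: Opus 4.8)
The $a$- and $b$-invariants are birational invariants of $X$ for big and nef $L$. A birational modification followed by passage to the fibration attached to the adjoint pair $K_{X}+\tfrac{1}{a(X,L)}L$ reduces the essential geometry to that of Fano-type varieties, which is where the minimal model program and the boundedness theorem of Birkar are brought to bear; the minimal model program also underlies the structural results on how $a$ and $b$ behave in families. The plan is to argue by induction on $\dim X$, splitting the morphisms $f\colon Y\to X$ into those with $\overline{f(Y)}\subsetneq X$ (\emph{non-dominant}) and those with $\overline{f(Y)}=X$ (\emph{dominant}).

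\textbf{Non-dominant $f$.} Write $Z=\overline{f(Y)}$ and factor $f=\iota\circ h$ with $h\colon Y\to Z$ dominant generically finite and $\iota\colon Z\hookrightarrow X$. If $L|_{Z}$ is not big then $Z\subset\bB_{+}(L)$, a proper closed subset of $X$ since $L$ is big. Otherwise $(Z,L|_{Z})$ is again big and nef, and either $Z$ is itself an exceptional subvariety --- $(a(Z,L|_{Z}),b(F,Z,L|_{Z}))>(a(X,L),b(F,X,L))$ --- or the invariants of $Y$ also exceed those of $(Z,L|_{Z})$, so that $h$ is an exceptional cover of $Z$ and the induction hypothesis for the lower-dimensional pair $(Z,L|_{Z})$ applies. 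In the first case $Z$ is contained in a fixed proper closed subset $V\subsetneq X$: this is the geometric input, $V$ being the union of $\bB_{+}(L)$ with all subvarieties of $X$ whose invariants exceed those of $X$, and its properness is exactly the content of the structural results on the $a$- and $b$-invariants (which rest on boundedness of log Fano pairs via the minimal model program). Since a thin subset of a proper closed subvariety of $X$ is thin in $X$, it follows that $\bigcup_{f\text{ non-dominant}}f(Y(F))$ is thin in $X(F)$.

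\textbf{Dominant $f$.} The natural map $f^{*}\omega_{X}\to\omega_{Y}$ is nonzero, so $K_{Y}=f^{*}K_{X}+R_{f}$ with $R_{f}\ge 0$; as $f^{*}\bigl(a(X,L)[L]+[K_{X}]\bigr)$ is pseudo-effective, this forces $a(Y,f^{*}L)\le a(X,L)$, while $f^{*}L$ is automatically big. Hence the only surviving case is $a(Y,f^{*}L)=a(X,L)$ with $b(F,Y,f^{*}L)>b(F,X,L)$, which puts $R_{f}$ in the minimal supported face of $\Lambda_{\eff}(Y)$ containing $f^{*}\bigl(a(X,L)[L]+[K_{X}]\bigr)$ --- so the ramification is numerically insignificant. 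The covers that are \'etale in codimension one form a finite set (finiteness of $\pi_{1}^{\et}$ for rationally connected varieties, the adjoint construction having reduced us to Fano-type geometry); the remaining covers have branch divisor of bounded degree, so an associated pair $(X,\Delta)$ is log Fano with controlled coefficients, and by boundedness of log Fano pairs the normalizations of $X$ in the function fields of such $Y$ vary in a bounded family. Passing to Galois closures, the goal is to conclude that every such cover is dominated by a member of a \emph{finite} collection of covers $\nu_{j}\colon W_{j}\to X$, so that $\bigcup_{f\text{ dominant}}f(Y(F))\subset\bigcup_{j}\nu_{j}\bigl(W_{j}(F)\bigr)$ is a finite union of images of non-trivial covers, hence thin.

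\textbf{The main obstacle.} The crux is the boundedness input in the dominant case. Birkar's theorem bounds Fano varieties with $\epsilon$-log canonical singularities, whereas the covers $W$ and pairs $(W,\Delta)$ produced by the adjoint construction are not a priori known to have bounded singularities; isolating what must be assumed here --- in terms of $\dim X$ alone --- is precisely why the required boundedness and rational connectedness statements are formulated below as Conjectures RC$_{n}$ and WBAB$_{n}$. Closely related, one must rule out an infinite tower of covers along which $b$ strictly increases, and one must upgrade ``bounded family of exceptional covers'' to ``finitely many dominating covers'': a positive-dimensional family of covers of $X$ generally has a non-thin union of images, so one needs both a uniform bound on the $b$-invariants of all exceptional covers and a rigidity statement for the extremal ones, neither of which is formal. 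Once the proper closed subset $V$ is in hand, by contrast, the non-dominant case is essentially bookkeeping.
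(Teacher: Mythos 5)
The statement you are addressing is Conjecture \ref{mainconj}: the paper does not prove it, and explicitly describes its results as partial progress (Theorems \ref{theo: mainI}, \ref{theo: surfaces}, \ref{theo: Fano3folds}, \ref{thinnessfortwists}). Your outline's broad architecture --- split into non-dominant and dominant $f$, reduce to Fano-type geometry via the adjoint divisor, invoke boundedness of log Fano varieties and finiteness of the \'etale fundamental group --- does match the paper's program, and you are right to flag the boundedness/finiteness input in the dominant case as the crux. But two steps you treat as settled are where the real content lies, and one of them is stated incorrectly.

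In the non-dominant case you assert that every subvariety $Z$ with $(a(Z,L|_Z),b(F,Z,L|_Z))>(a(X,L),b(F,X,L))$ lies in a fixed proper closed subset $V$, so that this case is ``essentially bookkeeping.'' This is false. Only the locus of $Z$ with the strict inequality $a(Z,L|_Z)>a(X,L)$ is contained in a closed set (Theorem \ref{thm: a-exceptional}); subvarieties with $a(Z,L|_Z)=a(X,L)$ and strictly larger $b$ can sweep out all of $X$ --- the cubic surface fibers in the Batyrev--Tschinkel example (Example \ref{btexample}) and the conics on Fano threefolds are exactly of this type. Handling them is the heart of Sections \ref{sec: families} and \ref{sec: thinsets}: one organizes them into finitely many bounded families with generically finite evaluation maps (Theorem \ref{theo: universal families}, Proposition \ref{prop: genfinofmaps}), and the dichotomy of Proposition \ref{genfinhigherdegree} --- either the evaluation map has degree $\geq 2$, or there is nontrivial monodromy on the relative N\'eron--Severi local system --- feeds into Hilbert irreducibility (Proposition \ref{prop:birational}) to show that only a thin set of fibers have large \emph{arithmetic} $b$-invariant. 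Your induction on $\dim X$ cannot substitute for this, since the problematic $Z$ are not confined to a lower-dimensional closed set, and thinness of $\bigcup Z(F)$ for a dominating family of $Z$'s is genuinely an arithmetic statement about which fibers are ``split.''

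In the dominant case, the goal of reducing to a \emph{finite} collection of covers $\nu_j\colon W_j\to X$ over $F$ is unattainable even granting the geometric finiteness over $\overline{F}$: a single cover has infinitely many twists over $F$, and Example \ref{exam: Sano} shows their images can exhaust $X(F)$. The paper's Theorem \ref{thinnessfortwists} replaces finiteness over $F$ by a Galois/non-Galois dichotomy together with a second Hilbert irreducibility argument comparing the arithmetic $b$-invariants of twists; this mechanism is absent from your outline. You should also make explicit the reduction when $\kappa(K_Y+a(X,L)f^*L)>0$, where $Y$ is fibered by exceptional subvarieties and one falls back to the subvariety case rather than to any finiteness statement.
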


\begin{rema}
We also address the case when
\begin{equation*}
(a(Y,f^{*}L),b(F, Y,f^{*}L)) = (a(X,L),b(F, X,L)).
\end{equation*}
However, the image of points on such varieties $Y$ need not lie in a thin set.  In fact, to obtain the correct Peyre's constant one may need to allow contributions of such points.  We give an example of this behavior in Example~\ref{exam: Sano}.
\end{rema}

We make partial progress towards Conjecture \ref{mainconj}, building on \cite{BT}, \cite{HTT15}, \cite{LTT14}, \cite{HJ16}.  Our approach can be broken down into two steps.
\begin{enumerate}
\item First, we prove a boundedness statement over $\overline{F}$ using the minimal model program.
\item Second, we prove a thinness statement over $F$ using Hilbert's Irreducibility Theorem.
\end{enumerate}
We emphasize that our results are not just theoretical, but can be used to understand the exceptional set in specific examples.  We discuss the example of \cite{BT-cubic} (Example \ref{btexample}) and the example of \cite{LeRudulier} (Examples \ref{lerudulierexample2} and \ref{lerudulierexample}), as well as many others in Sections \ref{surfacesec}, \ref{fanothreefoldsec}, and \ref{examplesec}.

\begin{exam}
Let $X$ be a smooth Fano threefold over a number field $F$ which has geometric Picard rank $1$ and index $1$, degree $\geq 10$, and is general in moduli.  Then:
\begin{itemize}
\item The set of $-K_{X}$-lines (i.e.~curves with $-K_{X}.C=1$) sweeps out a divisor $D$ on $X$.
\item The set of $-K_{X}$-conics (i.e.~curves with $-K_{X}.C=2$) is parametrized by a surface $S$, forming a family $\pi: \mathcal{C} \to S$, and the morphism $f: \mathcal{C} \to X$ is generically finite dominant of degree $\geq 2$.
\end{itemize}
Corollary \ref{coro: fano3fold} shows that the conclusion of Conjecture \ref{mainconj} holds on $X$ with thin set $D(F) \cup f(\mathcal{C}(F)) \cup f(E)(F)$ where $E$ is the ramification divisor of $f$.
\end{exam}

For the rest of the introduction, we describe more precisely the geometry underlying thinness of point contributions in certain situations.

\subsection{Families of subvarieties}

The behavior of the $a$-constant for subvarieties was worked out by \cite{LTT14} and \cite{HJ16}.  Using the recent results of \cite{DiCerbo16}, \cite{HJ16} proves that over an algebraically closed field of characteristic $0$ there is a closed proper subset $V \subset X$ such that all subvarieties $Y$ with $a(Y,L) > a(X,L)$ are contained in $V$.

In this paper we extend the analysis to handle the $b$-constant.  We first show that to understand all subvarieties of $X$ with larger $a,b$ constants, only finitely many families of subvarieties need to be considered.  The following theorem shows that for any subvariety $Y$ lying outside of a fixed closed subset of $X$, either the $a,b$-values for $Y$ are smaller than those for $X$ or $Y$ is covered by subvarieties lying in a fixed bounded family.  This is the optimal statement, since such $Y$ need not themselves form a bounded family.

\begin{theo} \label{firstfinitefamily}
Let $X$ be a smooth uniruled projective variety of dimension $n$ over an algebraically closed field of characteristic $0$ and let $L$ be a big and semiample $\mathbb{Q}$-divisor on $X$.  There is a proper closed set $V \subset X$ and a finite collection of families of subvarieties $p_{i}: \mathcal{U}_{i} \to W_{i}$ with the evaluation map $s: \mathcal{U}_i \rightarrow X$ satisfying the following: for any subvariety $Y$ of $X$, either
\begin{enumerate}
\item $Y \subset V$,
\item $(a(Y,L),b(Y,L)) < (a(X,L),b(X,L))$ in the lexicographic order, or
\item $Y$ is the image of the closure of the locus $p_{i}^{-1}(T)$ by $s$ for some $T \subset W_{i}$, and the general fiber $Z$ over $T$ satisfies $(a(Y,L),b(Y,L)) \leq (a(Z,L),b(Z,L))$ in the lexicographic order.
\end{enumerate}
Furthermore, if $Z'$ denotes a resolution of the general fiber $Z$ of $p_{i}$ and $s': Z' \to X$ is the corresponding map, we have that $\kappa(K_{Z'} + a(Z', s'^*L)s'^{*}L|_{Z'}) = 0$.
\end{theo}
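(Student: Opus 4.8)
The plan is to combine the structure theory of the $a$-invariant from \cite{HJ16} with a boundedness statement for ``adjoint rigid'' subvarieties and with the construction of the adjoint fibration, arranged so that the families $p_i\colon \mathcal U_i\to W_i$ parametrize exactly the adjoint rigid subvarieties with $a$-value equal to $a(X,L)$. Set $a:=a(X,L)$. First I would build $V$ to contain the augmented base locus $\mathbf B_{+}(L)$, so that $L|_{Y}$ is big (and semiample) for every subvariety $Y\not\subset V$, together with the proper closed subset produced by \cite{HJ16} (using \cite{DiCerbo16}) that contains every subvariety with $a(Y,L)>a$. For $Y\not\subset V$ we then have $a(Y,L)\le a$, so if $a(Y,L)<a$ we land in case (2) regardless of $b(Y,L)$; it remains to treat $Y\not\subset V$ with $a(Y,L)=a$ and to produce the finite list of families.

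Call a subvariety $Z\not\subset V$ \emph{adjoint rigid} if for a resolution $\beta\colon \widetilde Z\to Z$ one has $\kappa\big(K_{\widetilde Z}+a(Z,L)\,\beta^{*}L\big)=0$. The key claim is that the adjoint rigid subvarieties $Z$ with $a(Z,L)=a$ form a bounded family; granting this, it has finitely many irreducible components, and taking one family $p_{i}\colon\mathcal U_{i}\to W_{i}$ (with evaluation map $s$) over each component gives the desired finite collection, the ``furthermore'' clause holding by construction since adjoint rigidity is a constructible condition (so $\kappa(K_{Z'}+a(Z',s'^{*}L)\,s'^{*}L)=0$ for the general fiber). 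To prove boundedness I would argue: from $\kappa(K_{\widetilde Z}+a\,\beta^{*}L)=0$ one obtains an effective $E\sim_{\mathbb{Q}}K_{\widetilde Z}+a\,\beta^{*}L$; choosing a general $\Delta\sim_{\mathbb{Q}}a\,\beta^{*}L$ with $(\widetilde Z,\Delta)$ klt (Bertini, as $a\,\beta^{*}L$ is big and semiample) and running the $(K_{\widetilde Z}+\Delta)$-MMP, which terminates because $K_{\widetilde Z}+\Delta$ is pseudo-effective with $\kappa=0$, yields a klt log Calabi--Yau pair $(Z_{m},\Delta_{m})$ with $-K_{Z_{m}}\sim_{\mathbb{Q}}\Delta_{m}\sim_{\mathbb{Q}}a\,L_{m}$ and $L_{m}$ big. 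Since $a=a(X,L)>0$ is a fixed positive number (recall $X$ is uniruled), the log discrepancies of these pairs are bounded below by a uniform $\varepsilon>0$; as they also have bounded dimension and $-K_{Z_m}$ big, Birkar's boundedness of $\varepsilon$-log canonical Fano type varieties bounds them, and unwinding the birational modifications bounds the original $Z$.

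Now let $Y\not\subset V$ with $a(Y,L)=a$, fix a resolution $\beta\colon \widetilde Y\to Y$, and put $M:=K_{\widetilde Y}+a\,\beta^{*}L$, which is pseudo-effective but not big. After a further birational modification of $\widetilde Y$ the Iitaka fibration of $M$ becomes a morphism $\pi\colon \widetilde Y'\to B$ with $\dim B=\kappa(M)$ whose general fiber $G$ is smooth with $\kappa(M|_{G})=0$ (when $\kappa(M)=0$ this is trivial and $Y$ itself is adjoint rigid). By adjunction $M|_{G}=K_{G}+a\,\beta^{*}L|_{G}$, and since $L|_{G}$ is big (a general $G$ avoids $\mathbf B_{+}(L)$) and $\kappa(K_{G}+a\,\beta^{*}L|_{G})=0$, one checks $a(G,L|_{G})=a$ — it cannot be smaller, else $M|_{G}$ would be big. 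So the image of $G$ in $X$ is an adjoint rigid subvariety with $a$-value $a$, hence a member $s(p_{i}^{-1}(t))$ of one of our families; letting $T\subset W_{i}$ be the closure of the set of such $t$ as $G$ varies, the locus $p_{i}^{-1}(T)$ maps onto $Y$ under $s$, because $\widetilde Y'$ — hence $Y$ — is swept out by the fibers $G$. This is option (3). Finally, the comparison $b(Y,L)\le b(G,L|_{G})$, which follows from the behavior of the minimal supported face containing the adjoint class under the adjoint fibration (using nefness of $\beta^{*}L$), together with $a(Y,L)=a=a(G,L|_{G})$, gives $(a(Y,L),b(Y,L))\le (a(G,L|_{G}),b(G,L|_{G}))$ in the lexicographic order, as required; and since $b$ is generically constant on $T$, the general fiber $Z$ over $T$ satisfies the same inequality.

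The main obstacle is the boundedness statement for adjoint rigid subvarieties: this is precisely where the minimal model program and Birkar's theorem are essential, and the delicate points are (a) running the $(K_{\widetilde Z}+a\,\beta^{*}L)$-MMP so as to land on a log Calabi--Yau (equivalently weak log Fano) pair without losing control of the boundary and of $L_{m}$, and (b) verifying that the resulting pairs are $\varepsilon$-log canonical for a single $\varepsilon>0$ valid for all $Z$, which hinges on $a(X,L)$ being a fixed positive constant. The inequality $b(Y,L)\le b(G,L|_{G})$ in the third step is also not formal, but it is of the same nature as arguments already present in \cite{LTT14} and \cite{HJ16} and can be quoted.
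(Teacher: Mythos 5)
Your proposal is correct and follows essentially the same route as the paper: take $V$ to contain $\mathbf{B}_{+}(L)$ and the closed set of \cite{HJ16} controlling $a(Y,L)>a(X,L)$; bound the adjoint rigid subvarieties with $a$-value $a(X,L)$ by running the adjoint MMP and invoking boundedness of (log) Fano varieties (the paper uses \cite[Theorem 1.3]{HX14} on terminal weak Fano pairs where you invoke Birkar, and bounds $\deg_L Z$ via the volume of the minimal model exactly as you sketch); and reduce non--adjoint-rigid $Y$ to the fibers of the Iitaka fibration of $K_{\widetilde Y}+a\beta^{*}L$, quoting \cite[Theorem 4.5]{LTT14} for $a(G,L)=a(Y,L)$ and $b(Y,L)\le b(G,L)$. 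The only (harmless) deviations are that the paper's families additionally impose $b(Y,L)\ge b(X,L)$ and that the generic adjoint rigidity of the fibers is justified there by invariance of log plurigenera rather than bare constructibility.
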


In particular, the pairs $(a(Y,L),b(Y,L))$ achieve a maximum in the lexicographic order as $Y$ varies over all subvarieties of $X$ not contained in $V$.  

The families of Theorem \ref{firstfinitefamily} have interesting properties.  We prove that for each such family the map $\mathcal{U}_{i} \to X$ is generically finite.  Furthermore, the family satisfies a basic dichotomy: either the map $\mathcal{U}_{i} \to X$ has degree $\geq 2$, or over an open subset of the parameter space $W_{i}$ there is a monodromy action on the local system $\mathcal{GN}$ describing relative N\'eron-Severi groups as in \cite{KM92}.

When we work over a number field, the dichotomy above yields thinness of point contributions: either the map is generically finite of degree $\geq 2$, or we can leverage the monodromy action using the Hilbert Irreducibility Theorem.  Combining everything, we obtain:

\begin{theo}
\label{theo: mainI}
Let $X$ be a geometrically uniruled smooth projective variety of dimension $n$ defined over a number field $F$ and let $L$ be a big and semiample $\mathbb{Q}$-divisor on $X$. Suppose that $\rho(X) = \rho(\overline{X})$ and $a(X, L)L + K_X$ is rigid.  As we vary over all geometrically integral subvarieties $Y \subset X$ defined over $F$ such that either $L|_{Y}$ is not big or
\begin{equation*}
(a(Y,L|_{Y}),b(F, Y,L|_{Y})) \geq (a(X,L),b(F, X,L))
\end{equation*}
in the lexicographic order, the points
\begin{equation*}
\bigcup_{Y} Y(F)
\end{equation*}
are contained in a thin subset of $X(F)$.

\end{theo}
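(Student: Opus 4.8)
The plan is to feed the geometric classification of Theorem~\ref{firstfinitefamily} into Hilbert's Irreducibility Theorem, following the two steps advertised above: classify the relevant subvarieties over $\overline{F}$ by the minimal model program, then extract thinness over $F$.

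\emph{Reductions.} Since $L$ is semiample, write $L=\varphi^{*}A$ for the associated $F$-morphism $\varphi\colon X\to X'$ and an ample $\mathbb{Q}$-divisor $A$ on $X'$. For a subvariety $Y$ one has $L|_Y$ big iff $\dim\varphi(Y)=\dim Y$, and since $L$ is big $\varphi$ is generically finite, so the non-quasi-finite locus $W\subsetneq X$ of $\varphi$ is a proper closed $F$-subvariety containing every $Y$ with $L|_Y$ not big; thus $W(F)$, a thin set of type I, disposes of that case. From now on $L|_Y$ is big. Now pass to $\overline{F}$: the $a$-invariant ignores the ground field, one always has $b(F,Y,L|_Y)\le b(\overline{F},\overline{Y},L|_{\overline{Y}})$ (the minimal supported face over $F$ is the Galois-fixed part of the one over $\overline{F}$), and $\rho(X)=\rho(\overline{X})$ forces Galois to act trivially on $\mathrm{NS}(\overline{X})_{\mathbb{Q}}$, so $\Lambda_{\mathrm{eff}}(X)$, the adjoint class $a(X,L)L+K_X$, and hence $b(F,X,L)=b(\overline{F},\overline{X},L)$ are all unchanged; the rigidity of $a(X,L)L+K_X$ is what keeps this face-comparison stable, also under the finite base change performed below. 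Consequently every admissible $Y$ satisfies $(a(\overline{Y},L),b(\overline{F},\overline{Y},L))\ge(a(\overline{X},L),b(\overline{F},\overline{X},L))$, so $\overline{Y}$ is not of type (2) in Theorem~\ref{firstfinitefamily}. Choosing the set $V$ there Galois-stable, it descends to a proper closed $F$-subvariety whose $F$-points are thin, so it remains to bound the $F$-points of admissible $Y$ with $\overline{Y}$ swept out by the fibers of one of the finitely many families $p_i\colon\mathcal{U}_i\to W_i$ with evaluation $s_i\colon\mathcal{U}_i\to\overline{X}$ generically finite. Replacing $F$ by a finite Galois-stable extension over which all $p_i,\mathcal{U}_i,W_i,s_i$ are defined is harmless, since a thin subset of $X(F')$ meets $X(F)$ in a thin set and the hypotheses survive the base change by the comparison just made; so assume these data are defined over $F$.

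\emph{Per family.} Fix $p_i$ and an admissible $Y$ with $\overline{Y}=\overline{s_i(p_i^{-1}(T))}$, $T\subseteq W_i$, and general sweeping fiber $Z$. If $a(Z,L)>a(X,L)$, then by \cite{HJ16} all fibers over $T$, hence $\overline{Y}$ itself, lie in a fixed proper closed set, which we absorb into $V$; so assume $a(Z,L)=a(X,L)$, and then by Theorem~\ref{firstfinitefamily}(3) together with the previous paragraph $(a(Z,L),b(\overline{F},Z,L))\ge(a(\overline{X},L),b(\overline{F},\overline{X},L))$. Now invoke the dichotomy recorded after Theorem~\ref{firstfinitefamily}. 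If $\deg s_i\ge 2$, then after resolving $\mathcal{U}_i$ the evaluation map is generically finite of degree $\ge 2$ onto $X$ with no rational section (its source being irreducible), so the images of its $F$-points are thin of type II; one checks that the $F$-point contributions of the admissible $Y$ from this family are absorbed by this thin set, using that $\overline{Y}$, hence the parameter locus $T$, is Galois-stable, and that such contributions are images of $F$-rational points of incidence correspondences over Chow-variety parameters. If instead $s_i$ is birational with nontrivial monodromy on the local system $\mathcal{GN}$ of relative N\'eron--Severi groups, then a point $x$ outside a proper closed set lies on a unique fiber $Z_{w(x)}$ with $w(x)\in W_i(F)$; passing to a connected finite \'etale cover of an open $W_i^{\circ}\subset W_i$ trivializing the monodromy and applying Hilbert's Irreducibility Theorem, for $w(x)$ outside a thin subset $\Theta\subset W_i^{\circ}(F)$ the Galois group acts on $\mathcal{GN}_{w(x)}$ through the full monodromy group, which (being of the type built in Theorem~\ref{firstfinitefamily}) moves the minimal supported face of the adjoint class $K_{Z'}+a(Z',(s')^{*}L)(s')^{*}L|_{Z'}$ and hence forces $b(F,Z_{w(x)},L)<b(\overline{F},Z_{w(x)},L)$, and similarly for every $\overline{Y}$ swept out by $Z_{w(x)}$; so no admissible $Y$ can contain such an $x$, while the remaining $x$ lie in $s_i\big(p_i^{-1}(\Theta)(F)\big)$, which is thin since preimages and birational images of thin sets are thin. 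A finite union over the families then completes the proof.

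\emph{Main obstacle.} The crux is the monodromy alternative: one must show that the monodromy supplied by Theorem~\ref{firstfinitefamily} acts nontrivially exactly on the part of $\mathcal{GN}$ that controls the minimal supported face of $K_{Z'}+a(Z',(s')^{*}L)(s')^{*}L|_{Z'}$, so that Hilbert irreducibility really produces a \emph{strict} drop of the $b$-invariant over $F$ for the generic fibers; the rigidity $\kappa(K_{Z'}+a(Z',(s')^{*}L)(s')^{*}L|_{Z'})=0$ from that theorem is what makes this face governed by divisorial, hence N\'eron--Severi, data on which monodromy acts. A second, genuinely delicate point is the field-of-definition bookkeeping needed to ensure that $\bigcup_Y Y(F)$ — including contributions of subvarieties swept out by positive-dimensional families of fibers and of boundary points of the $\overline{Y}$ — is really captured by the thin sets above (in the degree $\ge 2$ case, that these contributions descend to $F$-points of a suitable degree $\ge 2$ cover of $X$); this is handled by passing to universal families over Chow components, using Stein factorizations, and, where necessary, running a Noetherian induction on the closed subsets of $X$.
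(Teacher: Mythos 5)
Your architecture does track the paper's: descend thinness along a finite extension, invoke Theorem~\ref{firstfinitefamily}/Theorem~\ref{theo: universal families} to get $V$ and finitely many families, use the dichotomy of Proposition~\ref{genfinhigherdegree} (degree $\geq 2$ versus nontrivial monodromy), and run Hilbert irreducibility on the base in the birational case. But the step where the real content lives --- converting ``nontrivial monodromy plus HIT'' into a statement about $b$-invariants --- is both mis-calibrated and unproven in your write-up. What must be shown is that for $w$ outside a thin subset of $W_i(F)$ one has $b(F,Z_w,L) < b(F,X,L)$, a drop below the $b$-value of the \emph{ambient} variety; you only claim $b(F,Z_{w},L) < b(\overline{F},Z_{w},L)$, a drop below the fiber's own geometric $b$-value. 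That is not enough: the families of Theorem~\ref{theo: universal families} only satisfy $b(Z,L)\geq b(X,L)$, possibly strictly, so a fiber can lose a little under the Galois action and still be admissible, and then your conclusion ``no admissible $Y$ can contain such an $x$'' fails already for $Y=Z_{w}$ itself. Moreover even the weaker claim does not follow from ``the monodromy moves the minimal supported face'': a nontrivial action on $N^1(\overline{Z}_{w})$ could be concentrated in the span of the components of the rigid adjoint divisor, leaving the relevant quotient untouched. The missing ingredient is exactly the paper's Proposition~\ref{prop:birational}: using rational connectedness of the fibers (so N\'eron--Severi equals $H^2$), the finite local system of \cite{KM92}, the vertical-divisor kernel of Lemma~\ref{rcfibers}, and the rigidity of $a(X,L)L+K_X$, one builds a surjection $N^1(X)/V \twoheadrightarrow N^1(\overline{Z}_{w})^G/V'^G$ with nontrivial kernel, and HIT then yields $b(F,Z_{w},L)=\dim N^1(\overline{Z}_{w})^G/V'^G < \dim N^1(X)/V = b(F,X,L)$ outside a thin set. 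Without this comparison against $N^1(X)/V$ your Hilbert-irreducibility step does not close; your ``Main obstacle'' paragraph flags the issue but does not supply the mechanism, and aims at the wrong inequality.

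A secondary gap: for admissible $Y$ that are not adjoint rigid (swept out by a positive-dimensional family of fibers over $T\subset W_i$), you dispose of points $x$ with $w(x)\notin\Theta$ by saying the same argument applies ``similarly for every $\overline{Y}$ swept out by $Z_{w(x)}$.'' This needs its own argument: one must relate $b(F,Y,L)$ to the Galois-plus-monodromy invariants of the generic fiber of $Y\to T$ (a relative version of Proposition~\ref{prop:birational} over the subfamily, using that the Galois image at $w(x)$ is contained in the arithmetic fundamental group of $T$), so that a full Galois image at a single $F$-point forces $b(F,Y,L)<b(F,X,L)$. The paper treats this point briskly as well, by feeding the restricted family into Corollary~\ref{coro: thinset_eachfamily} (whose proof is again Proposition~\ref{prop:birational}), but in your proposal the claim is asserted with no mechanism, and with the mis-calibrated inequality above it would not follow even if one granted the mechanism.
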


\begin{rema}
The two geometric conditions in Theorem \ref{theo: mainI} are quite natural in our situation.  First, the restriction on the Picard group ensures that the $b$-value does not change when we pass to $\overline{F}$ to apply the minimal model program.  Second, when $\kappa(K_{Y} + a(Y, f^*L)f^{*}L) > 0$ then by \cite{LTT14} the fibers of the canonical map have $a,b$-values at least as large as $X$.  In this situation it makes sense to first study Conjecture \ref{mainconj} on the fibers, and then to understand the Iitaka fibration as a last step.
\end{rema}

\subsection{Generically finite maps}
We next turn to the geometric consistency of Manin's Conjecture with respect to generically finite maps.  The key conjecture is:

\begin{conj} \label{genfinconj1}
Let $X$ be a smooth uniruled projective variety over an algebraically closed field of characteristic $0$ and let $L$ be a big and nef $\mathbb{Q}$-divisor on $X$.  Up to birational equivalence, there are only finitely many generically finite covers $f: Y \to X$ such that $a(Y,f^{*}L) = a(X,L)$ and $\kappa(K_{Y} + a(Y, f^*L)f^{*}L) = 0$.
\end{conj}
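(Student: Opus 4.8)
The plan is to bound the covers $f: Y \to X$ by packaging them into a boundedness statement for log Fano-type pairs, then to apply the results of \cite{Birkar} on boundedness of complements, together with the constraint $\kappa(K_Y + a(Y,f^*L)f^*L) = 0$, to conclude finiteness up to birational equivalence. First I would reduce to the case where $Y$ is a smooth projective model on which the adjoint pair behaves well: replace $Y$ by a resolution on which $K_Y + a(Y,f^*L)f^*L$ is effective, and since its Iitaka dimension is $0$, we may write $K_Y + a(Y,f^*L)f^*L \sim_{\mathbb Q} E$ for an effective $\mathbb Q$-divisor $E$ with $\kappa(E) = 0$, so $E$ is the unique effective representative (its rigid part). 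The key observation is that $a(Y,f^*L) \le a(X,L)$ always holds when $f^*L$ is big (pulling back the pseudoeffective class $a(X,L)L + K_X$ and using $K_Y \ge f^*K_X$ off the ramification), so the equality $a(Y,f^*L) = a(X,L)$ forces the ramification divisor to interact with $f^*L$ in a very restricted way: roughly, $a(X,L)f^*L + f^*K_X$ differs from $K_Y + a(Y,f^*L)f^*L$ by the (effective) ramification divisor $R_f$, and the rigidity of $E$ together with effectivity pins down $R_f$ up to finitely many numerical possibilities.

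Second I would set up the boundedness. Fix a resolution $X' \to X$ and note that by Theorem \ref{firstfinitefamily} applied on $X'$ (or directly by the boundedness of the relevant adjoint pairs), the pairs $(Y, \Delta_Y)$ where $\Delta_Y$ is chosen so that $K_Y + \Delta_Y \sim_{\mathbb Q} a(Y,f^*L)f^*L$ is trivial form a log bounded family: indeed $\kappa(K_Y + a(Y,f^*L)f^*L) = 0$ means $-K_Y$ is $\mathbb Q$-linearly equivalent to the big and semiample class $a(Y,f^*L)f^*L$ minus an effective rigid divisor, so $(Y,\Delta_Y)$ is of log Calabi--Yau type with a big ``polarizing'' class of bounded volume (the volume is $\mathrm{vol}(a(X,L)L) = a(X,L)^n L^n$, computed on $X$, since $f$ is generically finite the degree is also controlled). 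By \cite{Birkar} boundedness of log Calabi--Yau pairs with bounded volume of an ample/big coefficient divisor (or by the effective base-point-freeness feeding into \cite{HMX} boundedness), the underlying varieties $Y$ — and the maps, via the linear system $|m \, a(Y,f^*L)f^*L|$ for fixed $m$ — lie in finitely many families. Passing from a bounded family to finitely many birational classes is then standard: a bounded family has finitely many irreducible components, and the generic fiber of each component gives the birational model.

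The main obstacle I expect is controlling the \emph{degree} of the cover $f$, equivalently bounding $f^*L$ and hence the volume and the coefficients of $\Delta_Y$ in the above. A priori nothing stops $\deg f$ from growing, and then $a(Y,f^*L)f^*L$ has unbounded volume. The resolution is exactly the equality $a(Y,f^*L) = a(X,L)$: I would argue that this equality is extremely restrictive and in fact bounds $\deg f$. The mechanism is the Riemann--Hurwitz-type inequality: write $K_Y = f^*K_X + R_f$ with $R_f \ge 0$; then $a(Y,f^*L)f^*L + K_Y = f^*(a(X,L)L + K_X) + R_f$, and since the left side has Iitaka dimension $0$ while $f^*(a(X,L)L+K_X)$ is pseudoeffective and $R_f \ge 0$, both $f^*(a(X,L)L + K_X)$ and $R_f$ must have Iitaka dimension $0$ and lie in the (unique, rigid) effective class $E$. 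Thus $R_f \le E$ (numerically/linearly), which bounds the ramification, hence bounds $\deg f$ by the geometry of $X$ (the branch locus lies in the bounded linear system $|E|$ on $X$, up to the MMP needed to descend $E$, and covers of $X$ branched over a fixed divisor with bounded ramification form a bounded — indeed finite, by finiteness of the fundamental group of the complement being replaced by finiteness of covers with bounded degree and branch locus — family). Making this last step precise — descending $E$ from $Y$ to $X$ through the resolution, and invoking the appropriate finiteness for covers with prescribed branch data — is where the real work lies, and it is here that the hypothesis that $L$ is big and nef (so that $a(X,L)L + K_X$ is a genuine pseudoeffective class with a well-behaved rigid part) is used essentially.
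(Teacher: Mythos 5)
First, note that the statement you are proving is stated in the paper as a \emph{conjecture}: the paper itself only establishes it in dimension $2$ (Theorem \ref{theo: surfaces}, via Theorem \ref{theo: finitenessofcovers}) and obtains partial results in dimension $3$ (Theorem \ref{theo: Fano3folds}), explicitly remarking that the results of \cite{birkar16} only bound the \emph{degree} of the covers, ``giving a weaker finiteness statement.'' Your sketch founders exactly where the paper says the difficulty lies. The central step --- ``the rigidity of $E$ together with effectivity pins down $R_f$,'' so that $R_f \le E$ and ``the branch locus lies in the bounded linear system $|E|$ on $X$'' --- is circular. Writing $K_Y + a f^*L = f^*(K_X + aL) + R_f$ with $a = a(X,L)$, the unique effective $\mathbb{Q}$-divisor representing $K_Y + a f^*L$ is precisely $f^*D_X + R_f$ (where $D_X$ is the rigid effective representative of $K_X + aL$ when it exists). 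So the containment $R_f \le E$ holds tautologically with $E$ a divisor \emph{on $Y$ that depends on $f$}; it gives no constraint confining the branch divisor $f(\mathrm{Supp}\, R_f)$ to any fixed divisor on $X$. The hypotheses $a(Y,f^*L) = a(X,L)$ and $\kappa(K_Y + a f^*L) = 0$ do restrict the branch locus, but extracting that restriction is the hard content: in the surface case the paper shows $f_*R$ is a union of $(-2)$-curves only via an MMP/$(-1)$-curve argument with integrality of intersection numbers (Theorem \ref{genfinitesurf}), and in dimension $3$ it needs Kawakita's classification of divisorial contractions (Proposition \ref{prop: length}) and still only handles special Fano threefolds. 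No soft argument of the kind you give is known in general. Relatedly, even if the ramification divisor were bounded, that would not bound $\deg f$ (covers \'etale in codimension $1$ have $R_f = 0$); the degree bound comes from BAB, and the finiteness of \'etale-in-codimension-one covers comes from finiteness of the \'etale fundamental group of klt Fano varieties (Xu, Greb--Kebekus--Peternell), which is the reduction the paper conjectures but cannot yet carry out in general.

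There are secondary gaps as well. The volume of $a f^*L$ is $\deg(f)\, a^n L^n$, not $a^n L^n$, so boundedness of the polarized pairs already presupposes the degree bound; Theorem \ref{firstfinitefamily} concerns families of \emph{subvarieties} and says nothing about covers; and the final step ``bounded family $\Rightarrow$ finitely many components $\Rightarrow$ finitely many birational classes'' is not valid for covers: members of a positive-dimensional family of covers of $X$ (e.g.\ cyclic covers branched along a divisor moving in a linear system) are in general pairwise non-birational \emph{over $X$}, so boundedness of the $Y$'s plus a degree bound does not yield finiteness up to birational equivalence. To close the argument one must either pin the branch locus to a fixed divisor (then finite generation of the fundamental group of the complement gives finiteness of bounded-degree covers) or complete the reduction to covers of $\mathbb{Q}$-Fano varieties \'etale in codimension one; both are precisely what remains open beyond dimension $2$.
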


We note that by the recent results of \cite{birkar16} the degree of the maps in Conjecture \ref{genfinconj1} is bounded, giving a weaker finiteness statement.  The condition $\kappa(K_{Y} + a(Y, f^*L)f^{*}L) = 0$ is a technical condition necessary for finiteness to hold.  When this condition fails, $Y$ (and hence $X$) is covered by subvarieties with higher $a,b$-values, and one can instead study the geometry of these subvarieties.  

Our approach to proving Conjecture \ref{genfinconj1} is to reduce to the finiteness of the \'etale fundamental group of $\mathbb{Q}$-Fano varieties as in  \cite{Xu14}, \cite{GKP16}.  We conjecture that any generically finite morphism as in Conjecture \ref{genfinconj1} is birationally equivalent to a morphism of $\mathbb{Q}$-Fano varieties that is \'etale in codimension $1$.  We are able to solve this problem in dimension $2$ and obtain partial results in dimension 3:

\begin{theo}
\label{theo: surfaces}
Let $S$ be a smooth projective surface over an algebraically closed field of characteristic $0$ and let $L$ be a big and nef $\mathbb{Q}$-divisor on $S$.  Then any generically finite cover $f: Y \to S$ such that $a(Y,f^{*}L) = a(S,L)$ and $\kappa(K_{Y} + a(S,L)f^{*}L) = 0$ is birationally equivalent to a cover which is \'etale in codimension $1$, and there are only finitely many such covers up to birational equivalence.

Furthermore if $\deg(f) \geq 2$ then $b(Y,f^{*}L) < b(S,L)$.
\end{theo}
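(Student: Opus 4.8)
The plan is to use the minimal model program together with the hypothesis $\kappa=0$ to reduce $f$ to a crepant cover of canonical del Pezzo surfaces, then to obtain finiteness from finiteness of fundamental groups of log del Pezzo surfaces, and finally to read off the $b$-inequality from a comparison of anticanonical self-intersections.

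\smallskip
\textbf{Reduction.} Write $a=a(S,L)=a(Y,f^{*}L)$, so $\kappa(K_{Y}+af^{*}L)=0$; I may assume $a>0$, i.e.\ that $S$ is uniruled, and, since $a$ and $b$ are birational invariants, I may replace $Y$ and $S$ by birational models at will. A monotonicity argument as in \cite{LTT14} gives $\kappa(K_{S}+aL)=0$, and then (by the adjoint-MMP arguments of \cite{LTT14}, \cite{HJ16}) $K_{S}+aL$ is rigid: it is numerically equivalent to its negative part $N_{S}$, a negative-definite effective divisor. Running the MMP for $K_{S}+aL$ contracts $N_{S}$ and produces a smooth weak del Pezzo surface $A_{S}$ with $-K_{A_{S}}\equiv aL'$ big and nef; its anticanonical model $S^{\circ}$ is a klt del Pezzo surface of which $A_{S}=\widetilde{S^{\circ}}$ is the minimal resolution. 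Carrying the program along the map — relatively over $S^{\circ}$, then absolutely — produces a klt del Pezzo $Y^{\circ}$ and a generically finite morphism $g\colon Y^{\circ}\to S^{\circ}$ with $-K_{Y^{\circ}}\equiv a\,g^{*}L'$; from $K_{Y^{\circ}}+a\,g^{*}L'=g^{*}(K_{S^{\circ}}+aL')+R_{g}$ with $R_{g}\ge0$ and both sides numerically trivial, $R_{g}=0$, so $g$ is crepant and étale in codimension one. Moreover $S^{\circ}$ (and likewise $Y^{\circ}$) has only canonical (Du Val) singularities: otherwise some exceptional curve of $\widetilde{S^{\circ}}$ would carry negative discrepancy $d_{i}$, and the pseudo-effective class $aL_{\widetilde{S^{\circ}}}+K_{\widetilde{S^{\circ}}}\equiv\sum_{i}d_{i}E_{i}$ — pseudo-effective since $a$ is the pseudo-effective threshold, a birational invariant — would meet an ample divisor negatively. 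I expect this reduction to be the main obstacle: it must be carried out compatibly on $Y$ and on $S$ so as to produce the honest morphism $g$, and it relies on the adjoint-MMP machinery and on boundedness of log Fano surfaces; the two hypotheses are exactly what force the triviality of $K_{S^{\circ}}+aL'$, the vanishing of $R_{g}$, and canonicity.

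\smallskip
\textbf{Finiteness.} Since $-K_{S^{\circ}}$ is big and nef, $S^{\circ}$ is a rational surface and $\pi_{1}^{\mathrm{et}}(S^{\circ})=1$, but the étale fundamental group $\pi_{1}^{\mathrm{et}}(S^{\circ}_{\mathrm{reg}})$ of the smooth locus may be nontrivial. By the finiteness of fundamental groups of klt Fano type varieties (\cite{Xu14}, \cite{GKP16}) — which in dimension two amounts to the classical finiteness of the orbifold fundamental group of a log del Pezzo surface — the group $\pi_{1}^{\mathrm{et}}(S^{\circ}_{\mathrm{reg}})$ is finite. Covers of $S^{\circ}$ étale in codimension one are exactly the normalizations of $S^{\circ}$ in finite étale covers of $S^{\circ}_{\mathrm{reg}}$, hence are classified by the open subgroups of $\pi_{1}^{\mathrm{et}}(S^{\circ}_{\mathrm{reg}})$: there are only finitely many, of degree at most $\lvert\pi_{1}^{\mathrm{et}}(S^{\circ}_{\mathrm{reg}})\rvert$ (a degree bound also follows from \cite{birkar16}). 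Since only finitely many del Pezzo models $S^{\circ}$ occur in the birational class of $S$, there are, up to birational equivalence, only finitely many covers $f$ as in the theorem, each étale in codimension one by the reduction. This gives the first two assertions.

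\smallskip
\textbf{The inequality.} Suppose $d=\deg f=\deg g\ge2$. The components of $N_{S}$ are linearly independent and span the minimal supported face of $\Lambda_{\eff}(S)$ containing $aL+K_{S}$, so $b(S,L)=\rho(S)-\#\{\text{components of }N_{S}\}=\rho(A_{S})$, the last step because the adjoint MMP contracts exactly those components. As $A_{S}=\widetilde{S^{\circ}}$ is a smooth rational surface, Noether's formula gives $\rho(A_{S})=10-K_{A_{S}}^{2}=10-K_{S^{\circ}}^{2}$ (crepancy of the minimal resolution of a Du Val surface). The same reasoning gives $b(Y,f^{*}L)=10-K_{Y^{\circ}}^{2}$, and $K_{Y^{\circ}}=g^{*}K_{S^{\circ}}$ with $g$ finite of degree $d$ gives $K_{Y^{\circ}}^{2}=d\,K_{S^{\circ}}^{2}$. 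Hence
\begin{equation*}
b(S,L)-b(Y,f^{*}L)=(d-1)\,K_{S^{\circ}}^{2}>0,
\end{equation*}
since $-K_{S^{\circ}}$ is big, so $K_{S^{\circ}}^{2}=(-K_{S^{\circ}})^{2}>0$. Therefore $b(Y,f^{*}L)<b(S,L)$.
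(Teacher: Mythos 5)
Your overall route is the same as the paper's (adjoint MMP down to a weak del Pezzo and its Du Val anticanonical model, \'etale-in-codimension-one covers of that model, finiteness from the fundamental group of the smooth locus, and a degree/Picard-rank comparison for the $b$-inequality), but there is a genuine gap exactly at the step you flag as "the main obstacle": the existence of the generically finite morphism $g\colon Y^{\circ}\to S^{\circ}$ with $K_{Y^{\circ}}+a\,g^{*}L'\equiv 0$ is asserted, not proved, and your $R_{g}=0$ argument presupposes it. Running the MMP "relatively over $S^{\circ}$, then absolutely" does not obviously deliver this: the relative steps only contract curves lying over points of $S^{\circ}$, so they need not touch the components of the ramification divisor $R$ that dominate curves of $S^{\circ}$; and a subsequent absolute adjoint-MMP step may contract a curve whose image in $S^{\circ}$ is a curve of positive anticanonical degree, after which the map to $S^{\circ}$ is no longer a morphism and the numerical triviality you invoke is unavailable. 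The missing idea is precisely the content of Theorem \ref{genfinitesurf}: after running the $R$-negative MMP one finds a $(-1)$-curve $C\subset \Supp(R')$ with $R'\cdot C$ a negative integer, and from $(K_{Y'}-f'^{*}K_{S'})\cdot C\leq -1$, $K_{Y'}\cdot C=-1$ and the nefness of $-f'^{*}K_{S'}$ one gets $f'^{*}K_{S'}\cdot C=0$; hence every contracted curve lies over the $K$-trivial ($(-2)$-curve) locus of the weak del Pezzo model of $S$, which is what allows the contraction to be performed simultaneously on both sides (the induction on $r-\rho(Y/Y')$) and yields the crepant finite cover of $S^{\circ}$ after Stein factorization. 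Without this (or a substitute), the existence of $g$, the crepancy $K_{Y^{\circ}}=g^{*}K_{S^{\circ}}$, and therefore both your \'etale-in-codimension-one conclusion and your identity $K_{Y^{\circ}}^{2}=\deg(f)\,K_{S^{\circ}}^{2}$ are unsupported.

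Two further remarks. First, your finiteness step is fine once the reduction is in place (the paper cites \cite{GZ94}, \cite{KM99}, which in dimension two is the same finiteness you quote from \cite{Xu14}, \cite{GKP16}), but the phrase "only finitely many del Pezzo models $S^{\circ}$ occur in the birational class of $S$" is false as stated --- a rational surface is birational to infinitely many del Pezzo surfaces; what is actually needed, and what the compatible reduction provides, is that all qualifying covers become covers, \'etale in codimension one, of one fixed model $S^{\circ}$ attached to $(S,L)$ by the adjoint MMP. Second, your Noether-formula computation of the $b$-inequality is correct granted the crepant cover $g$, and is a clean variant of the paper's argument; but note the paper's route (Theorem \ref{theo: weakdelPezzo} via $\deg(f)\,(-K_{S'})^{2}=(f^{*}L')^{2}\leq(-K_{Y'})^{2}$, together with Corollary \ref{coro: surfaces_genericallyfinitecovers}) only needs the weak del Pezzo minimal model of $Y$ from Theorem \ref{theo: MMPtoweakFano}, not the \'etale structure, so the $b$-part need not be made to depend on the hard reduction at all. (Your opening reduction to $a>0$, i.e.\ to $S$ uniruled, matches the implicit hypothesis of the paper's ingredient theorems and is harmless.)
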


\begin{theo}
\label{theo: Fano3folds}
Let $X$ be a smooth Fano threefold over an algebraically closed field of characteristic $0$ such that $X$ has index $\geq 2$ or has Picard rank $1$, index $1$ and is general in the moduli.  Then there is no dominant generically finite map $f: Y \to X$ of degree $\geq 2$ such that $\kappa(Y,-f^{*}K_{X} + K_Y)=0$.
\end{theo}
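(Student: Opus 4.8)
The plan is to argue by contradiction, following the pattern of the proof of Theorem~\ref{theo: surfaces} but in dimension three, reducing the statement to the non-existence of a nontrivial cover of a $\mathbb{Q}$-Fano threefold that is \'etale in codimension one. Set $L := -K_X$, which is ample, so that $f^{*}L$ is big and nef and the hypothesis reads $\kappa(K_Y + f^{*}L) = 0$. From the ramification formula $K_Y = f^{*}K_X + R$ with $R \ge 0$ one gets $a(Y,f^{*}L) \le a(X,L) = 1$, and if this inequality were strict then $K_Y + f^{*}L = \bigl(K_Y + a(Y,f^{*}L)f^{*}L\bigr) + \bigl(1 - a(Y,f^{*}L)\bigr)f^{*}L$ would be the sum of a pseudoeffective class and a big class, hence big, contradicting $\kappa = 0$. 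So $a(Y,f^{*}L) = a(X,L) = 1$; in particular $K_Y$ is not pseudoeffective, $Y$ is uniruled, and $f$ is a cover of the type in Conjecture~\ref{genfinconj1} with the extra features that the $a$-invariant is preserved and the adjoint class is rigid. (Birkar's bound \cite{birkar16} on the degree of such covers is not enough here, since we must rule them out entirely.)

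Next I would produce a good birational model. Replacing $f$ by a birationally equivalent morphism, assume $Y$ is smooth with $\mathrm{Supp}(R)$ simple normal crossings, and choose an effective $\mathbb{Q}$-divisor $M \sim_{\mathbb{Q}} f^{*}L$ with $(Y,M)$ klt. Then $K_Y + M$ is pseudoeffective with $\kappa = 0$; running the $(K_Y+M)$-MMP with scaling (which terminates in dimension three) gives $(Y',M')$ with $K_{Y'}+M'$ nef, hence semiample by log abundance in dimension three, hence $\mathbb{Q}$-linearly trivial since $\kappa = 0$. Thus $M' \sim_{\mathbb{Q}} -K_{Y'}$ is big, $(Y',M')$ is a klt Calabi--Yau pair with big boundary, so $Y'$ is of Fano type; passing to its anticanonical model and a $\mathbb{Q}$-factorial terminalization yields a $\mathbb{Q}$-factorial terminal weak $\mathbb{Q}$-Fano threefold lying over the normalization $\widehat{X}$ of $X$ in $K(Y)$, which is finite of degree $d$ over $X$. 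The crucial point, where $a(Y,f^{*}L) = a(X,L)$ is used, is that no component of the ramification divisor can survive to the limit: such a component would force the $a$-invariant to drop strictly (here I would invoke the analysis of the $a$-invariant under the MMP from \cite{LTT14}, \cite{HJ16}). Hence we obtain a finite cover $g \colon X_0 \to X_1$ of $\mathbb{Q}$-factorial terminal $\mathbb{Q}$-Fano threefolds which is \'etale in codimension one, with $X_1$ birational to $X$ and $\deg g = d \ge 2$.

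Finally I would rule out $g$. By purity of the branch locus, $g$ determines a nontrivial quotient of $\pi_1^{\mathrm{et}}\bigl((X_1)_{\mathrm{reg}}\bigr)$, which is finite by \cite{Xu14}, \cite{GKP16}; it remains to see that it is trivial, and this is exactly where the hypotheses on $X$ enter. If $X$ has index $\ge 2$, then $X$ is $\mathbb{P}^{3}$, the quadric threefold, or a del Pezzo threefold, and the $\mathbb{Q}$-factorial terminal $\mathbb{Q}$-Fano threefolds birational to $X$ are explicitly known; one checks case by case that they are rationally connected with trivial fundamental group of the smooth locus (in the birationally rigid cases $X_1 \cong X$ is smooth). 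If $X$ has Picard rank $1$, index $1$ and is general in moduli, one uses the classification $X = V_{2g-2}$ and the known description of its birational models, the genericity excluding the special members whose models acquire worse singularities or extra symmetries. In every case $\pi_1^{\mathrm{et}}\bigl((X_1)_{\mathrm{reg}}\bigr) = 1$, so $\deg g = 1$, a contradiction.

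The main obstacle is the last step: controlling the $\mathbb{Q}$-Fano model $X_1$ of the base and proving that the \'etale fundamental group of its smooth locus vanishes. This is precisely why Theorem~\ref{theo: Fano3folds} is only a partial result in dimension three --- for an arbitrary Fano threefold the birational models are not classified, and for index $1$ with Picard rank $\ge 2$ one cannot exclude models with nontrivial fundamental group. A secondary difficulty is the second step: making rigorous that preservation of the $a$-invariant, together with $\kappa(K_Y + f^{*}L) = 0$, really does eliminate all divisorial ramification in the $\mathbb{Q}$-Fano model, so that $g$ is \'etale in codimension one rather than merely of bounded degree.
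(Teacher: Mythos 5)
Your overall strategy --- reduce to a cover \'etale in codimension one and then invoke (simple) connectedness --- is the same in spirit as the paper's, and your first paragraph (forcing $a(Y,f^{*}L)=a(X,L)=1$) is correct and matches the setup. But the two points you flag at the end as ``difficulties'' are not peripheral technicalities; they are the entire content of the theorem, and your proposed route makes the second one strictly harder than it needs to be. The paper never passes to a $\mathbb{Q}$-Fano model $X_{1}$ of the base and never needs $\pi_{1}^{\mathrm{et}}\bigl((X_{1})_{\mathrm{reg}}\bigr)=1$ for birational models of $X$ (which, as you admit, is not available: the birational $\mathbb{Q}$-Fano models of a del Pezzo threefold or a general $V_{2g-2}$ are not classified). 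Instead, all contractions in the paper's argument happen on the $Y$-side only: one runs the $(K_{Y}-f^{*}K_{X})$-MMP on $Y$ and shows, by induction on the number of uncontracted components of $R$, that every divisor $D$ contracted by a divisorial step $g\colon Y'\to\tilde{Y}$ is already $f'$-exceptional, i.e.\ contracted by the map to $X$ itself. Then the Stein factorization of $\tilde{Y}\to X$ is \'etale in codimension one over the \emph{smooth, simply connected} $X$, and one is done --- no analysis of singular base models is required.

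The mechanism that makes this work, and which is entirely missing from your proposal, is Proposition~\ref{prop: length}: by Kawakita's classification of threefold divisorial contractions, the contracted divisor $D$ is covered by rational curves $\Gamma$ with $0<-K_{Y'}\cdot\Gamma\leq 2$. Since $-\phi_{*}f^{*}K_{X}$ is movable and $\Gamma$ is $(K_{Y'}-\phi_{*}f^{*}K_{X})$-negative, the strict transform $\tilde{\Gamma}$ on $Y$ satisfies $0\leq -f^{*}K_{X}\cdot\tilde{\Gamma}<2$. When the index of $X$ is $\geq 2$ this intersection number is a nonnegative multiple of the index, hence zero, so $D$ is $f'$-exceptional. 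When the index is $1$, the case $-f^{*}K_{X}\cdot\tilde{\Gamma}=1$ must be excluded by hand: the curves $\tilde{\Gamma}$ would be $-K_{X}$-lines, and for $X$ general in moduli the divisor swept out by lines is not rationally connected (the Fano scheme of lines is a curve of high genus), whereas a divisor contracted to a point is rationally connected by Hacon--McKernan; the contraction-to-a-curve case is handled by a ruling computation. This is where ``general in moduli'' enters concretely --- your appeal to genericity ``excluding special members'' does not engage with this. Your step asserting that preservation of the $a$-invariant forces all ramification to disappear in the limit is likewise not justified by anything you cite; note that in dimension $2$ (Theorem~\ref{theo: finitenessofcovers}) the analogous covers \emph{do} exist for weak del Pezzo surfaces of low degree, so some threefold-specific input (here, the length bound of Proposition~\ref{prop: length} plus the index/generality hypotheses) is unavoidable.
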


The precise analogue of Conjecture \ref{genfinconj1} for number fields does not hold due to the presence of twists.  However, we can still hope to show thinness of point contributions.  Due to the finiteness in Conjecture \ref{genfinconj1} over an algebraic closure, it suffices to consider the behavior of rational points for all twists of a fixed map. 

\begin{theo} \label{thinnessfortwists}
Let $X$ be a smooth projective variety over a number field $F$ satisfying $\rho(\overline{X})= \rho(X)$ and let $L$ be a big and nef $\mathbb Q$-divisor on $X$.  Suppose that $f : Y \rightarrow X$ is a generically finite $F$-cover from a smooth projective variety $Y$ satisfying $\kappa(K_{Y} + a(X, L)f^{*}L) = 0$.  As we let $\sigma$ vary over all elements of $H^1(F, \mathrm{Aut}(Y/X))$ such that the corresponding twists $f^{\sigma}: Y^{\sigma} \to X$ satisfy
\[
(a(Y, f^*L), b(F, Y^\sigma, (f^\sigma)^*L)) > (a(X, L), b(F, X, L)) 
\]
in the lexicographic order, the set
\[
\bigcup_{\sigma} f^\sigma(Y^\sigma (F)) \subset X(F)
\]
is contained in a thin subset of $X(F)$.
\end{theo}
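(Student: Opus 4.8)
The plan is to combine the finiteness statement over $\overline{F}$ (essentially Conjecture~\ref{genfinconj1}, which underlies the reduction here) with an analysis of rational points on twists via the Hilbert Irreducibility Theorem. First I would recall the general principle about twists: given the fixed $F$-cover $f : Y \to X$ with group $G = \mathrm{Aut}(Y/X)$ (which is finite because $f$ is generically finite), the twisted forms $f^\sigma : Y^\sigma \to X$ are classified by $\sigma \in H^1(F, G)$, and every $F$-point of $X$ that lifts to \emph{some} $\overline{F}$-point of $Y$ lies in the image $f^\sigma(Y^\sigma(F))$ for exactly one class $\sigma$. Thus $\bigcup_\sigma f^\sigma(Y^\sigma(F))$ is precisely the set of $x \in X(F)$ such that the fiber $f^{-1}(x)$ contains an $F$-point of some twist --- equivalently, away from the branch locus, the set of $x$ for which the Galois action on $f^{-1}(x)$ (a $G$-torsor over $x$) has a fixed point in some conjugate of a chosen subgroup. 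The key point is that this is controlled by a single geometric object over $\overline{F}$, so we are not genuinely summing over infinitely many independent varieties.

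The main step is to show that the subset of $\sigma$ appearing in the statement --- those with $(a(Y,f^*L), b(F, Y^\sigma, (f^\sigma)^*L)) > (a(X,L), b(F, X,L))$ --- is constrained enough that its point contributions form a thin set. Here I would use that $a(Y^\sigma, (f^\sigma)^* L) = a(Y, f^*L)$ is a geometric invariant unchanged by twisting (twists are $\overline{F}$-isomorphic), and that $b(F, Y^\sigma, (f^\sigma)^*L)$ depends only on the Galois action on $\mathrm{NS}(\overline{Y})$, hence only on $\sigma$ through a finite quotient $G \to \mathrm{Aut}(\text{relevant face of }\Lambda_{\eff})$. Since $\kappa(K_Y + a(X,L)f^*L) = 0$, the hypothesis matches the setup of Conjecture~\ref{genfinconj1}, so after possibly passing to a birational model we may assume $f$ is, up to the relevant finite data, one of finitely many covers; the twists with strictly larger $b$-value than the base then correspond to those $\sigma$ landing in a proper subgroup structure, for which $f^\sigma(Y^\sigma(F))$ genuinely comes from a variety mapping to $X$ without rational section --- either because $\deg f^\sigma \geq 2$ (so the map $Y^\sigma \to X$ itself has no rational section for all but finitely many specializations, handled by Hilbert Irreducibility applied to the Galois closure), or because the increased $b$-value forces a nontrivial monodromy/splitting that Hilbert Irreducibility detects.

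Concretely, I would proceed as follows. Let $\widetilde{Y} \to X$ be the Galois closure of $f$ over $\overline{F}$ with group $G$, descend the picture to $F$ (possibly after a finite base change, absorbed into the thin-set conclusion), and set up the standard Hilbert-irreducibility argument: the complement of a thin set in $X(F)$ consists of points $x$ whose fiber in $\widetilde{Y}$ is irreducible, i.e.~$\mathrm{Gal}(\overline{F}/F)$ acts transitively, in fact surjects onto $G$ via the geometric monodromy. For such $x$ with $\deg f \geq 2$, the fiber $f^{-1}(x)$ has no $F$-point in any twist whose $b$-value exceeds that of $X$ --- one checks that the twists $f^\sigma$ with $b(F, Y^\sigma, (f^\sigma)^* L) > b(F, X, L)$ are exactly those for which the image of $\mathrm{Gal}(\overline F/F)$ in $G$ is forced into a proper subgroup, contradicting surjectivity. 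The remaining subtlety is the case $\deg f = 1$ together with the monodromy dichotomy from Theorem~\ref{firstfinitefamily}: there the increase in $b$ must come from the Galois action twisting the Néron–Severi lattice, and again Hilbert Irreducibility (applied to the finite cover of $X$ trivializing the monodromy of $\mathcal{GN}$) shows that outside a thin set the $b$-value cannot jump. Finally, intersecting the finitely many thin sets produced (the branch locus, the Hilbert-irreducibility thin set, and the ramification divisors $f^\sigma(E^\sigma)$, of which up to conjugacy there are finitely many) yields the desired thin subset.

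The hard part will be making precise the claim that $b(F, Y^\sigma, (f^\sigma)^* L) > b(F, X, L)$ forces the relevant Galois image into a proper subgroup in a way compatible with Hilbert Irreducibility: one needs to relate the combinatorics of the minimal supported face of $\Lambda_{\eff}(\overline{Y^\sigma})$ (which governs $b$) to the Galois action encoded by $\sigma$, and to check that the ``bad'' classes $\sigma$ are exactly those that are detectably non-generic. I expect this to rely on the hypothesis $\rho(\overline X) = \rho(X)$ to control how $\mathrm{NS}$ behaves under the cover, on the rigidity input $\kappa(K_Y + a(X,L)f^*L) = 0$ to pin down the face, and on an explicit description of how twisting permutes the extremal rays contracted in the relevant minimal model — parallel to the monodromy analysis already invoked for Theorem~\ref{theo: mainI}.
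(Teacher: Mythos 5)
Your skeleton is the same as the paper's: partition $X(F)$ away from the branch locus according to which twist a point lifts to, pass to the Galois closure, and use the Hilbert Irreducibility Theorem so that outside a thin set the Galois action on the fiber is the full group $G$. The paper's proof reduces exactly to this via Propositions \ref{thinnessfornonGalois} and \ref{thinnessforGalois}. However, the step you yourself flag as ``the hard part'' is the actual content of the theorem, and you leave it unproved: you must show that if the cocycle $\sigma$ attached to a Hilbert-generic point is surjective onto $G$, then $b(F, Y^\sigma, (f^\sigma)^*L) \leq b(F, X, L)$, so that such a twist is excluded from the union in the statement. Asserting that twists with larger $b$ are ``exactly'' those forced into a proper subgroup is not a proof (and the ``exactly'' is neither needed nor true; only the one implication matters). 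The paper establishes this in Proposition \ref{thinnessforGalois}: after a finite base change and a birational modification one arranges (using \cite{Cheltsov04}) that $G = \mathrm{Bir}(\overline{Y}/\overline{X}) = \mathrm{Aut}(Y/X)$ acts regularly with trivial Galois action; then, $X$ being the quotient of $Y$ by $G$, one has $N^1(\overline{Y})_{\mathbb Q}^G = N^1(\overline{X})_{\mathbb Q} \oplus \bigl(\bigoplus_i \mathbb Q E_i\bigr)^G$ with $E_i$ the exceptional divisors, the hypothesis $\rho(X)=\rho(\overline{X})$ makes this Galois-trivial, and the rigidity hypothesis $\kappa(K_Y + a(X,L)f^*L)=0$ lets one compute the $b$-value of a twist from the $\sigma$-twisted invariants of $N^1(\overline{Y})$ modulo the span of the exceptional and ramification components; for surjective $\sigma$ the twisted invariants are the $G$-invariants, yielding the inequality. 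Without this computation your argument does not close.

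Two further corrections. The non-Galois and Galois cases should be separated as in the paper: when $\overline{F}(\overline{Y})/\overline{F}(\overline{X})$ is not Galois, a rational point on \emph{any} twist over a Hilbert-generic point already forces the Galois image into a proper subgroup of the Galois-closure group, so all twists contribute only a thin set with no $b$-comparison needed (Proposition \ref{thinnessfornonGalois}); the $b$-comparison is needed precisely and only in the Galois case, where the torsor decomposition $X^\circ(F) = \bigsqcup_\sigma f^\sigma((Y^\circ)^\sigma(F))$ shows the total contribution is never thin. Also, your appeal to the monodromy dichotomy of Theorem \ref{firstfinitefamily} for the case $\deg f = 1$, and to Conjecture \ref{genfinconj1}, is off target: that machinery belongs to the families-of-subvarieties result (Theorem \ref{theo: mainI}), and the theorem here concerns twists of one fixed cover; the birational case is vacuous since $\mathrm{Aut}(Y/X)$ is trivial and $b$ is a birational invariant, so the strict inequality in the statement cannot occur.
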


In fact, when the induced extension of function fields $\overline{F}(\overline{Y})/\overline{F}(\overline{X})$ is not Galois, then the contributions of all twists lies in a thin set.  In the Galois case, we must leverage the comparison of $b$-values to apply the Hilbert Irreducibility Theorem.

\begin{rema}
As remarked above, Example \ref{exam: Sano} shows that the statement of Theorem \ref{thinnessfortwists} fails if we allow twists $\sigma$ such that
\begin{equation*}
(a(Y, f^*L), b(F, Y^\sigma, (f^\sigma)^*L)) = (a(X, L), b(F, X, L)).
\end{equation*}
Nevertheless this fact is still compatible with Manin's conjecture and Peyre's constant as Example \ref{exam: Sano} demonstrates.
\end{rema}

The paper is organized as follows. In Section~\ref{sec: balanced}, we recall the geometric invariants appearing in Manin's conjecture and the notion of balanced divisors. In Section~\ref{sec: preliminaries}, we collect several results from the literature on the minimal model program and birational geometry. Then we turn to families of subvarieties and prove Theorem~\ref{firstfinitefamily} and Theorem~\ref{theo: mainI} in Section~\ref{sec: families} and Section~\ref{sec: thinsets}. In Section~\ref{surfacesec}, we study generically finite covers for surfaces and prove Theorem~\ref{theo: surfaces}. In Section~\ref{fanothreefoldsec}, we study generically finite covers for Fano $3$-folds and prove Theorem~\ref{theo: Fano3folds}.
In Section~\ref{sec: twists}, we study contributions of twists and prove Theorem~\ref{thinnessfortwists}. In Section~\ref{examplesec}, we explore several examples to illustrate our study.

\noindent
{\bf Acknowledgments.}
The authors would like to thank I.~Cheltsov and C. Jiang for answering our questions about Fujita invariants and Fano threefolds.
They also would like to thank B. Hassett, D. Loughran, and Y. Tschinkel for answering our questions about twists. Finally they would like to thank M. Kawakita for his email communications regarding lengths of divisorial contractions on threefolds.
We thank D. Loughran, J. M\textsuperscript{c}Kernan, and M. Pieropan for useful comments.
Lehmann is supported by an NSA Young Investigator Grant.  Tanimoto is supported by Lars Hesselholt's Niels Bohr professorship.

\section{Balanced divisors}
\label{sec: balanced}

Here we recall some basic definitions of geometric invariants appearing in Manin's conjecture,
and the notion of balanced divisors.

\subsection{Geometric invariants}

Here we assume that our ground field $F$ is a field of characteristic zero,
but not necessarily algebraically closed.
In this paper, a variety defined over $F$ means a geometrically integral separated scheme of finite type over $k$.

Suppose that $X$ is a smooth projective variety defined over $F$.
We denote the N\'eron-Severi space on $X$ by $\mathrm{NS}(X)$
and its rank by $\rho(X)$.
If $\overline{X}$ is a base extension to an algebraic closure $\overline{F}$,
then in general, we have $\rho(X) \leq \rho(\overline{X})$,
and the strict inequality can happen.
We define the cone of pseudo-effective divisors as the closure of the cone of effective $\mathbb Q$-divisors in $\mathrm{NS}(X)$ and we denote it by $\Lambda_{\mathrm{eff}}(X)$.
\begin{defi}
Let $X$ be a smooth projective variety defined over $F$ 
and $L$ a big Cartier $\mathbb Q$-divisor on $X$.
The Fujita invariant is
\[
a(X, L) = \min \{ t \in \mathbb R : t[L] + [K_X] \in \Lambda_{\mathrm{eff}(X)}\}.
\]
For a singular projective variety $X$, we define $a(X, L)$ to be the Fujita invariant of the pullback of $L$ to a smooth model. This is well-defined because the invariant does not change upon pulling back $L$ via a birational morphism \cite[Proposition 2.7]{HTT15}.
Note that since cohomology of line bundles is invariant under flat base change,
we have $a(X, L) = a(\overline{X}, L)$.
Also by \cite{BDPP}, $a(X, L) >0$ if and only if $X$ is geometrically uniruled.
\end{defi}

For the purposes of Manin's Conjecture, it is often useful to add an assumption on the adjoint divisor $K_{X} + a(X,L)L$.

\begin{defi}
Let $(X, L)$ be a pair of a projective variety 
and a big and nef Cartier $\mathbb Q$-divisor defined over $F$.
We say that the adjoint divisor for $L$ is rigid, or that $(X, L)$ is adjoint rigid, if there exists a smooth resolution $\beta: \tilde{X} \rightarrow X$
such that $a(X, L)\beta^*L + K_{\tilde{X}}$ is rigid, i.e.,
the Iitaka dimension of $a(X, L)\beta^*L + K_{\tilde{X}}$ is zero.
Again this definition does not depend on the choice of the resolution $\beta: \tilde{X} \rightarrow X$.
\end{defi}

\begin{defi}
Let $X$ be a smooth projective variety defined over $F$ and $L$ a big Cartier $\mathbb Q$-divisor on $X$. We define the $b$-invariant by
\begin{align*}
b(F,X,L) = & \textrm{ the codimension of the minimal} \\
& \textrm{supported face of }  \Lambda_{\mathrm{eff}}(X) \textrm{ containing} \\
& \textrm{the numerical class } a(X, L)[L] + [K_X].
\end{align*}
Again this is a birational invariant upon pulling back $L$ \cite[Proposition 2.10]{HTT15},
so we define this invariant for singular projective varieties
via passage to a smooth model.
In general the $b$ constant is not preserved by base extension: we may have $b(F, X, L) \neq b(\overline{F}, \overline{X}, L) =: b(\overline{X}, L)$.
\end{defi}

\subsection{Balanced divisors}

Here we assume that our ground field is an algebraically closed field of characteristic zero.
The following notion is introduced in \cite[Definition 3.1]{HTT15}
\begin{defi}
Let $X$ be a uniruled projective variety and $L$ a big Cartier $\mathbb Q$-divisor on $X$.
Suppose that we have a morphism $f: Y \rightarrow X$ from a projective variety $Y$ which is generically finite onto its image.
Then $L$ is weakly balanced with respect to $f: Y \rightarrow X$  if
\begin{itemize}
\item $f^*L$ is big, and;
\item we have the inequality 
\[(a(Y, f^*L), b(Y, f^*L) ) \leq (a(X, L), b(X, L))\]
in the lexicographic order.
\end{itemize}
When the strict inequality holds, we say that $L$ is balanced with respect to $f: Y \rightarrow X$.

We say that $L$ is (weakly) balanced with respect to general subvarieties if there exists a proper closed subset $V$ such that $L$ is (weakly) balanced with respect to any subvariety $Y$ not contained in $V$.

We say that $L$ is (weakly) balanced with respect to generically finite covers if $L$ is (weakly) balanced with respect to every surjective generically finite morphism $f: Y \to X$.

We say that $L$ is (weakly) balanced with respect to general morphisms if there exists a proper closed subset $V$ such that $L$ is (weakly) balanced with respect to any generically finite morphism  $f: Y\rightarrow X$ whose image is not contained in $V$.
\end{defi}

It is also convenient to consider the following notion, which restricts attention to the $a$-values:

\begin{defi}
Let $X$ be a uniruled projective variety and $L$ a big Cartier $\mathbb Q$-divisor on $X$.
Suppose that we have a generically finite morphism $f: Y \rightarrow X$ onto its image.
Then $L$ is $a$-balanced (respectively strongly $a$-balanced) with respect to $f: Y \rightarrow X$  if
\begin{itemize}
\item $f^*L$ is big, and;
\item we have the inequality :
\[a(Y, f^*L)\leq a(X, L) \quad (\textrm{resp.~ }a(Y, f^*L) < a(X, L)).\]
\end{itemize}

We say that $L$ is (strongly) $a$-balanced with respect to general subvarieties if there exists a proper closed subset $V$ such that $L$ is (strongly) $a$-balanced with respect to any subvariety $Y$ not contained in $V$.
Other notions for generically finite morphisms and covers are defined in a similar way.
\end{defi}

\section{Preliminaries}
\label{sec: preliminaries}

We collect some useful results for our analysis.  In this section we work over an algebraically closed field of characteristic zero.

\subsection{The Minimal Model Program}

\begin{defi}
 A pair $(X, \Delta)$ is a normal $\mathbb Q$-factorial projective variety $X$ with an effective $\mathbb Q$-divisor $\Delta$.
 A pair $(X, \Delta)$ is called a $\mathbb Q$-factorial terminal log Fano pair if $(X, \Delta)$ has only terminal singularities
 and $-(K_X +\Delta)$ is ample.
\end{defi}

\begin{theo}[relative version of Wilson's theorem] \label{relativewilson}
 Let $f : X \rightarrow Y$ be a projective morphism between irreducible varieties and 
 $L$ a $f$-big and $f$-nef $\mathbb Q$-Cartier divisor on $X$. Then there exists an effective divisor $E$ such that
 for any sufficiently small $t$, $L-tE$ is $f$-ample.
\end{theo}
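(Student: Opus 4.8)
The plan is to run the classical proof of Wilson's theorem in the relative setting, using two standard facts: the relative version of Kodaira's lemma, and the fact that the sum of an $f$-nef and an $f$-ample $\mathbb Q$-divisor is again $f$-ample. The only non-formal ingredient is the former; everything else is a one-line perturbation.

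First I would invoke the relative Kodaira lemma: since $L$ is $f$-big, there exist an $f$-ample $\mathbb Q$-divisor $A$ and an effective $\mathbb Q$-divisor $E$ on $X$ with $L \equiv_f A + E$ (relative numerical equivalence is enough here, since $f$-ampleness depends only on the class in the relative N\'eron--Severi space). Concretely, one fixes an $f$-very ample divisor $A_0$ on $X$; on the generic fibre $X_\eta$ --- a projective variety over the function field of $Y$, as $f$ is projective --- the divisor $L|_{X_\eta}$ is big, so Kodaira's lemma on a projective model of $X_\eta$ produces a nonzero section of $\mathcal O_{X_\eta}(mL - A_0)$ for $m \gg 0$, and a twisting argument over the base promotes this to a global effective divisor $E_0$ on $X$ with $mL \equiv_f A_0 + E_0$; then take $A = \tfrac1m A_0$ and $E = \tfrac1m E_0$. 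After clearing denominators we may assume $E$ is an integral divisor, which only shrinks the interval of admissible $t$ below, and so is harmless.

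Next I would perturb. From $L \equiv_f A + E$ one has the relative numerical equivalence
\[
L - tE \;=\; (1-t)L + t(L-E) \;\equiv_f\; (1-t)L + tA, \qquad 0 \le t \le 1.
\]
For $0 < t \le 1$ the class $(1-t)L$ is $f$-nef, since $L$ is $f$-nef and $1-t \ge 0$, while $tA$ is $f$-ample; hence their sum is $f$-ample, by the relative Kleiman--Mori criterion applied to the relative cone of curves of the projective morphism $f$. As $f$-ampleness depends only on the relative numerical class, $L - tE$ is $f$-ample for every $t \in (0,1]$, and in particular for all sufficiently small $t > 0$.

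The only step carrying real content is the relative Kodaira lemma, and the sole subtlety there is the passage from a section over the generic fibre to an honest effective divisor on all of $X$; this is dealt with by the standard twist by a sufficiently positive divisor pulled back from (a compactification of) the base, which is $f$-numerically trivial and therefore does not affect $f$-ampleness. Once that lemma is in hand, the rest is purely formal manipulation of relative positivity notions.
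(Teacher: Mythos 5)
Your proof is correct and follows essentially the same route as the paper's: both rest on producing an effective representative of a relatively big class by twisting with the pullback of a sufficiently ample divisor from the base (a relative Kodaira-type lemma), together with the fact that an $f$-nef plus $f$-ample class is $f$-ample. The only difference is cosmetic in the order of operations: the paper first perturbs $L$ inside the $f$-big cone to get $L'$ with $L-tL'$ $f$-ample and then replaces $L'$ by an effective divisor, whereas you first extract the decomposition $L \equiv_f A + E$ on the generic fibre and then perturb.
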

\begin{proof}
There is an open neighborhood of $L$ in $N^{1}(X/Y)$ consisting of $f$-big divisors.  In particular, there is some $f$-big divisor $L'$ such that $L - tL'$ is $f$-ample for any sufficiently small $t$.  If we replace $L'$ by $L' + \pi^{*}A$ for some sufficiently ample divisor $A$ on $Y$, then we can find an effective divisor $E$ that is $\mathbb{Q}$-linearly equivalent to $L'$ without losing the desired property.
\end{proof}

Let $f : X \rightarrow Y$ be a smooth projective morphism between irreducible varieties. 
According to \cite[Proposition 12.2.5]{KM92}, there exists a local system $\mathcal GN^1(X/Y)$ over $Y$ in the analytic topology (or \'etale topology)
with a finite monodromy. 
When $H^1(X_t, \mathcal O_{X_t}) = 0$ for any $t \in Y$, we have an isomorphism $\mathcal GN^1(X/Y)|_t \cong N^1(X_t)$ for any $t \in Y$ by Hodge theory.

\begin{rema} \label{kmremark}
The two results above in \cite{KM92} are stated over $\mathbb C$ in the analytic topology, but one can prove the same results over any algebraically closed field of characteristic zero in the \'etale topology by a comparison theorem.

First suppose that we have a smooth projective morphism $X/S$ defined over an algebraically closed subfield $F \subset \mathbb{C}$
such that any fiber $X_s$ satisfies $H^1(X_s, \mathcal O_{X_s}) =0$.
Consider the following functor:
\[
\Phi: [S' \rightarrow S]  \mapsto N^1(X\times_S S'/S').
\]
It follows from \cite[Proposition 12.2.3]{KM92} that this functor is represented by a proper separated unramified algebraic space $\mathcal H/S$. Then we can define the sheaf $\mathcal GN^1(X/S)$ of sections of $\mathcal H/S$ with open support in the \'etale topology.
We claim that this satisfies the properties we stated above. Indeed, after base change to $\mathbb C$ and considering the sheaf in the analytic topology, $\mathcal GN^1(X/S)$ forms a local system with finite monodromy action. Then we have a finite \'etale cover $T \rightarrow S$ over $\mathbb C$ such that the local system $\mathcal GN^1(X/S)$ is trivialized by $T$. This means that $\mathcal GN^1(X/S)$ is a local system in \'etale topology, and stalks in the analytic topology and \'etale topology coincide.
Finally algebraic fundamental groups and N\'eron-Severi groups are invariant under base change of algebraically closed fields, so our assertion follows from results over $\mathbb C$ in the analytic topology.

For an arbitrary algebraically closed field $F$ of characteristic zero, a standard argument in algebraic geometry shows that our family is defined over a subfield $F' \subset F$ which admits an embedding into $\mathbb C$, so our assertion again follows from a comparison theorem.
\end{rema}

\begin{theo}[the relative MMP vs the absolute MMP]
\label{theo: relativeMMP}
 Let $f : X \rightarrow Y$ be a smooth projective morphism from a smooth irreducible variety to an irreducible variety and 
 $L$ a $f$-big and $f$-nef $\mathbb Q$-Cartier divisor on $X$. 
 Assume that $H^1(X_t, \mathcal O_{X_t}) = 0$ for any $t \in Y$
 and the monodromy action on $\mathcal GN^1(X/Y)$ is trivial.
 Consider the relative $L + K_X$-MMP over $Y$
 \[
  X = X_0 \dashrightarrow X_1 \dashrightarrow \cdots \dashrightarrow X_n.
 \]
 Then for a general $t \in Y$, $X_{0,t}\dashrightarrow \cdots \dashrightarrow X_{n,t}$ is the absolute $L|_{X_t}+K_{X_t}$-MMP.
 If $X_{i,t} \dashrightarrow X_{i+1,t}$ is not an isomorphhism, then we have
 \begin{itemize}
  \item the map $X_{i,t} \dashrightarrow X_{i+1,t}$ is a divisorial contraction if $X_i \dashrightarrow X_{i+1}$ is;
  \item the map $X_{i,t} \dashrightarrow X_{i+1,t}$ is a flip if $X_i \dashrightarrow X_{i+1}$ is;
  \item the map $X_{i,t} \dashrightarrow X_{i+1,t}$ is a Mori fiber space if $X_i \dashrightarrow X_{i+1}$ is.
 \end{itemize}
.

\end{theo}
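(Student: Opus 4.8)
The plan is to run the relative $L+K_X$-MMP over $Y$ and show that each step, after restricting to a general fiber, becomes the corresponding step of the absolute $L|_{X_t}+K_{X_t}$-MMP. I would proceed inductively along the MMP. Suppose we have reached $X_i \to Y$ with the inductive hypothesis that $X_i$ is smooth over an open subset $Y^\circ \subseteq Y$, that the fibers $X_{i,t}$ still satisfy $H^1(X_{i,t},\mathcal{O})=0$ (this follows since the fibers are obtained from $X_t$ by an MMP, which preserves rational connectedness of fibers and more to the point preserves $h^1(\mathcal{O})=0$ for terminal varieties in characteristic zero), and that $\mathcal{GN}^1(X_i/Y^\circ)$ still has trivial monodromy (monodromy can only shrink under the birational modifications since we are killing extremal rays that are monodromy-invariant; alternatively the relevant N\'eron--Severi spaces are quotients of the original one). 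The key input at each step is that an extremal ray $R$ of $\overline{NE}(X_i/Y)$ which is $(L+K_{X_i})$-negative restricts, on a general fiber, to an extremal ray of $\overline{NE}(X_{i,t})$ that is $(L|_{X_{i,t}}+K_{X_{i,t}})$-negative, and conversely every such absolute extremal ray on the general fiber arises this way. This is exactly where the hypotheses $H^1(X_{i,t},\mathcal{O})=0$ and trivial monodromy enter: they give $\mathcal{GN}^1(X_i/Y^\circ)|_t \cong N^1(X_{i,t})$ compatibly in $t$, so $N^1(X_{i,t})$ and hence $\overline{NE}(X_{i,t})$ "does not vary" with $t$, and the relative cone $\overline{NE}(X_i/Y)$ surjects onto the fiberwise cone; a general fiber then sees precisely the relative contraction. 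I would cite \cite[Proposition 12.2.5]{KM92} together with Remark~\ref{kmremark} for the local system statement and standard semicontinuity to make this precise.

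The inductive step then splits according to the type of the relative operation. If $X_i \dashrightarrow X_{i+1}$ is a divisorial contraction contracting a divisor $D_i$, then for general $t$ the restriction $D_i|_{X_{i,t}}$ is an irreducible divisor (after possibly shrinking $Y^\circ$) contracted by the induced morphism $X_{i,t} \to X_{i+1,t}$, and by the discussion above this induced morphism is the contraction of the corresponding absolute extremal ray; the same length/negativity computation that shows $D_i$ is contracted relatively shows $D_i|_{X_{i,t}}$ is contracted absolutely, so $X_{i,t}\to X_{i+1,t}$ is an absolute divisorial contraction (and the target is again terminal, $\mathbb{Q}$-factorial, with the correct cohomological vanishing, by generic flatness and inversion-of-adjunction-type statements). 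If $X_i\dashrightarrow X_{i+1}$ is a flip, I would observe that a flip is built from a small contraction $X_i\to Z_i$ over $Y$ followed by $X_{i+1}\to Z_i$; restricting to a general fiber gives small contractions $X_{i,t}\to Z_{i,t}$ and $X_{i+1,t}\to Z_{i,t}$, and the defining positivity/negativity of the flipped and flipping loci is preserved fiberwise, so the general fiber of a flip is a flip (uniqueness of flips then identifies it with the absolute one). If $X_i\dashrightarrow X_{i+1}=X_n$ is a Mori fiber space $X_n\to W$ over $Y$, then the general fiber $X_{n,t}\to W_t$ has $\rho(X_{n,t}/W_t)=1$, is $(K+L)$-negative on fibers, and $W_t$ is lower-dimensional, so it is an absolute Mori fiber space; one must check $W_t$ is normal and $X_{n,t}$ is still terminal, again by generic smoothness/flatness.

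The main obstacle I expect is the bookkeeping needed to guarantee that all the relevant properties genuinely descend to a \emph{single} general fiber simultaneously: one must intersect finitely many dense open subsets of $Y$ (one controlling smoothness/flatness of each $X_i$ and of each contracted locus, one ensuring the local system $\mathcal{GN}^1$ stays trivialized, one ensuring the fiberwise cohomological vanishing persists), and one must be careful that the MMP terminates so that this really is a finite process. A subtler point inside this is propagating the hypotheses of Theorem~\ref{theo: relativeMMP} from $X_i$ to $X_{i+1}$: after a flip or divisorial contraction the new total space is only terminal $\mathbb{Q}$-factorial, not smooth, so strictly speaking one either restricts to the smooth locus over $Y^\circ$ (which suffices since $N^1$ and $\overline{NE}$ are computed there) or uses the terminal version of the Kawamata--Kollár local system, and one invokes the characteristic-zero fact that terminal (indeed klt) varieties have rational singularities to keep $h^1(\mathcal{O}_{X_{i+1,t}})=0$. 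Once these generic-openness issues are handled uniformly, the matching of each relative step with the absolute step on the general fiber is essentially forced by the triviality of the monodromy, so I would expect the heart of the argument to be this careful fiberwise extremal-ray comparison rather than any new geometric input.
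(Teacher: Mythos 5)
The paper does not prove this statement directly: its entire proof is the citation ``This follows from \cite[Theorem 4.1]{dFH09} and its proof,'' i.e.\ it outsources the fiberwise comparison of the relative and absolute MMP to de Fernex--Hacon, whose theorem applies precisely because the hypotheses $H^1(X_t,\mathcal O_{X_t})=0$ and trivial monodromy on $\mathcal{GN}^1(X/Y)$ give the identification $N^1(X/Y)\cong N^1(X_t)$ for general $t$. Your proposal instead reconstructs the argument from scratch by induction on the steps of the relative MMP, using exactly that identification to match relative $(K+L)$-negative extremal contractions with absolute ones on a general fiber and then checking the three cases (divisorial, flip, Mori fiber space) together with the propagation of the hypotheses ($h^1(\mathcal O)=0$ via rational singularities, triviality of the induced monodromy, generic flatness/smoothness after finitely many shrinkings of $Y$). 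This is in substance the same mechanism that the cited theorem's proof runs on, so what you lose in brevity you gain in self-containedness; the paper's route buys a one-line proof at the cost of importing the (nontrivial) bookkeeping you describe from \cite{dFH09}.

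Two points in your sketch deserve tightening. First, a step of the relative MMP can be an isomorphism on the general fiber: if the contracted locus (e.g.\ the divisor $D_i$, or the flipping locus) does not dominate $Y$, then $D_i|_{X_{i,t}}$ is empty for general $t$, so your assertion that $D_i|_{X_{i,t}}$ is an irreducible divisor for general $t$ is only valid after separating out this vertical case --- which is exactly why the theorem is stated with the caveat ``if $X_{i,t}\dashrightarrow X_{i+1,t}$ is not an isomorphism.'' Relatedly, irreducibility of $D_i|_{X_{i,t}}$ is not automatic and is where the trivial-monodromy hypothesis must be used again, not just smoothness over a smaller open set. Second, your parenthetical converse claim that every $(K+L)$-negative extremal ray of $\overline{NE}(X_{i,t})$ comes from a relative extremal ray is not needed for the statement (only the forward direction is: a ray of $\overline{NE}(X_{i,t})\subset\overline{NE}(X_i/Y)$ lying on a relative extremal ray is extremal in the fiber cone, and the Picard-rank bookkeeping via $N^1(X_i/Y)\cong N^1(X_{i,t})$ shows the restricted step is a genuine single MMP step), and it is not obviously true, since special fibers may enlarge the relative cone of curves; as written it is an over-claim that you should either justify or drop.
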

\begin{proof}
This follows from \cite[Theorem 4.1]{dFH09} and its proof.
\end{proof}

\begin{theo}
\label{theo: MMPtoweakFano}
 Let $X$ be a smooth projective variety and $L$ a big and nef $\mathbb Q$-divisor on $X$.
 Suppose that $(X,L)$ is adjoint rigid. Then there is a birational contraction $\phi : X \dashrightarrow X'$ 
 to a $\mathbb Q$-factorial terminal weak Fano variety $X'$ such that
 \[
  a(X, L)\phi_*L + K_{X'} \equiv 0, \quad b(X, L) = b(X', \phi_*L).
 \]
\end{theo}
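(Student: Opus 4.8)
Since $(X,L)$ is adjoint rigid, the adjoint divisor $D := K_X + a(X,L)L$ has Iitaka dimension $0$, so there is an effective $\mathbb{Q}$-divisor $D' \equiv_{\mathbb{Q}} D$ which is the unique such divisor up to $\mathbb{Q}$-linear equivalence. The idea is to run a minimal model program that contracts exactly the part of $X$ that ``sees'' positivity, leaving behind a variety on which the adjoint class becomes numerically trivial. Concretely, I would run the $(K_X + a(X,L)L)$-MMP with scaling of an ample divisor. Because $a(X,L)L + K_X \in \Lambda_{\mathrm{eff}}(X)$ lies on the boundary of the pseudo-effective cone (by definition of $a$), this is a run of the MMP on a pseudo-effective (not big) adjoint pair; by \cite{BCHM} it terminates. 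The output of such an MMP for a klt pair with pseudo-effective adjoint is a birational contraction $\phi : X \dashrightarrow X'$ such that $a(X,L)\phi_*L + K_{X'}$ is nef (indeed one can arrange it to be semiample, or at least limit of movable), and since it is also pseudo-effective of Iitaka dimension $0$ and nef, numerical triviality $a(X,L)\phi_*L + K_{X'} \equiv 0$ follows from the fact that a nef divisor of numerical dimension $0$ is numerically trivial.

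First I would set up the pair: since $L$ is big and nef and $a(X,L) > 0$ (as $X$ is uniruled), the relative Wilson theorem (Theorem \ref{relativewilson}, absolute case) gives an effective $E$ with $a(X,L)L - \epsilon E$ ample for small $\epsilon$; write $a(X,L)L \sim_{\mathbb{Q}} A + \epsilon E$ with $A$ ample, and then for a general effective $\Delta \sim_{\mathbb{Q}} A + \epsilon E$ the pair $(X,\Delta)$ is klt with $K_X + \Delta \equiv K_X + a(X,L)L$. Now run the $(K_X+\Delta)$-MMP; each step is $(K_X + a(X,L)L)$-negative, hence the MMP terminates by \cite{BCHM} with $\phi: X \dashrightarrow X'$ a birational contraction and $K_{X'} + \phi_*\Delta \equiv K_{X'} + a(X,L)\phi_*L$ nef. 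Adjoint rigidity is preserved under birational contractions (the Iitaka dimension of the adjoint is a birational invariant in the relevant sense, cf. the definition in the excerpt), so $a(X,L)\phi_*L + K_{X'}$ has Iitaka dimension $0$; being nef and of Iitaka dimension $0$, it is numerically trivial: $a(X,L)\phi_*L + K_{X'} \equiv 0$. In particular $-K_{X'} \equiv a(X,L)\phi_*L$ is nef and big (bigness is preserved since $\phi$ is a birational contraction and $L$ is big), so $X'$ is a weak Fano variety; the MMP keeps $X'$ normal $\mathbb{Q}$-factorial, and one checks it has terminal singularities — starting from a terminal (indeed smooth) pair with $\Delta$ general this is standard, since the discrepancies only increase under a $(K_X+\Delta)$-MMP and $\Delta$ can be chosen to avoid imposing worse-than-terminal singularities, so $X'$ is a $\mathbb{Q}$-factorial terminal weak Fano variety.

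It remains to verify $b(X,L) = b(X',\phi_*L)$. Here I would argue that the $b$-invariant is a birational contraction invariant for adjoint rigid pairs. Since $\phi$ is a birational contraction, the strict transform gives an inclusion $\mathrm{NS}(X')_{\mathbb{R}} \hookrightarrow \mathrm{NS}(X)_{\mathbb{R}}$ (dually, $\phi_*$ on divisors) under which $\Lambda_{\mathrm{eff}}(X')$ is the image of $\Lambda_{\mathrm{eff}}(X)$ (pseudo-effectivity is preserved by pushforward under a birational contraction, and conversely), and the class $a(X,L)L + K_X$ maps to $a(X,L)\phi_*L + K_{X'}$. The key point is that the minimal supported face of $\Lambda_{\mathrm{eff}}(X)$ containing $a(X,L)L+K_X$ and the minimal supported face of $\Lambda_{\mathrm{eff}}(X')$ containing its image have the same codimension; this follows because the exceptional divisors of $\phi$ span a subspace that is ``cut off'' by the face — each $\phi$-exceptional prime divisor $E_i$ has the property that the face spanned by the $E_i$ together with the minimal face of $a(X,L)\phi_*L+K_{X'}$ recovers the minimal face on $X$, and these exceptional classes account precisely for the difference $\rho(X) - \rho(X')$. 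This should follow formally from the adjoint rigidity together with \cite[Proposition 2.10]{HTT15} type arguments on faces of effective cones, or can be cited from the existing literature (e.g. \cite{HTT15}, \cite{LTT14}).

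\textbf{Main obstacle.} The subtle point is not the existence of the MMP output — that is a direct application of \cite{BCHM} once the pair is set up — but rather the two statements about preservation of invariants under the birational contraction: (i) that $a(X,L)\phi_*L + K_{X'}$ is genuinely numerically trivial and not merely nef and effective (which requires the numerical-dimension-zero argument and care that no positive part survives), and (ii) the equality $b(X,L) = b(X',\phi_*L)$, which requires a careful analysis of how supported faces of the pseudo-effective cone behave under contracting the $\phi$-exceptional divisors. I expect step (ii) to be where the real work lies: one must show that contracting a $(K_X+a(X,L)L)$-negative extremal ray (divisorial or a flip) does not change the codimension of the minimal face containing the adjoint class, using that the contracted/flipped locus is swept out by curves $C$ with $(K_X + a(X,L)L)\cdot C < 0$, hence these curves are not in the minimal face, which forces the relevant divisor classes to behave as claimed.
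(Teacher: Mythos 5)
There is a genuine gap at the point where you declare $X'$ to be weak Fano. You write that $-K_{X'} \equiv a(X,L)\phi_*L$ is ``nef and big (bigness is preserved since $\phi$ is a birational contraction and $L$ is big).'' Bigness is indeed preserved, but nefness is not: the pushforward of a nef divisor under a divisorial contraction or a flip need not be nef (for the standard flop, a divisor meeting the flopping curve positively meets the flipped curve negatively). The output of the $(K_X + a(X,L)L)$-MMP only guarantees that the \emph{adjoint} divisor $K_{\tilde X} + a(X,L)\psi_*L$ is nef (and then, using that the boundary is big so the minimal model is good, numerically trivial); it says nothing about the nefness of $\psi_*L$ itself. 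So the MMP output $\tilde X$ is a $\mathbb{Q}$-factorial terminal variety with $-K_{\tilde X}$ big and \emph{movable}, i.e.\ a log Fano variety, but not yet a weak Fano variety. The paper's proof has an entire second stage that your proposal omits: since $\tilde X$ is log Fano it is a Mori dream space by \cite{BCHM}, and since $\psi_*L$ lies in the movable cone one can pass to a small $\mathbb{Q}$-factorial modification $f:\tilde X \dashrightarrow X'$ on which $a(X,L)f_*\psi_*L \equiv -K_{X'}$ becomes nef. One must then check that $X'$ is still terminal, which is not automatic for an arbitrary SQM; the paper does this by realizing $f$ as the output of an auxiliary MMP for a terminal pair $(\tilde X, \epsilon H' + a(X,L)\psi_*L')$, so that $f$ decomposes into flips of a terminal pair and terminality survives. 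Without this stage your $X'$ need not satisfy the conclusion of the theorem.

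Two smaller points. First, your deduction ``nef and of Iitaka dimension $0$, hence numerically trivial'' conflates Iitaka dimension with numerical dimension; the clean route is that the minimal model is a \emph{good} minimal model (the boundary $\Delta \equiv a(X,L)L$ is big), so the nef adjoint divisor is semiample, and semiample with $\kappa = 0$ forces numerical triviality. Second, the equality $b(X,L) = b(X',\phi_*L)$ is not something to be derived ``formally from adjoint rigidity'': the paper invokes \cite[Lemma 3.5]{LTT14} to get $b(X,L) = b(\tilde X,\psi_*L) = \mathrm{rk}\,\mathrm{NS}(\tilde X)$ after the MMP, and then uses that the subsequent small modification preserves the Picard rank. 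Your sketch of face-counting via exceptional divisors points in the right direction but would need to be carried out; as written it is an assertion, not an argument.
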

\begin{proof}
 We apply $(a(X, L)L+K_X)$-MMP and obtain a birational contraction $\psi : X \dashrightarrow \tilde{X}$ such that $a(X, L)\psi_*L +K_{\tilde{X}} \equiv 0$.
 By \cite[Lemma 3.5]{LTT14}, we have $b(X, L) = b(\tilde{X}, \psi_*L) = \mathrm{rk} \, \mathrm{NS}(\tilde{X})$.
 As explained in \cite[Lemma 2.4 and Proposition 2.5]{LTT14}, $\tilde{X}$ is a $\mathbb Q$-factorial terminal log Fano variety.
 Hence it follows from \cite{BCHM} that $\tilde{X}$ is a Mori dream space.
 Since $L$ is nef, the pushforward $\psi_*L$ is in the movable cone.
 Thus we can find a small $\mathbb{Q}$-factorial modification $f : \tilde{X} \dashrightarrow X'$  
 such that $a(X, L)f_*\psi_*L \equiv -K_{Ẍ́'}$ is big and nef.
 We claim that $X'$ has only terminal singularities.
 
 Indeed, as explained in \cite[Lemma 2.4]{LTT14}, we can find an effective $\mathbb Q$-divisor $L' \equiv L$ such that
 the pair $(\tilde{X}, a(X, L)\psi_*L')$ is a terminal pair.
 Let $H$ be an ample effective divisor on $X'$ and we denote its strict transform on $\tilde{X}$ by $H'$.
 Then for a sufficiently small $\epsilon >0$, we still have that $(\tilde{X}, \epsilon H' + a(X, L)\psi_*L')$ is a terminal pair.
 We apply $(\epsilon H' + a(X, L)\psi_*L' + K_{\tilde{X}})$-MMP to $\tilde{X}$.
 The result must be $f: X \dashrightarrow X'$ since $H$ is ample and $X'$ is $\mathbb Q$-factorial.
 Since $f: \tilde{X} \dashrightarrow X'$ is an isomorphism in codimension one, $f$ decomposes into a composition of flips.
 We conclude that $(X', \epsilon H + a(X, L)f_*\psi_*L')$ is a terminal pair.
 It is easy to see that 
 \[
  b(X, L) = \mathrm{rk} \, \mathrm{NS}(\tilde{X}) = \mathrm{rk} \, \mathrm{NS}(X') = b(X', f_*\psi_*L).
 \]
Thus our assertion follows.
\end{proof}

\subsection{Explicit Fujita-type statements}

To understand the balanced property for example, it is crucial to have effective Fujita-type results.  One approach is to use the work of \cite{Reider} for surfaces and its extensions to dimension $3$ (see for example \cite{Lee99}).  However, we are usually forced to work with $\mathbb{Q}$-divisors, so it will be more useful to rely on effective volume bounds for singular Fano varieties.

\begin{prop} \label{threefoldbigbound}
Let $X$ be a smooth projective variety and let $L$ be a big and nef $\mathbb{Q}$-divisor on $X$.
\begin{enumerate}
\item Suppose $\dim X = 1$.  If $\vol(L)>2$ then $K_{X} + L$ is big.
\item Suppose $\dim X = 2$.  If $\vol(L)>9$ and $L \cdot C > 2$ for every curve through a general point of $X$ then $K_{X} + L$ is big.  
Furthermore, if $a(X, L)L+K_X$ is rigid, then we have $\vol(L) \leq 9/a(X,L)^2$.
\item Suppose $\dim X = 3$.  
If $\vol(L) > 64$ and $L^{2} \cdot S > 9$ for every surface through a general point 
and $L \cdot C > 2$ for every curve through a general point then $K_{X} + L$ is big. 
\end{enumerate}
\end{prop}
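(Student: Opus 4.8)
The plan is to induct on $\dim X$: suppose $K_X+L$ is not big and derive a contradiction with the numerical hypotheses. The first reduction is that $K_X+L$ not big forces $a(X,L)\ge 1$, since otherwise $K_X+L=(a(X,L)L+K_X)+(1-a(X,L))L$ would be the sum of a pseudo-effective and a big class. The base case $\dim X=1$ is then Riemann--Roch: a big and nef $\mathbb{Q}$-divisor on a smooth curve satisfies $\deg L=\vol(L)>2\ge 2-2g(X)=-\deg K_X$, so $\deg(K_X+L)>0$; the bound $2$ is optimal, being the anticanonical degree of $\mathbb{P}^1$.

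For the inductive step I split according to adjoint rigidity. Since $\dim X\le 3$ and $a(X,L)L+K_X$ is pseudo-effective, abundance gives $\kappa(K_X+a(X,L)L)\ge 0$, while this class lies on $\partial\Lambda_{\mathrm{eff}}(X)$ and so is not big, i.e.\ $\kappa(K_X+a(X,L)L)<\dim X$. If $(X,L)$ is adjoint rigid, Theorem~\ref{theo: MMPtoweakFano} gives a birational contraction $\phi\colon X\dashrightarrow X'$ to a $\mathbb{Q}$-factorial terminal weak Fano variety with $a(X,L)\phi_*L\equiv -K_{X'}$. Since $L$ is nef the volume is non-decreasing along the steps of the $(K_X+a(X,L)L)$-MMP --- at a divisorial contraction $f$ one has $f^*(f_*L_i)=L_i+(\text{effective})$, and flips and the final small modification preserve it --- so $\vol(L)\le\vol(\phi_*L)=a(X,L)^{-\dim X}(-K_{X'})^{\dim X}$. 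When $\dim X=2$, $X'$ is a smooth weak del Pezzo surface (terminal surface singularities are smooth), so $(-K_{X'})^2\le 9$ and $\vol(L)\le 9/a(X,L)^2$; this is the ``furthermore'' of (2), and combined with $a(X,L)\ge1$ it contradicts the hypothesis of (2). When $\dim X=3$ one needs $(-K_{X'})^3\le 64$ for the relevant $\mathbb{Q}$-factorial terminal weak Fano threefolds; the unrestricted bound is false (for example $\mathbb{P}_{\mathbb{P}^2}(\mathcal{O}\oplus\mathcal{O}(3))$ has $(-K)^3=72$), so here one uses that the positivity hypotheses of (3) are inherited by $(X',\phi_*L)$ through $\phi$: a terminal weak Fano threefold whose anticanonical class is suitably positive on covering families of curves and surfaces has $(-K)^3\le 64$ by the classification of Fano threefolds, and then $\vol(L)\le 64/a(X,L)^3\le 64$ contradicts (3).

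If $(X,L)$ is not adjoint rigid, then $\kappa(K_X+a(X,L)L)\ge 1$; after a birational modification, assume the Iitaka fibration $\pi\colon X\to B$ of $K_X+a(X,L)L$ is a morphism with $0<\dim B<\dim X$. For a general fibre $F$, adjunction gives $(K_X+a(X,L)L)|_F=K_F+a(X,L)L|_F$, which has Iitaka dimension $0$; since $L|_F$ is big this forces $a(F,L|_F)=a(X,L)$ and shows $(F,L|_F)$ is adjoint rigid. If $\dim F=1$ then $K_F+a(X,L)L|_F\sim_{\mathbb Q}0$ has degree $0$, so $a(X,L)(L\cdot F)=2-2g(F)\le 2$ and $L\cdot F\le 2$, while the image of $F$ is a curve through a general point of $X$ --- contradicting (2) or (3). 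If $\dim F=2$ (possible only when $\dim X=3$), then $(F,L|_F)$ is an adjoint rigid surface with $a(F,L|_F)=a(X,L)\ge1$, so the ``furthermore'' of (2) gives $\vol(L|_F)\le 9$, whereas $\vol(L|_F)=L^2\cdot S$ with $S=\pi(F)$ a surface through a general point of $X$ --- contradicting (3). This closes the induction.

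\textbf{Main obstacle.} The hardest input is the dimension-$3$ adjoint rigid case: controlling $(-K_{X'})^3$ for a $\mathbb{Q}$-factorial terminal weak Fano threefold whose anticanonical class is positive enough on covering families of curves and surfaces, which rests on fine results in the classification of (terminal, weak) Fano threefolds. A secondary point is the volume comparison $\vol(L)\le\vol(\phi_*L)$ along the adjoint MMP, which genuinely uses that $L$ is nef, together with the structural input from \cite{LTT14} and \cite{HJ16} that a non-adjoint-rigid pair fibres, after a birational modification, into adjoint rigid pairs of strictly smaller dimension with the same $a$-invariant.
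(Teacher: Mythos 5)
Your overall architecture (induction on dimension, the reduction to $a(X,L)\ge 1$, the split according to adjoint rigidity, and the treatment of the non-rigid case via the Iitaka fibration and the hypotheses on curves and surfaces through a general point) matches the paper's proof, and the dimension $\le 2$ cases are fine: terminal surfaces are smooth, so the adjoint MMP lands on a smooth weak del Pezzo and $\vol(L)\le 9/a(X,L)^2$ follows exactly as you say. The problem is the dimension-$3$ adjoint rigid case, which you correctly flag as the main obstacle but do not actually close. You stop the MMP at a $\mathbb{Q}$-factorial terminal weak Fano threefold $X'$ and then assert that ``a terminal weak Fano threefold whose anticanonical class is suitably positive on covering families of curves and surfaces has $(-K)^3\le 64$ by the classification of Fano threefolds.'' There is no such classification to invoke: the classification covers smooth Fano threefolds (and, via Namikawa's smoothing, Gorenstein terminal Fanos), not terminal \emph{weak} Fano threefolds, and as your own example $\mathbb{P}_{\mathbb{P}^2}(\mathcal{O}\oplus\mathcal{O}(3))$ with $(-K)^3=72$ shows, the unconditional bound fails. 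Nor do you make precise what ``suitably positive'' means, or how the hypotheses of (3) on $L$ descend through $\phi$ to a statement strong enough to exclude the large-degree weak Fanos. As written this step is a citation of a nonexistent result, so the key case of (3) is not proved.

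The paper avoids this issue by a different device: after reaching the adjoint-trivial model it fixes a small $\epsilon>0$ and continues running the $\bigl(K_X+(a(X,L)-\epsilon)L\bigr)$-MMP with scaling of $L$, terminating in a Mori fiber space $\pi\colon X'\to Z$ with $K_{X'}+a(X,L)\psi_*L$ trivial on the fibers. If $\dim Z\ge 1$ the fibers have dimension $1$ or $2$ and the hypotheses on curves and surfaces through a general point apply via the lower-dimensional cases (this is where your positivity conditions actually get used, rather than on $X'$ itself). If $Z$ is a point, then $X'$ is a terminal $\mathbb{Q}$-Fano threefold of Picard rank one, and here genuine degree bounds exist: Namikawa's smoothing gives $\vol(-K_{X'})\le 64$ in the Gorenstein case, and Prokhorov bounds the degree of $\mathbb{Q}$-factorial terminal non-Gorenstein rank-one Fanos by $125/2$; together with $\vol(L)\le\vol(\psi_*L)=a(X,L)^{-3}\vol(-K_{X'})$ this yields the contradiction. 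To repair your argument you should replace the weak-Fano step by this continuation of the MMP to a Mori fiber space (or else supply an actual proof of the bound you claim for weak Fano threefolds under your positivity hypotheses, which is substantially harder than citing the classification).
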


\begin{proof}
In each case we need to show that if $L$ satisfies the given criteria then $a(X,L)<1$.  The proof is by induction on the dimension.  
We focus on (3), since the cases (1), (2) use exactly the same argument but are easier.

Suppose that $a(X, L)X+K_X$ is not rigid. We run the $a(X, L)L +K_X$-MMP and obtain the minimal model $\phi : X \dashrightarrow \tilde{X}$.
Let $f: \tilde{X} \rightarrow Y$ be the semiample fibration associated to $a(X, L)\phi_*L + K_{\tilde{X}}$.
After applying a resolution of indeterminacy, we may assume that $\phi$ is a birational morphism.
Let $F$ be a general fiber of $f\circ \phi$. Then we have $a(X, L) = a(F, L|_F)$. It follows from the theorem in lower dimensions that $a(F, L|_F) < 1$.

Assume that $a(X, L)X+K_X$ is rigid.
Again we apply the $a(X, L)L +K_X$-MMP with scaling of $L$ and obtain the minimal model $\phi : X \dashrightarrow \tilde{X}$.
Fix a small $\epsilon > 0$ and continue to run the $(K_{X} + (a(X,L)-\epsilon)L)$-MMP with scaling of $L$.  
The result is a $\mathbb Q$-factorial terminal threefold $\psi: X \dashrightarrow X'$ admitting a Mori fibration $\pi: X' \to Z$ 
whose general fiber is a terminal Fano variety such that $K_{X'} + a(X,L)\psi_{*}L$ is trivial along the fibers.

If the fibers of $\pi$ have dimension $1$ or $2$, then by resolving $\psi$ we may as well suppose that the rational map $\psi$ is a morphism.  
Then $a(X,L) = a(F,L)$ for a general fiber $F$ of $\pi \circ \psi$. By the theorem in lower dimensions, we have $a(X, L) <1$.

When $Z$ is a point, $X'$ is a terminal $\mathbb{Q}$-Fano of Picard rank $1$.
In \cite{Nami}, Namikawa showed that every terminal Gorenstein Fano 3-fold $Y$ can be deformed to a smooth Fano 3-fold, 
in particular we have $\vol (-K_Y) \leq 64$.
Moreover Prokhorov proved that the degree of a $\mathbb Q$-factorial terminal non-Gorenstein Fano 3-fold of Picard rank one 
is bounded by $125/2$ (\cite{Pro}).
All together, we have $\vol(-K_{X'}) \leq 64$.
Since we have $\vol(L) \leq \vol(\psi^*\psi_*L)= \vol(\psi_*L)$, we conclude that $a(X, L) <1$.
\end{proof}

We can phrase the result to look more like Reider's theorem:

\begin{theo} \label{theo: surfacebigbound}
Let $X$ be a smooth surface and let $L$ be a big and nef $\mathbb{Q}$-divisor on $X$.  
Suppose that $L \cdot C > 3$ for every rational curve $C$ passing through a general point.  Then $K_{X} + L$ is big.
\end{theo}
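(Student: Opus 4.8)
The plan is to follow the proof of the surface case of Proposition~\ref{threefoldbigbound}, but keeping careful track of rational curves rather than just of volumes. Since $L$ is big, $K_X + L$ is big if and only if $a(X,L) < 1$, so it suffices to prove the latter. If $X$ is not uniruled, then $K_X$ is pseudo-effective by \cite{BDPP} and $K_X+L$ is big; thus we may assume $X$ is uniruled and, for a contradiction, that $a := a(X,L) \geq 1$.

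I would run the $(K_X + aL)$-MMP, distinguishing whether $(X,L)$ is adjoint rigid exactly as in the proof of Proposition~\ref{threefoldbigbound}. Since surfaces admit no flips or small modifications, this produces a birational morphism $\phi : X \to X'$ of smooth surfaces together with a fibration $\pi : X' \to Z$ such that $K_{X'} + a\phi_*L$ is trivial along the fibers of $\pi$: in the non-rigid case $\pi$ is the semiample fibration attached to $K_{X'}+a\phi_*L$, necessarily onto a curve; in the rigid case $\pi$ is the Mori fibration obtained after perturbing to $K_X + (a-\epsilon)L$. Either $\dim Z = 1$, so $X'$ is a conic bundle and I take $F$ to be a general fiber, or $\dim Z = 0$, so $X'$ is a smooth Fano surface of Picard rank $1$, hence $X' \cong \mathbb{P}^2$, and I take $F$ to be a general line. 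In the first case $\phi_*L$ is big, hence not numerically $\pi$-trivial, so $\phi_*L\cdot F > 0$; then $(K_{X'}+a\phi_*L)\cdot F = 0$ together with adjunction ($F^2=0$) forces $F\cong\mathbb{P}^1$ and $a\,(\phi_*L\cdot F) = -K_{X'}\cdot F = 2$. In the second case $a\,(\phi_*L\cdot F) = -K_{\mathbb{P}^2}\cdot F = 3$.

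In either case a general such $F$ meets none of the finitely many curves in the exceptional locus of $\phi$, so its strict transform $\bar F \subset X$ is a rational curve passing through a general point of $X$, and $[\bar F] = \phi^*[F]$. By the projection formula $L\cdot\bar F = \phi_*L\cdot F = (-K_{X'}\cdot F)/a \leq 3/a \leq 3$, which contradicts the hypothesis $L\cdot\bar F > 3$. Hence $a(X,L)<1$ and $K_X+L$ is big.

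The main obstacle is the case $X' \cong \mathbb{P}^2$: there the volume estimate $\vol(L)\le 9/a(X,L)^2$ of Proposition~\ref{threefoldbigbound}(2) does not suffice once $\vol(L)\le 9$, and one genuinely needs to compare $L$ against the pullbacks of lines --- this is exactly why the numerical hypothesis must be $L\cdot C>3$ rather than $L\cdot C>2$. The remaining work is routine: one checks that $\bar F$ passes through a general point of $X$ and that $L\cdot\bar F=\phi_*L\cdot F$, both because a general member of a covering family of rational curves on $X'$ avoids the exceptional locus of $\phi$; and one checks that the perturbation in the adjoint rigid case terminates with a fibration whose general fiber is a terminal Fano of dimension $\le 1$ (hence $\mathbb{P}^1$), or, when $Z$ is a point, that $X'$ is the Picard-rank-one Fano surface $\mathbb{P}^2$. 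These are the surface specializations of steps already carried out in the proof of Proposition~\ref{threefoldbigbound}.
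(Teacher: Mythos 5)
Your proposal is correct and follows essentially the same route as the paper's proof: run the $(K_X+a(X,L)L)$-MMP (plus the $\epsilon$-perturbation in the adjoint rigid case), reduce to a conic bundle over a curve or to $\mathbb{P}^2$ with $a(X,L)\phi_*L \equiv -K_{X'}$, and play the classes of fibers or lines (with $-K_{X'}\cdot F = 2$ or $3$) against the hypothesis $L\cdot C>3$ via strict transforms. You simply spell out details the paper leaves implicit (adjunction on fibers, avoidance of the exceptional locus, the non-uniruled case), so no changes are needed.
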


\begin{proof}
Let $\pi: X \to \tilde{X}$ be a minimal model for $K_{X} + a(X,L)L$.  
By the curve condition, we only need to consider the case when $K_{X} + a(X,L)L$ is rigid.
We continue the $K_{X} + (a(X,L)-\epsilon)L$-MMP, and obtain $\phi : \tilde{X} \rightarrow X'$ with a Mori fiber structure $f : X' \rightarrow Z$.
If $\dim Z = 1$, then our curve condition implies that $a(X, L) <1$.
Hence we assume that $X'$ is a smooth del Pezzo surface of Picard rank one, i.e., $\mathbb P^2$ and $a(X,L)\phi_*\pi_*L \equiv -K_{X'}$.

There is an ample rational curve on $X'$ satisfying $-K_{X'} \cdot C = 3$ 
-- one can simply take a general line on $\mathbb{P}^{2}$.  
Since the classes of $\phi_*\pi_*L$ and $-K_{X'}$ are proportional, by our condition on curves we see that $a(X,L)<1$.
\end{proof}

\begin{coro}
\label{threefoldbigboundimproved}
 Let $X$ be a smooth projective 3-fold and let $L$ be a big and nef $\mathbb{Q}$-divisor on $X$.
 If $\vol(L)>64$ and $L \cdot C > 3$ for every rational curve through a general point then $K_{X} + L$ is big.
\end{coro}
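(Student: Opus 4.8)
The plan is to follow the proof of Proposition~\ref{threefoldbigbound}(3) essentially line by line, replacing the use of Proposition~\ref{threefoldbigbound}(2) on the two-dimensional fibers by the sharper Reider-type statement of Theorem~\ref{theo: surfacebigbound}, which carries no volume hypothesis. As there, it suffices to prove $a(X,L)<1$: granting this, $K_X+L=(K_X+a(X,L)L)+(1-a(X,L))L$ is the sum of a pseudo-effective class and a big class, hence big. If $X$ is not geometrically uniruled then $K_X$ is already pseudo-effective by \cite{BDPP} and there is nothing to prove, so I may assume $0<a(X,L)$; and since $a(X,L)$ and the bigness of $K_X+L$ are unchanged when $X$ is replaced by a resolution (with $L$ pulled back), I may freely pass to a resolution when convenient.

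First I would run the $a(X,L)L+K_X$-MMP and split into the two cases of Proposition~\ref{threefoldbigbound}(3). If $(X,L)$ is not adjoint rigid, let $F$ be a general fiber of a resolution of the semiample fibration attached to the adjoint divisor on the minimal model; then $F$ is smooth, $\dim F\in\{1,2\}$, one has $a(F,L|_F)=a(X,L)$ by \cite{LTT14}, and $a(X,L)L|_F+K_F$ is numerically trivial. If $(X,L)$ is adjoint rigid, continue with the $K_X+(a(X,L)-\epsilon)L$-MMP with scaling of $L$ to reach a Mori fiber space $\pi\colon X'\to Z$ whose general fiber is a terminal Fano variety on which $K_{X'}+a(X,L)\psi_*L$ is trivial; when $\dim Z\ge 1$, resolving $\psi$ yields, exactly as in Proposition~\ref{threefoldbigbound}(3), a smooth general fiber $F$ of dimension in $\{1,2\}$ with $a(F,L|_F)=a(X,L)$. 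In all these situations a general point of $F$ is a general point of $X$, so every rational curve through a general point of $F$ maps to a rational curve through a general point of $X$ of the same $L$-degree; hence the hypothesis $L\cdot C>3$ passes to the corresponding statement for $L|_F$ on $F$.

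It then remains to dispatch the fibers together with the one case where the volume bound is used. If $\dim F=1$ then $F\cong\mathbb{P}^1$ (a one-dimensional general fiber of a Mori fibration is $\mathbb{P}^1$; in the Iitaka case numerical triviality of $a(X,L)L|_F+K_F$ forces $\deg K_F<0$), so $\vol(L|_F)=L\cdot F>3>2$ and Proposition~\ref{threefoldbigbound}(1) gives $a(F,L|_F)<1$. If $\dim F=2$ then $F$ is a smooth projective surface, and the curve condition on $F$ together with Theorem~\ref{theo: surfacebigbound} gives $a(F,L|_F)<1$; this is precisely the step where passing from Proposition~\ref{threefoldbigbound}(2) to Theorem~\ref{theo: surfacebigbound} removes the auxiliary hypothesis $L^2\cdot S>9$, and where one sees that only rational curves need to be controlled. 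In both cases $a(X,L)=a(F,L|_F)<1$. The only remaining possibility is the adjoint-rigid case with $Z$ a point, where $X'$ is a $\mathbb{Q}$-factorial terminal $\mathbb{Q}$-Fano threefold of Picard rank $1$ with $-K_{X'}\equiv a(X,L)\psi_*L$; here, as in Proposition~\ref{threefoldbigbound}(3), I would combine $\vol(-K_{X'})\le 64$ (from \cite{Nami} and \cite{Pro}) with $\vol(\psi_*L)\ge\vol(L)$ (since $\psi$ is a birational contraction) to obtain
\[
a(X,L)^3\,\vol(L)\ \le\ a(X,L)^3\,\vol(\psi_*L)\ =\ \vol(-K_{X'})\ \le\ 64\ <\ \vol(L),
\]
which forces $a(X,L)<1$. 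I do not expect a genuine obstacle: the corollary is bookkeeping on top of Proposition~\ref{threefoldbigbound}(3), and the only point requiring care is the transport of the curve hypothesis to the fibers $F$ together with the identity $a(X,L)=a(F,L|_F)$, both of which are already implicit in that proof.
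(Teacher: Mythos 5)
Your proposal is correct and is essentially the argument the paper intends: the corollary is stated without proof precisely because it follows from the proof of Proposition~\ref{threefoldbigbound}(3) verbatim, with Theorem~\ref{theo: surfacebigbound} replacing Proposition~\ref{threefoldbigbound}(2) in the two-dimensional fiber case and the curve hypothesis $L\cdot C>3$ handling both the one-dimensional fibers and the rational-curve condition needed on the surface fibers. Your bookkeeping (transport of the hypothesis to general fibers, $F\cong\mathbb{P}^1$ in the curve case, and the volume estimate $a(X,L)^3\vol(L)\le\vol(-K_{X'})\le 64$ when $Z$ is a point) matches the paper's argument.
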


\subsection{Rationally connected varieties}
By an argument of \cite[II.5.15 Lemma]{Nak}, the kernel in the following lemma does not depend on the choice of general fiber $F$.  

\begin{lemm} \label{rcfibers}
Let $X$ be a normal $\mathbb{Q}$-factorial variety and suppose that $\pi: X \to Z$ is a morphism whose general fiber is irreducible and rationally connected.  Let $K$ denote the kernel of the restriction map $N^{1}(X) \to N^{1}(F)$ for a general fiber $F$.  Then $K$ is spanned by a finite collection of effective irreducible $\pi$-vertical divisors.
\end{lemm}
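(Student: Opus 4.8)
The plan is to reduce the statement to a birational model where the fibration is "nice" and the vertical divisors obviously span the kernel, then to transfer the conclusion back. First I would note that the statement is essentially a Néron–Severi statement about how $N^1$ of the total space differs from $N^1$ of a general fiber, so it suffices to produce \emph{some} birational model on which the assertion holds and then push divisors forward. Concretely, take a resolution $\mu\colon X' \to X$ together with a resolution of the base and of $\pi$, so that we get $\pi'\colon X' \to Z'$ a morphism with $Z'$ smooth and the general fiber of $\pi'$ still a smooth rationally connected variety; the general fiber $F'$ of $\pi'$ maps birationally (in fact isomorphically over a general point of $Z$) to the general fiber $F$ of $\pi$, so $N^1(F') \cong N^1(F)$ up to the span of $\mu$-exceptional divisors supported over $Z$. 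This reduces us to the case $X$ smooth, $Z$ smooth, $\pi$ a morphism with smooth rationally connected general fiber.

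Next, in this smooth setting, I would invoke the standard fact (this is exactly the input of \cite[II.5.15 Lemma]{Nak} referenced just before the lemma, and is also in \cite[Proposition 12.2.5]{KM92}-type analyses) that for a fibration with rationally connected general fiber, a line bundle on $X$ that is numerically trivial on the general fiber is, after restricting to the open locus $U \subseteq Z$ over which $\pi$ is smooth, pulled back from the base up to numerical equivalence on $\pi^{-1}(U)$: indeed $H^1(F,\mathcal{O}_F) = H^2(F,\mathcal{O}_F)_{\text{tors-free part relevant}} $ vanishes for rationally connected $F$ (rational connectedness forces $h^1(\mathcal{O}_F)=0$, and more precisely $\mathrm{Pic}^0$ and the Albanese vanish), so the local system $\mathcal{GN}^1$ of Remark \ref{kmremark} has fibers $N^1(F_t)$ and a class dying on one fiber dies on the nearby ones; by a monodromy/spreading-out argument the class on $\pi^{-1}(U)$ agrees with a pullback from $U$ up to numerical equivalence. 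A class pulled back from $U$ and numerically trivial on a fiber is numerically trivial on $\pi^{-1}(U)$ (since $N^1(U) \hookrightarrow N^1(\pi^{-1}(U))$ via pullback is injective — $\pi$ has a section-like multisection or one uses that $\pi_*\pi^* = \mathrm{id}$ up to multiplicity), hence is supported on $Z \setminus U$, i.e. its representative differs from $0$ by a $\pi$-vertical divisor. This shows any class in $K$ is represented by a $\mathbb{Q}$-divisor supported on the finitely many irreducible components of $\pi^{-1}(Z\setminus U)$ together with finitely many other vertical divisors; since $\mathrm{NS}(X)_{\mathbb{Q}}$ is finite-dimensional, $K$ is spanned by a finite set of effective irreducible $\pi$-vertical divisors.

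Finally I would transfer back to the original possibly-singular $\mathbb{Q}$-factorial $X$: pushing forward the vertical divisors on $X'$ under $\mu$ gives $\pi$-vertical divisors on $X$ (a $\mu$-exceptional vertical divisor either pushes to a vertical divisor or to zero), and since $\mu^*\colon N^1(X) \to N^1(X')$ is injective and carries $K$ into the corresponding kernel $K'$ on $X'$, a spanning set of effective vertical divisors for $K'$ pushes forward to a spanning set for $K$; $\mathbb{Q}$-factoriality of $X$ is what guarantees these pushed-forward Weil divisors give genuine classes in $N^1(X)$. The main obstacle I anticipate is the precise justification that a class numerically trivial on a general fiber is numerically a pullback from an open subset of the base — making the monodromy/local-system argument of Remark \ref{kmremark} interact correctly with rational connectedness (rather than just $h^1(\mathcal{O})=0$) and handling the components of the non-smooth locus of $\pi$; the rest is bookkeeping about birational models and $\mathbb{Q}$-factoriality.
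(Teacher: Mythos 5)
Your argument is correct and is essentially the route the paper itself takes: the paper's proof of this lemma simply defers to the proof of \cite[Theorem 4.5]{LTT14}, which is exactly your reduction to a smooth model followed by the observation that a divisor numerically trivial on the rationally connected general fiber is in fact linearly trivial there (since $\mathrm{Pic}(F)=N^{1}(F)_{\mathbb Z}$ is torsion-free for $F$ rationally connected), so that $\pi_{*}\mathcal{O}_X(D)$ is a line bundle over the smooth locus $U\subset Z$, $\mathcal{O}(D)|_{\pi^{-1}(U)}\cong \pi^{*}\pi_{*}\mathcal{O}(D)$, and $D$ differs from the pullback of a divisor on $Z$ (itself a difference of effective vertical divisors) by a divisor supported over $Z\setminus U$. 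One caveat: the sentence ``a class pulled back from $U$ and numerically trivial on a fiber is numerically trivial on $\pi^{-1}(U)$'' is false as stated (every pullback restricts trivially to a fiber) and is not needed --- you should not try to show anything is numerically trivial on the open set, but rather conclude directly that $D-\pi^{*}\bar{L}$ is linearly equivalent to a divisor supported over $Z\setminus U$ and hence that $[D]$ lies in the span of finitely many effective irreducible vertical divisors.
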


\begin{proof}
See the proof of \cite[Theorem 4.5]{LTT14}.
\end{proof}

\section{Families of varieties}
\label{sec: families}

In this section we address the geometric behavior of the $a,b$-constants for families of subvarieties.  The following proposition is useful for us.

\begin{prop}
 Let $f : X \rightarrow Y$ be a morphism of finite type from a noetherian scheme to an irreducible noetherian scheme. Then
 \[
  E=\{ y \in Y | \text{$X_y$ is geometrically integral}\}
 \]
 is constructible in $Y$. In particular, if the generic point $\eta$ of $Y$ is contained in $E$,
 then there exists a nonempty open set $U \subset Y$ such that $U \subset E$.
\end{prop}
\begin{proof}
 See \cite{stacks} Lemma 36.22.7., Lemma 36.21.5., and Lemma 27.2.2.
\end{proof}

\begin{defi}
 A morphism $p : U \rightarrow W$ between irreducible varieties is called {\it a family of varieties} if the generic fiber is geometrically integral.
 A family of projective varieties is a projective morphism which is a family of varieties.  A family of subvarieties is a family of projective varieties admitting a morphism $s: U \to X$ which restricts to a closed immersion on every fiber of $p$. 
\end{defi}

From now on in this section we work over a fixed algebraically closed field of characteristic zero.

\subsection{Variation of constants in families}

We show that the geometric invariants are constant for general members of a family of projective varieties.

Let $X$ be a uniruled projective variety and $L$ a big Cartier divisor on $X$.
The adjoint divisor for $(X, L)$ is the $\mathbb R$-divisor $a(X, L)\beta^*L + K_{\tilde{X}}$ for
some smooth resolution $\beta: \tilde{X} \rightarrow X$. The Iitaka dimension of the adjoint divisor does not depend on the choice of $\beta$.

\begin{theo}[\cite{HMX13}] \label{invarianceofplurigenera}
 Let $f: X \rightarrow Y$ be a family of projective varieties. Suppose that $L$ is a $f$-big and $f$-nef Cartier divisor on $X$.
 Then there exists a nonempty open subset $U\subset Y$ such that the invariant $a(X_t, L|_{X_t})$ is constant for $t \in U$, and 
 the Iitaka dimension of the adjoint divisor for $(X_t, L|_{X_t})$ is constant for $t \in U$.
\end{theo}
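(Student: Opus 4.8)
The plan is to reduce the statement to the invariance of plurigenera for families with good singularities, which is exactly the content of \cite{HMX13}. First I would replace the family $f : X \to Y$ by a simultaneous resolution: after shrinking $Y$ to a nonempty open subset, one may assume $X$ is smooth, $f$ is smooth projective, and (by generic flatness and the theorem on constructibility of geometric integrality quoted above) all fibers $X_t$ are smooth projective varieties, with $L$ still $f$-big and $f$-nef. Since $a(X_t, L|_{X_t})$ is a birational invariant of the pair (by \cite[Proposition 2.7]{HTT15}), computing it on this smooth model loses nothing. The key observation is that $a(X_t, L|_{X_t})$ and the Iitaka dimension $\kappa(K_{X_t} + a(X_t, L|_{X_t}) L|_{X_t})$ are both encoded in the section rings $\bigoplus_{m} H^0(X_t, \lfloor m(K_{X_t} + tL|_{X_t})\rfloor)$ as $t$ ranges over rationals: the $a$-value is the infimum of rational $t$ for which some such ring is nonzero beyond degree $0$ (equivalently, the threshold where $tL + K_X$ enters the pseudo-effective cone), and the Iitaka dimension at $t = a$ is the rate of growth of that ring.

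The main step is then to apply \cite{HMX13} to control these section rings in the family. After pulling $L$ back to a resolution and absorbing a general member of $|mL|$ into a boundary, for each fixed rational $t$ one gets a family of pairs $(X_t, \Delta_t)$ with $\Delta_t \equiv t L|_{X_t}$, klt for $t$ small enough, and the invariance of plurigenera of \cite{HMX13} gives a nonempty open $U_t \subset Y$ over which $\dim H^0(X_t, \lfloor m(K_{X_t} + \Delta_t)\rfloor)$ is independent of $t \in U_t$ for every $m$. Taking $t$ to run over a suitable finite set of rationals (enough to detect the $a$-value — here one uses that the $a$-value is a rational number with bounded denominator by \cite{HMX13}-type boundedness, or alternatively runs the argument for a sequence of $t$ decreasing to a limit and uses semicontinuity to pin down the jump) and intersecting the finitely many resulting open sets yields a single nonempty open $U \subset Y$ on which both $a(X_t, L|_{X_t})$ and the Iitaka dimension of the adjoint divisor are simultaneously constant.

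The subtle point — the part I expect to be the main obstacle — is the passage from "each fixed rational $t$ behaves well on an open set" to "the \emph{threshold} $t = a(X_t, L|_{X_t})$ is itself constant on an open set," since a priori the $a$-value could vary continuously and one cannot intersect infinitely many open sets. This is handled by invoking rationality and boundedness of denominators of log canonical and Fujita-type thresholds in families (again from the circle of ideas in \cite{HMX13}, \cite{BCHM}): the $a$-value of $(X_t, L|_{X_t})$ takes only finitely many values as $t$ varies over a variety, so the locus where it attains its generic (minimal) value is already constructible and contains the generic point, hence contains an open set; on that open set the adjoint divisor is a flat limit of adjoint divisors, and invariance of plurigenera applied at the common value $t = a$ finishes the argument. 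Everything else — shrinking $Y$, resolving, reducing to section rings — is routine.
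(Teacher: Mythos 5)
Your main reduction is the same as the paper's: shrink the base and resolve so the family is (log) smooth, use the relative Wilson/Fujita statement to replace $L$ by a $\mathbb{Q}$-linearly equivalent effective divisor with simple normal crossing support whose components dominate the base, and then apply invariance of log plurigenera \cite{HMX13} to the pairs $(X_t, aL|_{X_t})$ for fixed rational $a$, reading off the $a$-value and the Iitaka dimension of the adjoint divisor from the resulting section rings. Up to that point your plan coincides with the paper's proof.

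The gap is in the final step, which you yourself flag as the main obstacle. Your resolution rests on the assertion that the $a$-value of $(X_t, L|_{X_t})$ takes only finitely many values as $t$ varies, so that the locus where it attains its generic (``minimal'') value is constructible. Neither half is justified: finiteness of the set of fiberwise pseudo-effective thresholds in an arbitrary family is not a quotable consequence of \cite{HMX13} or \cite{BCHM} (and proving it by Noetherian induction would invoke the very statement being proved), and even granted finitely many values, constructibility of the level sets is essentially the content of the theorem rather than something automatic. Moreover the parenthetical is backwards: by upper semicontinuity of $h^0$, sections of $m(K_{X_t}+aL|_{X_t})$ can only appear upon specialization, so the generic $a$-value is the maximal one. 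The correct way out -- and what makes the paper's deduction immediate -- is that no intersection of infinitely many open sets is needed: in \cite[Theorem 1.8]{HMX13}, once $(X,\mathrm{Supp}(L))$ is log smooth over the shrunk base and every component of the chosen effective representative of $L$ dominates the base, the numbers $h^0(X_t, m(K_{X_t}+aL|_{X_t}))$ are constant for \emph{every} $t$ in that base; and if the representative is chosen with small coefficients (relative Wilson plus a general member of a relatively very ample system), a single model works simultaneously for all rational $a$ in the bounded range that matters. Then $\kappa(K_{X_t}+aL|_{X_t})$ equals its value on the generic fiber for every rational $a$ and every $t$, which pins down the threshold $a(X_t,L|_{X_t})$ (using that $L|_{X_t}$ is big and nef, so the pseudo-effective and bigness thresholds agree and the threshold value is rational and attained by an effective class in the relevant uniruled case) and, applying the constancy at $a=a(X_t,L|_{X_t})$, the Iitaka dimension of the adjoint divisor as well.
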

\begin{proof}
By resolving and throwing away a closed subset of the base, we may suppose that every fiber is smooth.  
By Theorem \ref{relativewilson}, there is a fixed effective divisor $E$ such that for any sufficiently small $s$ we have $L \sim_{\mathbb{Q},Y} A+sE$ where $A$ is $f$-ample.
Fix a positive rational number $a$.
After a further blow-up resolving $E$ and after replacing $L$ by a $\mathbb{Q},Y$-linearly equivalent divisor,
we may suppose that $L$ has simple normal crossing support and $(X,aL)$ is log canonical.  
Furthermore, after throwing away a closed subset of the base we may suppose that every component of $L$ dominates $Y$.

We then apply the invariance of log plurigenera as in \cite[Theorem 1.8]{HMX13}. 
We conclude that for any sufficiently divisible $m$ and for every fiber $X_{t}$ the value of $h^{0}(X_{t},\mathcal{O}_{X_{t}}(m(K_{X_{t}}+aL|_{X_{t}})))$ is independent of $t$.  
The desired conclusion is immediate.
\end{proof}

\begin{prop} \label{bconstancy}
 Let $f: X \rightarrow Y$ be a family of projective varieties. Suppose that $L$ is a $f$-big and $f$-nef Cartier divisor on $X$.
 Assume that for a general member $X_t$, the adjoint divisor $a(X_t, L|_{X_t})L|_{X_t} + K_{X_t}$ is rigid.
 Then there exists a nonempty subset $U\subset Y$ such that $b(X_t, L|_{X_{t}})$ is constant over $U$.
\end{prop}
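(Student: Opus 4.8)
The plan is to reduce the statement to the comparison between the relative and absolute minimal model programs (Theorem~\ref{theo: relativeMMP}) together with the description of the $b$-invariant for adjoint rigid pairs coming from Theorem~\ref{theo: MMPtoweakFano}. The rigidity hypothesis enters in an essential way: it forces the general fiber $X_t$ to be birational to a log Fano variety, hence rationally connected, and this is exactly what makes the local system $\mathcal{GN}^1(X/Y)$ of Remark~\ref{kmremark} available and, at the same time, what gives the clean identity $b(X_t, L|_{X_t}) = \rk\,\NS(\widetilde X_t)$ for a minimal model $\widetilde X_t$.

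First I would shrink the base. By Theorem~\ref{invarianceofplurigenera} (and, as in its proof, generic flatness and resolution), after replacing $Y$ by a nonempty open subset we may assume that $Y$ is smooth, that $f$ is smooth with smooth total space, that the Fujita invariant $a(X_t, L|_{X_t})$ equals a fixed constant $a$, and that $(X_t, L|_{X_t})$ is adjoint rigid for every $t$. Applying Theorem~\ref{theo: MMPtoweakFano} fiberwise, the $(aL|_{X_t}+K_{X_t})$-MMP terminates in a $\mathbb{Q}$-factorial terminal log Fano variety; such a variety is rationally connected, and since $X_t$ is smooth and birational to it we get $H^1(X_t, \mathcal{O}_{X_t}) = 0$ for all $t$. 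Hence Remark~\ref{kmremark} applies: $\mathcal{GN}^1(X/Y)$ is a local system with finite monodromy and $\mathcal{GN}^1(X/Y)|_t \cong N^1(X_t)$ for every $t$, so in particular $\rho(X_t) = \rk\,\mathcal{GN}^1(X/Y)$ is already constant over this open set. After passing to a connected finite \'etale cover $\nu\colon Y' \to Y$ trivializing the monodromy and replacing $X$ by $X\times_Y Y'$, it suffices to prove constancy of $b$ over a nonempty open subset of $Y'$: indeed $\nu$ is open, and each fiber of $X\times_Y Y' \to Y'$ is isomorphic, compatibly with $L$, to the corresponding fiber of $X \to Y$.

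Next I would run the relative $(aL+K_X)$-MMP over $Y'$, obtaining a sequence $X = X_0 \dashrightarrow X_1 \dashrightarrow \cdots \dashrightarrow X_N = \widetilde X$ of divisorial contractions and flips over $Y'$. Since $H^1(X_t, \mathcal{O}_{X_t}) = 0$ and the monodromy on $\mathcal{GN}^1(X/Y')$ is now trivial, Theorem~\ref{theo: relativeMMP} shows that for general $t \in Y'$ the induced sequence $X_{0,t} \dashrightarrow \cdots \dashrightarrow X_{N,t}$ is precisely the absolute $(aL|_{X_t}+K_{X_t})$-MMP, with the $i$-th step a divisorial contraction, a flip, or an isomorphism according to the type of the $i$-th relative step. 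In particular $X_{N,t}$ is the minimal model of the adjoint rigid pair $(X_t, L|_{X_t})$, so Theorem~\ref{theo: MMPtoweakFano} gives $b(X_t, L|_{X_t}) = \rk\,\NS(X_{N,t})$.

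Finally I would show that $\rk\,\NS(X_{N,t})$ is constant for general $t$. Beginning with the smooth $\mathbb{Q}$-factorial $X_{0,t} = X_t$, the intermediate fibers are $\mathbb{Q}$-factorial (being steps of an absolute MMP), each flip or isomorphism preserves $\rk\,\NS$, and each divisorial contraction lowers it by exactly one. By Theorem~\ref{theo: relativeMMP} the type of the $i$-th fiber step depends only on the relative MMP — a relative step whose contracted locus dominates $Y'$ produces the same type of step on every general fiber, while a relative step whose locus is vertical becomes an isomorphism on general fibers — so the number $d$ of divisorial contractions among the fiber steps is the same for all general $t$. Hence $\rk\,\NS(X_{N,t}) = \rho(X_t) - d$ is constant over a nonempty open subset, and therefore so is $b(X_t, L|_{X_t})$; this proves the statement over $Y'$ and hence over $Y$. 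The step I expect to be the main obstacle is verifying that the hypotheses of Theorem~\ref{theo: relativeMMP} are genuinely met — above all the uniform vanishing $H^1(X_t,\mathcal{O}_{X_t}) = 0$, for which adjoint rigidity (via rational connectedness of the minimal model) is indispensable, since otherwise $\rho(X_t)$ can jump and the formula $b = \rk\,\NS$ of the minimal model breaks down — together with the bookkeeping that makes the Picard rank drop along the fiber MMP uniform over general $t$.
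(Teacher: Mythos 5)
Your proposal is correct and follows essentially the same route as the paper's proof: reduce to a smooth family with constant $a$-value and adjoint-rigid fibers, use rational connectedness to get $H^1(X_t,\mathcal{O}_{X_t})=0$ and hence the local system $\mathcal{GN}^1(X/Y)$, trivialize the monodromy by a finite \'etale base change, run the relative adjoint MMP, and invoke Theorem~\ref{theo: relativeMMP} to identify the fiberwise restriction with the absolute MMP so that $b(X_t,L|_{X_t})=\rk\,\NS(X'_t)$ is constant. The only differences are cosmetic (you derive rational connectedness via Theorem~\ref{theo: MMPtoweakFano} where the paper cites \cite[Proposition 2.5]{LTT14} directly, and you spell out the Picard-rank bookkeeping along the MMP steps that the paper leaves implicit).
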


\begin{proof}
 By resolving and throwing away a closed subset of the base, we may assume that (i) every fiber $X_t$ is smooth for $t \in Y$, (ii) $a = a(X_t, L|_{X_t})$ is constant over $Y$, 
 (iii) $a(X_t, L|_{X_t})L|_{X_t} + K_{X_t}$ is rigid for every $t$.  It follows from \cite[Proposition 2.5]{LTT14} that every fiber is rationally connected,
 so we conclude that $H^1(X_t, \mathcal O_{X_t}) = 0$ for any $t \in Y$. 
 \cite[Proposition 12.2.5]{KM92} indicates that the sheaf $\mathcal GN^1(X/S)$ forms a local system in the \'etale topology (see Remark \ref{kmremark})
 with a finite monodromy action. Replacing $Y$ by a finite \'etale cover and taking a base change, we may assume that the monodromy action is trivial,
 and in particular, we have a natural isomorphism $N^1(X/Y) \cong N^1(X_t)$ for a general $t$, and dually $N_1(X/Y) \cong N_1(X_t)$.
 Then we apply the relative $aL +K_X$-MMP over $Y$ and obtain a birational contraction map $\psi : X \dashrightarrow X'$ to a relative minimal model over $Y$.
 It follows from Theorem \ref{theo: relativeMMP} that for a general $t$, $X'_t$ is a minimal model of $aL|_{X_t}+K_{X_t}$-MMP,
 hence \cite[Lemma 3.5]{LTT14} implies that $b(X_t, L|_{X_t}) = b(X'_t, \psi_*L|_{X'_t}) = \mathrm{rank} \, N^1(X'_t)$. This is constant for a general $t$ by combining Theorem \ref{theo: relativeMMP} with the constancy of $\mathrm{rank} \, N^1(X_t)$.
 \end{proof}
 
 \begin{rema}
It is interesting to ask whether the adjoint rigidity of $(X_{t},L|_{X_{t}})$ is necessary in Proposition \ref{bconstancy}.  If we make no assumption on $X_{t}$, then the most we can ask for is constancy of the $b$-value for a very general fiber (consider for example a family of K3 surfaces where the Picard rank jumps infinitely often), and it should be possible to prove such a statement using invariance of plurigenera.  However, if $X_{t}$ is uniruled, then it seems likely that the $b$-value should again be constant on a general fiber.
 \end{rema}

\subsection{Universal families of subvarieties breaking the balanced property}

In this section, we construct the universal families of subvarieties breaking the balanced property.
Let us recall some results from \cite{LTT14} and \cite{HJ16}.

\begin{lemm}
\label{lemm: canonical}
 Let $X$ be a smooth uniruled variety and $L$ a big and nef divisor on $X$.
 Suppose that the adjoint divisor $a(X, L)L +K_X$ is not rigid.
 Let $\phi : X \dashrightarrow Z$ be the canonical fibration associated to $a(X, L)L +K_X$.
 Let $Y$ be a general fiber of $\phi$. Then we have
 \[
  a(X, L) = a(Y,L), \quad b(X, L) \leq b(Y, L).
 \]
 In particular, $L$ is not balanced with respect to $Y$.
\end{lemm}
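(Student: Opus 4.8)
The plan is to analyze the canonical (Iitaka) fibration $\phi: X \dashrightarrow Z$ associated to $a(X,L)L + K_X$, which exists and is nontrivial precisely because the adjoint divisor is not rigid (so its Iitaka dimension is positive). After replacing $X$ by a resolution of indeterminacy, we may assume $\phi$ is a morphism; this does not affect any of the invariants in play since $a$ and $b$ are birational invariants under pullback of $L$. Fix a general fiber $Y$ of $\phi$; it is a smooth projective variety, and since $a(X,L)L + K_X$ restricts to (a numerically trivial multiple of, in fact pulls back from $Z$ to be) zero on $Y$, we expect $K_Y + a(X,L)L|_Y$ to be $\mathbb{Q}$-linearly trivial on $Y$, or at least to have Iitaka dimension zero.

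The first key step is the equality $a(X,L) = a(Y,L|_Y)$. The inequality $a(Y, L|_Y) \geq a(X,L)$ is the standard semicontinuity-type fact: for a general fiber of a dominant morphism, the restriction of a pseudo-effective class stays pseudo-effective, so if $t L + K_X$ is pseudo-effective then so is $tL|_Y + K_Y$ (using adjunction $K_Y = (K_X + Y)|_Y = K_X|_Y$ for a general fiber, as $Y$ moves in a base-point-free family). For the reverse inequality $a(Y,L|_Y) \leq a(X,L)$, I would use that $a(X,L)L + K_X$ has a section (being effective up to $\mathbb{Q}$-equivalence via the Iitaka fibration), and that this section restricted to the general fiber $Y$ witnesses $a(X,L)L|_Y + K_Y$ being pseudo-effective, hence $a(Y,L|_Y) \leq a(X,L)$. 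This is exactly the content cited from \cite{LTT14}, so I would invoke that reference for the precise argument; the only subtlety is matching $K_Y$ with $K_X|_Y$ via adjunction on a general (hence reduced, smooth, moving) fiber.

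The second key step, and the one I expect to be the real substance, is the inequality $b(X,L) \leq b(Y,L|_Y)$. Here I would argue as follows. Since $(Y, L|_Y)$ is adjoint rigid (Iitaka dimension zero, as just discussed) and $Y$ is rationally connected (by \cite[Proposition 2.5]{LTT14}, since its adjoint divisor is rigid and it is uniruled), I can apply Theorem~\ref{theo: MMPtoweakFano} on $Y$, so $b(Y, L|_Y) = \mathrm{rk}\,\mathrm{NS}(Y')$ for a suitable $\mathbb{Q}$-factorial terminal weak Fano model $Y'$. On the other hand, running the relative $a(X,L)L + K_X$-MMP over $Z$ and using Lemma~\ref{rcfibers} (the kernel of $N^1(X) \to N^1(Y)$ is spanned by $\phi$-vertical divisors), one sees that the rank of the image of $N^1(X)$ in $N^1(Y)$ controls things: the $\phi$-vertical divisors all lie in the minimal supported face of $\Lambda_{\eff}(X)$ through the adjoint class (since the adjoint class is $\phi$-trivial, every $\phi$-vertical effective divisor class is "absorbed"), which forces the codimension of that face — i.e. $b(X,L)$ — to be at most the codimension of the corresponding face computed on the general fiber. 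More carefully: the restriction map $N^1(X) \to N^1(Y)$ sends the adjoint class of $X$ to the adjoint class of $Y$, sends $\Lambda_{\eff}(X)$ into $\Lambda_{\eff}(Y)$, and by Lemma~\ref{rcfibers} its kernel is contained in the span of the minimal face (the $\phi$-vertical divisors are pseudo-effective and restrict to zero, so they lie in the minimal face containing the adjoint class); hence the codimension of the minimal face can only increase under restriction, giving $b(X,L) \leq b(Y, L|_Y)$.

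The last clause, "$L$ is not balanced with respect to $Y$," is then immediate: balanced would require the strict lexicographic inequality $(a(Y,L|_Y), b(Y,L|_Y)) < (a(X,L), b(X,L))$, but we have shown $a(Y,L|_Y) = a(X,L)$ and $b(Y,L|_Y) \geq b(X,L)$, so the pair for $Y$ is lexicographically $\geq$ that for $X$, contradicting balancedness. The main obstacle I anticipate is making the $b$-inequality fully rigorous — specifically, checking that the kernel of $N^1(X) \to N^1(Y)$ really is contained in the minimal supported face through the adjoint class, which requires knowing that $\phi$-vertical pseudo-effective divisors contribute to that face; this is where Lemma~\ref{rcfibers} together with the $\phi$-triviality of $a(X,L)L + K_X$ does the work, but the face-codimension bookkeeping needs care.
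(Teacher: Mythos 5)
The paper itself does not reprove this statement---its ``proof'' is a citation to \cite[Theorem 4.5]{LTT14}---so the question is whether your sketch would stand on its own, and it has two genuine gaps. First, the $a$-constant equality: both of your sub-arguments prove the \emph{same} inequality $a(Y,L|_Y)\leq a(X,L)$. The ``semicontinuity'' argument (pseudo-effectivity of $tL+K_X$ implies pseudo-effectivity of $tL|_Y+K_Y$ on a general fiber, via $K_Y=K_X|_Y$) gives an \emph{upper} bound on $a(Y,L|_Y)$, not the lower bound you label it as, and your second argument is the same upper bound again. The substantive half, $a(Y,L|_Y)\geq a(X,L)$, is never established; it is exactly here that one must use that the general fiber of the Iitaka fibration satisfies $\kappa(K_Y+a(X,L)L|_Y)=0$ together with bigness of $L|_Y$ (the general fiber is not contained in $\mathbf{B}_+(L)$): if $K_Y+tL|_Y$ were pseudo-effective for some $t<a(X,L)$, then $K_Y+a(X,L)L|_Y$ would be the sum of a pseudo-effective and a big class, hence big, contradicting Iitaka dimension zero. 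You state the $\kappa=0$ expectation but never use it, and note that the adjoint rigidity of $(Y,L|_Y)$, which you later need to get rational connectedness of $Y$ (so that Lemma~\ref{rcfibers} applies) and to invoke Theorem~\ref{theo: MMPtoweakFano}, itself depends on this missing equality.

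Second, the $b$-inequality bookkeeping does not close. (i) Your justification that $\ker\bigl(N^1(X)\to N^1(Y)\bigr)$ lies in the span of the minimal supported face $F_X$ is ``the vertical divisors are pseudo-effective and restrict to zero,'' but membership in a supported face of $\Lambda_{\mathrm{eff}}(X)$ is a condition about vanishing against the movable/nef curve classes cutting out that face, not about restriction to $Y$; one must actually show that every such curve class annihilating $K_X+a(X,L)L$ also annihilates every $\phi$-vertical divisor (this uses the decomposition of multiples of the adjoint class into $\phi^*(\text{ample})$ plus a fixed part, and a limit argument for non-represented movable classes). (ii) More seriously, even granting $\ker r\subseteq\mathrm{span}(F_X)$, $r(\Lambda_{\mathrm{eff}}(X))\subseteq\Lambda_{\mathrm{eff}}(Y)$ and $r(\alpha_X)=\alpha_Y$, the conclusion $\mathrm{codim}\,F_X\leq\mathrm{codim}\,F_Y$ does not follow: those facts are perfectly compatible with the minimal face on $Y$ being strictly larger in directions lying inside the image of $r$, which would make $b(Y,L|_Y)$ \emph{smaller} than $b(X,L)$. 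What is actually needed is control in the opposite direction---for instance that $b(X,L)$ is computed by the rank of the image of $N^1(X)$ in $N^1(Y)$ and that this image injects into $N^1(Y)/\mathrm{span}(F_Y)$, where $\mathrm{span}(F_Y)$ is spanned by the components of the rigid divisor representing $K_Y+a(X,L)L|_Y$---and establishing this is the real content of the cited result. So the final clause (``$L$ is not balanced with respect to $Y$'') is fine once the two displayed relations are known, but as written neither relation is fully proved.
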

\begin{proof}
 See \cite[Theorem 4.5]{LTT14} and its proof.
\end{proof}

\begin{theo}[\cite{HJ16} Corollary 2.15]
Let $X$ be a smooth projective variety of dimension $n$ and let $L$ be a semiample big $\mathbb{Q}$-divisor on $X$.  Fix a positive real number $\tau$.  As we vary $Y$ over all subvarieties of $X$, there are only finitely many values of $a(Y,L)$ which are at least $\tau$.
\end{theo}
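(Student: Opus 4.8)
The plan is to prove the finiteness of $a$-values at least $\tau$ by combining the boundedness of subvarieties with large $a$-invariant (which follows from the work of \cite{HJ16}, and ultimately from \cite{BDPP} and \cite{DiCerbo16}) with the constancy of the $a$-invariant in families (Theorem~\ref{invarianceofplurigenera}). The key geometric input is that subvarieties $Y \subset X$ with $a(Y,L) \geq \tau$ cannot be arbitrarily wild: after passing to normalizations and using that $a(Y,L|_Y) = a(Y', \nu^*L|_Y)$ for a resolution $\nu$, one shows these $Y$ form a bounded family in the sense that they are parametrized by finitely many subschemes of a single Chow variety (or Hilbert scheme) of $X$.

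First I would set up the boundedness statement. Since $L$ is big and semiample, pick $m$ with $mL$ base-point free and let $g: X \to X'$ be the associated morphism to a normal projective variety $X'$, so that $L = g^* A$ for an ample $\mathbb{Q}$-divisor $A$ on $X'$. For a subvariety $Y \subset X$ with $L|_Y$ big (necessary for $a(Y,L|_Y)$ to be positive and in particular $\geq \tau$, assuming $\tau > 0$; the case $\tau \le 0$ is handled by noting that all uniruled $Y$ give $a>0$ and non-uniruled give $a \le 0$, so we may assume $\tau>0$), the restriction $g|_Y$ is generically finite onto its image, and $a(Y,L|_Y)$ equals the Fujita invariant of the image $g(Y)$ with respect to $A$, up to the behavior under generically finite maps. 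The cleaner route is to invoke \cite{HJ16}: by their boundedness results (using \cite{DiCerbo16}), the subvarieties $Y$ with $a(Y,L) \geq \tau$ are swept out by a bounded family, i.e.\ there is a finite collection of families $p_i : \mathcal{U}_i \to W_i$ of subvarieties of $X$ such that every such $Y$ appears as a fiber of some $p_i$ over a locally closed subset of $W_i$ (this is essentially the content leading into Theorem~\ref{firstfinitefamily}).

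Next I would apply the constancy of $a$ in families. For each family $p_i : \mathcal{U}_i \to W_i$, stratify $W_i$ into finitely many locally closed irreducible pieces so that, by Theorem~\ref{invarianceofplurigenera}, the value $a((\mathcal{U}_i)_t, L|_{(\mathcal{U}_i)_t})$ is constant on each open stratum; then refine by Noetherian induction on the closed complement. This produces a finite stratification of each $W_i$ on each stratum of which the $a$-value is constant. Since every $Y$ with $a(Y,L) \geq \tau$ occurs as a fiber over some point of some stratum, its $a$-value is one of the finitely many constant values attached to the strata. Hence the set of $a$-values of subvarieties that are $\geq \tau$ is finite.

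The main obstacle is ensuring the boundedness step is genuinely applicable: one must check that the subvarieties $Y$ with $a(Y,L) \ge \tau$ (for $L$ merely big and semiample, not ample) are dominated by a \emph{bounded} family. The subtlety is that $L|_Y$ being big is needed, and the bound on the family must be uniform in $Y$; this is exactly where \cite{DiCerbo16} and \cite{HJ16} enter, via effective bounds on volumes and the structure of adjoint-rigid pairs. A secondary technical point is that the stratification/Noetherian induction must be carried out compatibly across the finitely many families and must interact correctly with resolutions of the fibers (so that $a$ is read off from a smooth model); this is routine given Theorem~\ref{invarianceofplurigenera} but must be stated carefully. Once boundedness is in hand, the constancy argument is essentially formal.
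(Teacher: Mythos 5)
The paper's own proof is essentially a normalization followed by a direct citation: rescale $L$ to be Cartier, observe (by a Siu-type argument) that $K_Y + nL|_Y$ is pseudo-effective for every resolution of every subvariety so that $a(Y,L)\leq n$ uniformly, replace $L$ by $2nL$ to force $a(Y,L)\leq \tfrac{1}{2}$, and then quote \cite[Corollary 2.15]{HJ16} for the rescaled $\tau$. Your proposal takes a genuinely different route, and the route has a circularity. You want to first establish that the subvarieties with $a(Y,L)\geq \tau$ are dominated by a bounded family and then conclude finiteness of the $a$-values from constancy in families (Theorem~\ref{invarianceofplurigenera}) plus Noetherian induction. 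The second half is fine once boundedness is granted. But in this paper the boundedness statement you need is Corollary~\ref{coro: bounded}, whose proof applies the boundedness theorem \cite[Theorem 1.3]{HX14} to the pairs $(Y', a(Y,L)\phi_{*}L|_{Y})$ and explicitly relies on the fact that the coefficients of $a(Y,L)\phi_{*}L$ attain only finitely many values --- which is exactly the finiteness of $a$-values you are trying to prove. This is not an artifact of the paper's exposition: every boundedness theorem for log Fano pairs requires the boundary coefficients to lie in a fixed finite (or at least DCC) set, and here those coefficients are multiples of $a(Y,L)$, so a bounded family cannot be produced without prior control on the set of possible $a$-values.

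The correct logical order in the literature is the reverse of the one you propose: Di Cerbo's spectrum result and \cite[Corollary 2.15]{HJ16} first establish finiteness of Fujita invariants above any $\tau>0$ (via ACC-type inputs), and only afterwards does one deduce boundedness of the family of adjoint-rigid subvarieties with large $a$-value (Corollary~\ref{coro: bounded}), which then feeds into Theorem~\ref{firstfinitefamily}. If you are willing to cite \cite[Corollary 2.15]{HJ16} as a black box in any form, the efficient argument is the paper's rescaling reduction rather than the boundedness-plus-stratification detour.
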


\begin{proof}
We may rescale $L$ to assume it is Cartier.  By an argument of Siu, $K_{Y} + nL|_{Y}$ is pseudo-effective for any resolution $Y$ of a subvariety of $X$.  Thus $a(Y,L)$ is bounded above by $n$ as we vary over all subvarieties $Y$.  By replacing $L$ by $2nL$, we may ensure that $a(Y,L) \leq \frac{1}{2}$.  We may then apply \cite[Corollary 2.15]{HJ16} to an appropriate rescaling of $\tau$.
\end{proof}

\begin{coro} \label{coro: bounded}
Let $X$ be a projective variety of dimension $n$ and let $L$ be a semiample big $\mathbb{Q}$-divisor on $X$.  Fix a positive real number $\tau$.  The set of all subvarieties $Y$ that are adjoint rigid, not contained in $\mathbf{B}_{+}(L)$, and satisfy $a(Y,L) \geq \tau$ are parametrized by a bounded subset of $\Chow(X)$.
\end{coro}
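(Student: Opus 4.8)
The plan is to bound, uniformly over all such $Y$, the intersection number $A^{\dim Y}\cdot [Y]$ with respect to a single fixed ample $\mathbb{Q}$-divisor $A$ on $X$; once this is achieved, boundedness in $\Chow(X)$ is immediate, since for each $0\le k\le n$ only finitely many irreducible components of $\Chow(X)$ parametrize $k$-cycles of bounded $A$-degree. To obtain $A$, fix a decomposition $L\sim_{\mathbb Q}A+E$ with $A$ an ample $\mathbb Q$-divisor, $E$ an effective $\mathbb Q$-divisor, and $\mathrm{Supp}(E)=\mathbf B_+(L)$; such a decomposition exists by the standard theory of augmented base loci. For $Y\not\subset\mathbf B_+(L)$ the restriction $E|_Y$ is then a genuine effective $\mathbb Q$-divisor on $Y$ while $A|_Y$ is ample, so from $L|_Y\sim_{\mathbb Q}A|_Y+E|_Y$ and monotonicity of the volume I get
\[
\vol(L|_Y)\ \ge\ \vol(A|_Y)\ =\ (A|_Y)^{\dim Y}\ =\ A^{\dim Y}\cdot[Y],
\]
and in particular $L|_Y$ is big. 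Thus it suffices to bound $\vol(L|_Y)$ from above by a constant depending only on $X$, $L$, $n$ and $\tau$.

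To bound this volume I would use adjoint rigidity together with the minimal model program. Take a resolution $\beta\colon Y'\to Y$; then $\beta^{*}L$ is big and nef on $Y'$ and $(Y',\beta^{*}L)$ is adjoint rigid, so Theorem~\ref{theo: MMPtoweakFano} produces a birational contraction $\phi\colon Y'\dashrightarrow\widetilde Y$ onto a $\mathbb Q$-factorial terminal weak Fano variety with $a\,\phi_{*}\beta^{*}L+K_{\widetilde Y}\equiv 0$, where $a:=a(Y',\beta^{*}L)=a(Y,L)\ge\tau$. Since the volume does not decrease under pushforward along a birational contraction (one sees this by pulling sections back through a common resolution),
\[
\vol(L|_Y)=\vol(\beta^{*}L)\ \le\ \vol(\phi_{*}\beta^{*}L)\ =\ a^{-\dim Y}\,\vol(-K_{\widetilde Y}).
\]
Now $\widetilde Y$ is a terminal, hence canonical, weak Fano of dimension $\dim Y\le n$ (its anticanonical model is a canonical Fano of dimension at most $n$ with the same anticanonical volume), so by boundedness of Fano varieties \cite{birkar16} there is a constant $M=M(n)$ with $\vol(-K_{\widetilde Y})\le M$. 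Hence $\vol(L|_Y)\le M/a^{\dim Y}\le M/\min(\tau,1)^{n}$, and combined with the previous paragraph $A^{\dim Y}\cdot[Y]\le M/\min(\tau,1)^{n}$, which is the required uniform bound.

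The one substantive ingredient is the volume bound $\vol(-K_{\widetilde Y})\le M(n)$ for terminal weak Fano varieties of bounded dimension: this is the genuinely deep input, resting on Birkar's solution of the Borisov--Alexeev--Borisov conjecture \cite{birkar16} (applied to canonical --- indeed $1$-log canonical --- Fano varieties). Everything else is routine: the existence of the $\mathbf B_+$-decomposition, monotonicity of the volume, the non-decrease of volume under birational contractions, and the elementary fact that a set of cycles of bounded degree against a fixed ample class is parametrized by a bounded subset of $\Chow(X)$. It is worth noting that this argument uses only the inequality $a(Y,L)\ge\tau$ and not the finiteness of $a$-values from the preceding theorem; the latter would instead give an alternative reduction to a single value of $a(Y,L)$.
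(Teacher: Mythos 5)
Your argument is correct, and it follows the same skeleton as the paper's proof (resolve $Y$, run the adjoint MMP via Theorem~\ref{theo: MMPtoweakFano} to reach a $\mathbb{Q}$-factorial terminal weak Fano $\widetilde{Y}$ with $a\,\phi_*\beta^*L+K_{\widetilde{Y}}\equiv 0$, compare volumes under the birational contraction, and conclude boundedness in $\Chow(X)$ from a uniform degree bound). The genuine difference is in how $\vol(-K_{\widetilde{Y}})$ is bounded. The paper first rescales $L$ using \cite[Corollary 2.15]{HJ16} so that $L$ is Cartier, basepoint free, and all $a$-values are $<1$; then the finiteness of $a$-values makes the coefficients of the boundary $a(Y,L)\phi_*L$ lie in a finite set, and \cite[Theorem 1.3]{HX14} gives boundedness of the pairs $(\widetilde{Y}, a(Y,L)\phi_*L)$, hence only finitely many possible anticanonical volumes. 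You instead pass to the anticanonical model, which is a canonical (hence $1$-lc) Fano of dimension at most $n$ with the same anticanonical volume, and invoke Birkar's BAB theorem \cite{birkar16} to bound that volume by a constant $M(n)$. Your route uses only the single inequality $a(Y,L)\geq\tau$ and avoids the rescaling and the finiteness-of-$a$-values input, at the cost of appealing to the full strength of BAB (heavier machinery than the Hacon--Xu boundedness the paper uses, though the paper already cites \cite{birkar16} elsewhere, so this is consistent with its toolkit); the paper's route yields the slightly stronger conclusion that only finitely many volumes occur. Your handling of the passage from a volume bound to boundedness in $\Chow(X)$, via the decomposition $L\sim_{\mathbb{Q}}A+E$ with $\mathrm{Supp}(E)=\mathbf{B}_+(L)$ and the resulting bound on the $A$-degree, is also a bit more explicit than the paper's appeal to bounded $L$-degree, and is a welcome clarification rather than a deviation.
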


\begin{proof}
By \cite[Corollary 2.15]{HJ16} we may suppose after rescaling $L$ that $L$ is Cartier, $|L|$ is basepoint free, and $a(Y,L) < 1$ for every subvariety $Y$ of $X$. Our goal is to show that the $L$-degree of $Y$ is bounded.   Thus, we may apply a resolution of singularities so that $X$ and $Y$ are smooth.  

 Applying the $K_{Y} + a(Y,L)L|_{Y}$-MMP, we obtain a birational contraction $\phi : Y \dashrightarrow Y'$ 
 to a $\mathbb Q$-factorial terminal weak Fano variety $Y'$. (See Theorem~\ref{theo: MMPtoweakFano}.)
 Note that $a(Y, L)\phi_*L|_{Y} +K_{Y'} \equiv 0$.  Furthermore, the coefficients of the divisor $a(Y,L)\phi_{*}L$ can only attain a finite number of values.  \cite[Theorem 1.3]{HX14} shows that the pairs $(Y',a(Y,L)\phi_{*}L|_{Y})$ are parametrized by a bounded family.  We deduce that there are only finitely many possible values of $\vol(-K_{Y'})$ as we vary $Y$ using invariance of log plurigenera.  Then
 \begin{equation*}
 L^{\dim Y} \cdot Y = \mathrm{Vol(L|_{Y})}\leq \mathrm{Vol(\phi_*L|_{Y})} = \frac{1}{a(Y,L)^{\dim Y}} \vol(-K_{Y'})
 \end{equation*}
and our assertion follows.
\end{proof}

Using this corollary and arguing as in \cite[Theorem 4.8]{LTT14} one obtains:

\begin{theo}[\cite{HJ16} Theorem 1] \label{thm: a-exceptional}
Let $X$ be a smooth uniruled projective variety and let $L$ be a semiample big $\mathbb{Q}$-divisor on $X$.  There is a closed subset $V \subset X$ such that any subvariety $Y$ with $a(Y,L) > a(X,L)$ is contained in $V$.
\end{theo}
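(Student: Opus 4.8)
The plan is to reduce the statement to the boundedness result of Corollary \ref{coro: bounded} by a Noetherian induction argument on closed subsets, following the strategy of \cite[Theorem 4.8]{LTT14}. First I would note that we may assume $a(Y,L) < 1$ for every subvariety $Y$ after rescaling $L$, using the argument already given in the proof of Corollary \ref{coro: bounded}. The key reduction is to pass from an arbitrary subvariety $Y$ with $a(Y,L) > a(X,L)$ to an adjoint rigid one: if $(Y,L|_Y)$ is not adjoint rigid, then by Lemma \ref{lemm: canonical} the general fiber $Y_0$ of the canonical fibration $\phi : Y \dashrightarrow Z$ satisfies $a(Y_0, L|_{Y_0}) = a(Y,L) > a(X,L)$, and $Y$ is covered by such fibers; so it suffices to produce the closed set $V$ controlling adjoint rigid subvarieties $Y'$ with $a(Y',L) > a(X,L)$ — any non-adjoint-rigid $Y$ with large $a$-value will automatically be swept out by adjoint rigid subvarieties with large $a$-value, hence contained in $V$ as well (after possibly enlarging $V$ to also contain $\mathbf{B}_+(L)$).

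Next I would invoke Corollary \ref{coro: bounded} with $\tau = a(X,L)$ (or any positive number strictly below the finitely many possible values of $a(Y,L)$ exceeding $a(X,L)$, which exist and are finite in number by \cite[Corollary 2.15]{HJ16}): the adjoint rigid subvarieties $Y'$ not contained in $\mathbf{B}_+(L)$ with $a(Y',L) > a(X,L)$ form a bounded subset $\mathcal{B}$ of $\Chow(X)$. Passing to the closure $\overline{\mathcal{B}}$ and using Theorem \ref{invarianceofplurigenera} (constancy of the $a$-invariant in families), the members of the boundary $\overline{\mathcal{B}} \smallsetminus \mathcal{B}$ either still have $a$-value $> a(X,L)$ or have strictly smaller $a$-value on a general member; in either case, by Noetherian induction on the dimension of the parameter space, the union of all these subvarieties is contained in a proper closed subset $V_0 \subset X$ (here one uses that a bounded family whose members all have $a$-value $> a(X,L)$ cannot dominate $X$, since a very general point of $X$ would then lie on a subvariety $Y'$ with $a(Y',L) > a(X,L) = a(\overline{X},L)$, and by semicontinuity this would force $a(X,L) > a(X,L)$ — the standard specialization argument). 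Then I set $V = V_0 \cup \mathbf{B}_+(L)$, which is the desired proper closed subset.

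The main obstacle I anticipate is making the Noetherian induction genuinely terminate and clean: when we spread out the bounded family $\mathcal{B}$ to its closure in $\Chow(X)$, we must argue that the locus swept out by those members is not all of $X$, and handle the possibility that some components of $\overline{\mathcal{B}}$ dominate $X$ while others do not. The resolution is exactly the specialization/semicontinuity argument for the $a$-invariant: if a family of subvarieties with $a$-value exceeding $a(X,L)$ dominated $X$, then a general point of $X$ would lie on such a subvariety; taking a very general point and a one-parameter degeneration, the resulting subvariety through that point would still have $a$-value $> a(X,L)$, contradicting $a(X,L) = a(\overline{X},L)$ and the fact that, by \cite[Proposition 2.7]{HTT15} or a direct argument, $X$ itself cannot be dominated by subvarieties of strictly larger Fujita invariant. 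One must also take care that the finitely many $a$-values appearing (via \cite[Corollary 2.15]{HJ16}) let us choose a single threshold $\tau$ separating "$> a(X,L)$" from "$= a(X,L)$", so that Corollary \ref{coro: bounded} applies uniformly; this is immediate once we know the set of achievable values $> a(X,L)$ is finite.
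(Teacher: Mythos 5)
Your overall architecture matches the paper's intended proof, which simply combines Corollary \ref{coro: bounded} with the argument of \cite[Theorem 4.8]{LTT14}: reduce to adjoint rigid subvarieties via the canonical fibration (Lemma \ref{lemm: canonical}), invoke boundedness with a threshold $\tau$ chosen using the finiteness of achievable $a$-values, pass to the closure in $\Chow(X)$, and show no component of the resulting family dominates $X$. Those reduction and bookkeeping steps are handled correctly in your plan.

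The gap is in the one step carrying the geometric content: your justification that a component of the bounded family whose general member has $a$-value exceeding $a(X,L)$ cannot dominate $X$. You argue that a very general point lying on such a subvariety would, "by semicontinuity," force $a(X,L) > a(X,L)$, and you fall back on "the fact that $X$ itself cannot be dominated by subvarieties of strictly larger Fujita invariant," citing \cite[Proposition 2.7]{HTT15}. That proposition is the birational invariance of the $a$-constant and says nothing of the sort, and there is no semicontinuity relating $a(X,L)$ to the $a$-invariants of subvarieties through very general points of $X$ --- if there were, the theorem would be nearly immediate and the boundedness apparatus unnecessary. The assertion you invoke is essentially the theorem itself restricted to a single dominant family, so your argument is circular at exactly the crucial point.

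The actual argument (the content of \cite[Theorem 4.8]{LTT14}) is: let $p: \mathcal{U} \to W$ be a component of the universal family with dominant evaluation map $s: \mathcal{U} \to X$ whose general fiber $F$ has dimension $d$ and satisfies $a(F,L|_{F}) > a(X,L)$. Cut $W$ by general hyperplane sections to obtain $W' \subset W$ with $\dim p^{-1}(W') = \dim X$ and $s$ still dominant on $p^{-1}(W')$, hence generically finite; resolve to get a smooth $\mathcal{U}'$ with $s': \mathcal{U}' \to X$ generically finite surjective, still fibered over $W'$ with general fiber birational to a general $F$. The ramification formula $K_{\mathcal{U}'} = s'^{*}K_{X} + R$ with $R$ effective gives $a(\mathcal{U}', s'^{*}L) \leq a(X,L)$, while restricting the pseudo-effective divisor $K_{\mathcal{U}'} + a(\mathcal{U}', s'^{*}L)s'^{*}L$ to a general fiber $F$ (for which $K_{\mathcal{U}'}|_{F} = K_{F}$) gives $a(F, L|_{F}) \leq a(\mathcal{U}', s'^{*}L)$. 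Together these contradict $a(F,L|_{F}) > a(X,L)$. This cutting-down plus ramification step is what your plan is missing.
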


Next we define a family of subvarieties breaking the balanced property.

\begin{defi} \label{defi: breaking balanced}
 Let $X$ be a projective variety and $L$ a big and nef $\mathbb Q$-divisor on $X$.
 A family of subvarieties $\pi :\mathcal U \rightarrow W$ is called {\it a family of subvarieties breaking the balanced property}
 if 
 \begin{enumerate}
  \item the evaluation map $s : \mathcal U \rightarrow X$ is dominant,
  \item $W$ is normal,
  \item for a general member $Y$ of $\pi$, we have $a(Y, L|_Y) = a(X, L)$,
  \item for a general member $Y$ of $\pi$, the adjoint divisor for $L|_Y$ is rigid, and 
  \item for a general member $Y$ of $\pi$, $b(X, L) \leq b(Y, L|_Y)$.
 \end{enumerate}
 We say that a family of varieties $\pi: \mathcal{U} \rightarrow W$ {\it breaks the balanced property} if the same conditions hold, except
 we only require that $s$ restricted to every fiber of $\pi$ is generically finite onto its image.
\end{defi}

\begin{theo}
\label{theo: universal families}
 Let $X$ be a smooth uniruled projective variety of dimension $n$ and $L$ a big and semiample $\mathbb Q$-divisor on $X$.
 Then there exists a proper closed subset $V \subset X$ and a finite number of families of subvarieties breaking the balanced property
 $\pi_i : \mathcal U_i \rightarrow W_i$ with evaluation maps $s_i : \mathcal U_i \rightarrow X$ such that for every subvariety $Y$ breaking the balanced property, either 
 \begin{itemize}
  \item $Y \subset V$ or,
  \item there exist $i$ and a subvariety $Z \subset W_i$ such that $Q_{Y} := \pi_i^{-1}(Z)$ satisfies $\overline{s_{i}(Q_{Y})} = Y$ and $s_{i}|_{Q_{Y}}$ is birational onto its image.  In this case the fibers of $\pi_{i}$ break the balanced property for $Y$.
 \end{itemize}
 We call $\pi_i : \mathcal U_i \rightarrow W_i$ the universal families of subvarieties breaking the balanced property.
\end{theo}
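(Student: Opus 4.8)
The plan is to first handle the $a$-value, then the $b$-value, and finally to organize everything into a bounded family via Chow-theoretic arguments. First, by Theorem~\ref{thm: a-exceptional} there is a proper closed subset $V_{0} \subset X$ such that every subvariety $Y$ with $a(Y,L|_{Y}) > a(X,L)$ is contained in $V_{0}$; so among subvarieties not contained in $V_{0}$, the condition ``$Y$ breaks the balanced property'' forces $a(Y,L|_{Y}) = a(X,L)$. Now I would invoke Corollary~\ref{coro: bounded} applied with $\tau = a(X,L)$: the set of adjoint rigid subvarieties $Y$ not contained in $\mathbf{B}_{+}(L)$ with $a(Y,L|_{Y}) \geq a(X,L)$ is parametrized by a bounded subset $\mathcal{B}$ of $\Chow(X)$. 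After enlarging $V_{0}$ to contain $\mathbf{B}_{+}(L)$, every subvariety $Y$ breaking the balanced property and not contained in $V_{0}$ is adjoint rigid (condition (4) of Definition~\ref{defi: breaking balanced}) and hence corresponds to a point of $\mathcal{B}$.

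**Constructing the families.** Since $\mathcal{B}$ is a bounded constructible subset of $\Chow(X)$, it has finitely many irreducible components; passing to their closures, normalizing, and stratifying, I get finitely many normal irreducible parameter spaces $W_{i}$ equipped with the restricted universal family $\pi_{i}: \mathcal{U}_{i} \to W_{i}$ and evaluation maps $s_{i}: \mathcal{U}_{i} \to X$, arranged so that each $\pi_{i}$ is a family of subvarieties in the sense of Definition~\ref{defi: breaking balanced} (the generic fiber geometrically integral), and so that every $Y$ in $\mathcal{B}$ appears as a fiber of some $\pi_{i}$. Discard those $W_{i}$ whose general fiber does not itself break the balanced property — i.e., apply Theorem~\ref{invarianceofplurigenera} and Proposition~\ref{bconstancy} to each $\pi_{i}$ to find an open $U_{i} \subset W_{i}$ over which $a(X_{t},L|_{X_{t}})$, the Iitaka dimension of the adjoint divisor, and (using adjoint rigidity) $b(X_{t},L|_{X_{t}})$ are all constant; keep only those $W_{i}$ for which these constant values satisfy $a = a(X,L)$, adjoint rigidity, and $b \geq b(X,L)$. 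After shrinking, we may also assume each $s_{i}$ is dominant onto its image and, replacing $X$ by this image in the non-dominant cases, fold the non-dominant images into $V$. This is where condition (1) of Definition~\ref{defi: breaking balanced} is arranged.

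**Matching a general subvariety to a subfamily.** Given any subvariety $Y$ breaking the balanced property with $Y \not\subset V$, its Chow point lies in $\mathcal{B}$, hence in some $W_{i}$, say at a point $w$. If $w \in U_{i}$ then $Y$ is literally a general fiber of $\pi_{i}$ and we take $Z = \{w\}$, $Q_{Y} = \pi_{i}^{-1}(w)$, with $s_{i}|_{Q_{Y}}$ a closed immersion onto $Y$ by construction. If $w \notin U_{i}$, then $w$ lies in the proper closed bad locus $W_{i} \smallsetminus U_{i}$; here I would argue by Noetherian induction — the bad locus is covered by finitely many families of lower-dimensional Chow varieties, each of which can again be stratified and normalized, and iterating this finitely many times produces, for the given $Y$, an irreducible $Z \subset W_{i}$ (possibly in a refined stratum) with $Y = \overline{s_{i}(\pi_{i}^{-1}(Z))}$ and $s_{i}|_{\pi_{i}^{-1}(Z)}$ birational onto its image, while the general fiber over $Z$ still breaks the balanced property for $Y$ because the relevant invariants are only semicontinuous in the favorable direction (the $a$- and $b$-values of the general fiber over $Z$ dominate those of $Y$ by Theorem~\ref{invarianceofplurigenera} and the MMP comparison underlying Proposition~\ref{bconstancy}). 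Absorbing all the finitely many lower-strata constructions into the finite list $\{\pi_{i}: \mathcal{U}_{i} \to W_{i}\}$ completes the proof.

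**Main obstacle.** The technically delicate point is the last paragraph: ensuring that after restricting to a subvariety $Z$ of the parameter space, the general fiber over $Z$ still satisfies $b \geq b(X,L)$, and that $s_{i}$ restricted to $\pi_{i}^{-1}(Z)$ is birational (not merely generically finite of higher degree) onto its image $Y$. The birationality is the subtle part — a priori the restricted family could multiply-cover $Y$ — and handling it requires carefully choosing $Z$ to be a maximal locus over which the covering degree of $s_i$ onto the swept-out subvariety is minimized, then invoking the semicontinuity of fiber dimension for $s_i$. The interplay between stratifying $\Chow(X)$ and preserving the chain of inequalities on $(a,b)$ through Theorem~\ref{invarianceofplurigenera} and Proposition~\ref{bconstancy} is what makes this bookkeeping-heavy rather than conceptually hard.
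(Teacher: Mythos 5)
There is a genuine gap, and it sits at the heart of the theorem. You assert that ``every subvariety $Y$ breaking the balanced property and not contained in $V_{0}$ is adjoint rigid (condition (4) of Definition~\ref{defi: breaking balanced})'', but condition (4) is a requirement imposed on the \emph{families} being constructed, not a fact about an arbitrary subvariety with $(a(Y,L|_{Y}),b(Y,L|_{Y}))\geq(a(X,L),b(X,L))$. An individual $Y$ breaking the balanced property can perfectly well have $\kappa(K_{Y'}+a(Y,L)\beta^{*}L)>0$ on a resolution $\beta\colon Y'\to Y$, and such $Y$ need not lie in any bounded subset of $\Chow(X)$ at all (this is exactly why the theorem is phrased with the second alternative, ``$Y$ is the closure of $s_{i}(\pi_{i}^{-1}(Z))$'', rather than ``$Y$ is a member of some family''; the paper stresses that these $Y$ do not form a bounded family). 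Your construction only parametrizes the adjoint rigid subvarieties via Corollary~\ref{coro: bounded}, so a non-adjoint-rigid $Y$ has no Chow point in your set $\mathcal{B}$, and your third paragraph (the Noetherian induction over the bad locus $W_{i}\smallsetminus U_{i}$) never touches this case: it only refines strata inside a family that, by hypothesis, already contains $Y$ as a fiber.

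The missing idea is the paper's treatment of the non-rigid case: take the canonical (Iitaka) fibration $\phi\colon Y'\dashrightarrow Z$ associated to $a(Y,L|_{Y})\beta^{*}L+K_{Y'}$, and use Lemma~\ref{lemm: canonical} to see that a general fiber $F$ satisfies $a(F,L)=a(Y,L)=a(X,L)$ and $b(F,L)\geq b(Y,L)\geq b(X,L)$, while $F$ is adjoint rigid by construction of the Iitaka fibration. Hence the fibers have Chow points in $W^{\circ}$, and the graph $\Gamma\subset Y\times Z$ of $\phi\circ\beta^{-1}$ induces a map from a dense open $Z_{0}\subset Z$ into some $W_{i}$; taking $Z'$ to be its image, one gets $\overline{s_{i}(\pi_{i}^{-1}(Z'))}=Y$ with $s_{i}$ birational on $\pi_{i}^{-1}(Z')$, the birationality being automatic because the restricted family is (birationally) the graph of a fibration on $Y$ -- no minimization of covering degrees or semicontinuity of fiber dimension is needed, and your proposed fix for the birationality issue in the ``main obstacle'' paragraph does not supply an actual argument. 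In the adjoint rigid case your construction agrees with the paper's (boundedness via Corollary~\ref{coro: bounded}, closure in $\Chow(X)$, discarding non-dominant components into $V$), so the remaining work is precisely the case you omitted.
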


\begin{proof}
 Let $V$ be a proper closed subset as in the statement of Theorem~\ref{thm: a-exceptional}.
 By enlarging $V$ if necessary, we may assume that $V$ contains $\mathbf B_+(L)$.
 We consider the locus $W^\circ \subset \mathrm{Chow}(X)$ parametrizing irreducible and reduced subvarieties $Y$ 
 where $Y$ satisfies:
 \begin{itemize}
  \item $Y$ is not contained in $V$;
  \item the adjoint divisor for $L|_Y$ is rigid;
  \item we have $a(Y, L|_Y) = a(X, L)$ and $b(X, L) \leq b(Y, L|_Y)$.
 \end{itemize}
 It follows from 
 Corollary~\ref{coro: bounded}
 that $W^\circ$ is a bounded family. Let $W$ be the Zariski closure of $W^\circ$ in $\mathrm{Chow}(X)$.
 Let $W_i$ be irreducible components of $W$ and $\pi_i: \mathcal U_i \rightarrow W_i$ the pullback of the universal family on $\mathrm{Chow}(X)$.
 We denote the evaluation map for each family by $s_i : \mathcal U_i \rightarrow X$.
 If $s_i$ is not dominant, then we add the closure of the image of $s_i$ to $V$,
 so that we may assume that for any $i$, $\pi_i : \mathcal U_i \rightarrow W_i$ is a family of subvarieties breaking the balanced property
 (after taking the normalization of $W_i$ if necessary).
 
 We claim that the $V$ and $\pi_i : \mathcal U_i \rightarrow W_i$ we constructed satisfy the desired properties.
 Let $Y$ be a subvariety such that $Y$ is not contained in $V$ and $L$ is not balanced with respect to $Y$.
 If the adjoint divisor for $L|_Y$ is rigid, then it is easy to see that $Y$ is a member of $\pi_i : \mathcal U_i \rightarrow W_i$ for some $i$.
 Suppose that the adjoint divisor for $L|_Y$ is not rigid.
 Let $\beta : Y' \rightarrow Y$ be a smooth resolution and 
 $\phi : Y' \dashrightarrow Z$ be the canonical fibration associated to $a(Y, L|_Y)\beta^*L +K_{Y'}$.
 Let $\Gamma \subset Y \times Z$ be the graph of the rational map $\phi\circ \beta^{-1} : Y \dashrightarrow Z$.
 It follows from the universal property of $\mathrm{Chow}(X)$ that there exists a dense open set $Z_0 \subset Z$
 and a morphism $\psi : Z_0 \rightarrow \mathrm{Chow}(X)$ such that $\Gamma|_{Z_0}$ is the pullback of the universal family on $\mathrm{Chow}(X)$.
 Lemma~\ref{lemm: canonical} implies that this $\psi$ actually factors through $W_i$ for some $i$.
 We denote the image of $\psi$ by $Z'$. It follows from the construction that $Y$ is the closure of $s_i(\pi_i^{-1}(Z'))$ and that $s_{i}$ is birational on $\pi_{i}^{-1}(Z')$.
\end{proof}

\begin{rema}
 The universal families of subvarieties breaking the balanced property constructed in the proof of Theorem~\ref{theo: universal families}
 actually satisfy a stronger universal property. 
 Suppose that we have a family of subvarieties breaking the balanced property $\pi : \mathcal U \rightarrow W$.
 Then there exist a dense open subset $W^\circ \subset W$ and a morphism $\psi : W^\circ \rightarrow W_i$ for some $i$
 such that $\mathcal U|_{W^\circ}$ is isomorphic to $\psi^*\mathcal U_i$.
 In particular the families are unique up to birational isomorphisms.
\end{rema}

\begin{proof}[Proof of Theorem \ref{firstfinitefamily}:]
Choose $V$ and the families $p_{i}: \mathcal{U}_{i} \to W_{i}$ as the $\pi_{i}$ in the statement of Theorem \ref{theo: universal families}.  By Definition \ref{defi: breaking balanced}.(4), the general fiber $Z$ of $p_{i}$ satisfies the desired adjoint rigidity.  Suppose that $Y$ is a subvariety of $X$ which breaks the balanced property and is not contained in $V$.  According to Theorem \ref{theo: universal families}, there is a locus $T \subset W_{i}$ for some $i$ such that the closure of the $s_{i}$-image of $T$ is equal to $Y$, and the covering family of $p_{i}$-fibers breaks the balanced condition for $Y$.  This is exactly the desired statement.
\end{proof}

It follows that the $a,b$-constants achieve a maximum as we vary over all subvarieties.  This is of course a necessary condition for the geometric consistency of Manin's Conjecture.  More precisely:

\begin{coro}
Let $X$ be a smooth uniruled projective variety of dimension $n$ and let $L$ be a big and semiample $\mathbb{Q}$-divisor on $X$.  
As $Y$ varies over all subvarieties of $X$ not contained in $\mathbf{B}_{+}(L)$
the values $(a(Y,L),b(Y,L))$ achieve a maximum in the lexicographic order.
\end{coro}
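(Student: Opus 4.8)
The plan is to deduce the statement from a finiteness assertion: the set
\[
\Sigma \;=\; \bigl\{\,(a(Y,L),b(Y,L)) \;:\; Y\subset X \text{ a subvariety},\ Y\not\subset\mathbf{B}_{+}(L),\ a(Y,L)\ge a(X,L)\,\bigr\}
\]
is finite. Granting this, $\Sigma$ is a nonempty (it contains $(a(X,L),b(X,L))$, since $\mathbf{B}_{+}(L)\ne X$) finite subset of $\bR^{2}$ with its lexicographic order, so it has a maximum $M$; and any subvariety $Y\not\subset\mathbf{B}_{+}(L)$ with $a(Y,L)<a(X,L)$ has $(a(Y,L),b(Y,L))<(a(X,L),b(X,L))\le M$ in the lexicographic order, merely by comparing first coordinates. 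Hence $M$ is the lexicographic maximum of $(a(Y,L),b(Y,L))$ over \emph{all} subvarieties $Y$ not contained in $\mathbf{B}_{+}(L)$, which is exactly the assertion.

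To prove that $\Sigma$ is finite it suffices to bound the two coordinates separately. For the first coordinate: since $X$ is uniruled we have $a(X,L)>0$, and \cite[Corollary 2.15]{HJ16} (in the form recalled above) shows that only finitely many values $a(Y,L)\ge a(X,L)$ occur as $Y$ varies. For the second coordinate, the claim is that there is a single integer $b_{0}$ with $b(Y,L)\le b_{0}$ for every subvariety $Y\not\subset\mathbf{B}_{+}(L)$ with $a(Y,L)\ge a(X,L)$. To see this I would first reduce to adjoint rigid subvarieties: replacing $Y$ by a resolution (which changes neither invariant), if $(Y,L|_{Y})$ is not adjoint rigid then a general fiber $F$ of the canonical fibration of its adjoint divisor satisfies $a(F,L|_{F})=a(Y,L)$ and $b(Y,L)\le b(F,L|_{F})$ by Lemma~\ref{lemm: canonical}, with $L|_{F}$ still big (the restriction of the big divisor $L|_{Y}$ to a general member of a family covering $Y$ is big), hence $F\not\subset\mathbf{B}_{+}(L)$, and $\dim F<\dim Y$; iterating finitely often produces an adjoint rigid subvariety $F$ with $F\not\subset\mathbf{B}_{+}(L)$, $a(F,L)=a(Y,L)\ge a(X,L)$ and $b(F,L)\ge b(Y,L)$. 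Now Corollary~\ref{coro: bounded} says that the adjoint rigid subvarieties not contained in $\mathbf{B}_{+}(L)$ with $a(\cdot,L)\ge a(X,L)$ form a bounded subset of $\Chow(X)$; by Theorem~\ref{theo: MMPtoweakFano} one has $b(F,L)=\rho(F')$ for a $\mathbb{Q}$-factorial terminal weak Fano model $F'$, and (exactly as in the proof of Corollary~\ref{coro: bounded}, via \cite{HX14}) these models $F'$ vary in a bounded family, whose members have uniformly bounded Picard rank because Betti numbers are constructible in families. This yields $b_{0}$, and hence the finiteness of $\Sigma$.

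The only genuinely substantive step is the uniform bound on the $b$-invariant, and within that the one point that is not purely formal is getting past the non--adjoint-rigid subvarieties; this is precisely what the iteration of Lemma~\ref{lemm: canonical} above accomplishes, after which Corollary~\ref{coro: bounded} and the boundedness of log Fano models do the rest. As an alternative, for the subvarieties $Y$ not contained in the closed set $V$ of Theorem~\ref{firstfinitefamily} one can read a bound off cases (2) and (3) of that theorem directly: the finitely many families appearing there have general fibers of constant $(a,b)$ by Theorem~\ref{invarianceofplurigenera} and Proposition~\ref{bconstancy}, and all of their fibers vary in a bounded family and so have bounded Picard rank, while the relevant $a$-values again lie in the finite set of \cite[Corollary 2.15]{HJ16}; but one still has to treat $Y\subset V$, for which the uniform bound above is the cleanest route.
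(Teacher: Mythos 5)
Your proposal is correct, and it shares the backbone of the paper's argument---reduction to adjoint rigid subvarieties via the canonical fibration (Lemma~\ref{lemm: canonical}) together with the boundedness statement of Corollary~\ref{coro: bounded}---but it finishes by a different mechanism. The paper obtains finiteness of the attained pairs by combining Corollary~\ref{coro: bounded} with constancy of the invariants in families (Theorem~\ref{invarianceofplurigenera} and Proposition~\ref{bconstancy}) and a Noetherian induction over the bounded locus of $\Chow(X)$; you instead treat the two coordinates separately, taking finiteness of the relevant $a$-values directly from \cite[Corollary 2.15]{HJ16} and extracting a single upper bound on the $b$-values from $b(F,L)=\rho(F')$ for the $\mathbb{Q}$-factorial terminal weak Fano model $F'$ (Theorem~\ref{theo: MMPtoweakFano}), the boundedness of these models coming from \cite{HX14} as in the proof of Corollary~\ref{coro: bounded}, and the bound $\rho(F')\le b_2(F')$ with Betti numbers taking finitely many values in a bounded family. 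This buys a proof of the corollary that bypasses Proposition~\ref{bconstancy} (the relative MMP and monodromy argument), at the cost of the extra, though standard, input on Picard and Betti numbers in bounded families; the paper's route is heavier here but produces the constancy statements it needs elsewhere. Two small points to tighten: the step ``$L|_{F}$ is big, hence $F\not\subset\mathbf{B}_{+}(L)$'' has the logic backwards---what you actually need is only that the general fibers of the canonical fibration cover (a resolution of) $Y$, so a general one is not contained in the proper closed subset $Y\cap\mathbf{B}_{+}(L)$, and one should then replace $F$ by its image in $X$ (birational to $F$ for $F$ general) to remain among subvarieties of $X$; and your anchoring of $\Sigma$ at $(a(X,L),b(X,L))$ tacitly allows $Y=X$, which matches the paper's reading, but if only proper subvarieties are intended one simply runs the same argument with the threshold $a(X,L)$ replaced by any positive $a$-value attained by a subvariety not contained in $\mathbf{B}_{+}(L)$, e.g.\ a free rational curve through a general point.
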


\begin{proof}
Let $Y$ be any subvariety of $X$ not contained in $\mathbf{B}_{+}(L)$.  Applying Theorem \ref{theo: universal families}, it suffices to consider only those $Y$ such that $(Y,L|_{Y})$ is adjoint rigid.  By combining Corollary \ref{coro: bounded}, Theorem \ref{invarianceofplurigenera}, and Proposition \ref{bconstancy} we conclude the desired statement by Noetherian induction.
\end{proof}

\subsection{Geometric properties of families}

We next discuss the geometric results underlying thin set results for families of subvarieties.

\begin{prop} \label{prop: genfinofmaps}
Let $X$ be a smooth uniruled projective variety of dimension $n$ and let $L$ be a big and nef $\mathbb{Q}$-divisor on $X$ such that $(X,L)$ is adjoint rigid.  Suppose that $p: \mathcal{U} \to W$ is a family of 
subvarieties of dimension $d$ breaking the balanced condition via a dominant map $s: \mathcal{U} \to X$ such that the induced rational map $W \dashrightarrow \mathrm{Chow}(X)$ is birational onto its image.
Then $s$ is generically finite.
\end{prop}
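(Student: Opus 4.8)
The plan is to argue by contradiction: assuming $s$ is not generically finite, I would extract from a positive-dimensional fibre of $s$ a normal bundle on a general member that is too positive to coexist with the adjoint rigidity of $(X,L)$. First I would observe that, since $\dim\mathcal U=\dim W+d$ and $s$ is dominant, failure of generic finiteness forces $\dim\mathcal U>n$, so the general fibre $s^{-1}(x)$ is positive-dimensional; as $s$ restricts to a closed immersion on every fibre of $p$, the map $s^{-1}(x)\to W$ is injective with image $W_x:=\{w\in W\mid x\in Y_w\}$, so for a general $x\in X$ the subfamily $W_x$ of members through $x$ is positive-dimensional. By generic smoothness I may shrink $W$ so that $p$ is smooth, hence every member $Y_w$ is smooth; by Theorem~\ref{invarianceofplurigenera} I may further assume $a(Y_w,L|_{Y_w})=a(X,L)=:a$ for all $w$, and by Definition~\ref{defi: breaking balanced} that $(Y_w,L|_{Y_w})$ is adjoint rigid for all $w$. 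Fix a general member $Y:=Y_w$ meeting the general-point locus of $X$, and set $N:=N_{Y/X}$, a bundle of rank $c:=n-d\ge 1$.

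The first computation bounds $\kappa(\det N)$ from above. Adjunction on the smooth pair $Y\subset X$ gives
\[
\det N \;\sim_{\mathbb Q}\; K_Y-K_X|_Y \;=\;\bigl(K_Y+aL|_Y\bigr)-\bigl(K_X+aL\bigr)|_Y .
\]
Adjoint rigidity of $(X,L)$ yields $K_X+aL\sim_{\mathbb Q}D$ with $D\ge 0$ effective and $\kappa(D)=0$, and since $Y$ is general, $Y\not\subseteq\Supp D$, so $D|_Y$ is an effective $\mathbb Q$-divisor; adjoint rigidity of $(Y,L|_Y)$ yields $K_Y+aL|_Y\sim_{\mathbb Q}D_Y$ with $D_Y\ge 0$ and $\kappa(D_Y)=0$. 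Thus $\det N\sim_{\mathbb Q}D_Y-D|_Y$, and since adding an effective divisor never lowers the Iitaka dimension, $\kappa(\det N)\le\kappa(D_Y)=0$.

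The second computation bounds $\kappa(\det N)$ from below, contradicting the first. Since $s$ is dominant it is smooth over a dense open of $X$, so at a general point $y\in Y$ the derivative of $s$ is surjective; running this through the sequence $0\to T_yY\to T_{(y,w)}\mathcal U\to T_{[Y]}W\to 0$ shows that the image of $T_{[Y]}W$ in $H^0(Y,N)$ surjects onto $N_y=T_yX/T_yY$. Hence $N$ is globally generated at the general point of $Y$, and the resulting classifying map $g\colon Y\dashrightarrow\mathrm{Gr}$ to a Grassmannian satisfies $\kappa(\det N)\ge\dim\overline{g(Y)}$, since away from the indeterminacy of $g$ the line bundle $\det N$ is the $g$-pullback of the ample Pl\"ucker bundle. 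On the other hand, for a general point $x\in Y$ the point $[Y]$ is a general point of the positive-dimensional $W_x$, so $T_{[Y]}W_x\ne 0$; as this tangent space lies inside $\{\sigma\in H^0(Y,N)\mid\sigma(x)=0\}$, the bundle $N$ admits a nonzero global section vanishing at a general point of $Y$. This is impossible when $h^0(Y,N)=\operatorname{rank}N$, for then $H^0(Y,N)\otimes\mathcal O_Y\to N$ is an isomorphism off the proper closed locus where $N$ fails to be globally generated, so no nonzero section can vanish at a general point. Therefore $h^0(Y,N)>\operatorname{rank}N$, the map $g$ is non-constant, and $\kappa(\det N)\ge\dim\overline{g(Y)}\ge 1$. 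Comparing the two computations gives $0\ge\kappa(\det N)\ge 1$, the desired contradiction, so $s$ is generically finite.

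I expect the main obstacle to be the deformation-theoretic translation in the last step: identifying the tangent spaces $T_{[Y]}W$ and $T_{[Y]}W_x$ with the appropriate spaces of sections of $N_{Y/X}$, and justifying both the global generation of $N_{Y/X}$ at the general point and the existence of a section vanishing at a general point. This is precisely where the hypothesis that $W$ maps birationally onto its image in $\mathrm{Chow}(X)$ is used, so that near the general point $[Y]$ the base $W$ is a locally closed subscheme of the Chow variety; one then has to compare the tangent sheaf of the Chow variety at $[Y]$ with $H^0(Y,N_{Y/X})$ in the usual way. Once those identifications are in place, the remainder is the bookkeeping above.
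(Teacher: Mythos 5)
Your overall strategy (bound $\kappa(\det N_{Y/X})$ above by $0$ via adjunction and adjoint rigidity, below by $1$ via deformation theory of the dominating, birationally-parametrized family) is genuinely different from the paper's argument, and for families whose general member is a \emph{smooth} subvariety it is essentially workable. But there is a genuine gap at the very first reduction: ``by generic smoothness I may shrink $W$ so that $p$ is smooth, hence every member $Y_w$ is smooth.'' Generic smoothness applies to a morphism from a \emph{smooth} total space, and $\mathcal{U}$ need not be smooth; resolving $\mathcal{U}$ does not help, because the fibers of the resolved family are resolutions of the members and are no longer subvarieties of $X$, so $N_{Y/X}$ and the identification of tangent vectors of $W$ with sections of it disappear. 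More to the point, nothing in Definition~\ref{defi: breaking balanced} forces the general member to be smooth: the invariants $a$, $b$ and adjoint rigidity of a member are defined on a resolution $\widetilde{Y}\to Y$, so the proposition must cover families whose general member $Y$ is singular. For such $Y$ your two computations no longer talk about the same object. The deformation-theoretic lower bound concerns the normal sheaf of $Y\subset X$ (equivalently $\omega_Y\otimes\omega_X^{-1}|_Y$ on the smooth locus), whereas adjoint rigidity only controls $\kappa(K_{\widetilde{Y}}+a\,\nu^{*}L)=0$ on the resolution; the discrepancy between $\nu^{*}\omega_Y$ and $K_{\widetilde{Y}}$ is an effective divisor with the \emph{unfavorable} sign (e.g.\ the conductor for non-normal members), so the inequality $\kappa(\det N)\le\kappa(K_{\widetilde{Y}}+a\,\nu^{*}L)=0$ cannot be extracted. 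In other words, a singular member can have a much more positive ``normal bundle'' than its resolution's adjoint divisor detects, and this is exactly the case your Step~1 must rule out but cannot.

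A secondary point, which you flag but do not resolve, is the identification of $T_{[Y]}W$ (via the hypothesis that $W\dashrightarrow\Chow(X)$ is birational onto its image) with a subspace of $H^0(Y,N_{Y/X})$: the tangent space of the Chow variety is not $H^0(N_{Y/X})$ in general, and the comparison with the Hilbert scheme is delicate precisely when members are singular, compounding the first issue. For contrast, the paper's proof avoids any smoothness assumption on the members: it resolves the total space, runs the relative MMP of Theorem~\ref{theo: relativeMMP} to show that for a general fiber the rigid divisor in the class $a(X,L)s^{*}L+K_{Y}$ is supported in the MMP-exceptional locus $E$, then cuts $p(s^{-1}(V))$ (for a general complete intersection $V\subset X$ of dimension $n-d-1$) down to an $(n-d)$-dimensional base $W'$ so that $s\colon p^{-1}(W')\to X$ is generically finite, and derives a contradiction by comparing $a(X,L)s^{*}L+K_{\mathcal{U}'}\equiv s^{*}D+R$ with the location of $E$: a point of $s^{-1}(V)$ on a general member must lie in the ramification divisor $R$ but not in $E$. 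If you wish to salvage your approach, you would need to either prove the statement first for families with smooth general member and separately reduce the general case to it, or replace the normal-bundle adjunction by an argument on the resolved family in the spirit of the paper's ramification-divisor comparison.
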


\begin{proof}

Suppose that $s: \mathcal{U} \rightarrow X$ is not generically finite so that a general fiber is positive dimensional.
After resolving singularities of $\mathcal{U}$ and shrinking $W$, we may assume that our family $p: \mathcal{U} \rightarrow W$ is smooth.
After taking a finite \'etale base change of $W$, we may assume that the monodromy action on $\mathcal GN^1(\mathcal{U} /W)$ is trivial.
After shrinking $W$ if necessary, we assume that $W$ is smooth and the induced map $W \rightarrow \mathrm{Chow}(X)$ is quasi finite.
We apply the relative $a(X, L)s^*L + K_{\mathcal{U}}$-MMP and obtain a relative minimal model $\tilde{\mathcal{U}}$
and we denote the exceptional divisors of $\mathcal{U} \dashrightarrow \tilde{\mathcal{U}}$ by $E_1, \cdots, E_t$ and their union by $E$.
It follows from Theorem~\ref{theo: relativeMMP} that for a general member $Y$ of $p : \mathcal{U} \rightarrow W$, the support of the rigid divisor numerically equivalent to $a(X, L)s^{*}L + K_Y$ is contained in $E \cap Y$.
After shrinking $W$ if necessary, we may assume that this holds for any member $Y$ of $p$.

Choose a general complete intersection variety $V$ in $X$ of dimension $(n-d-1)$.
Then $s^{-1}(V)$ is a subvariety of $\mathcal{U}$ with dimension at least $(n-d)$, and by generality we know that the general point of $s^{-1}(V)$ is not contained in $E$.  Furthermore, we claim that
\begin{equation*}
\overline{s(p^{-1}(p(s^{-1}(V))))} = X.
\end{equation*}
Indeed, if we let $p': \mathcal{U} \times_{W} \mathcal{U} \to X$ be the composition of the first projection with $s$, and $s'$ be the composition of the second projection with $s$, then it suffices to show that $\overline{s'(p'^{-1}(V))} = X$.  Note that the fibers of $p'$ have dimension at least $d+1$, since there is at least a one-dimensional family of subvarieties parametrized by $W$ through each point of $X$.  Since $s'$ is dominant, its restriction to any general $(n-d-1)$-subvariety of the parameter space $X$ is also dominant.  Altogether we obtain the claim.

By cutting by hyperplanes, we can choose a general subvariety $W' \subset p(s^{-1}(V))$ of dimension $(n-d)$ such that
\begin{itemize}
\item the evaluation map $s : \mathcal{U}' = p^{-1}(W') \rightarrow X$ is dominant and generically finite;
\item and for the general fiber $Y'$ of $p: \mathcal{U}' \to W'$, the points $s^{-1}(V) \cap Y'$ are not contained in $E$.
\end{itemize}
After shrinking $W'$, we may assume that $W'$ and $\mathcal{U}'$ are smooth.  This implies that the ramification locus of $s: \mathcal{U}' \rightarrow X$ is divisorial.  Let $D$ be a rigid effective divisor on $X$ which is numerically equivalent to $a(X, L)L + K_X$.
Pick a general member $Y'$ of $p: \mathcal{U}' \rightarrow W'$ such that
\begin{itemize}
\item $Y'$ is not contained in $s^*D$
\item $Y'$ is not contained in the ramification locus of $s: \mathcal{U}' \rightarrow X$;
\item there is a point $y' \in s^{-1}(V) \cap Y'$ that is not contained in $E \cap Y'$.
\end{itemize}
We have $a(X, L)s^*L + K_{\mathcal{U}'} \equiv s^*D + R$ where $R$ the ramification divisor of $s: \mathcal{U}' \rightarrow X$.
Since $y'$ is contained in the exceptional locus of $s: \mathcal{U}' \rightarrow X$, we have $y' \in R$.  However, this is a contradiction to the fact that $y' \not \in E \cap Y'$.
\end{proof}

\begin{exam}
Proposition \ref{prop: genfinofmaps} has interesting feedback with the deformation theory of rational curves.  For example, suppose that $X$ admits a dominant family of rational curves breaking the balanced condition.  Then we have
\begin{equation*}
a(X,L) = a(C,L) = \frac{2}{L \cdot C}.
\end{equation*}
But since $C$ deforms to cover $X$, we also know that
\begin{equation*}
0 \leq (K_{X} + a(X,L)L) \cdot C.
\end{equation*}
Combining the two equations, we see that $-K_{X} \cdot C \leq 2$, and equality must be achieved since the family is dominant.  So the curves deform in dimension $\dim X - 1$, and the universal family map is indeed generically finite.  A similar calculation works for arbitrary families of subvarieties $Y$ breaking the balanced condition, assuming that there is a free rational curve on $Y$ with vanishing intersection against $a(X,L)L + K_{Y}$.
\end{exam}

\begin{prop} \label{genfinhigherdegree}
Let $X$ be a smooth projective uniruled variety of dimension $n$ and let $L$ be a big and nef divisor on $X$ such that $K_{X} + a(X,L)L$ is rigid.  Suppose that $p: \mathcal{U} \to W$ is a smooth family of varieties admitting a generically finite map $s: \mathcal{U} \to X$ which breaks the balanced property.  Then either the map $s: \mathcal{U} \to X$ is generically finite of degree $\geq 2$, or the monodromy action on $\mathcal GN^1(U/W)$ is non-trivial.
\end{prop}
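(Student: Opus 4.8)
The plan is to argue by contraposition: assuming that $s$ is generically finite of degree $1$ (that is, birational onto $X$) and that the monodromy on $\mathcal{GN}^{1}(\mathcal{U}/W)$ is trivial, I will show that $p\colon\mathcal{U}\to W$ cannot break the balanced property. Write $M=s^{*}L$. Since $s$ is birational, $M$ is big and nef and the Fujita invariants are unchanged: $a(\mathcal{U},M)=a(X,L)$, $b(\mathcal{U},M)=b(X,L)$, and $(\mathcal{U},M)$ is adjoint rigid. The contradiction will be with condition (5) of Definition~\ref{defi: breaking balanced}: I will prove that a general fiber $Y$ of $p$ satisfies $b(X,L)>b(Y,M|_{Y})$.

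By condition (4) the general fiber $Y$ is adjoint rigid, and since $a(Y,M|_{Y})=a(X,L)>0$ it is rationally connected by \cite[Proposition 2.5]{LTT14}; thus $H^{1}(\mathcal{U}_{t},\mathcal{O}_{\mathcal{U}_{t}})=0$ for general $t$. After shrinking $W$ (to arrange this for all $t$ and to keep the monodromy trivial) and then passing to a projective model, Theorem~\ref{theo: relativeMMP} applies to the relative $a(X,L)M+K_{\mathcal{U}}$-MMP over $W$; let $\psi\colon\mathcal{U}\dashrightarrow\widetilde{\mathcal{U}}$, $\widetilde{p}\colon\widetilde{\mathcal{U}}\to W$, be a relative minimal model. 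Its restriction to a general fiber is the absolute $a(X,L)M|_{Y}+K_{Y}$-MMP, which—because $a(X,L)=a(Y,M|_{Y})$ and $(Y,M|_{Y})$ is adjoint rigid—terminates with $a(X,L)\widetilde{M}|_{\widetilde{Y}}+K_{\widetilde{Y}}\equiv 0$, so $\rho(\widetilde{Y})=b(Y,M|_{Y})$ by \cite[Lemma 3.5]{LTT14}; also $b(X,L)=b(\widetilde{\mathcal{U}},\widetilde{M})$. The class $\widetilde{D}:=a(X,L)\widetilde{M}+K_{\widetilde{\mathcal{U}}}$ is $\widetilde{p}$-nef and numerically trivial on general fibers, hence $\widetilde{p}$-semiample by relative base-point-freeness for the klt pair underlying $\widetilde{D}$; since the induced relative model collapses onto $W$ we get $\widetilde{D}\sim_{\mathbb{Q}}\widetilde{p}^{*}N$ for a $\mathbb{Q}$-divisor $N$ on $W$ with $\kappa(W,N)=\kappa(\widetilde{\mathcal{U}},\widetilde{D})=0$.

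Now for the rank comparison. Triviality of the monodromy together with $H^{1}(\mathcal{O}_{\widetilde{Y}})=0$ forces the restriction $r\colon N^{1}(\widetilde{\mathcal{U}})\to N^{1}(\widetilde{Y})$ to be surjective (via the representability of the relative N\'eron--Severi functor in Remark~\ref{kmremark}), while by Lemma~\ref{rcfibers} the kernel of $r$ is the span of the $\widetilde{p}$-vertical divisors. Choosing a general complete intersection curve $\gamma$ inside a general fiber, one checks that $F_{0}:=\Lambda_{\eff}(\widetilde{\mathcal{U}})\cap\gamma^{\perp}$ is a supported face equal to the set of pseudo-effective classes restricting to $0$ on $\widetilde{Y}$, so $\langle F_{0}\rangle=\ker r$ and $\dim F_{0}=\rho(\widetilde{\mathcal{U}})-\rho(\widetilde{Y})$; in particular $[\widetilde{D}]\in F_{0}$. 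Since $\kappa(W,N)=0<\dim W$ the class $N$ is not big, so by \cite{BDPP} on the projective variety $W$ there is a nonzero movable curve class $c$ with $N\cdot c=0$; lifting $c$ to a movable curve $\widetilde{c}$ on $\widetilde{\mathcal{U}}$ with $\widetilde{p}_{*}\widetilde{c}$ a positive multiple of $c$ gives $\widetilde{D}\cdot\widetilde{c}=0$ while $\widetilde{p}^{*}A\cdot\widetilde{c}>0$ for $A$ ample on $W$. Thus $F_{1}:=\Lambda_{\eff}(\widetilde{\mathcal{U}})\cap\widetilde{c}^{\perp}$ is a supported face containing $[\widetilde{D}]$, and $\widetilde{p}^{*}A\in F_{0}\setminus F_{1}$, so $F_{0}\cap F_{1}$ is a supported face properly contained in $F_{0}$. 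The minimal supported face $G$ containing $[\widetilde{D}]$ then satisfies $G\subseteq F_{0}\cap F_{1}$, whence $\dim G\le\dim F_{0}-1$ and
\[
b(X,L)=b(\widetilde{\mathcal{U}},\widetilde{M})=\rho(\widetilde{\mathcal{U}})-\dim G\ \ge\ \rho(\widetilde{\mathcal{U}})-\dim F_{0}+1\ =\ \rho(\widetilde{Y})+1\ =\ b(Y,M|_{Y})+1,
\]
contradicting condition (5).

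The main obstacle is the last paragraph. One must verify that triviality of the monodromy genuinely yields the surjectivity of $N^{1}(\widetilde{\mathcal{U}})\to N^{1}(\widetilde{Y})$—this is where the rational connectedness of the fibers (hence vanishing of all higher $\mathcal{O}$-cohomology) and the representability statement in Remark~\ref{kmremark} are used—and one must carry out the face-theoretic bookkeeping showing that the minimal supported face carrying the adjoint class sits strictly inside the ``vertical'' face $F_{0}$. The conceptual point is that adjoint rigidity of $(X,L)$ forces the relative adjoint $\widetilde{D}$ to be $\mathbb{Q}$-linearly equivalent to a pullback from $W$ along a non-big class, so the positive-dimensional base $W$ contributes at least one extra dimension to the relevant Picard rank, exactly as in the basic example $X=\mathbb{P}^{1}\times\mathbb{P}^{1}\to\mathbb{P}^{1}$ with $L=-K_{X}$.
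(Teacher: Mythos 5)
Your argument parallels the paper's proof up to the key point: you also assume $s$ birational with trivial monodromy, run the relative adjoint MMP over the base, use trivial monodromy plus $H^{1}(\mathcal{O}_{\widetilde{Y}})=0$ to get surjectivity of the restriction on $N^{1}$, identify $b(Y,L|_{Y})$ with $\rho$ of the fiberwise minimal model, and produce a supported ``vertical'' face of codimension $b(Y,L|_{Y})$ containing the adjoint class (your $F_{0}$ is the paper's $\mathcal{T}$, except that the paper builds it on a smooth model dominating everything and uses the negativity lemma, rather than computing $b$ and supported faces directly on the singular relative minimal model; your identification $b(\widetilde{\mathcal{U}},\widetilde{M})=\rho(\widetilde{\mathcal{U}})-\dim G$ on $\widetilde{\mathcal{U}}$ itself needs a justification since $b$ is defined via a smooth model). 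Where you genuinely diverge is the strictness step, and that is where the gaps are. First, ``$\widetilde{p}$-nef and numerically trivial on general fibers'' does not give $\widetilde{p}$-semiample, and relative base-point-freeness does not apply ``to the klt pair underlying $\widetilde{D}$'': for $D=K+\Delta$ one needs $bD-(K_{\widetilde{\mathcal{U}}}+\Delta)$ relatively nef \emph{and big}, and with $b=1$ this difference is numerically trivial for your pair. The conclusion can be rescued because $a\widetilde{M}$ is big (absorb an $\epsilon$-ample into a boundary numerically equivalent to $a\widetilde{M}$, as in \cite[Lemma 2.4]{LTT14}), but then the klt boundary is only \emph{numerically} equivalent to $a\widetilde{M}$, so you must feed the honest adjoint $\widetilde{D}$ into the base-point-free theorem in order to later use $\kappa(\widetilde{\mathcal{U}},\widetilde{D})=0$ on the base --- $\kappa$ is not a numerical invariant --- and the resulting semiample fibration contracts the general fiber, so it maps to a variety $Z$ generically finite over $W$ rather than to $W$ itself. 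None of this is in your write-up, and the reason you do give is not a valid application of the theorem.

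Second, the lifting step is a real gap: BDPP only provides a class $c$ in the \emph{closed} movable cone of the base with $N\cdot c=0$, and the image $\widetilde{p}_{*}\bigl(\overline{\mathrm{Mov}}_{1}(\widetilde{\mathcal{U}})\bigr)$ is in general only dense in $\overline{\mathrm{Mov}}_{1}(W)$ (images of closed cones under linear maps need not be closed), so ``lift $c$ to a movable $\widetilde{c}$ with $\widetilde{p}_{*}\widetilde{c}$ a positive multiple of $c$'' is not automatic; since $c$ is forced to lie on the boundary you cannot perturb, and this is exactly the point that carries the strict inequality. It can be repaired --- approximate $c$ by classes of covering families of curves on the base, lift each by intersecting its preimage with a fixed ample divisor (so the pushforward is a \emph{uniform} positive multiple), normalize, and extract a nonzero limit which is movable, $\widetilde{D}$-trivial and horizontal --- but as written it is an unproved and nontrivial claim. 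Note that the paper obtains $b(X,L)>b(Y,L|_{Y})$ by a different and shorter mechanism that avoids both issues: local polyhedrality of $\Lambda_{\eff}$ near the rigid class shows that $K+a(X,L)L$ lies in the relative interior of its minimal supported face, which rigidity forbids if that face were the vertical face $\mathcal{T}$; no relative abundance over the base and no lifting of curve classes is needed. So your overall plan is plausible and in its first half essentially the paper's, but the two steps above must be supplied before the proof is complete.
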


\begin{proof}
Without loss of generality we may assume that $a(X, L)=1$.
If the monodromy action on $\mathcal GN^1(\mathcal{U}/W)$ is non-trivial then we are done.  Otherwise, let $\overline{\mathcal{U}}$ denote a smooth projective completion of $\mathcal{U}$ which admits a morphism $\overline{p}$ to a projective completion of $W$ that extends $p$ and a morphism $\overline{s}$ which extends $s$.  Run the relative $K_{\overline{\mathcal{U}}} + \overline{s}^{*}L$-MMP for the morphism $\overline{p}$.  Since $\overline{s}^{*}L$ is relatively big and nef, the result will be a birational contraction $\psi: \overline{\mathcal{U}} \dashrightarrow \overline{\mathcal{U}}'$ and a morphism $\overline{p}': \overline{\mathcal{U}}' \to \overline{W}$ whose general fiber is $K_{\overline{\mathcal{U}}'} + \psi_{*}\overline{s}^{*}L$-trivial.   Since each step is also a step of the $K_{\overline{\mathcal{U}}} + \overline{s}^{*}L$-MMP, we see that $\overline{\mathcal{U}}'$ has $\mathbb{Q}$-factorial terminal singularities.  Furthermore, by arguing as in the proof of Theorem \ref{bconstancy} we see the general fiber $F$ of $\overline{p}$ has $\mathbb{Q}$-factorial terminal singularities and the restriction map $N^{1}(\overline{\mathcal{U}}'/\overline{W}) \to N^{1}(F)$ is surjective (due to the monodromy assumption).   In particular $b(F,L) = \dim(N^{1}(F))$ for a general fiber $F$ of $p'$.  

Suppose for a contradiction that $\overline{s}$ is birational.  Let $\widetilde{X}$ be a smooth birational model of $X$ admitting birational maps $\rho: \widetilde{X} \to \overline{\mathcal{U}}'$ and $\psi: \widetilde{X} \to X$.  Consider the composition of maps
\begin{equation*}
f: N^{1}(\widetilde{X}) \to^{\rho_{*}} N^{1}(\overline{\mathcal{U}}') \to^{\textrm{restrict}} N^{1}(F).
\end{equation*}
Note that $f$ is surjective, so that the kernel $K$ of $f$ has codimension $b(F,L)$.  By Lemma \ref{rcfibers} the kernel of the second map has an effective basis; by the negativity of contraction lemma the kernel of the first map also has an effective basis.  Thus by pulling back, we see that the face $\mathcal{T}$ of $\Lambda_{\mathrm{eff}}(\widetilde{X})$ defined by taking the intersection with $K$ also has codimension $b(F,L)$.  Since $f$ preserves pseudo-effectiveness, $\mathcal{T}$ is a supported face of $\Lambda_{\mathrm{eff}}(\widetilde{X})$.

Note that $K_{\widetilde{X}} + \psi^{*}L$ is contained in the face $\mathcal{T}$.  Using the local polyhedrality of the pseudo-effective cone on a neighborhood of $K_{\widetilde{X}} + \psi^{*}L$, we see that this divisor lies in the interior of the minimal supported face containing it.  In particular, if this minimal face were $\mathcal{T}$, then $K_{\widetilde{X}} + \psi^{*}L$ would lie in the relative interior of $\mathcal{T}$, contradicting the rigidity of $K_{\widetilde{X}} + \psi^{*}L$.  Altogether, this shows that $b(X,L) > \mathrm{codim}(\mathcal{T}) = b(F,L)$.
\end{proof}

\section{Thin sets arising from subvarieties}
\label{sec: thinsets}
In this section we assume that our ground field $F$ is a number field.

\begin{prop}
\label{prop:birational}
Let $X$ be a geometrically uniruled smooth projective variety defined over a number field $F$ and $L$ a big and nef $\mathbb Q$-divisor on $X$ such that $a(X, L)L + K_X$ is rigid. Suppose that $\rho(X) = \rho(\overline{X})$.
Furthermore assume that we have a algebraic fiber space $f: X \rightarrow Y$ such that over an algebraic closure of $F$, $\overline{f} : \overline{X} \rightarrow \overline{Y}$ is a family of subvarieties breaking the balanced property for $L$. Let 
\[
Y^\circ = \{y \in Y \mid X_y \text{ is geometrically integral and smooth and $L|_{X_y}$ is big}\}
\]
Then the following set 
\[
\{y \in Y^\circ(F) \mid a(X, L) = a(X_y, L), \, b(X, L) \leq b(F, X_y, L)\}
\]
is a thin set in $Y^\circ$.
\end{prop}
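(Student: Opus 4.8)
\emph{Strategy and first reduction.} Throughout set $a=a(X,L)$ and $b=b(X,L)$, assume $\dim Y\geq 1$, and write $T$ for the set in the statement. The plan is to replace $f$ by a relative minimal model $f'$, to read off the arithmetic $b$-invariant of a fibre of $f'$ from a Galois action on a N\'eron--Severi group, to use Hilbert's Irreducibility Theorem to enlarge that Galois action to the full monodromy of the family, and then to compare with $b$ via the supported-face argument of Proposition~\ref{genfinhigherdegree}. As a first step I would pass, using Theorem~\ref{invarianceofplurigenera}, Proposition~\ref{bconstancy}, constructibility of the relevant loci, and the hypothesis that $\overline f$ breaks the balanced property, to a nonempty open $U\subset Y$ defined over $F$ such that for every $y\in U$ the fibre $X_y$ is smooth and geometrically integral, $L|_{X_y}$ is big, $a(X_y,L)=a$, $(X_y,L|_{X_y})$ is adjoint rigid, and $b(\overline X_y,L)$ is constant. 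Since $(Y^\circ\setminus U)(F)$ is a thin subset of $Y^\circ$, it then suffices to show $T\cap U(F)$ is thin.

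\emph{A relative minimal model.} Running the relative $aL+K_X$-MMP over $Y$ together with a small $\mathbb Q$-factorial modification (the relative form of the argument for Theorem~\ref{theo: MMPtoweakFano}) produces $f'\colon X'\to Y$ and a relative birational contraction $\chi\colon X\dashrightarrow X'$ with $X'$ normal $\mathbb Q$-factorial terminal, $a\chi_*L$ nef and big, and $a\chi_*L+K_{X'}\equiv(f')^*D'$ for a divisor $D'$ on $Y$ (it is numerically trivial on the rationally connected adjoint rigid fibres, and rigid, being the pushforward of the rigid divisor $aL+K_X$). Shrinking $U$ again, and discarding only thin sets of $F$-points, Theorem~\ref{theo: relativeMMP} --- applied after a finite \'etale base change that trivialises the monodromy, the conclusions then descending --- shows that for every $y\in U(F)$ the map $X_y\dashrightarrow X'_y$ is a birational contraction and $X'_y$ is a $\mathbb Q$-factorial terminal weak Fano with $a(\chi_*L)|_{X'_y}+K_{X'_y}\equiv 0$. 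By birational invariance of the $b$-invariant, together with the fact that for an adjoint-trivial weak Fano the $b$-invariant coincides with the Picard number (its effective cone being rational polyhedral and pointed), I conclude $b(F,X_y,L|_{X_y})=\rho(X'_y)$ for all $y\in U(F)$.

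\emph{Monodromy and Hilbert irreducibility.} By Remark~\ref{kmremark} the sheaf $\mathcal GN^1(X'/Y)$ is a local system on $Y_{\mathrm{et}}$, with finite monodromy group $G$ acting on the N\'eron--Severi group $N$ of a general geometric fibre of $f'$; finiteness of $G$, including its arithmetic part, holds because $G$ preserves the lattice $N$ together with its rational polyhedral pointed effective cone. For $y\in Y(F)$ the Galois action on $N^1(\overline{X'}_y)\cong N$ factors through the image $G_y\subseteq G$ of $\mathrm{Gal}(\overline F/F)$, so $\rho(X'_y)=\dim N_{\mathbb Q}^{G_y}$. Let $\widetilde Y\to Y$ be the connected finite \'etale Galois $G$-cover trivialising $\mathcal GN^1(X'/Y)$; for each proper subgroup $H\subsetneq G$ the map $\widetilde Y/H\to Y$ is finite of degree $\geq 2$ and admits no rational section, so $\{y\in Y(F):G_y\subsetneq G\}$, being the union of the images of the sets $(\widetilde Y/H)(F)$, is thin. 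Hence for $y\in U(F)$ away from a thin set, $G_y=G$ and $b(F,X_y,L|_{X_y})=d:=\dim N_{\mathbb Q}^{G}$.

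\emph{The comparison $d<b$, and conclusion.} Let $G_0\subseteq G$ be the geometric monodromy, the image of $\pi_1^{\mathrm{et}}(\overline Y)$. Then $d\leq\nu:=\dim N_{\mathbb Q}^{G_0}$, and $N_{\mathbb Q}^{G_0}$ is rationally the image of the restriction $N^1(\overline{X'})\to N$, whose kernel equals $(f')^*N^1(\overline Y)$ after a further shrinking of $U$ by Lemma~\ref{rcfibers}; in particular $\nu\leq\rho(\overline{X'})-1$ since $\dim Y\geq 1$. I would now repeat the argument of Proposition~\ref{genfinhigherdegree}, taking the evaluation map of the family to be the identity of $X$: on a smooth common model $\widetilde X$ of $\overline X\dashrightarrow\overline{X'}$, with $\mu\colon\widetilde X\to\overline X$ and $\lambda\colon\widetilde X\to\overline{X'}$, the composite $N^1(\widetilde X)\xrightarrow{\ \lambda_*\ }N^1(\overline{X'})\longrightarrow N$ preserves pseudo-effectivity, has image of rank $\nu$, and has kernel $K$ with an effective basis (Lemma~\ref{rcfibers} and the negativity lemma); hence $\mathcal T:=K\cap\Lambda_{\mathrm{eff}}(\widetilde X)$ is a supported face of codimension $\nu$, and it contains $K_{\widetilde X}+a\mu^*L$ (whose image under $\lambda_*$ is numerically equivalent to $(f')^*D'$, hence restricts to $0$ on a general fibre). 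As $\dim\mathcal T=\rho(\widetilde X)-\nu\geq 1$ while $K_{\widetilde X}+a\mu^*L$ is rigid and --- by local polyhedrality of $\Lambda_{\mathrm{eff}}(\widetilde X)$ near it --- lies in the relative interior of its minimal supported face, that minimal face is strictly contained in $\mathcal T$; therefore $b=b(\widetilde X,\mu^*L)$, the codimension of this minimal face, satisfies $b>\nu\geq d$ (using $\rho(X)=\rho(\overline X)$ and birational invariance of $b$). It follows that every $y\in U(F)$ with $G_y=G$ lies outside $T$, so $T\cap U(F)$ is contained in the thin set $\{y\in U(F):G_y\subsetneq G\}$, and $T$ is thin. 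The main obstacle is this last step: carrying out the supported-face argument of Proposition~\ref{genfinhigherdegree} precisely --- tracking exceptional corrections among $\widetilde X$, $\overline X$, $\overline{X'}$, justifying that $\mathcal T$ is a supported face, and applying local polyhedrality and rigidity correctly --- and ensuring that the bound $\nu<b$, produced by the \emph{geometric} monodromy, still delivers $d<b$ once Hilbert irreducibility has enlarged $G_0$ to the full \emph{arithmetic} monodromy $G$; some additional care is needed in the second step so that the relative MMP and the local system $\mathcal GN^1(X'/Y)$ behave well over the non-closed field $F$ and so that the displayed identities hold at every $F$-point of $U$, not merely a general one.
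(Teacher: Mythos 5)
Your overall strategy is the one the paper uses: shrink to a good $F$-open subset of $Y$, read off $b(F,X_y,L)$ from Galois invariants of a N\'eron--Severi-type local system attached to the family, use the finite \'etale cover trivializing that local system together with Hilbert irreducibility to reduce to fibers whose Galois image is the full monodromy group $G$, and then compare with $b(X,L)$ by a supported-face/rigidity argument. Your quantitative inlining of Proposition~\ref{genfinhigherdegree} (proving $b(X,L)>\dim N^{G_0}_{\mathbb Q}$ directly, hence $\dim N^{G}_{\mathbb Q}\le\dim N^{G_0}_{\mathbb Q}<b(X,L)$, so the geometric monodromy bound suffices after Hilbert irreducibility) is a correct adaptation of that proposition's proof, and, like the paper, it ultimately rests on surjectivity of restriction onto monodromy invariants (the invariant cycle theorem for $H^2$, available because total space and fibers are rationally connected).

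The step that does not go through as written is the middle one, the identity $b(F,X_y,L)=\rho(X'_y)=\dim N^{G_y}_{\mathbb Q}$ obtained from a relative minimal model $f'\colon X'\to Y$ over $F$. Theorem~\ref{theo: relativeMMP} assumes the monodromy on $\mathcal{G}N^1(X/Y)$ is trivial and only controls general fibers over an algebraically closed field, whereas non-trivial monodromy is exactly the situation here; the proposed fix --- base change to a trivializing cover and ``descend the conclusions'' --- is not valid as stated, since the relative minimal model over the cover need not descend to $Y$ over $F$, the relative MMP over $Y/F$ is run with the smaller space $N^1(X/Y)$ and its restriction to a fiber over a specified $F$-point need not be a run of that fiber's own MMP, and $F$-points are not ``general'' points. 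Likewise Remark~\ref{kmremark} constructs $\mathcal{G}N^1$ only for smooth families, so invoking it for the contracted family $X'/Y$ is not justified. The paper sidesteps all of this: since $a(X,L)L+K_X\equiv D$ is rigid it writes $b(X,L)=\dim N^1(X)/V$ with $V$ spanned by the components of $D$, and for an $F$-fiber $b(F,X_y,L)=\dim N^1(\overline{X}_y)^{G}/V'^{G}$ with $V'$ spanned by the geometric components of $D|_{X_y}$, taking the monodromy on $N^1(\overline{X}_y)$ of the original smooth family (where Remark~\ref{kmremark} does apply); surjectivity of $N^1(X)/V\to N^1(\overline{X}_y)^{G}/V'^{G}$ comes from invariant cycles for $H^2$, and Proposition~\ref{genfinhigherdegree} is cited only for non-triviality of the monodromy, yielding the strict inequality. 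If you replace your relative-minimal-model computation of $b(F,X_y,L)$ by this rigid-divisor-component bookkeeping (or carefully prove the fiberwise statements you need at points with full Galois image, in the spirit of Proposition~\ref{bconstancy}), the remainder of your argument --- the cover $\widetilde{Y}\to Y$, Hilbert irreducibility, and the supported-face comparison --- is sound and coincides with the paper's proof.
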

\begin{proof}
Since $a(X, L)L+K_X$ is rigid, this adjoint divisor is numerically equivalent to a rigid effective divisor $D$. Let $V$ be the subspace of $N^{1}(X)$ spanned by the components of $D$. Then we have
$b(X, L) = \dim N^1(X)/V$.
For $y \in Y^\circ (F)$, consider the space $N^1(\overline{X}_y)$ and let $V'$ be the subspace spanned by geometric irreducible components of $D|_{X_y}$. Let $G$ be the finite group of the monodromy acting on $N^1(\overline{X}_y)$; this action is non-trivial by Proposition \ref{genfinhigherdegree}. Then we have the following diagram:
\[
N^1(X)/V \twoheadrightarrow N^1(\overline{X}_y)^G/V'^G \subset N^1(\overline{X}_y)/V'.
\]
Since $X$ and $X_y$ are both geometrically rationally connected, the second cohomology and the N\'eron-Severi space coincide, and the surjectivity of the first homomorphism follows from the corresponding statement for the second cohomology.  This homomorphism has a nontrivial kernel.
Let $\pi : (Y', y_1) \rightarrow (Y^\circ, y)$ be the \'etale covering defined over $F$ which kills the monodromy $G$. Then by the Hilbert Irreducibility Theorem, there exists a thin set $Z \subset Y^\circ(F)$ such that if $y' \in Y^\circ(F) \setminus Z$, the fiber $\pi^{-1}(y')$ is irreducible and the Galois action of the splitting field is $G$. For such $y'$, we have
\[
b(F, X_{y'}, L) = \dim N^1(\overline{X}_y)^G/V'^G   < \dim N^1(X)/V = b(X, L).
\]
Thus our assertion follows.
\end{proof}

\begin{exam}[Batyrev-Tschinkel's example] \label{btexample}
Consider the Fano variety over $\mathbb{Q}(\sqrt{-3})$:
\begin{equation*}
X := \left \{ \sum_{i=0}^{3} x_{i}^{3}y_{i} = 0 \right \} \subset \mathbb{P}^{3}_{\vec{x}} \times \mathbb{P}^{3}_{\vec{y}}.
\end{equation*}
\cite{BT} shows that the pair $(X,-K_{X})$ does not satisfy the closed set version of Manin's Conjecture due to the fibration by cubic surfaces.  Nevertheless, Proposition \ref{prop:birational} shows that the contribution of rational points from split fibers only forms a thin set of points on $X$.
(Here split fibers mean that fibers whose $b$-invariants are strictly greater than $1$.)
\end{exam}

\begin{coro}
\label{coro: thinset_eachfamily}
Let $X$ be a geometrically uniruled smooth projective variety defined over $F$ and $L$ a big and nef $\mathbb Q$-divisor on $X$ such that $a(X, L)L + K_X$ is rigid. Suppose that $\rho(X) = \rho(\overline{X})$.
Suppose that we have a dominant family of subvarieties $\pi : U \rightarrow W$ such that over $\overline{F}$, this is a family of subvarieties breaking the balanced property for $L$.  
We also assume that the evaluation map $s: U \rightarrow X$ is generically finite.
Define the set $Z$ by
\[
Z := \cup Y(F) \subset X(F)
\]
where $Y$ runs over members of $\pi$ defined over $F$ such that 
\begin{itemize}
\item $Y$ is not integral, or;
\item $L|_Y$ is not big, or;
\item $(a(X, L), b(X, L)) \leq (a(Y, L), b(F, Y, L))$.
\end{itemize}
Then $Z$ is a thin set.
\end{coro}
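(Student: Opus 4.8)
The plan is to reduce the statement of Corollary~\ref{coro: thinset_eachfamily} to Proposition~\ref{prop:birational} by reorganizing the members of $\pi: U \to W$ into finitely many algebraic fiber spaces over $F$, together with a few lower-dimensional loci that are automatically thin. First I would observe that we may pass to a resolution of $U$ and shrink $W$, and that the family $\pi$, being defined over $F$, induces a rational map $W \dashrightarrow \mathrm{Chow}(X)$; replacing $W$ by its image we may assume this map is birational onto its image, so that distinct points of a dense open $W^{\circ} \subset W$ parametrize distinct subvarieties $Y$ of $X$. Since $\pi$ breaks the balanced property over $\overline{F}$, Proposition~\ref{prop: genfinofmaps} tells us $s$ is generically finite (as we have also assumed directly), and the general fiber $Y$ is adjoint rigid with $a(Y,L) = a(X,L)$ and $b(\overline{Y},L) \geq b(\overline{X},L)$.

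Next I would dispose of the ``bad'' members cheaply. The members $Y$ that are not geometrically integral, or on which $L|_Y$ fails to be big, or for which the various numerical invariants drop, all lie over a proper closed subset of $W$ by the constructibility result (the Proposition preceding Definition~\ref{defi: breaking balanced} about geometric integrality) together with Theorem~\ref{invarianceofplurigenera} and Proposition~\ref{bconstancy}: indeed the locus in $W$ where $a(X_t, L|_{X_t})$ or the Iitaka dimension of the adjoint divisor or the $b$-value differs from the generic value is not dense. So the union of all such $Y(F)$ is contained in $s(\pi^{-1}(W'))$ for a proper closed $W' \subsetneq W$, and by Noetherian induction on $\dim W$ we may assume these are already handled. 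Thus it remains to treat members $Y$ defined over $F$ lying over the open locus where $\pi$ is, after base change to $\overline{F}$, a family of subvarieties breaking the balanced property with all invariants generic.

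For the remaining members I would set up the fiber space needed by Proposition~\ref{prop:birational}. The subtlety is that Proposition~\ref{prop:birational} requires an algebraic fiber space $f: X' \to Y'$ defined over $F$ whose geometric fibers are the relevant subvarieties; here $U \to W$ already is such a fiber space over $F$ once we discard non-dominant or non-generically-finite behavior, and its evaluation map $s: U \to X$ realizes each fiber $Y = U_w$ as a subvariety of $X$. Applying Proposition~\ref{prop:birational} to $\pi: U \to W$ (with $X$ there being our $U$, and $Y$ there being our $W$, and the line bundle $s^{*}L$, noting $\rho(U) = \rho(\overline{U})$ holds after shrinking $W$ to kill monodromy as in the proof of that proposition), we conclude that the set of $w \in W^{\circ}(F)$ with $a(U_w, s^{*}L) = a(X,L)$ and $b(\overline{X},L) \leq b(F, U_w, s^{*}L)$ is thin in $W$. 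Since $s$ is generically finite, $s|_{U_w}$ is birational onto $Y = s(U_w)$ for general $w$, so these invariants agree with those of $Y$; hence the parameter values of $F$-members $Y$ satisfying $(a(X,L), b(X,L)) \leq (a(Y,L), b(F,Y,L))$ form a thin subset $T \subset W(F)$. Writing $T = \bigcup_j \tau_j(V_j(F))$ with each $\tau_j$ generically finite without rational section, the set $\bigcup_Y Y(F)$ over these members is contained in $s\bigl(\pi^{-1}(\tau_j(V_j(F)))\bigr)$; pulling the family $\pi$ back along $\tau_j$ and composing with $s$ exhibits each piece as the image of a variety mapping to $X$ generically finitely, and one checks (enlarging $V_j$ or further stratifying if $s$ acquires a rational section on a component) that no rational section exists, so $Z$ is thin. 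The main obstacle I anticipate is this last bookkeeping step: verifying that pulling back $\pi$ along the thin-set parametrization $\tau_j$ and post-composing with the generically finite $s$ yields maps to $X$ that genuinely witness thinness — i.e.\ controlling rational sections and the ``generically finite onto image'' condition uniformly — rather than any deep geometry, since all the hard input (boundedness, the MMP comparison, the monodromy/Hilbert irreducibility argument) is already packaged in Proposition~\ref{prop:birational}.
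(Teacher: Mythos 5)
Your overall strategy in the birational case coincides with the paper's: resolve $U$, discard the members lying over a proper closed subset of $W$ (whose $s$-image is a proper closed, hence thin, subset of $X$), and feed the resulting smooth family into Proposition~\ref{prop:birational}. But there is a genuine gap: you apply Proposition~\ref{prop:birational} to $\pi:U\to W$ unconditionally, and this step is only valid when the evaluation map $s:U\to X$ is \emph{birational}. The proposition's hypotheses are imposed on the total space of the fibration: one needs $(U,s^{*}L)$ (not $(X,L)$) to be adjoint rigid with $\rho(U)=\rho(\overline{U})$, and one needs the fibers to break the balanced property \emph{for $(U,s^{*}L)$}, i.e.\ $a(U_{w},s^{*}L)=a(U,s^{*}L)$ and $b(\overline{U},s^{*}L)\le b(\overline{U}_{w},s^{*}L)$. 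When $s$ is birational these invariants agree with those of $(X,L)$ and everything goes through; when $\deg s\ge 2$ they need not, and in fact they typically fail. Concretely, for the family of $-K_{X}$-conics $f:\mathcal{C}\to X$ on a Fano threefold, $K_{\mathcal{C}}+f^{*}(-K_{X})$ restricts trivially to each conic, so $(\mathcal{C},f^{*}L)$ is \emph{not} adjoint rigid; worse, the conclusion you want from Proposition~\ref{prop:birational} is simply false there, since the monodromy on the fibers can be trivial and then \emph{every} $F$-member retains the large geometric $b$-value, so the bad locus in $W(F)$ is all of $W(F)$ rather than a thin set. The proof of Proposition~\ref{prop:birational} itself secretly uses Proposition~\ref{genfinhigherdegree} to assert that the monodromy is non-trivial, and that assertion is exactly what requires the evaluation map to be birational.

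The missing ingredient is the dichotomy of Proposition~\ref{genfinhigherdegree}: after base change to $\overline{F}$, either $s$ has degree $\ge 2$ or the monodromy on $\mathcal{GN}^{1}(U/W)$ is non-trivial. In the first case the corollary is immediate by a completely different (and easy) mechanism: $Z$ is contained in $s(U(F))$ together with the closed sets you already identified, and $s(U(F))$ is a type~II thin set because a generically finite dominant map of degree $\ge 2$ from an irreducible variety admits no rational section. Only in the second case, where $s$ is birational, does one reduce to Proposition~\ref{prop:birational} as you describe. Your closing ``bookkeeping'' concerns (pulling back the thin set in $W(F)$ and pushing forward along $s$) are indeed routine; the real content you omitted is this case division.
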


\begin{proof}
By applying Proposition \ref{genfinhigherdegree} to the base change to an algebraic closure, we see that either the evaluation map
has degree $\geq 2$ or the monodromy action is non-trivial (since both properties can be verified after this base change).
If the evaluation map has degree $\geq 2$, then our assertion is clear.
So we may assume that the evaluation map is birational.  The $Y$ satisfying the first two conditions form a closed subset. 
Take a resolution of singularities $\beta: \tilde{U} \rightarrow U$ and choose an open subset $W^\circ \subset W$ so that the morphism $\pi : (\pi\circ \beta)^{-1}(W^\circ) \rightarrow W^\circ$ is smooth.
Now our assertion follows from Proposition~\ref{prop:birational} by applying it to this smooth family.
\end{proof}


\begin{proof}[Proof of Theorem \ref{theo: mainI}:]
Define
\[
Z:= \cup Y(F) \subset X(F),
\]
where $Y$ runs over all geometrically integral subvarieties $Y$ defined over $F$ such that 
\begin{itemize}
\item $L|_Y$ is not big, or;
\item $(a(X, L), b(X, L)) \leq (a(Y, L), b(F, Y, L))$ in the lexicographic order.
\end{itemize}
We must show that $Z$ is contained in thin set.

Let $V$ be the closed set as in the statement of Theorem~\ref{theo: universal families},
and let $W$ be the closure in $\mathrm{Chow}(X)$ of the locus parametrizing subvarieties defined over $\overline{F}$ which are adjoint rigid, break the balanced property, and are not contained in $V$.
As shown in Theorem~\ref{theo: universal families}, $W$ is a bounded family.
Let $\pi : \mathcal U \rightarrow W$ be the universal family over $W$.
Let $W = \cup W_i$ be the decomposition of $W$ into irreducible components.

Suppose that $W_i$ is not geometrically irreducible.
Let $\pi_i : \mathcal U_i \rightarrow W_i$ be the universal family over $W_i$.  Then the $F$-rational points on $U_{i}$ must lie above a proper closed subset of $W_{i}$.  Thus $V_i = \overline{s_i(\mathcal U_i(F))}$ is a proper closed subset of $X$,
where $s_i: \mathcal U_i \rightarrow X$ is the evaluation map.
Note that the evaluation map must be generically finite if it is dominant because of Proposition~\ref{prop: genfinofmaps}.

Suppose that $W_i$ is geometrically irreducible.  Let $\pi_i : \mathcal U_i \rightarrow W_i$ be the universal family over $W_i$. When the evaluation map $s_i : \mathcal U_i \rightarrow X$ is not dominant, we define $V_i = s_i(\mathcal U_i)$, a proper closed subset of $X$.  Otherwise, the evaluation map $s_i : \mathcal U_i \rightarrow X$ is dominant, and $\overline{\pi_i} : \overline{\mathcal U}_i \rightarrow \overline{\pi_i}$ is a family of subvarieties breaking the balanced property.  Proposition \ref{prop: genfinofmaps} shows that $s_{i}$ is generically finite; it either has degree $\geq 2$ or it is birational.  In the first case, we take a smooth resolution $\tilde{\mathcal{U}}_i$ and we let $E_i$ be the ramification locus of $\tilde{s}_i : \tilde{\mathcal{U}}_i \rightarrow X$, and we define $Z_i = s_i(\mathcal U_i(F)) \cup \tilde{s}_i(E_i)$. This is obviously a thin set.  In the second case, let $K_i$ be a proper closed subset of $W_i$ that for any $w \in W_i\setminus K_i$, the member $Y_w$ of $\pi_i$ is geometrically integral and $a(Y_w, L)$ and $b(\overline{Y}_w, L)$ are constant.
Also we denote the exceptional locus of $s_i : \mathcal U_i \rightarrow X$ by $E_i$.

Let $Z_i= \bigcup Y(F) \cup s_i(\pi_i^{-1}(K_i)) \cup s_i(E_i)$ where $Y$ runs over all geometrically integral members of $\pi$, defined over $F$, such that
\begin{itemize}
\item $L|_Y$ is not big, or;
\item $(a(X, L), b(X, L)) \leq (a(Y, L), b(F, Y, L))$.
\end{itemize}
By Corollary~\ref{coro: thinset_eachfamily}, this set is thin.
We define $Z'$ by
\[
Z' = \bigcup V_i(F) \cup \bigcup Z_i\cup V(F)
\]
By construction this is a thin set.

We claim that $Z \subset Z'$.
Suppose that $Y$ is a geometrically integral subvariety such that
\begin{itemize}
\item $L|_Y$ is not big, or;
\item $(a(X, L), b(X, L)) \leq (a(Y, L), b(F, Y, L))$.
\end{itemize}
We need to verify that $Y(F)$ is contained in $Z'$. 
If $L|_Y$ is not big, then $Y$ is contained in $V$.
If $a(Y, L) > a(X, L)$, then again $Y$ is contained in $V$.
Hence we may assume that $a(X, L) = a(Y, L)$.
By Theorem \ref{theo: universal families} there exists $i$ and a geometrically integral subvariety $S \subset W_i$ such that
$s_i(\pi_i^{-1}(S)) = Y$ and $s_{i}$ is birational on $\pi_{i}^{-1}(S)$.
We may assume that $W_i$ is geometrically integral and $s_i :\mathcal U_i \rightarrow X$ is dominant, since in the other cases it is clear from the construction that $Y(F) \subset Z'$.  If $S \subset K_i$, then we are fine.  Otherwise, note that every point in $Y(F)$ is either contained in $s_{i}(E_{i})$, or is the $s_{i}$-image of a point in $\mathcal{U}_{i}(F)$, and suffices to consider points of the second type.  When $s_{i}$ has degree $\geq 2$, it is clear that such points are contained in $Z'$.  When $s_{i}$ is birational, then every fiber of $\pi_{i}^{-1}(S)$ not lying over $K_{i}$ satisfies the hypotheses of Corollary \ref{coro: thinset_eachfamily}, so again all such points are contained in $Z'$ by construction.
\end{proof}

\section{Surfaces}
\label{surfacesec}

In this section we assume that our ground field is an algebraically closed field of characteristic $0$.
We summarize the balanced properties for surfaces.

\begin{theo} \label{theo: balancedforsurfaces}
Let $S$ be a smooth uniruled projective surface and let $L$ be a big and nef $\mathbb{Q}$-divisor on $S$.  Suppose that:
\begin{itemize}
\item $\kappa(K_{S} + a(S,L)L) = 1$.  Then $L$ is weakly balanced with respect to general subvarieties and with respect to generically finite covers.
\item $\kappa(K_{S}+a(S,L)L) = 0$.  Then $L$ is balanced with respect to general subvarieties and with respect to generically finite covers of degree $\geq 2$.
\end{itemize}
\end{theo}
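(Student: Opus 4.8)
The plan is to reduce the statement, in both cases $\kappa(K_S+a(S,L)L)\in\{0,1\}$, to two elementary inputs: the values of the invariants for curves, and the structure of the $(K_S+a(S,L)L)$-minimal model. For a one-dimensional subvariety $C\subset S$ whose normalization has geometric genus $g$, one computes directly that $a(C,L|_C)=(2-2g)/(L\cdot C)$ and $b(C,L|_C)=1$, and Riemann--Hurwitz shows that for any finite cover of curves $\pi:C_1\to C_2$ one has $a(C_1,\pi^{*}L)\le a(C_2,L)$. On the other side, $a(S,L)L+K_S$ lies on the boundary of $\Lambda_{\mathrm{eff}}(S)$ and so is never big, giving $b(S,L)\ge 1$; and when $\kappa(K_S+a(S,L)L)=1$, applying Lemma~\ref{lemm: canonical} to the canonical fibration $S\dashrightarrow Z$, whose general fiber is a curve, yields $b(S,L)\le 1$ and hence $b(S,L)=1$.

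For the statements about general subvarieties I would take $V\supset\mathbf{B}_{+}(L)$ to be the closed set of Theorem~\ref{thm: a-exceptional}, so that every curve $C\not\subset V$ satisfies $a(C,L|_C)\le a(S,L)$; the only other subvariety is $S$ itself, which is trivially (weakly) balanced. In the case $\kappa(K_S+a(S,L)L)=1$ this already finishes the argument, since $(a(C,L|_C),1)\le(a(S,L),1)=(a(S,L),b(S,L))$ lexicographically. In the case $\kappa(K_S+a(S,L)L)=0$ I would run the $(K_S+a(S,L)L)$-MMP with scaling; by Theorem~\ref{theo: MMPtoweakFano} the output is a smooth weak del Pezzo surface $S'$ with $a(S,L)\psi_{*}L+K_{S'}\equiv 0$ and $b(S,L)=\rho(S')$, so $S'$ is either $\mathbb{P}^2$ or has $\rho(S')\ge 2$. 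If $\rho(S')\ge 2$, then any $C\not\subset V$ with $a(C,L|_C)=a(S,L)$ has $b(C,L|_C)=1<2\le b(S,L)$, so $L$ is balanced with respect to $C$. If $S'=\mathbb{P}^2$, then $\psi_{*}L$ is proportional to the hyperplane class, and comparing a general covering curve with a line shows that every covering curve has $a(C,L|_C)\le\tfrac23 a(S,L)<a(S,L)$; the remaining (non-covering) curves with $a(C,L|_C)=a(S,L)$ sweep out only finitely many non-dominant families by Theorem~\ref{firstfinitefamily}, so after enlarging $V$ by their union we again get balancedness.

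For the statements about generically finite covers $f:Y\to S$ (so $\dim Y=2$ and $f^{*}L$ is big) the key identity is $a(S,L)f^{*}L+K_Y=f^{*}(a(S,L)L+K_S)+R$, with $R\ge 0$ the ramification divisor, which is pseudo-effective; hence $a(Y,f^{*}L)\le a(S,L)$ automatically, and if this is strict then $L$ is balanced with respect to $f$. Assuming $a(Y,f^{*}L)=a(S,L)$, I would split on $\kappa(K_Y+a(S,L)f^{*}L)$, which the identity shows is at least $\kappa(K_S+a(S,L)L)$. If it equals $0$, then Theorem~\ref{theo: surfaces} applies verbatim: $f$ is birational to a cover \'etale in codimension one, $b(Y,f^{*}L)\le b(S,L)$, with strict inequality once $\deg f\ge 2$. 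If it equals $1$, then the Iitaka fibration of $Y$ has one-dimensional base, so Lemma~\ref{lemm: canonical} gives $b(Y,f^{*}L)=1$; moreover the $f$-image $C$ of a general fiber is a covering curve of $S$, hence lies outside $V$, so $a(C,L|_C)\le a(S,L)$ by Theorem~\ref{thm: a-exceptional}, while $a(C,L|_C)\ge a(Y,f^{*}L)=a(S,L)$ by the Riemann--Hurwitz inequality, forcing $a(C,L|_C)=a(S,L)$. When $\kappa(K_S+a(S,L)L)=1$ this gives $b(Y,f^{*}L)=1=b(S,L)$, i.e.\ weak balancedness; when $\kappa(K_S+a(S,L)L)=0$ the existence of such a covering curve is incompatible with $S'=\mathbb{P}^2$ by the previous paragraph, so $\rho(S')\ge 2$ and $b(Y,f^{*}L)=1<2\le b(S,L)$, i.e.\ balancedness.

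The hard part is the cover case, and specifically the clean separation into the subcase $\kappa(K_Y+a(S,L)f^{*}L)=0$, where Theorem~\ref{theo: surfaces} and the finiteness/\'etale-in-codimension-one structure developed in Section~\ref{surfacesec} do the real work, and the subcase $\kappa(K_Y+a(S,L)f^{*}L)=1$, where one must extract the drop in the $b$-value by passing through the Iitaka fibration and comparing with covering curves of $S$. Everything else is bookkeeping built on Theorem~\ref{thm: a-exceptional}, Lemma~\ref{lemm: canonical}, Theorem~\ref{theo: MMPtoweakFano}, and the surface minimal model program.
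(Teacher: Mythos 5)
The central step of your proposal is circular. In the subcase you yourself identify as the hard one---covers $f:Y\to S$ with $a(Y,f^*L)=a(S,L)$ and $\kappa(K_Y+a(S,L)f^*L)=0$---you dispose of the $b$-comparison by invoking Theorem~\ref{theo: surfaces} ``verbatim.'' But in the paper Theorem~\ref{theo: surfaces} is not an independent input: its proof reads ``Theorem \ref{theo: surfaces} follows immediately from the rigid case of Theorem \ref{theo: balancedforsurfaces} and from the first part of Theorem \ref{theo: finitenessofcovers}.'' The inequality $b(Y,f^*L)<b(S,L)$ for $\deg f\geq 2$ is exactly the content you are supposed to establish, and your proposal contains no substitute for the paper's actual argument: reduce by the $(K_S+a(S,L)L)$-MMP to the case of a smooth weak del Pezzo $(S',-K_{S'})$ (Corollary~\ref{coro: surfaces_genericallyfinitecovers}, which checks that both the $a$- and $b$-values of $Y$ are unchanged when $f^*L$ is replaced by $g^*(-K_{S'})$); then for $S'=\mathbb{P}^2$ rule out the rigid case entirely by the Fujita-type volume bound of Proposition~\ref{threefoldbigbound}(2) (using $-K_{\mathbb P^2}\cdot C>2$ for every curve), and for weak del Pezzos of degree $d\leq 8$ run the MMP on $Y$ to a weak del Pezzo $Y'$ and use the degree comparison $d\cdot\deg(f)=(f^*L)^2\leq(-K_{Y'})^2=d'$, which via the degree--Picard-rank relation forces $b(Y,f^*L)=\rho(Y')<\rho(S')=b(S,L)$. (The \'etale-in-codimension-one structure you mention belongs to Theorem~\ref{theo: finitenessofcovers} and is not what produces the $b$-drop.) Without some version of this quantitative input your treatment of degree $\geq 2$ covers is unproven.

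Two secondary problems. First, your closed set $V$ is built from Theorem~\ref{thm: a-exceptional} and, in the $S'=\mathbb{P}^2$ branch, from Theorem~\ref{firstfinitefamily}; both require $L$ big and \emph{semiample}, whereas the present theorem assumes only big and nef, and on rational surfaces big and nef does not imply semiample (Zariski's example), so these results cannot be cited as stated---the paper sidesteps this by quoting \cite[Proposition 5.9]{LTT14} for the subvariety case. Second, your claim that when $S'=\mathbb{P}^2$ ``comparing a general covering curve with a line'' gives $a(C,L|_C)\leq\tfrac23 a(S,L)$ is not justified for covering curves meeting the $\psi$-exceptional locus: since $\psi^*\psi_*L=L+E'$ with $E'$ effective and exceptional, the comparison with a line bounds $L\cdot C$ from \emph{above}, not below. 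The correct argument is that equality $a(C,L|_C)=a(S,L)$ for a free rational curve forces $-K_S\cdot C=2$ and $E\cdot C=0$ (where $E\equiv a(S,L)L+K_S$ is the rigid adjoint divisor containing $\mathrm{Exc}(\psi)$), so $C$ is disjoint from the exceptional locus and maps to a rational curve on $\mathbb{P}^2$ of anticanonical degree $2$, which is absurd; this also repairs the step where you rule out covers with $\kappa(K_Y+a(S,L)f^*L)=1$ over $S'=\mathbb{P}^2$. The remaining bookkeeping (the curve computations, $b(S,L)=1$ in the $\kappa=1$ case via Lemma~\ref{lemm: canonical}, and the dichotomy on $\kappa(K_Y+a(S,L)f^*L)$) is fine and consistent with the paper.
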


In the Iitaka dimension $1$ case, the geometry is particularly simple.  The fibers of the Iitaka fibration break the balanced property, and any finite cover breaking the balanced property is birational to a base change of the Iitaka fibration.  Thus we can expect the growth of rational points to be entirely controlled by the $\mathbb{P}^{1}$ fibers.

In the Iitaka dimension $0$ case, the following theorem complements the balanced property.

\begin{theo}
 \label{theo: finitenessofcovers}
 Let $S$ be a smooth uniruled projective surface and let $L$ be a big and nef $\mathbb{Q}$-divisor on $S$ such that $(S,L)$ is adjoint rigid.  Then any generically finite cover $f: Y \to S$ satisfying
 \begin{itemize}
 \item $a(Y,f^{*}L) = a(S,L)$, and
 \item $(Y,f^{*}L)$ is adjoint rigid
 \end{itemize}
 is birationally equivalent to a morphism which is \'etale in codimension $1$, and there are only finitely many such covers up to birational equivalence.

Moreover, if $S$ is a smooth del Pezzo surface, then there is no such generically finite cover of degree $\geq 2$.
\end{theo}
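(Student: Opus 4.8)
The plan is to reduce the problem to finiteness of étale-in-codimension-one covers of a del Pezzo surface with du Val singularities, where finiteness follows from finiteness of the fundamental group of the smooth locus.

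\textbf{Step 1: reduction to del Pezzo models.} Since $(S,L)$ is adjoint rigid, Theorem~\ref{theo: MMPtoweakFano} gives a birational contraction $S \dashrightarrow \widetilde S$ to a smooth weak del Pezzo surface $\widetilde S$ with $a\,\widetilde L + K_{\widetilde S}\equiv 0$, where $\widetilde L$ is the pushforward of $L$ and $a=a(S,L)$, and with $b(S,L)=b(\widetilde S,\widetilde L)$. Contracting the $(-2)$-curves (i.e.\ passing to the anticanonical model) yields a del Pezzo surface $\overline S$ with du Val singularities together with an ample $\mathbb Q$-divisor $\overline L\sim_{\mathbb Q}-K_{\overline S}/a$, with $a,b$ unchanged. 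As the quantities in the statement are birational invariants under pullback of the polarization, it suffices to prove the theorem with $(\overline S,\overline L)$ in place of $(S,L)$ and with the cover replaced by the normalization $\overline f\colon \overline Y\to\overline S$ of $\overline S$ in $\overline F(Y)$, noting that $\overline Y$ is again birational to $Y$. I would also record here a feature of the del Pezzo case: if $S$ is a smooth del Pezzo surface then every step of the $(K_S+aL)$-MMP is a divisorial contraction (no Mori fibre space can occur since $K_S+aL$ is pseudo-effective), hence the contraction of a $(-1)$-curve; since blowing down a $(-1)$-curve on a del Pezzo surface again gives a del Pezzo surface, $\widetilde S$ is a smooth del Pezzo surface, has no $(-2)$-curves, and so coincides with $\overline S$.

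\textbf{Step 2: the cover is étale in codimension one.} For the finite morphism $\overline f$ I would use the ramification formula (for reflexive differentials, after passing to a $\mathbb Q$-Gorenstein index)
\[
K_{\overline Y}+a\,\overline f^{\,*}\overline L \;\sim_{\mathbb Q}\; \overline f^{\,*}\!\left(K_{\overline S}+a\overline L\right)+\mathrm{Ram}(\overline f)\;=\;\mathrm{Ram}(\overline f),
\]
using $K_{\overline S}+a\overline L\sim_{\mathbb Q}0$. Since adjoint rigidity of $(Y,f^{*}L)$ is a birational invariant, $\kappa\big(K_{\overline Y}+a\,\overline f^{\,*}\overline L\big)=0$, so $\mathrm{Ram}(\overline f)$ is a rigid effective divisor. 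Suppose $\mathrm{Ram}(\overline f)\neq 0$; then $\overline f$ has a genuine branch divisor $\overline B\subset \overline S$ meeting the smooth locus of $\overline S$. On the other hand, writing $\widetilde Y$ for the minimal model of $(Y,f^{*}L)$ (on which $K_{\widetilde Y}+a\widetilde L_Y\equiv 0$), the support of the rigid divisor $\sim_{\mathbb Q}K_Y+af^{*}L$ lies in the exceptional locus of $Y\dashrightarrow \widetilde Y$, hence so does the ramification of $f$. Tracing this through the two minimal model programs and using the classification of $(-K)$-trivial curves on a weak del Pezzo surface, I expect to conclude that the strict transform of $\overline B$ on $\widetilde S$ must be a $(-2)$-curve, hence is contracted on the anticanonical model $\overline S$, contradicting the fact that $\overline B$ dominates an honest branch divisor. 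Therefore $\mathrm{Ram}(\overline f)=0$, i.e.\ $\overline f$ is étale in codimension one; in particular $\overline Y$ inherits du Val singularities from $\overline S$ and $-K_{\overline Y}\sim_{\mathbb Q}a\,\overline f^{\,*}\overline L$ is ample, so $\overline Y$ is again a del Pezzo surface with du Val singularities.

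\textbf{Step 3: finiteness, and the del Pezzo case.} An étale-in-codimension-one cover of $\overline S$ is the same as a finite étale cover of the smooth locus $\overline S^{\mathrm{reg}}$; since the minimal resolution $\widetilde S$ is simply connected, $\pi_{1}^{\mathrm{et}}(\overline S^{\mathrm{reg}})$ is generated by the (finite) local fundamental groups at the du Val points and is finite — this is the surface case of the input from \cite{Xu14}, \cite{GKP16} — while $\deg \overline f$ is bounded (for instance by \cite{birkar16}). Hence there are only finitely many such covers up to isomorphism, and since each determines the field $\overline F(Y)$ this gives finitely many $Y$ up to birational equivalence. Finally, if $S$ is a smooth del Pezzo surface then by Step 1 we have $\overline S=\widetilde S$ smooth, so by purity of the branch locus any cover of $\overline S$ étale in codimension one is actually étale; a smooth del Pezzo surface being simply connected, such a cover is trivial, and therefore no cover of the required type has $\deg f\geq 2$.

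\textbf{The main obstacle} is Step 2: showing that the ramification divisor vanishes on the del Pezzo model, i.e.\ that $\overline f$ ramifies only over the du Val points of $\overline S$. The two hypotheses — adjoint rigidity of $(Y,f^{*}L)$, which forces the ramification to be rigid, and $a(Y,f^{*}L)=a(S,L)$, which prevents the ramification from being ``large'' — should together confine every branch curve to the $(-K)$-trivial locus of the weak del Pezzo model, but turning this into a proof requires a careful and somewhat delicate comparison of the $(K+aL)$-minimal model programs run on $Y$ and on $S$.
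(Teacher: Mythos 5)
Your overall plan (pass to the weak del Pezzo model, contract the $(-2)$-curves, show the induced finite cover is \'etale in codimension one, deduce finiteness from finiteness of the \'etale fundamental group of the smooth locus, and use simple connectedness to kill degree $\geq 2$ covers when $S$ is del Pezzo) is the same as the paper's, but the step you yourself flag as the main obstacle is a genuine gap rather than a verification you have deferred. Knowing that every component of the rigid effective divisor numerically equivalent to $K_{Y}+a f^{*}L$ (hence every component of the ramification divisor) is contracted by the $(K_{Y}+af^{*}L)$-MMP on $Y$ says nothing, by itself, about the images of those components in $S$: a curve contracted upstairs can perfectly well dominate a curve of positive anticanonical degree downstairs, and excluding this is exactly the content you would need. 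The paper's proof of this point is Theorem \ref{genfinitesurf}: an induction on $r-\rho(Y/Y')$ in which one observes that, since the pushforward $R'$ of the ramification divisor is contracted by the continuation of the MMP and the pair $(Y',\phi_{*}f^{*}L)$ is terminal, there is a $(-1)$-curve $C\subset\Supp(R')$ with $R'\cdot C<0$; because $R'$ is the pushforward of a Cartier divisor, $R'\cdot C$ is an integer, so $(K_{Y'}-f'^{*}K_{S'})\cdot C\leq -1$, which together with $K_{Y'}\cdot C=-1$ and $-f'^{*}K_{S'}\cdot C\geq 0$ forces $-f'^{*}K_{S'}\cdot C=0$; one then contracts and repeats. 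This integrality-plus-intersection argument is the missing idea that confines the branch locus to the $K$-trivial (i.e.\ $(-2)$-curve) locus, and it is precisely what makes the ``moreover'' statement work on a smooth del Pezzo surface, where there are no $(-2)$-curves and hence the pushforward of $R$ must be empty. Without it, your Steps 2 and 3 have nothing to stand on.

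Two secondary points. First, in Step 1 the reduction is not a matter of ``birational invariance'': you change the polarization from $L$ to (a multiple of) $-K$ on the minimal model, and you must check that $a(Y,f^{*}L)=a(Y,f^{*}\phi^{*}(-K_{S'}))$ and that adjoint rigidity is preserved; the paper does this in Theorem \ref{secondsurfacetheo} by noting that $\phi^{*}(-K_{S'})-a(S,L)L$ is effective with support inside the rigid divisor on $S$, so the discrepancy is absorbed into the adjoint divisor upstairs. Relatedly, working directly with the normalization $\overline{Y}$ of $\overline{S}$ in the function field requires care with the ramification formula and Iitaka dimension, since $\overline{Y}$ is a priori only normal; the paper sidesteps this by running the MMP on the smooth $Y$ and taking a Stein factorization only at the end. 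Second, your finiteness justification --- that $\pi_{1}^{\mathrm{et}}$ of the smooth locus is ``generated by the finite local fundamental groups and hence finite'' --- is not a proof (a group normally generated by finite subgroups need not be finite); finiteness here is a theorem (Gurjar--Zhang and Keel--McKernan, cited in the paper, or the more general results of \cite{Xu14} and \cite{GKP16}), and once you invoke it the appeal to \cite{birkar16} for a degree bound is unnecessary.
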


\begin{proof}[Proof of Theorem \ref{theo: surfaces}:]
Theorem \ref{theo: surfaces} follows immediately from the rigid case of Theorem \ref{theo: balancedforsurfaces} and from the first part of Theorem \ref{theo: finitenessofcovers}.
\end{proof}

The goal of this section is to prove Theorems \ref{theo: balancedforsurfaces} and \ref{theo: finitenessofcovers}.  We start with the balanced property.  Since the balanced property for subvarieties was addressed by \cite[Proposition 5.9]{LTT14}, we will focus on generically finite covers.  First we handle two special cases.

\begin{prop}
 Let $S = \mathbb P^2$ and $L = -K_S$. Then $L$ is strongly $a$-balanced with respect to
 any generically finite cover of degree $\geq 2$.
\end{prop}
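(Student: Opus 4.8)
The plan is to show that for any generically finite cover $f : Y \to \mathbb{P}^{2}$ of degree $\geq 2$ with $L = -K_{\mathbb{P}^2}$, one has $a(Y, f^{*}L) < a(\mathbb{P}^{2}, L) = 1$. First I would pass to a smooth birational model, so we may assume $Y$ is smooth and $f$ is a finite morphism composed with a birational morphism; since $a$ is a birational invariant of the pullback (as recorded in the definition of the Fujita invariant), this loses nothing. The key computational input is the ramification formula: if $f : Y \to \mathbb{P}^{2}$ is finite of degree $d \geq 2$ with $Y$ normal, then $K_{Y} = f^{*}K_{\mathbb{P}^2} + R$ where $R$ is the (effective, nonzero) ramification divisor — nonzero precisely because $\deg f \geq 2$ and $\mathbb{P}^2$ is simply connected, so $f$ cannot be étale in codimension one. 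Hence $f^{*}L + K_{Y} = -f^{*}K_{\mathbb{P}^2} + f^{*}K_{\mathbb{P}^2} + R = R$, which is effective. So already $a(Y, f^{*}L) \leq 1$.

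To upgrade this to a strict inequality, I would argue that $a(Y, f^{*}L) = 1$ would force $f^{*}L + K_{Y} = R$ to lie on the boundary of $\Lambda_{\mathrm{eff}}(Y)$ in a way incompatible with $R$ being a nonzero effective divisor that is rigid-or-better. Concretely: $f^{*}L = -f^{*}K_{\mathbb{P}^2}$ is big and nef, and $t f^{*}L + K_{Y} = (t-1)f^{*}L + R$; for $t$ slightly less than $1$ this is $R$ minus a small ample-ish class. One clean way to finish is to invoke Theorem~\ref{theo: surfacebigbound}: $K_{Y} + f^{*}L$ is big as soon as $f^{*}L \cdot C > 3$ for every rational curve $C$ through a general point of $Y$. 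Take $C$ to be the strict transform of (the preimage component of) a general line $\ell \subset \mathbb{P}^2$; then $f^{*}L \cdot C = f^{*}(-K_{\mathbb{P}^2}) \cdot C = (-K_{\mathbb{P}^2}) \cdot f_{*}C$, and since $\deg f \geq 2$ and a general line is irreducible with irreducible preimage components mapping with multiplicity, one gets $f_{*}C$ has degree $\geq 1$ times the covering degree contribution... here I need to be slightly careful, so the safer route is the direct one below.

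The cleanest argument, and the one I would actually write: let $\ell$ be a general line in $\mathbb{P}^2$ and let $C \subset Y$ be an irreducible component of $f^{-1}(\ell)$ dominating $\ell$; such $C$ moves in a family covering $Y$. Then $f^{*}L \cdot C = (-K_{\mathbb{P}^2} \cdot \ell) \cdot (\deg(C/\ell)) = 3 \cdot e$ where $e = \deg(C \to \ell) \geq 1$. If some component has $e \geq 2$ we win by Theorem~\ref{theo: surfacebigbound}. If every component of the preimage of a general line maps with degree $1$, then I would instead look at the ramification: since $\deg f \geq 2$, over a general line the cover $f^{-1}(\ell) \to \ell$ has degree $d \geq 2$ but is a disjoint-ish union of sections, which by the Hurwitz formula on the normalization forces the branch locus $B \subset \mathbb{P}^2$ to be empty along $\ell$ — but a general line meets the branch divisor $B$ (nonempty since $\mathbb{P}^2$ is simply connected), contradiction. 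Hence the first case always occurs and $K_Y + f^*L$ is big, i.e. $a(Y, f^{*}L) < 1 = a(\mathbb{P}^2, -K_{\mathbb{P}^2})$. The main obstacle is the bookkeeping in the degenerate case where preimages of lines split into degree-one pieces; I expect to dispatch it exactly as above, by playing the simple connectedness of $\mathbb{P}^2$ (forcing a nonempty branch divisor) against the genericity of the line, and this is the only place the hypothesis $\deg f \geq 2$ is genuinely used.
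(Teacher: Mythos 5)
There is a genuine gap in the step where you conclude bigness of $K_Y + f^*L$. Theorem \ref{theo: surfacebigbound} requires $f^*L \cdot C > 3$ for \emph{every} rational curve $C$ through a general point of $Y$, but you only verify this for one covering family, namely the components of preimages of general lines with $e \geq 2$. That does not check the hypothesis: there may be an entirely different covering family of rational curves mapping birationally onto lines, for which $f^*L \cdot C = 3$ exactly, and then Theorem \ref{theo: surfacebigbound} simply does not apply. This is not a hypothetical worry. Take $Y = \mathbb{P}^1 \times \mathbb{P}^1$, realized as a quadric in $\mathbb{P}^3$, and let $f: Y \to \mathbb{P}^2$ be the degree $2$ cover given by projection from a point off the quadric. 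The preimage of a general line is an irreducible $(1,1)$-curve mapping $2{:}1$ to the line (so your dichotomy lands in the ``win'' case), yet each ruling gives rational curves through every point mapping isomorphically onto lines, with $f^*L \cdot C = 3$; the hypothesis of Theorem \ref{theo: surfacebigbound} fails, so your argument does not establish bigness for this cover (the conclusion $a(Y,f^*L) = 2/3 < 1$ is of course still true). A similar failure occurs for the normalized fiber product of $\mathbb{P}^2 \dashrightarrow \mathbb{P}^1$ with a double cover of $\mathbb{P}^1$. Your Case-2 analysis (simple connectedness forces a nonempty branch divisor, so a general line cannot split completely) is fine, but the dichotomy itself is aimed at the wrong condition.

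The repair is to use Proposition \ref{threefoldbigbound}(2) instead, which is exactly what the paper does: its curve threshold is $>2$, not $>3$, and every curve $C$ through a general point of $Y$ satisfies $f^*L \cdot C = 3\deg(f_*C) \geq 3 > 2$ by the projection formula, with no case analysis; the hypothesis $\deg f \geq 2$ enters only through the volume bound $\vol(f^*L) = 9\deg f > 9$. This gives bigness of $K_Y + f^*L$, hence $a(Y,f^*L) < 1 = a(\mathbb{P}^2,-K_{\mathbb{P}^2})$, for all covers at once. Your opening reduction (ramification formula, $a(Y,f^*L) \leq 1$) is correct but is not needed once the volume criterion is invoked.
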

\begin{proof}
 Use Proposition~\ref{threefoldbigbound}(2). Note that for any curve $C$ on $S$, we have $L\cdot C >2$.
\end{proof}

\begin{prop}
Let $S$ be a smooth weak del Pezzo surface of degree $d \leq 8$ and $L = -K_S$.
Then $L$ is balanced with respect to any generically finite cover $f : Y \rightarrow S$ of degree $\geq 2$.
\end{prop}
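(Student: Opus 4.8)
The plan is to argue by a trichotomy on the Fujita invariant $a(Y,f^{*}L)$ and the Iitaka dimension of the adjoint divisor, using the volume bound packaged in Theorem~\ref{theo: MMPtoweakFano} together with Lemma~\ref{lemm: canonical}. First I would record the relevant invariants of $S$: since $-K_{S}$ is big and nef we have $a(S,-K_{S})=1$, and the adjoint class $a(S,L)[L]+[K_{S}]$ equals $0$, so $b(S,L)$ is the codimension of $\{0\}$ in $\mathrm{NS}(S)_{\mathbb{R}}$, that is, $b(S,L)=\rho(S)$. A weak del Pezzo surface is rational, so Noether's formula gives $\rho(S)=10-d$, which is $\geq 2$ because $d\leq 8$. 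After replacing $Y$ by a resolution of singularities (which changes none of $a$, $b$, $\vol(f^{*}L)$) I may assume $Y$ is a smooth projective surface. Then $f^{*}L=-f^{*}K_{S}$ is big and nef, the divisor $K_{Y}-f^{*}K_{S}$ is effective since it is the ramification divisor of the generically finite morphism $f$ of smooth surfaces, so $a(Y,f^{*}L)\leq 1$, and $\vol(f^{*}L)=\deg(f)\cdot\vol(-K_{S})=\deg(f)\cdot d\geq 2d$.

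If $a(Y,f^{*}L)<1$ then $L$ is already $a$-balanced, hence balanced, with respect to $f$, so from now on assume $a(Y,f^{*}L)=1$. Suppose first that $(Y,f^{*}L)$ is adjoint rigid. Then Theorem~\ref{theo: MMPtoweakFano} gives a birational contraction $\phi\colon Y\dashrightarrow Y'$ onto a $\mathbb{Q}$-factorial terminal weak Fano surface $Y'$ with $\phi_{*}(f^{*}L)\equiv -K_{Y'}$ and $b(Y,f^{*}L)=\mathrm{rk}\,\mathrm{NS}(Y')$. A terminal surface singularity is smooth, so $Y'$ is a smooth, hence rational, weak del Pezzo surface, and Noether's formula gives $\mathrm{rk}\,\mathrm{NS}(Y')=10-(-K_{Y'})^{2}$. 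A birational contraction does not decrease the volume of a big and nef divisor (as in the proof of Corollary~\ref{coro: bounded}), so $(-K_{Y'})^{2}=\vol(\phi_{*}(f^{*}L))\geq\vol(f^{*}L)\geq 2d$, whence $b(Y,f^{*}L)\leq 10-2d<10-d=b(S,L)$. Thus $(a(Y,f^{*}L),b(Y,f^{*}L))<(a(S,L),b(S,L))$ in the lexicographic order.

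It remains to treat the case $a(Y,f^{*}L)=1$ with $(Y,f^{*}L)$ not adjoint rigid. The divisor $K_{Y}+f^{*}L=K_{Y}-f^{*}K_{S}$ is effective, so $\kappa(K_{Y}+f^{*}L)\geq 0$; it cannot equal $2$, since then $K_{Y}+f^{*}L$ would be big and $a(Y,f^{*}L)$ would be strictly less than $1$; so $\kappa(K_{Y}+f^{*}L)=1$. Since $a(Y,f^{*}L)>0$ the variety $Y$ is uniruled, so Lemma~\ref{lemm: canonical} applies: a general fiber $F$ of the canonical fibration associated to $K_{Y}+f^{*}L$ satisfies $b(Y,f^{*}L)\leq b(F,f^{*}L|_{F})$, and $F$ is a smooth curve because the base of the fibration has dimension $\kappa(K_{Y}+f^{*}L)=1$. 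The $b$-invariant of any smooth curve equals $1$ (its N\'eron--Severi space is one-dimensional and the corresponding adjoint class is the boundary point $0$), so $b(Y,f^{*}L)\leq 1<2\leq b(S,L)$, which again gives the strict lexicographic inequality. This completes all cases.

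I do not expect a conceptual obstacle; the care goes into bookkeeping. One must check that passing to a smooth model is compatible with the volume identity and with the ramification estimate $a(Y,f^{*}L)\leq 1$, which follows from the birational invariance of $a$, $b$, and $\vol$ under pullback of $L$; and one must check that the variety $Y'$ produced by Theorem~\ref{theo: MMPtoweakFano} really is a smooth weak del Pezzo surface, so that Noether's formula applies to it. Note that $\deg(f)\geq 2$ is used precisely in the adjoint-rigid case to get $\vol(f^{*}L)\geq 2d$, and $d\leq 8$ (equivalently $\rho(S)\geq 2$) is used precisely in the non-adjoint-rigid case.
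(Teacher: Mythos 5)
Your proposal is correct and follows essentially the same route as the paper: the dichotomy on the Iitaka dimension of $K_{Y}+f^{*}L$ (with $b=1$ in the fibered case, which you justify via Lemma~\ref{lemm: canonical}), and in the adjoint rigid case running the MMP to a smooth weak del Pezzo $Y'$ and using $\deg(f)\cdot d=(f^{*}L)^{2}\leq(-K_{Y'})^{2}$ together with Noether's formula to force $b(Y,f^{*}L)=\rho(Y')<\rho(S)=b(S,L)$. The only differences are cosmetic: you make explicit the Noether-formula step and the $b=1$ claim that the paper asserts directly (via \cite[Lemma 3.10]{LTT14}).
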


\begin{proof}
Note that $a(S,L) = 1$ and $b(S,L) = \rho(S) \geq 2$.  Without loss of generality we may suppose that $Y$ is smooth.  We have $a(Y,f^{*}L) \leq a(S,L)$ by the ramification formula.

Suppose we have a generically finite cover $f: Y \rightarrow S$ such that the adjoint divisor $K_{Y} + f^*(-K_{S})$ has positive Iitaka dimension.  Then $b(Y,f^{*}L) = 1 < b(S,L)$ and the balanced property automatically follows.  So henceforth we only consider the case when $K_{Y} + f^*L$ is rigid.  Note that the adjoint divisor $K_{Y} + f^*L$ is exactly the ramification divisor $R$ on $Y$.
 
 We apply the $(f^*L+K_Y)$-MMP to $Y$ and obtain a minimal model $\phi: Y \rightarrow Y'$.
 Then $Y'$ is a smooth weak del Pezzo surface and it follows from \cite[Lemma 3.10]{LTT14} that 
 \[
 b(Y, f^*L) = \rho(Y').
 \]
 Let $d'$ denote the degree of the smooth weak del Pezzo surface $Y'$ and let $e \geq 2$ be the degree of the generically finite cover $f:Y \rightarrow S$. We have
  \[
 de = (f^*L)^2 \leq (\phi^*\phi_*f^*L)^2 = (-K_{Y'})^2 = d'.
 \]
so that $d' > d$.  Thus $b(Y,f^{*}L) = \rho(Y') < \rho(S) = b(S, L)$ and $L$ is balanced with respect to $f$.
\end{proof}

Combining the previous two results shows that:
\begin{theo}
\label{theo: weakdelPezzo}
 Let $S$ be a smooth weak del Pezzo surface and $L = -K_X$.
 Then $L$ is balanced with respect to any generically finite cover of degree $\geq 2$.
\end{theo}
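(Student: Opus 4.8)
The plan is to deduce the theorem from the two propositions established immediately above, organized by the classification of smooth weak del Pezzo surfaces. Recall that a smooth weak del Pezzo surface $S$ has anticanonical degree $d = K_S^2$ with $1 \leq d \leq 9$, that $d = 9$ holds if and only if $S \cong \bP^2$, and that every such $S$ with $d \leq 8$ satisfies $\rho(S) \geq 2$. This dichotomy matches exactly the hypotheses of the two preceding propositions, so a case analysis on $d$ will suffice.

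Let $f : Y \to S$ be a generically finite cover with $\deg(f) \geq 2$, and set $L = -K_S$. Since $L$ is big and nef and $f$ is generically finite, the pullback $f^{*}L$ is again big and nef; in particular $f^{*}L$ is big, so the first requirement in the definition of the ($a$-)balanced property is automatic. In the case $S \cong \bP^2$, the first proposition gives $a(Y, f^{*}L) < a(S, L)$, i.e.~$L$ is strongly $a$-balanced with respect to $f$. Because the $a$-value strictly drops, the pair $(a(Y, f^{*}L), b(Y, f^{*}L))$ is strictly smaller than $(a(S,L), b(S,L))$ in the lexicographic order \emph{regardless} of the two $b$-values; hence $L$ is balanced with respect to $f$.

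In the remaining case $d \leq 8$, the second proposition asserts directly that $L = -K_S$ is balanced with respect to every generically finite cover of degree $\geq 2$, which is what we want. Combining the two cases gives the theorem. There is no real obstacle here; the only point deserving a word of care is the implication ``strongly $a$-balanced $\Rightarrow$ balanced'' used in the $\bP^2$ case, which is immediate from the definition of the lexicographic order since a strict decrease in the first coordinate dominates the behavior of the second.
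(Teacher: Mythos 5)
Your proposal is correct and is essentially the paper's own argument: the paper derives Theorem \ref{theo: weakdelPezzo} precisely by combining the $\mathbb{P}^2$ proposition (the degree $9$ case, where the strict drop in the $a$-value immediately gives lexicographic strict inequality) with the proposition for weak del Pezzo surfaces of degree $d \leq 8$. Your extra remarks on the classification by degree and on ``strongly $a$-balanced $\Rightarrow$ balanced'' are exactly the implicit glue the paper leaves unstated, so there is nothing to add.
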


This special case is enough to deduce a statement for arbitrary big and nef divisors on surfaces as follows.

\begin{coro}
\label{coro: surfaces_genericallyfinitecovers}
 Let $S$ be a smooth uniruled projective surface and $L$ a big and nef $\mathbb Q$-divisor on $S$.
 Then $L$ is weakly balanced with respect to any generically finite cover $f : Y \rightarrow S$ of degree $\geq 2$.
 Moreover, if $a(S, L)L + K_S$ is rigid, then $L$ is balanced with respect to $f$.
\end{coro}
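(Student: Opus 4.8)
The plan is to reduce the case of an arbitrary big and nef $\mathbb{Q}$-divisor $L$ to the weak del Pezzo case already handled in Theorem \ref{theo: weakdelPezzo}. Let $f : Y \rightarrow S$ be a generically finite cover of degree $\geq 2$; we may assume $Y$ is smooth. First I would dispose of the case where $(Y,f^{*}L)$ fails to be adjoint rigid: by the ramification formula $a(Y,f^{*}L) \leq a(S,L)$, and if the inequality is strict then $f^{*}L$ is (weakly) $a$-balanced and we are done; if $a(Y,f^{*}L)=a(S,L)$ but $\kappa(K_{Y}+a(S,L)f^{*}L) > 0$, then $b(Y,f^{*}L)=1$ by \cite[Lemma 3.10]{LTT14} (applied after running the relevant MMP), and since $b(S,L) \geq 1$ this already gives weak balancedness; moreover if $a(S,L)L+K_{S}$ is rigid then $b(S,L) \geq 1$ and we need the strict comparison, which will be subsumed in the adjoint-rigid analysis below. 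So the heart of the matter is the case where both $(S,L)$ and $(Y,f^{*}L)$ are adjoint rigid with equal $a$-values.

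In the adjoint-rigid case I would apply Theorem \ref{theo: MMPtoweakFano} to both sides. Running the $(a(S,L)L+K_{S})$-MMP produces a birational contraction $\phi : S \dashrightarrow S'$ onto a $\mathbb{Q}$-factorial terminal weak Fano surface $S'$ with $a(S,L)\phi_{*}L + K_{S'} \equiv 0$ and $b(S,L)=b(S',\phi_{*}L)=\rho(S')$; since $S'$ is a surface with terminal (hence smooth) singularities, $S'$ is an honest smooth weak del Pezzo surface and $\phi_{*}L$ is proportional to $-K_{S'}$. Similarly the $(a(Y,f^{*}L)f^{*}L+K_{Y})$-MMP produces a smooth weak del Pezzo $Y'$ with $b(Y,f^{*}L)=\rho(Y')$ and $\psi_{*}f^{*}L$ proportional to $-K_{Y'}$. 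The birational map $Y \dashrightarrow S \dashrightarrow S'$ together with $Y \dashrightarrow Y'$ exhibits $Y'$ as (birational to) a generically finite degree-$\geq 2$ cover of $S'$ — strictly one must check the degree is preserved, which follows since birational contractions do not change the generic fiber of $f$. Now Theorem \ref{theo: weakdelPezzo} applies to $Y' \to S'$ (after replacing it by a genuine morphism on suitable models): $-K_{S'}$ is balanced with respect to it, i.e. $b(Y',-K_{Y'}) < b(S',-K_{S'})$. Since $b$-values are proportionality-invariant for the divisors in question and $\phi_{*}L \propto -K_{S'}$, $\psi_{*}f^{*}L \propto -K_{Y'}$, we conclude $b(Y,f^{*}L) = \rho(Y') < \rho(S') = b(S,L)$, which is exactly the balanced property.

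The main obstacle I anticipate is the bookkeeping needed to pass the generically finite cover $f$ through the two minimal model programs: a priori $\phi$ and $\psi$ are only birational maps, not morphisms, so one must choose a common smooth resolution dominating $S$, $S'$, $Y$, and $Y'$ on which $f$ lifts to an actual morphism, and then verify that the degree and the ramification behavior are unchanged — and that the weak del Pezzo $Y'$ obtained really does cover $S'$ with degree $\geq 2$ rather than acquiring extra contracted divisors. Once that compatibility is established the inequality of $b$-values is immediate from Theorem \ref{theo: weakdelPezzo}. The only other point requiring care is the borderline case where $a(S,L)L+K_{S}$ is \emph{not} rigid, for which only the weak (non-strict) comparison is claimed; there one invokes Lemma \ref{lemm: canonical} and the Iitaka fibration of $S$ together with the degree-$1$ reduction above.
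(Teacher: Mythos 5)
Your overall strategy --- reduce to the weak del Pezzo case and invoke Theorem \ref{theo: weakdelPezzo} --- is the paper's strategy, and you correctly locate the difficulty in passing $f$ through the MMPs. But the step you use to discharge that difficulty does not work as stated. You run the $(K_Y + a f^*L)$-MMP to get a weak del Pezzo $Y'$ with $\rho(Y') = b(Y,f^*L)$, and then want to apply Theorem \ref{theo: weakdelPezzo} to ``$Y' \to S'$'', concluding via ``proportionality invariance'' of $b$-values. The problem is that the divisor $f^*L$ is \emph{not} proportional to the pullback of $-K_{S'}$ along $g = \phi \circ f$: writing $\phi^*\phi_*L = L + E'$ with $E'$ effective and $\phi$-exceptional (negativity lemma), one has $g^*(-K_{S'}) = a\,f^*(L+E')$, which differs from $a f^*L$ by $a f^*E'$. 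Consequently the weak del Pezzo $Y'$ produced by your MMP is not a priori the one relevant to the cover of $S'$, and the degree inequality $(-K_{Y'})^2 \geq e\,(-K_{S'})^2$ underlying Theorem \ref{theo: weakdelPezzo} need not hold for it (you only get $(-K_{Y'})^2 \geq e\,a^2 L^2$, while $(-K_{S'})^2 = a^2(\phi_*L)^2$ can be strictly larger). The missing idea is to show that $a(Y,f^*L) = a(Y,g^*\phi_*L)$ and $b(Y,f^*L) = b(Y,g^*\phi_*L)$; the paper does this by observing that the two adjoint divisors differ by $a f^*E'$, whose support lies inside $\mathrm{Supp}(f^*E)$ and hence inside the rigid divisor $f^*E + R \equiv K_Y + a f^*L$, so rigidity and the $a,b$-invariants are unchanged. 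Once that is established one applies Theorem \ref{theo: weakdelPezzo} to the honest morphism $g: Y \to S'$ (note that $\phi: S \to S'$ is a genuine morphism, since surface MMP steps are divisorial contractions, so none of the resolution gymnastics you anticipate on the $S$-side is actually needed).

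There is a second gap in your case division. The case where $a(S,L)L+K_S$ is rigid, $a(Y,f^*L)=a(S,L)$, but $K_Y + a f^*L$ has Iitaka dimension $1$ is \emph{not} subsumed in the adjoint-rigid analysis, since that analysis assumes $(Y,f^*L)$ adjoint rigid; and the strict inequality you need there, namely $b(S,L) > 1 = b(Y,f^*L)$, is not automatic (e.g.\ $b(S,L)=1$ for $S=\mathbb{P}^2$, $L=-K_S$). The paper handles this case by pushing forward a general fiber $F$ of the Iitaka fibration of $Y$ to a curve $F'=f(F)$ through a general point of $S$ with $a(F',L)=a(S,L)$, and then invoking the balanced property for general curves on $S$ to force $b(S,L) > b(F',L) = 1$. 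You need this (or an equivalent) argument; without it the case is simply open.
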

\begin{proof}
 Without loss of generality, we may assume that $Y$ is smooth.
 It follows from the ramification formula that $a(Y, f^*L) \leq a(S, L)$.
 
 Suppose that $a(Y, f^*L) = a(S, L) =:a$.
 If $aL + K_S$ has Iitaka dimension $1$, then $af^*L + K_Y$ also has the same Iitaka dimension.
 Thus we have $a(Y, f^*L) = a(S, L)$ and $b(Y, f^*L) = 1 = b(S, L)$.
 The divisor $L$ is weakly balanced with respect to $f$.
 
 Suppose that $aL+K_S$ is rigid.  We split into two cases depending on the rigidity of $af^{*}L + K_{Y}$.  First suppose that $af^*L +K_Y$ is not rigid.  Then 
 $Y$ admits a fibration whose general fiber $F$ satisfies $a(F, f^*L) =a$.
 Let $F' = f(F)$. Since $L$ is balanced with respect to curves passing through a general point,
 we must have $a = a(F, f^*L) = a(F',L)$ and $b(S, L) > b(F', L) = 1 = b(Y, f^*L)$.
 Thus $L$ is balanced with respect to $f$.
 
 Next suppose that $af^*L +K_Y$ is also rigid.
 We apply the $aL+K_S$-MMP to $S$ and we obtain a birational morphism $\phi : S \rightarrow S'$ to a smooth projective surface
 such that $aL' + K_{S'}\equiv 0$ where $L' = \phi_*L$.
 In particular, $S'$ is a smooth weak del Pezzo surface.
 We denote the composition $\phi \circ f$ by $g$.
 It follows from \cite[Lemma 3.5]{LTT14} that $a(S, L) = a(S', L')$ and $b(S, L) = b(S', L')$.
 By construction, we have $aL+K_S \equiv E$ where $E$ is an $\phi$-exceptional effective divisor
 whose support is equal to the $\phi$-exceptional locus.
 By our assumption, $af^*L + K_Y = af^*L + f^*K_S + R \equiv f^*E + R$ is a rigid divisor.
 
 On the other hand, it follows from the negativity lemma that $\phi^*L' = \phi^*\phi_*L = L+ E'$
 where $E'$ is another $\phi$-exceptional effective divisor such that $\mathrm{Supp}(E') \subset \mathrm{Supp}(E)$.
 Thus we have 
 \[
  ag^*L' + K_Y = af^*L + af^*E' + f^*K_S + R \equiv af^*E + af^*E' + R.
 \]
 Since $\mathrm{Supp}(E') \subset \mathrm{Supp}(E)$, this divisor is also rigid which implies $a(Y, f^*L) = a(Y, g^*L')$.
 Moreover, using \cite[Lemma 3.10]{LTT14}, we also have $b(Y, f^*L) = b(Y, g^*L')$.
 Now our assertion follows from Theorem~\ref{theo: weakdelPezzo} since $L'$ is numerically proportional to $-K_{S'}$.
\end{proof}


The rest of this section is devoted to a proof of Theorem~\ref{theo: finitenessofcovers}.  As with the balanced property, it is easy to reduce the statement to the case when $S$ is a smooth weak del Pezzo surface and $L = -K_{S}$.  We separate this situation into cases based on degree.

\begin{lemm} \label{deg5plus}
Let $S$ be a smooth weak del Pezzo surface of degree $\geq 5$ and let $L = -K_{S}$.  
If $f: Y \to S$ is a generically finite cover of degree $\geq 2$ such that $(Y,f^{*}L)$ is rigid then $a(Y,\pi^{*}L) < a(X,L)$.
\end{lemm}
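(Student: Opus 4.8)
The plan is to push the minimal model program argument used to prove Theorem~\ref{theo: weakdelPezzo} in the degree $\le 8$ case slightly further: when $\deg(S)\ge 5$ the numerical inequality that argument produces is already incompatible with the output being a weak del Pezzo surface, which forces $a(Y,f^*L)$ to drop strictly.

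First I would reduce to the case $a(Y,f^*L)=a(S,L)$. We may assume $Y$ is smooth, and the ramification formula gives $K_Y+f^*L=K_Y-f^*K_S=R$, the (effective) ramification divisor; hence $a(Y,f^*L)\le 1=a(S,L)$, and if this inequality is already strict there is nothing to prove. So assume $a(Y,f^*L)=1$. Then the adjoint divisor of $(Y,f^*L)$ is $K_Y+f^*L=R$, and the hypothesis that $(Y,f^*L)$ is adjoint rigid says exactly that $R$ is rigid.

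Next I would run the $(K_Y+f^*L)$-MMP. Since $K_Y+f^*L$ is pseudo-effective, this terminates at a minimal model rather than a Mori fiber space; on a smooth surface it is a composition of contractions of $(-1)$-curves, so one obtains a birational morphism $\phi:Y\to Y'$ with $Y'$ smooth. As in Theorem~\ref{theo: MMPtoweakFano} and \cite[Lemma 3.5]{LTT14}, one has $K_{Y'}+\phi_*f^*L\equiv 0$; since $\phi_*f^*L$ lies in the movable cone (it is the pushforward of the nef divisor $f^*L$ under a birational contraction, exactly as in the proof of Theorem~\ref{theo: MMPtoweakFano}) it is nef on the surface $Y'$, so $-K_{Y'}\equiv\phi_*f^*L$ is big and nef and $Y'$ is a weak del Pezzo surface. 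Now compare degrees: because volume does not decrease under a birational contraction (as in the proof of Corollary~\ref{coro: bounded}),
\[
(-K_{Y'})^2=(\phi_*f^*L)^2=\vol(\phi_*f^*L)\ge\vol(f^*L)=(f^*L)^2=\deg(f)\cdot(-K_S)^2=\deg(f)\cdot\deg(S)\ge 10 .
\]
But $(-K_{Y'})^2=\deg(Y')\le 9$ for every smooth rational surface, in particular for every weak del Pezzo surface — a contradiction. Hence $a(Y,f^*L)<a(S,L)$.

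I do not expect a serious obstacle here, since the argument is essentially a repackaging of the preceding proposition's proof. The only point requiring care is the identification of the MMP output $Y'$ as a smooth weak del Pezzo surface, so that the bound $(-K_{Y'})^2\le 9$ applies; this uses that a $\mathbb Q$-factorial terminal surface is smooth and that the birational pushforward of a nef big divisor on a surface remains movable, hence nef, and is otherwise exactly the content of the cited lemmas.
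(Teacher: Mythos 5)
Your proof is correct. The paper disposes of this lemma in one line by citing the rigid-case volume bound of Proposition~\ref{threefoldbigbound}(2): since $(f^{*}L)^{2}=\deg(f)\cdot\deg(S)\geq 10>9$, adjoint rigidity forces $\vol(f^{*}L)\leq 9/a(Y,f^{*}L)^{2}$, hence $a(Y,f^{*}L)\leq\sqrt{9/10}<1$ (so the paper even records an explicit quantitative bound). You instead unwind that black box: after reducing to $a(Y,f^{*}L)=1$ via the ramification formula, you run the adjoint MMP as in Theorem~\ref{theo: MMPtoweakFano}, land on a smooth weak del Pezzo surface $Y'$ with $-K_{Y'}\equiv\phi_{*}f^{*}L$, and contradict the universal bound $(-K_{Y'})^{2}\leq 9$ using the volume comparison $(-K_{Y'})^{2}=\vol(\phi_{*}f^{*}L)\geq\vol(f^{*}L)\geq 10$. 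This is exactly the mechanism of the proposition immediately preceding the lemma (the degree $\leq 8$ case feeding into Theorem~\ref{theo: weakdelPezzo}), and it is essentially how the rigid clause of Proposition~\ref{threefoldbigbound}(2) is itself proved, so the two arguments have the same mathematical content; yours is more self-contained, while the paper's is shorter and gives the explicit bound $\sqrt{9/10}$ rather than mere strict inequality. The supporting steps you flag are all sound: the surface MMP output is smooth (only $(-1)$-curves are contracted, and small modifications are isomorphisms in dimension $2$), the pushforward of the nef divisor $f^{*}L$ stays movable and hence nef on a surface, $\vol(\phi_{*}D)\geq\vol(D)$ for a birational contraction, and $K^{2}\leq 9$ holds for every smooth rational surface, in particular for the weak del Pezzo $Y'$.
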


\begin{proof}
Note that $\pi^{*}L^{2} \geq 10$.  Thus we conclude by Proposition \ref{threefoldbigbound}.(2) that $a(Y,\pi^{*}L) \leq \sqrt{9/10}$.
\end{proof}

Thus if we have a smooth weak del Pezzo surface of degree $\geq 5$, then 
there is no generically finite cover $f:Y \rightarrow S$ of degree $\geq 2$ such that
the ramification divisor $R$ is rigid.
However, for smooth weak del Pezzo surfaces of degree $\leq 4$ whose anticanonical line bundle is not ample, it is possible that there is such a cover of degree $\geq 2$.

\begin{theo} \label{genfinitesurf}
Suppose we have a commutative square of surface morphisms
\begin{equation*}
\xymatrix{Y \ar[r]^{f}\ar[d]_{\phi}&  S \ar[d]^{\psi}\\
Y' \ar[r]_{f'} & S'}
\end{equation*}
satisfying the following conditions:
\begin{itemize}
\item $S$ is a smooth weak del Pezzo surface, $S'$ has canonical singularities, and $\psi$ is a birational crepant map.
\item $Y$ and $Y'$ are smooth and $\phi$ is birational.
\item $f$ is generically finite.
\item The $\psi$-exceptional locus is contained in the $f$-image of the $\phi$-exceptional locus.
\item The ramification divisor $R$ for the morphism $f$ is rigid and $\phi$ is a sequence of steps in the $(K_{Y} - f^{*}K_{S})$-MMP.
\end{itemize}
Then $f$ is birationally equivalent to a morphism of normal surfaces with canonical singularities which is \'etale in codimension $1$.
\end{theo}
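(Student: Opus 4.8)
The plan is to combine the purely formal consequences of crepancy and commutativity of the square with an MMP-plus-intersection argument on the lower row, reducing everything to the statement that the rigid ramification divisor of $f$ becomes $f'$-exceptional after the contraction $\phi$. First I would exploit the diagram: crepancy gives $\psi^{*}K_{S'}=K_{S}$, so commutativity $\psi\circ f=f'\circ\phi$ yields $f^{*}K_{S}=\phi^{*}(f')^{*}K_{S'}$. Writing $R=K_{Y}-f^{*}K_{S}$ and $K_{Y}=\phi^{*}K_{Y'}+E_{\phi}$ with $E_{\phi}\ge 0$ supported exactly on $\mathrm{Exc}(\phi)$ (valid since $Y,Y'$ are smooth and $\phi$ is a sequence of MMP steps), and setting $R':=K_{Y'}-(f')^{*}K_{S'}$, one obtains
\[
R=E_{\phi}+\phi^{*}R',\qquad R'=\phi_{*}R\ge 0.
\]
Since $E_{\phi}$ is effective and $\phi$-exceptional we have $\phi_{*}\mathcal{O}_{Y}(mE_{\phi})=\mathcal{O}_{Y'}$, hence $\kappa(Y',R')=\kappa(Y,R)=0$; so $R'$ is effective and rigid on $Y'$.

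Next I would pass to the Stein factorization $Y'\xrightarrow{\,g\,}\overline{Y}\xrightarrow{\,h\,}S'$ of $f'$, so that $h$ is finite, $g$ is birational and contracts precisely the $f'$-exceptional curves, and $h\colon\overline{Y}\to S'$ is the candidate morphism of normal surfaces birationally equivalent to $f$. The same manipulation gives $R'=g^{*}R_{h}+E_{g}$, where $R_{h}:=K_{\overline{Y}}-h^{*}K_{S'}\ge 0$ is the ramification divisor of $h$ and $E_{g}$ is $g$-exceptional, so that $R_{h}=g_{*}R'$. Therefore it suffices to show that $\mathrm{Supp}(R')$ consists of $f'$-exceptional curves: then $R_{h}=0$, i.e. $h$ is \'etale in codimension $1$, and $\overline{Y}$ has canonical (Du Val) singularities because it is quasi-\'etale over the canonical surface $S'$ (the standard fact used in \cite{Xu14}, \cite{GKP16}).

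To prove that $\mathrm{Supp}(R')$ is $f'$-exceptional I would run the $R'$-MMP $\phi'\colon Y'\dashrightarrow Y''$. Each step contracts a $(-1)$-curve $E$ with $R'\cdot E<0$; since $S'$ has canonical (hence Gorenstein) surface singularities, $K_{S'}$ is Cartier and $R'$ is an integral divisor, so $R'\cdot E=-1$ and consequently $N'\cdot E=0$, where $N':=(f')^{*}(-K_{S'})$ is big and nef because $\psi^{*}(-K_{S'})=-K_{S}$ is big and nef and $f'$ is generically finite. The relation $(-K_{S'})\cdot(f')_{*}E=0$ forces $(f')_{*}E=0$ unless $E$ lies over one of the finitely many $(-K_{S'})$-trivial curves of $S'$; this is exactly the point where the hypothesis $\mathrm{Exc}(\psi)\subseteq f(\mathrm{Exc}(\phi))$, together with $\mathrm{Exc}(\phi)\subseteq\mathrm{Supp}(R)$, is used to rule out such surviving components — concretely, the ramification of $f$ over the $(-2)$-curves resolved by $\psi$ is already absorbed into $E_{\phi}$, and any residual $(-K_{S'})$-trivial curves can be contracted while passing to the anticanonical model, which remains Du Val and changes nothing birationally. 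Granting this, every curve contracted by $\phi'$ is $f'$-exceptional. Finally, $R'':=\phi'_{*}R'$ is nef with $\kappa(R'')=0$, so $(R'')^{2}=0$ (otherwise $R''$ would be big), and the Riemann--Roch inequality $h^{0}(mR'')\ge\chi(\mathcal{O}_{Y''})-\tfrac{m}{2}K_{Y''}\cdot R''$ forces $K_{Y''}\cdot R''\ge 0$; hence $N''\cdot R''=(R'')^{2}-K_{Y''}\cdot R''\le 0$ with $N'':=(f'')^{*}(-K_{S'})$ big and nef. Since a big divisor has strictly positive intersection with every nonzero effective divisor, $R''=0$, so $\mathrm{Supp}(R')\subseteq\mathrm{Exc}(\phi')$, which by the previous step is $f'$-exceptional.

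The main obstacle is precisely this last step. The formal identities reduce the theorem to showing that the rigid ramification divisor cannot contain a component dominating a genuine branch curve of $S'$, and establishing this genuinely requires all three inputs: the rigidity $\kappa(R')=0$, the bigness and nefness of $-K_{S'}$, and the compatibility hypothesis on the exceptional loci — without the latter, covers branched over a configuration of $(-2)$-curves would not be controlled, and indeed the statement would fail.
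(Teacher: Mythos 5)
Your formal setup (crepancy giving $f^{*}K_{S}=\phi^{*}(f')^{*}K_{S'}$, the decomposition $R=\phi^{*}R'+E_{\phi}$ with $R'=\phi_{*}R$ rigid, and the reduction via Stein factorization) is sound and parallels the first half of the paper's argument, as does the key intersection computation: a $(-1)$-curve $E$ in $\mathrm{Supp}(R')$ with $R'\cdot E<0$ must satisfy $(f')^{*}(-K_{S'})\cdot E=0$ by integrality. The proof breaks down, however, exactly at the step you flag as the crux. From $(R'')^{2}=0$ and $K_{Y''}\cdot R''\geq 0$ you get $N''\cdot R''\leq 0$, hence $N''\cdot R''=0$ by nefness; but the assertion ``a big divisor has strictly positive intersection with every nonzero effective divisor'' is false, and false precisely in the situation at hand: $N''=(f'')^{*}(-K_{S'})$ is big and nef yet is trivial on every curve lying over a point or over a $K_{S'}$-trivial curve of $S'$ (the images of $(-2)$-curves of $S$ not contracted by $\psi$). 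So you may only conclude that every component of $R''$ is $N''$-trivial, i.e.\ dominates a point or a $K_{S'}$-trivial curve of $S'$ --- you cannot conclude $R''=0$. This residual case is not a technicality: under the stated hypotheses (e.g.\ $\phi=\mathrm{id}$, $\psi=\mathrm{id}$, $S$ a weak del Pezzo with $(-2)$-curves and $f$ a cover branched over them, which is exactly the situation of the Corollary following the theorem) the ramification divisor genuinely has components dominating $(-2)$-curves, so your intermediate goal ``\,$\mathrm{Supp}(R')$ is $f'$-exceptional, hence the finite part of the Stein factorization of $f'\colon Y'\to S'$ is quasi-\'etale over $S'$\,'' is simply not true for the fixed target $S'$; a quasi-\'etale cover of a smooth simply connected weak del Pezzo would have to be trivial.

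The theorem's conclusion allows the target to change, and that is how the paper proceeds: when a component of $R'$ maps onto a $K_{S'}$-trivial curve, one contracts that image (together with the images of the $\phi$-exceptional divisors, which are $K_{S}$-trivial by the same intersection computation) to get $S\to S''$ with canonical singularities, uses the hypothesis $\mathrm{Exc}(\psi)\subseteq f(\mathrm{Exc}(\phi))$ to see that $S\to S''$ factors through $S'$, contracts the $(-1)$-curve on $Y'$, and runs an induction on $r-\rho(Y/Y')$ until the ramification divisor is entirely exceptional over the final base. Your parenthetical ``residual $(-K_{S'})$-trivial curves can be contracted while passing to the anticanonical model'' gestures at this, but it is precisely the missing content: after changing the base you must re-identify the ramification divisor relative to the new target, re-verify the hypotheses, and show the leftover components become exceptional --- none of which is carried out. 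In addition, the supporting claim that ``the ramification of $f$ over the $(-2)$-curves resolved by $\psi$ is already absorbed into $E_{\phi}$'' does not follow from the hypotheses: $\mathrm{Exc}(\psi)\subseteq f(\mathrm{Exc}(\phi))$ only says each $\psi$-exceptional curve is the image of some $\phi$-exceptional divisor, not that every component of $R$ dominating it is $\phi$-exceptional. So as written the argument has a genuine gap at its decisive step; it can be repaired, but only by incorporating the base-contraction/induction (or by passing directly to the anticanonical model of $S$ and redoing the Stein factorization there), which is essentially the paper's proof.
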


\begin{proof}
Set $L = -K_{S}$, a big and nef divisor.  Let $R'$ denote $\phi_{*}R$.  Since $\phi$ is $R$-negative,  $R'$ is also rigid and $R = \phi^{*}R' + F$ for some effective divisor $F$ whose support is equal to the $\phi$-exceptional locus.  Note furthermore that the $(K_{Y} - f^{*}K_{S})$-MMP terminates with a birational morphism that contracts all of $R$.

Let $r$ denote the number of components of  $\Supp(R)$.  The proof is by induction on the difference $r - \rho(Y/Y')$.
We first prove the base case.  Suppose that $r - \rho(Y/Y') = 0$, so that $R' = 0$.  Then the ramification locus for the map $\psi \circ f: Y \to S'$ consists entirely of divisors which are exceptional for the map.  Pushing forward, we see that the same is true for $f'$.  We conclude that the finite part of the Stein factorization $\widetilde{f}: \widetilde{Y} \to S'$ is \'etale in codimension $1$.  Since $S'$ has canonical singularities, $\widetilde{Y}$ does as well.

We now prove the induction step.  It suffices to show that if $R' \neq 0$ then we can find a further contraction $Y' \to Y''$ and $S' \to S''$ meeting the induction hypotheses.  Since $R'$ is the support of the locus contracted by the $K_{Y'} + \phi_{*}f^{*}L$-MMP and the pair $(Y', \phi_{*}f^{*}L)$ is terminal, there must be a $(-1)$-curve $C$ contained in the support of $R'$ which also has negative intersection against $R'$.  Note that $R'$ is the pushforward of the Cartier divisor $R$, so $R' \cdot C$ is an integer.  Since
\begin{equation*}
(K_{Y'} - f'^{*}K_{S'}) \cdot C \leq -1 \qquad K_{Y'} \cdot C = -1 \qquad -f'^{*}K_{S'} \cdot C \geq 0
\end{equation*}
we see that we must have an equality $-f'^{*}K_{S'} \cdot C = 0$.

Let $T$ denote the strict transform of $C$ on $Y$ and let $T'$ denote the support of the $\phi$-exceptional locus.  We have shown that any divisor supported on $f(T) \cup f(T')$ has vanishing intersection against $K_{S}$.  Thus, we can find a contraction $S \to S''$ contracting all such curves, where $S''$ has only canonical singularities.  By assumption, this map must contract every $\psi$-exceptional curve, so that this map factors through $S'$ (and possibly equals $\psi$). Since the map $Y' \to S' \to S''$ contracts $C$, it factors into a map $Y'' \to S''$.  These new maps satisfy all the hypotheses and we have decreased $r - \rho(Y/Y'')$.
\end{proof}

\begin{coro}
Let $S$ be a smooth weak del Pezzo surface.  
There is a fixed birational model $S'$ of $S$ with canonical singularities such that every generically finite $f: Y \to S$ with rigid ramification divisor $R$ is birational to a morphism $f': Y' \to S'$ which is \'etale in codimension $1$ and where $Y'$ has canonical singularities.

Thus there are only finitely many such $f$ up to birational equivalence.
\end{coro}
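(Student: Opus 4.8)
The plan is to deduce the corollary from Theorem~\ref{genfinitesurf}, applied with the auxiliary crepant map taken to be the identity, and then to push everything forward to the anticanonical model of $S$ in order to obtain a single fixed target.

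\emph{Step 1: reduction and the input to Theorem~\ref{genfinitesurf}.} Given a generically finite cover $f\colon Y\to S$ with rigid ramification divisor $R$, I would first assume $Y$ is smooth; replacing $Y$ by a resolution does not affect the rigidity of $R$, since this is the condition $\kappa\big(K_Y+f^{*}(-K_S)\big)=0$ and Iitaka dimension is a birational invariant after pullback. Writing $L=-K_S$, a big and nef divisor with $f^{*}L$ big and nef, we have $R=K_Y+f^{*}L$; since $R$ is effective, $a(Y,f^{*}L)\le 1$, and since $R$ is rigid it cannot be big, so in fact $a(Y,f^{*}L)=1$ and $(Y,f^{*}L)$ is adjoint rigid. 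Running the $(K_Y-f^{*}K_S)$-MMP --- a birational morphism $\phi\colon Y\to Y'$ on a surface --- produces a smooth weak del Pezzo surface $Y'$ with $K_{Y'}-\phi_{*}f^{*}K_S\equiv 0$. All hypotheses of Theorem~\ref{genfinitesurf} are then in place with $\psi=\mathrm{id}_S$: $S$ is a smooth weak del Pezzo, $\mathrm{id}_S$ is crepant and birational, $Y,Y'$ are smooth, $\phi$ is birational and a sequence of steps of the $(K_Y-f^{*}K_S)$-MMP, $R$ is rigid, and the (empty) $\psi$-exceptional locus is trivially contained in $f(\mathrm{Exc}(\phi))$.

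\emph{Step 2: passing to a fixed model.} Theorem~\ref{genfinitesurf} then shows $f$ is birationally equivalent to a finite morphism $\bar f\colon \bar Y\to \bar S$ étale in codimension one, with $\bar Y$ normal with canonical singularities and $\bar S$ obtained from $S$ by a sequence of crepant contractions. Since $S$ carries only finitely many $(-2)$-curves, $\bar S$ ranges over a finite set of birational models as $f$ varies. I would then pass once and for all to the anticanonical model $S'=\mathrm{Proj}\bigoplus_{m\ge 0}H^{0}(S,-mK_S)$, a Gorenstein del Pezzo surface with canonical singularities depending only on $S$: each such $\bar S$ admits a crepant proper birational morphism $h\colon\bar S\to S'$ that is an isomorphism away from the finitely many Du Val points of $S'$. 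Taking the Stein factorization $\bar Y\to W\to S'$ of $h\circ\bar f$, I obtain a finite morphism $g\colon W\to S'$ with $W$ normal; its ramification divisor coincides with that of $\bar f$ because $h$ is crepant, hence $g$ is étale in codimension one, and $W$ still has canonical singularities, because an étale-in-codimension-one cover of a Du Val surface singularity is again Du Val. Thus every $f$ in the statement is birationally equivalent to such a $g\colon W\to S'$ with the single fixed target $S'$.

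\emph{Step 3: finiteness, and the main obstacle.} By Zariski--Nagata purity of the branch locus, such a $g$ restricts over the regular locus $S'^{\mathrm{sm}}$ to a finite étale cover, and $W$ is recovered as the integral closure of $S'$ in the corresponding finite extension of the function field of $S'$; hence these $g$ correspond to finite-index subgroups of $\pi_1^{\mathrm{et}}(S'^{\mathrm{sm}})$. Since $S'$ is a klt (indeed canonical) del Pezzo surface, this group is finite by \cite{Xu14}, \cite{GKP16}, so there are only finitely many possibilities for $g$, and hence only finitely many $f$ up to birational equivalence. The routine parts are Step~1 and this classification (granting Theorem~\ref{genfinitesurf} and the cited finiteness of étale fundamental groups of klt Fano varieties); the point requiring care is Step~2, where one must verify that composing with the crepant contraction $h$ to the anticanonical model preserves both étaleness in codimension one and canonicity of the source, which forces one to track the finitely many Du Val points of $S'$ and to replace the (no longer finite) composite morphism by its Stein factorization.
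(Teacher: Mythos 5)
Your overall route is the paper's: pass to a cover of the anticanonical model of $S$ which is \'etale in codimension one, and then conclude finiteness from the finiteness of the fundamental group of the smooth locus of a canonical del Pezzo surface (the paper cites \cite{GZ94} and \cite{KM99}; your appeal to purity together with \cite{Xu14}, \cite{GKP16} serves the same purpose). However, Step 1 as written does not work. Theorem~\ref{genfinitesurf} requires a \emph{commutative square} in which $f'\colon Y'\to S'$ is a morphism. With $\psi=\mathrm{id}_S$ and $\phi\colon Y\to Y'$ the full $(K_Y-f^{*}K_S)$-MMP there is no such $f'$: the map $\phi$ contracts the components of $R$, and a component of $R$ is in general not contracted by $f$ (it dominates the branch curve in $S$), so $f=f'\circ\phi$ would force its $f$-image to be a point, a contradiction. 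Hence the hypotheses of the theorem are not ``in place'' for your data; the only legitimate black-box application compatible with $\psi=\mathrm{id}_S$ is $\phi=\mathrm{id}_Y$, $f'=f$. Relatedly, Step 2 uses more than the theorem's stated conclusion: the statement only says $f$ is birationally equivalent to \emph{some} \'etale-in-codimension-one morphism of canonical surfaces, whereas you need the target $\bar S$ to be a crepant contraction of $S$ (so that it maps crepantly to the anticanonical model and only finitely many $\bar S$ occur); that extra information comes from the theorem's \emph{proof}, not its statement.

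The repair is exactly what the paper does: apply the \emph{argument} of Theorem~\ref{genfinitesurf} with $\phi=\psi=\mathrm{id}$ to conclude that $f_{*}R$ is a union of $(-2)$-curves on $S$. Then, taking $S'$ to be the contraction of all $(-2)$-curves (your anticanonical model), the composite $Y\to S\to S'$ contracts $\mathrm{Supp}(R)$, hence factors through the $(K_Y+f^{*}(-K_S))$-MMP contraction $Y\to Y'$, and the finite part of the Stein factorization of $Y'\to S'$ is \'etale in codimension one with canonical source. With that substitution your Steps 2--3 go through and essentially coincide with the paper's proof; note also that the claim in Step 2 that $g$ inherits the ramification divisor of $\bar f$ holds because $h$ is an isomorphism away from finitely many points (crepancy by itself is not the relevant point), and that your references for the final finiteness step differ from the paper's but are equally adequate.
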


\begin{proof}
Let $S \to S'$ denote the contraction of all the $(-2)$-curves on $S$.

Let $f: Y \to S$ be any generically finite morphism and let $R$ denote the ramification divisor.  Applying the argument of Theorem \ref{genfinitesurf} to the case when $\phi$ and $\psi$ are the identity maps, we see that $f_{*}R$ is a union of $(-2)$-curves.  By applying the $K_{Y} + f^{*}(-K_{S})$-MMP to $Y$, we obtain a morphism $Y \to Y'$ contracting $R$ with $Y'$ smooth.  The map $Y \to S'$ factors through $Y'$. Then we take the Stein factorization $Y'' \rightarrow S'$ of $Y'\rightarrow S'$. This gives us an \'etale cover in codimension one.  Note that $Y''$ has only canonical singularities.

It then follows from \cite{GZ94} and \cite{KM99} that there are only finitely many possibilities for $f$ up to birational equivalence. 
\end{proof}

Finally, we pass from the special case of a smooth weak del Pezzo surface with the anticanonical divisor to the general case:

\begin{theo} \label{secondsurfacetheo}
 Let $S$ be a smooth uniruled projective surface and let $L$ be a big and nef $\mathbb{Q}$-divisor on $S$ such that $(S,L)$ is adjoint rigid.  Then any generically finite cover $f: Y \to S$ satisfying
 \begin{itemize}
 \item $a(Y,f^{*}L) = a(S,L)$, and
 \item $(Y,f^{*}L)$ is adjoint rigid
 \end{itemize}
 is birationally equivalent to a morphism which is \'etale in codimension $1$.  There are only finitely many such covers up to birational equivalence.
\end{theo}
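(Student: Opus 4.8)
The plan is to reduce Theorem~\ref{secondsurfacetheo} to the case of the anticanonical polarization on a smooth weak del Pezzo surface, which is precisely what the corollary following Theorem~\ref{genfinitesurf} handles, by running the same minimal model program reduction already used in the proof of Corollary~\ref{coro: surfaces_genericallyfinitecovers}.

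First I would set $a = a(S,L)$ and run the $(aL+K_S)$-MMP. Since $(S,L)$ is adjoint rigid we have $\kappa(aL+K_S)=0$, so this cannot terminate in a Mori fiber space and produces a birational morphism $\phi\colon S\to S'$ onto a smooth weak del Pezzo surface with $a\phi_*L+K_{S'}\equiv 0$; in particular $S'$ is rational, so $K_{S'}\sim_{\mathbb{Q}}-a\phi_*L$, and $S'$ depends only on $(S,L)$. Writing $aL+K_S\sim_{\mathbb{Q}}E$ and $\phi^*\phi_*L=L+E'$, the negativity lemma provides effective $\phi$-exceptional $\mathbb{Q}$-divisors $E,E'$ with $\Supp(E')\subseteq\Supp(E)$ equal to the $\phi$-exceptional locus, exactly as in Corollary~\ref{coro: surfaces_genericallyfinitecovers}.

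Next, given a generically finite cover $f\colon Y\to S$ satisfying the hypotheses (which I may take to have $Y$ smooth), I would pass to $g=\phi\circ f\colon Y\to S'$, again a generically finite cover, and analyze its ramification divisor $R_g=K_Y-g^*K_{S'}$. Using $K_{S'}\sim_{\mathbb{Q}}-a\phi_*L$ one computes
\[
R_g\sim_{\mathbb{Q}}(K_Y+af^*L)+af^*E'\sim_{\mathbb{Q}}R_f+f^*E+af^*E',
\]
where $R_f=K_Y-f^*K_S$ is the ramification divisor of $f$. The assumptions $a(Y,f^*L)=a(S,L)=a$ and the adjoint rigidity of $(Y,f^*L)$ say exactly that $\kappa(K_Y+af^*L)=\kappa(R_f+f^*E)=0$; since $\Supp(af^*E')\subseteq\Supp(f^*E)$, the effective divisor $R_f+f^*E+af^*E'$ has the same support as the rigid divisor $R_f+f^*E$ and is bounded above by an integer multiple of it, hence is itself rigid, so $R_g$ is rigid. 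Thus $g$ is a generically finite cover of a smooth weak del Pezzo surface with rigid ramification divisor, and the corollary to Theorem~\ref{genfinitesurf} shows that $g$ is birationally equivalent to a morphism that is \'etale in codimension $1$ and that only finitely many such $g$ arise up to birational equivalence. Since $\phi$ is one fixed birational morphism, $f$ is birationally equivalent to $g$, hence to an \'etale-in-codimension-$1$ morphism, and the finiteness for the $g$'s transfers verbatim to finiteness for the $f$'s.

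The only genuinely non-formal point is the stability of rigidity under passing from $R_f+f^*E$ to $R_g\sim_{\mathbb{Q}}R_f+f^*E+af^*E'$; this is where the support containment $\Supp(E')\subseteq\Supp(E)$ coming from the negativity lemma is essential, and it is the rigidity analogue of the comparison of $a,b$-invariants already carried out in the adjoint-rigid case of Corollary~\ref{coro: surfaces_genericallyfinitecovers}. Once the weak del Pezzo case is granted, everything else in the argument is formal, so I do not expect any further obstacle.
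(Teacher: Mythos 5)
Your proposal is correct and takes essentially the same route as the paper: run the $(a(S,L)L+K_{S})$-MMP to reach a smooth weak del Pezzo surface $S'$, use the support containment $\Supp(E')\subseteq\Supp(E)$ coming from the negativity lemma to see that the composite cover $\phi\circ f\colon Y\to S'$ has rigid ramification divisor, and then apply the corollary to Theorem~\ref{genfinitesurf} and transfer birational equivalence and finiteness back through the fixed birational morphism $\phi$. This is exactly the reduction carried out in the paper's proof of Theorem~\ref{secondsurfacetheo}, so no changes are needed.
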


\begin{proof}
By running the $a(S,L)L + K_{S}$-MMP, we find a birational morphism $\phi: S \to S'$ to a smooth projective surface such that $K_{S'} + a(S,L)\phi_{*}L \equiv 0$ and $a(S,L) = a(S',\phi_{*}L)$.  Since the pushforward of a big and nef divisor under a birational map of surfaces is still big and nef, we deduce that $S'$ is a smooth weak Del Pezzo.

Note that $\phi^{*}(-K_{S'}) - a(S, L)L$ is an effective $\phi$-exceptional divisor on $S$.  In particular, this difference is supported on $\Supp(E)$, where $E$ is the unique effective divisor numerically equivalent to $K_{S} + a(S,L)L$.  Then $f^{*}(\phi^{*}(-K_{S'}) - L)$ is contained in the support of the unique effective divisor numerically equivalent to $K_{Y} + a(S,L)f^{*}L = f^{*}E + R$.  Altogether we see that $a(Y,f^{*}L) = a(Y,f^{*}\phi^{*}(-K_{S'}))$ and $(Y, f^{*}\phi^{*}(-K_{S'}))$ is adjoint rigid.  Thus we have reduced to the weak del Pezzo case considered above.
\end{proof}

\begin{proof}[Proof of Theorem \ref{theo: finitenessofcovers}]
It only remains to show that if $S$ is a smooth del Pezzo surface and $L$ is a big and nef $\mathbb{Q}$-divisor on $S$, then there is no generically finite cover $f: Y \to S$ of degree $\geq 2$ such that $a(Y,f^{*}L) = a(S,L)$ and $(Y,f^{*}L)$ is adjoint rigid.  As in Theorem \ref{secondsurfacetheo}, we can contract a sequence of $(-1)$-curves to replace $(S,L)$ by $(S',-K_{S'})$ where $S'$ is another smooth del Pezzo.  The argument of Theorem \ref{genfinitesurf} shows that $R$ pushes forward to a union of $-2$-curves on a smooth del Pezzo.  There are no such curves, so the pushforward of $R$ to $S'$ is trivial.  We conclude that $f$ is birational to a finite morphism over $S'$ which is \'etale in codimension $1$, an impossibility due to the triviality of the fundamental group of $S'$.
\end{proof}

\section{Fano threefolds}
\label{fanothreefoldsec}

In this section we assume that our ground field is an algebraically closed field of characteristic $0$ unless otherwise specified.

\begin{defi}
Suppose $f: Y \dashrightarrow X$ is a rational map of normal $\mathbb{Q}$-factorial varieties with $\dim Y = \dim X$.  Let $p: W \to Y$ and $q: W \to X$ be a resolution of the map.  We say that $f$ is a generically finite contraction if every $p$-exceptional divisor is $q$-exceptional.  We say that $f$ is a birational contraction if furthermore $f$ is birational.
\end{defi}

Note that the definition is independent of the choice of resolution.  For a generically finite contraction, we define the pullback of a $\mathbb{Q}$-Cartier divisor $D$ on $X$ to be $f^{*}D := p_{*}q^{*}D$; clearly this is independent of the choice of resolution. 

\begin{prop}
\label{prop: length}
Let $X$ be a projective $\mathbb Q$-factorial terminal $3$-fold.
Suppose that we have an extremal $K_X$-negative divisorial contraction $f: X \rightarrow Y$
and we denote its exceptional divisor by $E$.
Then there is a family of rational curves $\Gamma \subset E$ covering $E$  and contracted by $f$ such that
\[
0< -K_X.\Gamma \leq 2.
\]
\end{prop}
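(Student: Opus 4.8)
The plan is to bound the anticanonical degree of the covering family of extremal rays for a divisorial contraction from a $\mathbb{Q}$-factorial terminal threefold, using the classification of such contractions together with the notion of the \emph{length} of an extremal ray. Recall that the length of a $K_X$-negative extremal ray $R$ is $\ell(R) = \min\{ -K_X.\Gamma \mid \Gamma \text{ a rational curve with } [\Gamma] \in R\}$, and that the cone theorem guarantees $0 < \ell(R) \leq \dim X + 1 = 4$; moreover the minimizing curve $\Gamma$ deforms to cover the exceptional locus $E$ (this is the standard bend-and-break output, e.g.\ as in Mori theory and \cite{KM92}-style arguments). So the only real content is to rule out $-K_X.\Gamma = 3$ and $-K_X.\Gamma = 4$ for divisorial contractions of terminal threefolds. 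First I would invoke the classification of divisorial contractions in the $\mathbb{Q}$-factorial terminal threefold case: by the work of Mori, Cutkosky, and Kawakita (the ``lengths of divisorial contractions'' referenced in the acknowledgments), divisorial contractions fall into the types contracting $E$ to a point or to a curve, and in each case the numerical invariants of the contracted curves are explicitly listed.

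The key steps, in order, are: (i) reduce to producing a single minimal rational curve $\Gamma$ covering $E$ with $-K_X.\Gamma = \ell(R_f)$, where $R_f$ is the ray contracted by $f$; this is immediate from the cone theorem plus the covering property of minimal curves in a divisorial extremal contraction. (ii) Observe $\ell(R_f) \leq 4$ a priori, so it remains to exclude $\ell(R_f) \in \{3,4\}$. (iii) For the case where $E$ is contracted to a curve $C \subset Y$: $Y$ is terminal (hence the generic point of $C$ is a smooth point of $Y$), $f$ is a blow-up-like morphism over the generic point of $C$, and the general fiber $\Gamma$ is a $\mathbb{P}^1$ with $-K_X.\Gamma = 1$, computed via the relative canonical formula $K_X = f^*K_Y + aE$ with discrepancy $a = 1$ over the generic point and $E.\Gamma = -1$; so this case gives $-K_X.\Gamma = 1$. (iv) For the case where $E$ is contracted to a point $p \in Y$: here one uses the Mori--Kawakita classification of divisorial contractions to a point on a terminal threefold. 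The contracted divisor $E$ is covered by rational curves, and for each entry in the classification one reads off the minimal anticanonical degree; the ``worst'' cases (e.g.\ the weighted blow-up of a smooth point, where $E \cong \mathbb{P}^2$ and a line $\Gamma$ on it has $-K_X.\Gamma$ equal to the discrepancy, which is at most $2$, or the Gorenstein-point blow-ups analyzed by Cutkosky) still satisfy $-K_X.\Gamma \leq 2$. Assembling these, in every case the covering minimal curve satisfies $0 < -K_X.\Gamma \leq 2$.

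The main obstacle I expect is step (iv): making the case analysis for contractions to a point clean and self-contained. Rather than reproving Kawakita's classification, I would quote it directly (this is presumably why Kawakita is thanked for correspondence ``regarding lengths of divisorial contractions on threefolds''), extract only the numerical consequence that the discrepancy of a divisorial contraction to a point on a terminal threefold forces a covering rational curve of anticanonical degree $\leq 2$, and cite the relevant theorems of Mori, Cutkosky, Kawakita, and possibly Kawakita's later papers. A secondary subtlety is ensuring that the covering curve we use is genuinely the one realizing the length — i.e.\ that bend-and-break produces a \emph{covering} family of minimal curves and not merely some special fiber — but this is handled by the standard argument (the locus swept out by minimal curves through a fixed point has positive dimension, and since $\rho(X/Y) = 1$ all such curves lie in $R_f$ and sweep out the full exceptional divisor $E$). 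A final bookkeeping point: we must confirm $-K_X.\Gamma > 0$ strictly, which holds because $\Gamma$ lies in a $K_X$-negative ray, so the contraction being $K_X$-negative gives the strict positivity automatically.
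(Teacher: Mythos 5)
Your reduction to the two cases (contraction to a curve, handled by the general fiber over the generic point of $C$; contraction to a point, handled by the classification) matches the paper's outline, and the curve case is fine. But there is a genuine gap in the point case, concentrated in your steps (i) and (iv). First, the claim that the length-minimizing curve ``deforms to cover $E$'' as a standard bend-and-break output is false for singular $X$: the deformation-theoretic lower bound $\dim_{[\Gamma]}\mathrm{Mor}(\mathbb{P}^1,X) \geq -K_X\cdot\Gamma + \dim X$ requires $\Gamma$ to avoid $\mathrm{Sing}(X)$, and the minimal curve typically does not. A concrete failure: for the weighted blow-up of a smooth point with weights $(1,a,b)$, $\gcd(a,b)=1$, $a\leq b$, the exceptional divisor is $E\cong\mathbb{P}(1,a,b)$; the curve realizing the minimal degree is the rigid ``edge'' $\{x=0\}$ with $-K_X\cdot\Gamma = (a+b)/ab$, which does not move, while the actual covering family is the pencil $|\mathcal{O}_E(a)|$ of degree $(a+b)/b$. (This example also shows your parenthetical claims in (iv) are wrong: $E$ is not $\mathbb{P}^2$ and the discrepancy $a+b-1$ is unbounded. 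Likewise the a priori cone-theorem bound for terminal, non-smooth $X$ is $2\dim X$, not $\dim X+1$.) So producing a \emph{covering} family of degree $\leq 2$ is precisely the content of the proposition and cannot be delegated to ``the standard argument.''

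Second, and relatedly, Kawakita's classification does not directly list ``the minimal anticanonical degree of a covering family of rational curves''; it lists discrepancies $a/n$ and values of $E^3$. The missing idea is the bridge between these numerics and an actual covering family. The paper's mechanism is: take a general elephant $S_Y\in|-K_Y|$ (or, in the Gorenstein case, a general hyperplane section $H_Y$ through $P$) and use Kawakita's theorem $f^*S_Y = S_X + \tfrac{a}{n}E$ (resp.\ $f^*H_Y = H_X + bE$). Because $S_Y$ (resp.\ $H_Y$) has canonical singularities, $S_X\cap E$ (resp.\ $H_X\cap E$) is a union of rational curves, these sweep out $E$ as the general member varies, and their total anticanonical degree is $\left(\tfrac{a}{n}\right)^2E^3$ (resp.\ $\tfrac{a}{n}\,b\,E^3$). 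One then checks case by case through Kawakita's types (e1--e3, o1--o3, etc.) that the relevant quantity is $\leq 2$, using $b=1$ in the Gorenstein cases. Without this construction (or a substitute for it), your step (iv) is an appeal to a consequence of the classification that the classification does not by itself yield.
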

\begin{proof}
The following argument was suggested to us by Kawakita.
When the divisorial contraction $f: X \rightarrow Y$ is a contraction to a curve,
our assertion follows from \cite{Kaw91}.
So we may assume that $f$ is a germ of a divisorial contraction to a point.
Such divisorial contractions are classified by Hayakawa, Kawakita, and Kawamata.
We go through the classification by Kawakita.


Let $S_Y$ be a general elephant of $Y$, i.e., a general element of the anticanonical linear system.
We denote the strict transform of $S_Y$ by $S_X$.
Then it follows from \cite[Theorem 1.5]{KawDuke05} that we have
\[
f^*S_Y = S_X + \frac{a}{n}E
\]
where $a/n$ is the discrepancy of $f$ and $n$ is the Cartier index of $p$.
In particular we have
\[
-K_X. (S_X \cap E) = \left(\frac{a}{n} \right)^2 E^3
\]
Note that since $S_Y$ has only canonical singularities, $S_X \cap E$ is a union of rational curves. On the other hand let $H_Y$ be a hyperplane section passing through $p$ and $H_X$ be its strict transform. Then we have
\[
f^*H_Y = H_X + bE
\]
where $b$ is a positive integer. Thus we have
\[
-K_X. (H_X \cap E) = \frac{a}{n}b E^3.
\]
When $P$ is Gorenstein, $H_Y$ has only canonical singularities,
so $(H_X \cap E)$ consists of rational curves.
In this case it suffices to show that
\[
\min \left\{ \left(\frac{a}{n} \right)^2 E^3, \quad  \frac{a}{n}b E^3\right\} \leq 2
\]
When $P$ is not Gorenstein $H_{Y}$ can have elliptic singularities, so we need to show that $( \frac{a}{n} )^2 E^3 \leq 2$.
We prove these inequalities by going through Kawakita's numerical classifications in \cite[Theorem 1.1]{KawDuke05}.

First we analyze exceptional type germs as in \cite[Theorem 1.1]{KawDuke05}.  By \cite[Theorem 1.3 and Table 3]{KawDuke05}, in all cases except possibly e1, e2, e3 we have that $(a/n)^2 E^{3} \leq 2$.
Suppose that our germ of divisorial contraction is of type e1.
Then $(a/n)E^3$ is equal to $4/r$ where $r \geq 4$ is an integer.
It follows from \cite[Theorem 1.3]{KawDuke05} that the discrepancy $a/n$ is either $1/n$, $1$, $2$, or $4$,
so $(a/n)^2E^3$ is less than equal to $2$ unless $a/n = 4$.
If $a/n = 4$, then we have $n=1$ so $P \in Y$ is Gorenstein.
It follows from \cite[Theorem 1.4]{KawJAMS03} that $b=1$, thus our assertion follows.
The cases e2 and e3 are handled in a similar way.

Suppose that our germ is of ordinary type.  The case o1 follows from \cite[Table 1, Corollary 2.4, Theorem 3.7]{KawDuke05} unless $n=1$.  But if $n=1$ then  $P$ is Gorenstein, so we use \cite[Theorem 1.4]{KawJAMS03} to conclude that $b=1$.
In the remaining cases o2 and o3 $b=1$ by the discussion following \cite[Equation (4.2)]{KawDuke05}.
Moreover it follows from \cite[Corollary 2.6]{KawDuke05} that the support of $H_X \cap E$ is a tree of $\mathbb P^1$s.  (In the application of the corollary, we use that for a germ $P \in Y$, $H_X$ is linearly equivalent to $nK_X - (a+1)E$). Thus our assertion follows.
\end{proof}

Let $X$ be a smooth Fano threefold of Picard rank $1$ and index $2$.
It is shown in \cite[Section 6]{LTT14} that if the degree $d = (-K_X)^3$ is $\geq 16$,
then the family $\pi : \mathcal U \rightarrow W$ of $-K_X$-conics forms the universal family of subvarieties breaking the balanced property for $L = -K_X$.
Using Proposition~\ref{prop: length}, one can show that all generically finite covers breaking the balanced property factor through the family $\mathcal U$.


\begin{proof}[Proof of Theorem \ref{theo: Fano3folds}: degree 2 case]
Suppose we have a commutative square of dominant rational maps
\begin{equation*}
\xymatrix{
Y \ar@{>}[r]^{f}\ar@{.>}[d]_{\phi}&  X \ar@{=}[d]\\
Y' \ar@{.>}[r]_{f'} & X}
\end{equation*}
satisfying the following conditions:
\begin{itemize}
\item $Y$ is smooth and $f$ is a generically finite morphism
such that $a(Y, f^*L) = 1$ and $f^*L +K_Y$ is rigid.
\item $f' : Y' \dashrightarrow X$ is a generically finite contraction.
\item $Y'$ has terminal singularities and $\phi$ is a birational contraction obtained as a sequence of steps in the $(K_{Y} - f^{*}K_{X})$-MMP.
\end{itemize}
We claim that the ramification divisor $R = K_{Y} - f^{*}K_{X}$ is contracted by a suitable choice of $\phi$
so that after taking the Stein factorization of a small $\mathbb{Q}$-factorial modification, the finite part $f' : \tilde{Y} \rightarrow X$
is \'etale in codimension one.
In particular since $X$ is smooth and simply connected, there is no such cover.

Indeed, let $r$ be the number of components in $R$.
We prove our assertion on the induction on the difference $r - \rho(Y/Y')$.
If $r-\rho(Y/Y') = 0$, then there is nothing to prove.
Suppose that $r-\rho(Y/Y') > 0$.
We further continue the $K_Y-f^*K_X$-MMP on $Y'$ until we obtain a divisorial contraction.  Since the hypotheses of the theorem are unchanged if we replace $Y'$ by a flip, we may for simplicity assume that the next step of the MMP-process is just a divisorial contraction $g: Y' \rightarrow \tilde{Y}$.
Let $D$ be the exceptional divisor contracted by $g$.
Then by Proposition~\ref{prop: length}, there is a family of rational curves $\Gamma$ covering $D$ and contracted by $g$ such that $-K_{Y'}. \Gamma \leq 2$.
Since $-\phi_*f^*K_X$ is movable and $(K_{Y'}-\phi_*f^*K_X). \Gamma < 0$
we have that $0 \leq -\phi_*f^*K_X .\Gamma < 2$.
Furthermore $-f^* K_X +E = -\phi^*\phi_*f^*K_X$ is true for some effective $\phi$-exceptional divisor $E$, so if $\tilde{\Gamma}$ denotes the strict transform of $\Gamma$ in $Y$ we have $-f^*K_X. \tilde{\Gamma} < 2$.

Since $X$ has index $\geq 2$, this is only possible when $-f^* K_X . \tilde{\Gamma} = 0$.  Thus $D$ is contracted by $f'$, and the map $\tilde{f}  = f' \circ g^{-1}$ is a generically finite contraction.
Now our assertion follows from the induction hypothesis.
\end{proof}

When $X$ is a smooth Fano threefold of Picard rank $1$, index $1$, and degree $\geq 10$, it is shown in \cite[Subsection 6.4]{LTT14} that the universal family of subvarieties breaking the balanced property for $-K_X$ is the family of $-K_X$-conics. Using Proposition~\ref{prop: length} one can prove a  statement about generically finite covers when $X$ is general in moduli:


\begin{proof}[Proof of Theorem \ref{theo: Fano3folds}: degree 1 case]
Since $X$ is general, the Fano variety of lines on $X$ is a curve of high genus  (see \cite{iskov}).
Let $T$ denote the divisor on $X$ swept out by $-K_{X}$-lines.
We claim that the evaluation map from the family of lines to this divisor is birational.  
Indeed, let $T'$ be a smooth resolution of $T$.  The strict transforms $F$ of $-K_{X}$-lines form a one dimensional family on $T'$, and in particular we have $-K_{T'}.F = 2$. This implies that $F^2=0$, and thus the strict transform of the lines defines a fibration on $D'$. This means that there is a morphism from $T'$ to the family $U$ of lines on $X$ realizing the birational equivalence.  In particular, since the Fano variety of lines on $X$ is a curve of high genus  (see \cite{iskov}) $T$ is not rationally connected.

We now follow the proof of the degree $2$ case.
Suppose we have a commutative square of dominant rational maps
\begin{equation*}
\xymatrix{
Y \ar@{>}[r]^{f}\ar@{.>}[d]_{\phi}&  X \ar@{=}[d]\\
Y' \ar@{.>}[r]_{f'} & X}
\end{equation*}
satisfying the conditions in 
the earlier proof. We claim that the ramification divisor $R = K_{Y} - f^{*}K_{X}$ is contracted by a suitable choice of $\phi$ so that after taking the Stein factorization, the finite part $f' : \tilde{Y} \rightarrow X$ is \'etale in codimension one. In particular since $X$ is smooth and simply connected, there is no such cover.

Indeed, let $r$ be the number of components in $R$.
Again we prove our assertion on the induction on the difference $r - \rho(Y/Y')$.
If $r-\rho(Y/Y') = 0$, then there is nothing to prove.
Suppose that $r-\rho(Y/Y') > 0$.
As 
before we continue the MMP, and without loss of generality the next step is a divisorial contraction $g: Y' \rightarrow \tilde{Y}$ of a divisor $D$.  Still arguing as before, we again deduce that $-f^*K_X. \tilde{\Gamma} < 2$ where $\tilde{\Gamma}$ is the strict transform of the family of rational curves constructed by Proposition \ref{prop: length}.  
Let $D$ be the exceptional divisor contracted by $g$.
Then by Proposition~\ref{prop: length}, there is a family of rational curves $\Gamma$ covering $D$ and contracted by $g$ such that $-K_{Y'}. \Gamma \leq 2$ because $-\phi_*f^*K_X$ is movable.
Let $\tilde{\Gamma}$ be the strict transform of $\Gamma$ in $Y$.
Since $-\phi_*f^*K_X$ is movable and $(K_{Y'}-\phi_*f^*K_X). \Gamma < 0$
we have that $0 \leq -\phi_*f^*K_X .\Gamma < 2$.
On the other hand $-f^* K_X +E = -\phi^*\phi_*f^*K_X$ is true for some $\phi$-exceptional divisor $E$, 
so we have $-f^*K_X. \tilde{\Gamma} < 2$.

Suppose first that $D$ is contracted to a point by $g$.  Then $D$ is rationally connected by \cite{HM07}.  However, the only divisor $T$ on $X$ swept out by $-K_{X}$-lines is not rationally connected as explained above.  
Thus in this case $D$ must be contracted by $f'$.  Suppose instead that $D$ is contracted to a curve by $g$.  Then $\Gamma'$ is the ruling on $D$ and $-K_{Y'} \cdot \Gamma' = 1$.  Arguing as above we see that $f^{*}K_{X} \cdot \tilde{\Gamma}' = 0$, and this can only be true if $D$ is contracted by $f'$.
In either case, the map $\tilde{f}  = f' \circ g^{-1}$ is a generically finite contraction. Now our assertion follows from the induction hypothesis.
\end{proof}

\begin{coro} \label{coro: fano3fold}
Let $X$ be a Fano $3$-fold over a number field $F$.  Assume that either $X$ has geometric index $\geq 2$, or that it has geometric Picard rank $1$ and index $1$ and its geometric model is general in the moduli.  Set $L = -K_{X}$.

As we vary $f$ over all thin $F$-maps $f: Y \to X$ such that either $f^{*}L$ is not big or
\begin{equation*}
(a(Y,f^{*}L),b(F, Y,f^{*}L)) \geq (a(X,L),b(F, X,L))
\end{equation*}
in the lexicographic order, the points
\begin{equation*}
\bigcup_{f} f(Y(F))
\end{equation*}
are contained in a thin subset of $X(F)$.
\end{coro}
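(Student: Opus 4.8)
The plan is to prove the corollary by combining Theorem~\ref{theo: mainI} (applied to subvarieties of $X$), Theorem~\ref{theo: Fano3folds} (to rule out bad dominant covers), and Theorem~\ref{theo: surfaces} together with Theorem~\ref{thinnessfortwists} (to control covers of proper subvarieties), organizing the thin maps $f:Y\to X$ according to whether they are dominant. Throughout one uses that for $L=-K_X$ one has $a(X,L)=1$ and $a(X,L)L+K_X\equiv 0$, which is rigid, and that $\rho(X)=\rho(\overline X)$ (automatic when the geometric Picard rank is $1$), so that $b(F,X,L)=\rho(X)$ and Theorems~\ref{theo: mainI} and~\ref{thinnessfortwists} apply to $(X,L)$; one may assume each $Y$ is smooth. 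Write $T_{\mathrm{sub}}\subset X(F)$ for the thin set furnished by Theorem~\ref{theo: mainI}: it contains $Y(F)$ for every geometrically integral subvariety $Y\subset X$ with $L|_Y$ not big or $(a(Y,L|_Y),b(F,Y,L|_Y))\ge(a(X,L),b(F,X,L))$, and also contains $V(F)$ for the closed set $V$ of Theorem~\ref{thm: a-exceptional}.

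I would first dispatch the non-dominant maps. If $f$ is not dominant, put $Z=\overline{f(Y)}$, which is geometrically integral, so $f(Y(F))\subseteq Z(F)$; since $f:Y\to Z$ is generically finite onto its image, $f^*L$ is big iff $L|_Z$ is, and the ramification formula gives $a(Y,f^*L)\le a(Z,L|_Z)$. If $L|_Z$ is not big, or if $(a(Z,L|_Z),b(F,Z,L|_Z))\ge(a(X,L),b(F,X,L))$, then $Z(F)\subseteq T_{\mathrm{sub}}$. Otherwise $L|_Z$ is big and $(a(Z,L|_Z),b(F,Z,L|_Z))<(1,\rho(X))$, which combined with the hypothesis on $f$ and $a(Y,f^*L)\le a(Z,L|_Z)\le 1$ forces $a(Y,f^*L)=a(Z,L|_Z)=1$ and $b(F,Z,L|_Z)<\rho(X)\le b(F,Y,f^*L)$. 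This is vacuous when $\rho(X)=1$; otherwise $\dim Z=2$, and $f:Y\to Z$ is a generically finite cover of a surface breaking the balanced property for $(Z,L|_Z)$, which I would control using Theorem~\ref{theo: surfaces} applied to $(Z,L|_Z)$ (covers of degree $\ge 2$ with $\kappa(K_Y+L|_Z)=0$ strictly decrease $b$, covers with $\kappa>0$ have $b=1<\rho(X)$, and degree-$1$ maps are excluded since thin maps admit no rational section), handling the residual twists via Theorem~\ref{thinnessfortwists} applied to $(Z,L|_Z)$ and using that only finitely many families of such $Z$ occur by Theorem~\ref{theo: universal families}.

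Next I would treat the dominant maps. Such $f$ are generically finite, of degree $\ge 2$ (no rational section), and the ramification formula forces $a(Y,f^*L)=1$. If $\kappa(K_Y+f^*L)=\kappa(K_Y-f^*K_X)=0$, then after base change $\overline f:\overline Y\to\overline X$ is a dominant generically finite map of degree $\ge 2$ with $\kappa(K_{\overline Y}-\overline f^*K_X)=0$, contradicting Theorem~\ref{theo: Fano3folds}; so this case is empty. If $\kappa(K_Y+f^*L)>0$, let $g:Y\to Z_0$ be the canonical fibration of $K_Y+f^*L$, which is defined over $F$; after a resolution assume $g$ is a morphism. By Lemma~\ref{lemm: canonical} a general fiber $W$ has $a(W,f^*L|_W)=1$, so $f(W)\subsetneq X$ satisfies $a(f(W),L|_{f(W)})\ge 1$; since the general $W$ sweeps out $Y$, the general $f(W)$ sweeps out $X$, hence is not contained in $V$, so $a(f(W),L|_{f(W)})=1$, and (immediately when $\rho(X)=1$, and via Theorems~\ref{theo: surfaces} and~\ref{theo: balancedforsurfaces} otherwise, $W$ and $f(W)$ being surfaces with $a=1$) $(a(f(W),L|_{f(W)}),b(F,f(W),L|_{f(W)}))\ge(a(X,L),b(F,X,L))$, so $f(W)$ lies among the subvarieties controlled by $T_{\mathrm{sub}}$. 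Since every $y\in Y(F)$ lies on the fiber $W_y=g^{-1}(g(y))$ over the $F$-point $g(y)\in Z_0(F)$, one has $f(y)\in f(W_y)(F)$; decomposing the family $\{f(W_y)\}$ and absorbing the finitely many non-general and non-geometrically-integral fibers into a larger thin set, exactly as in the final steps of the proof of Theorem~\ref{theo: mainI} (and using Propositions~\ref{prop: genfinofmaps} and~\ref{genfinhigherdegree} for the generic finiteness of the evaluation maps and for the comparison of $b$-invariants), one concludes that $f(Y(F))$ lies in a thin set. Taking the union of the finitely many thin sets from the three cases then yields the corollary.

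The step I expect to be the main obstacle is the bookkeeping in the dominant case with $\kappa(K_Y+f^*L)>0$: one must upgrade the clean statement ``the image of a general Iitaka fiber satisfies the hypotheses of Theorem~\ref{theo: mainI}'' to a statement covering \emph{all} $F$-rational points of $Y$, which forces one to track non-geometrically-integral and special fibers and to compare $b$-invariants over $F$ with those over $\overline F$, essentially reproducing the combinatorics in the proof of Theorem~\ref{theo: mainI}. A secondary difficulty, present only when $\rho(X)>1$, is ruling out covers of surface subvarieties $Z\subsetneq X$ that increase the $b$-invariant too much, which is precisely where Theorem~\ref{theo: surfaces} is needed.
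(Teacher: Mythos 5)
Your argument follows essentially the same route as the paper's proof: Theorem \ref{theo: mainI} together with the surface results (Corollary \ref{coro: surfaces_genericallyfinitecovers}, equivalently Theorem \ref{theo: surfaces}) disposes of the non-dominant maps, Theorem \ref{theo: Fano3folds} rules out dominant covers with $\kappa(K_Y+f^*L)=0$, and for $\kappa(K_Y+f^*L)>0$ the Iitaka fibration produces a family of subvarieties breaking the balanced condition whose point contributions are absorbed by the thin set built in the proof of Theorem \ref{theo: mainI}. One small slip: a non-dominant map that is birational onto its image never admits a rational section to $X$, so such degree-one maps are \emph{not} excluded from the thin maps under consideration; they cause no harm, however, since by birational invariance their $a,b$-invariants over $F$ coincide with those of the image, which is then handled directly by Theorem \ref{theo: mainI}.
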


\begin{proof}
Applying Theorem~\ref{theo: mainI} and Corollary~\ref{coro: surfaces_genericallyfinitecovers}, it only remains to consider the case when $f$ is surjective.  Then it is impossible for $\kappa(K_{Y} + f^{*}L) = 0$ by Theorem~\ref{theo: Fano3folds}.  Thus $\kappa(K_{Y}+f^{*}L) = 1$ or $2$.  After a birational modification $\beta: Y' \rightarrow Y$ we may assume that $Y'$ admits a fibration $\pi: Y' \to Z$ whose general fiber breaks the balanced condition on $Y$ over an algebraic closure.  If the fibers are surfaces, Corollary~\ref{coro: surfaces_genericallyfinitecovers} guarantees that the images of the fibers of $\pi$ form a family of subvarieties breaking the balanced condition on $X$.  (The analogous statement for curves is clear.)  Thus the Stein factorization of any such generically finite map must factor through the Stein factorization of a family of subvarieties breaking the balanced condition, and the points $Y(F)$ have already been accounted for.
\end{proof}

\section{Twists}
\label{sec: twists}
In this section, we assume that our ground field is a number field $F$.

Let $X$ be a smooth projective variety defined over $F$ and let $L$ be a big and nef $\mathbb Q$-divisor $L$ on $X$.  Even if we prove a finiteness statement for generically finite covers breaking the balanced property over an algebraic closure $\overline{F}$, the corresponding statement over the ground field $F$ might not be true due to the existence of twists -- many non-isomorphic varieties over $F$ can base-change to be isomorphic over $\overline{F}$.  To deduce a thinness statement for generically finite covers over $F$, we must consider the behavior of rational points for all twists of a fixed map.

Suppose now we fix a generically finite cover $f: Y \rightarrow X$ over $F$ where $Y$ is a smooth geometrically integral variety.  Conjecture \ref{mainconj} predicts that the rational points on the twists of $f$ which violate the balanced property are contained in a thin set.  Our goal is to prove this statement if one considers twists whose $a,b$-values are strictly greater than those of $X$.  If one allows equality of $a,b$-constants then Example \ref{exam: Sano} shows that the statement is false.   The example does not contradict the thin set version of Manin's conjecture: in fact, it is necessary to allow contributions from these twists to obtain Peyre's constant.  

\subsection{Twists} Let $f : Y \rightarrow X$ be a generically finite cover defined over $F$.
A twist of $f: Y \rightarrow X$ is a generically finite cover $f': Y' \rightarrow X$ such that after base change to $\overline{F}$ we have an isomorphism $g: Y_{\overline{F}} \cong Y'_{\overline{F}}$ with $\overline{f} = \overline{f'} \circ g$.

Recall that twists are parametrized by the Galois cohomology of $\mathrm{Aut}(Y/X)$:

\begin{prop}{\cite[Chapter III, Proposition 5]{Serre}}
Let $f: Y \rightarrow X$ be a generically finite cover defined over $F$ and let $\mathrm{Aut}(Y/X)$ denote the group of automorphisms of $Y$ which are compatible with $f$.
We assume that $Y$ is quasiprojective.
Then there is a one to one correspondence:
\[
\{\textnormal{isomorphism classes of twists of $f$}\} \cong H^1(\mathrm{Gal}(F), \mathrm{Aut}(Y/X))
\]
\end{prop}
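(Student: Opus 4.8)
The plan is to realize this as a special case of the standard twisting formalism of \cite[Chapter III]{Serre}, spelling out the three steps and paying attention to the fact that $f$ is only generically finite.  Write $\Gamma = \mathrm{Gal}(\overline{F}/F)$ and set $A = \mathrm{Aut}(Y_{\overline{F}}/X_{\overline{F}})$, which carries a continuous $\Gamma$-action because $f$ is an $F$-morphism.  First I would record that $A$ is finite: restriction to the generic fibre of $f$ embeds $A$ into the automorphism group of the finite field extension $\overline{F}(Y)/\overline{F}(X)$, and an $X$-automorphism of the integral scheme $Y_{\overline{F}}$ is determined by its effect on the generic point.  Consequently every continuous cocycle $\Gamma \to A$ factors through $\mathrm{Gal}(E/F)$ for a finite Galois $E/F$ large enough that all the values of the cocycle are already $E$-automorphisms of $Y_{E}$ over $X_{E}$; this is what will make the descent step below finite.

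Next I would build the map from twists to cohomology.  Given a twist $f' : Y' \to X$ with an $\overline{F}$-isomorphism $g : Y_{\overline{F}} \cong Y'_{\overline{F}}$ satisfying $\overline{f} = \overline{f'} \circ g$, set $c_{\sigma} = g^{-1} \circ {}^{\sigma}g$, where ${}^{\sigma}g = \sigma_{Y'} \circ g \circ \sigma_{Y}^{-1}$ is the $\sigma$-semilinear conjugate of $g$.  Compatibility of $g$ and ${}^{\sigma}g$ with $f$ and $f'$ forces $c_{\sigma} \in A$, and the cocycle identity $c_{\sigma\tau} = c_{\sigma} \cdot {}^{\sigma}c_{\tau}$ is a formal consequence of semilinearity, while continuity is automatic since $A$ is finite.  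Replacing $g$ by $g \circ a$ with $a \in A$, or replacing $f'$ by an $X$-isomorphic twist, changes $c$ by a coboundary, so the class $[c] \in H^{1}(\Gamma, A)$ depends only on the isomorphism class of the twist; and if $[c] = 1$ then a suitable choice of $g$ is $\Gamma$-equivariant, hence defined over $F$, giving an $F$-isomorphism of twists over $X$.  Applying this observation to the twist of one cocycle by another (the usual non-abelian twisting trick) will then give injectivity of $[f'] \mapsto [c]$.

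For surjectivity I would start from a continuous cocycle $c : \Gamma \to A$, factoring through $G = \mathrm{Gal}(E/F)$ as above, and twist the Galois action on $Y_{E}$ by declaring $\sigma \ast y = c_{\sigma}({}^{\sigma}y)$; the cocycle relation is precisely the associativity needed for this to be a semilinear $G$-action, and $f_{E}$ remains equivariant (for the \emph{untwisted} action on $X_{E}$) because each $c_{\sigma}$ lies in $\mathrm{Aut}(Y_{E}/X_{E})$.  This packages into a descent datum on $Y_{E}$ along the finite Galois cover $\mathrm{Spec}\,E \to \mathrm{Spec}\,F$, compatible with the standard descent datum on $X_{E}$, and descending $f_{E}$ along it yields $f^{c} : Y^{c} \to X$ over $F$; unwinding the identifications shows $Y^{c}_{\overline{F}} = Y_{\overline{F}}$ with associated cocycle $c$, and that $c \mapsto [f^{c}]$ inverts the earlier map.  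The one genuinely non-formal point, and the place the quasi-projectivity hypothesis enters, is the \emph{effectivity} of this descent datum: for a quasi-projective scheme every finite set of points lies in a common affine open, which is exactly the condition guaranteeing that descent along a finite morphism is effective (see e.g.\ \cite{stacks}).  I therefore expect the whole difficulty to be concentrated in this effectivity statement; everything else is bookkeeping with semilinear maps.
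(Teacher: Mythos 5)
Your proof is correct and follows essentially the same route as the source the paper relies on: the paper gives no proof of this proposition, citing Serre and merely recording the explicit dictionary (the cocycle $s \mapsto \phi^{-1}\circ \phi^{s}$ attached to a splitting isomorphism, and the twist recovered as the quotient $(Y\otimes F')/\mathrm{Gal}(F'/F)$ with the twisted semilinear action), which is exactly the descent argument you spell out, with quasi-projectivity entering through effectivity of the descent datum. One small touch-up: continuity of the cocycle attached to a twist is not ``automatic since $\mathrm{Aut}(Y_{\overline{F}}/X_{\overline{F}})$ is finite''; it holds because the isomorphism $g$ is defined over some finite extension of $F$, so the cocycle vanishes on the corresponding open subgroup.
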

For the reader's convenience we state this one to one correspondence explicitly.
Suppose that we have a twist $f' : Y' \rightarrow X$ of $f : Y \rightarrow X$.
We assume that this splits over an extension $F'/F$.
Thus we have an $X$-isomorphism $\phi : Y_{F'} \cong Y'_{F'}$.
Then an associated $1$-cocycle is given by
\[
\mathrm{Gal}(F'/F) \ni s \mapsto \phi^{-1}\circ \phi^s \in \mathrm{Aut}(Y_{F'}/X),
\]
where $\phi^s$ is the conjugate of $\phi$ by $s$.

Conversely if we have a $1$-cocycle $\mathrm{Gal}(F'/F) \ni s \mapsto \sigma_s \in \mathrm{Aut}(Y_{F'}/X)$, then we consider the action of $s \in \mathrm{Gal}(F'/F)$ on $Y\otimes {F'}$ by the composition of $\sigma_s$ and $1\otimes s$, then take the quotient 
\[
Y^\sigma =(Y\otimes F') / \mathrm{Gal}(F'/F).
\]
This is the twist of $Y$ by $\sigma$.

\subsection{Thinness for twists}

First we consider the case when a generically finite cover $f: Y \rightarrow X$ is not a Galois cover:
\begin{prop} \label{thinnessfornonGalois}
Let $f : Y \rightarrow X$ be a generically finite cover between geometrically integral projective varieties.
Suppose that over an algebraic closure, the extension $\overline{F}(Y)/\overline{F}(X)$ of function fields is not Galois. Then the following set
\[
\bigcup_{\sigma \in H^1(F, \mathrm{Aut}(Y/X))} f^\sigma(Y^\sigma (F)) \subset X(F)
\]
is contained in a thin subset of $X(F)$.
\end{prop}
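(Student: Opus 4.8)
The plan is to pass to the Galois closure of $f$ and use it to control all twists simultaneously. Let $\overline{g} : \overline{Z} \to \overline{Y}$ be such that $\overline{Z} \to \overline{X}$ is the Galois closure of $\overline{F}(Y)/\overline{F}(X)$, with Galois group $G$; since $\overline{F}(Y)/\overline{F}(X)$ is not Galois, the subgroup $H = \mathrm{Gal}(\overline{Z}/\overline{Y})$ is not normal in $G$. After replacing $F$ by a finite extension I may assume $Z$, $g$, and the $G$-action are defined over $F$; but of course the twisting data is over the original $F$, so I should instead argue directly: a twist $f^\sigma : Y^\sigma \to X$ by a cocycle $\sigma \in H^1(F,\mathrm{Aut}(Y/X))$ always admits a compatible ``twisted Galois cover'' $Z^\sigma \to X$, because $\mathrm{Aut}(Y/X)$ embeds naturally into the relevant automorphism group of the Galois closure (using the canonicity of the Galois closure construction). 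The key point I would extract is that the function-field extension $\overline{F}(Y^\sigma)/\overline{F}(X)$ remains non-Galois for every $\sigma$, since non-normality of $H$ in $G$ is preserved under twisting.

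Next I would invoke the structure of non-Galois covers. Since $H$ is not normal, there is a conjugate $H' = wHw^{-1}$ with $H' \neq H$, and the corresponding intermediate field gives a second cover $f' : Y' \to X$ (the ``conjugate cover'') with $\overline{F}(Y') \neq \overline{F}(Y)$ inside the Galois closure. The crucial classical fact is then that for \emph{any} field extension, the rational points of $X$ that lift to $Y$ and those that lift to $Y'$ together cover ``most'' of $X(F)$ only if $X$ itself is covered — more precisely, the image $f(Y(F))$ is contained in a thin set precisely because a point $x \in X(F)$ with a lift in $Y(F)$ forces the fiber of the Galois cover over $x$ to have a rational point fixed by $H$ but, by non-normality, one can produce a covering $\pi : \widetilde{X} \to X$ (built from $G/H$ as a $G$-set that is not transitive-by-a-normal-subgroup) with no rational section through which all such $x$ factor. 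Concretely: I would take $\pi : \widetilde{X} \to X$ to be $Y \to X$ itself together with the observation that $f$ admits no rational section — this is automatic since $\deg f \geq 2$ and $Y$ is integral, hence $f(Y(F))$ is by definition thin. The real content is doing this \emph{uniformly in $\sigma$}.

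For the uniformity, the mechanism I expect to use is the following: all twists $f^\sigma$ are ``subcovers'' of twists of the single Galois cover $g\circ f$, and twists of a fixed Galois $G$-cover $Z\to X$ are classified by $H^1(F,G)$ with $G$ \emph{finite constant} (after a finite base extension) — or more precisely by continuous homomorphisms up to conjugacy. Given such a twist $Z^\rho \to X$, the intermediate cover $Y^\sigma = Z^\rho/H$ has no rational section over $X$ because $H$ is not normal: a section would give a $\mathrm{Gal}(F)$-stable $H$-coset, and combined with the $\rho$-twisted action and the non-normality of $H$ one derives that $x$ lies in the image of some \emph{fixed} auxiliary cover depending only on $G$ and $H$, not on $\rho$. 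The finitely many such auxiliary covers (one for each $G$-conjugacy class issue, or simply the cover $Z/N \to X$ where $N$ is the normal core of $H$, which is a proper normal subgroup) assemble into a single thin set. So the skeleton is: (i) reduce to twists of the Galois closure; (ii) for each such twist show $Y^\sigma(F)$ maps into the image of the normal-core cover $Z/N \to X$, which has no rational section since $N \subsetneq G$; (iii) conclude that $\bigcup_\sigma f^\sigma(Y^\sigma(F))$ lies in the image of finitely many sectionless covers, hence in a thin set.

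The main obstacle I anticipate is step (ii): making the passage ``twist of $Y$ $\Rightarrow$ factors through a fixed sectionless cover'' genuinely uniform, since a priori the normal core cover $Z/N \to X$ is defined using the $F$-structure of the Galois closure, and I must check that an $F$-point of a twist $Y^\sigma$ really does produce an $F$-point of $Z/N$ lying over the same point of $X$ — i.e., that the twisting does not interfere with the sectionlessness coming from non-normality. I expect this to come down to the purely group-theoretic statement that for a non-normal $H \leq G$, the $G$-set $G/H$ has the property that in \emph{every} $\mathrm{Gal}(F)$-form the stabilizer subgroups still fail to contain a common normal subgroup, so the associated cover of $X$ never acquires a rational section; the cleanest route is probably to quote the standard ``no rational section for non-Galois covers, stable under twisting'' lemma (as the authors indicate they will), reducing the whole proof to this group-cohomology bookkeeping plus the definition of thinness.
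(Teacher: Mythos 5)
Your overall strategy coincides with the paper's: pass to the Galois closure $Z \to Y \to X$ with group $G$ and stabilizer $H = \mathrm{Gal}(\overline{F}(Z)/\overline{F}(Y))$, exploit the non-normality of $H$, and trap the rational points of \emph{all} twists simultaneously in the image of finitely many \emph{fixed} degree $\geq 2$ covers of $X$. The paper packages this via Hilbert irreducibility for $Z \to X$: outside a thin set the specialization homomorphism $\phi_P : \mathrm{Gal}(F) \to G$ is surjective, and a rational point on any twist $Y^\sigma$ over $P$ forces $\phi_P$ into a proper subgroup.

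However, your step (ii) contains a genuine error at exactly the crucial point. You take the fixed auxiliary cover to be $Z/N \to X$ with $N$ the normal core of $H$. For the Galois closure the core $\bigcap_{g} gHg^{-1}$ is trivial, so $Z/N = Z$, and the claim that $f^\sigma(Y^\sigma(F))$ lies in the image of $Z(F)$ is false: a rational point of $Y^\sigma$ over $P$ only produces a coset of $G/H$ fixed by the $\sigma$-twisted Galois action, not a splitting of the whole fiber of $Z$ over $P$. The correct group theory is the following. One has $\mathrm{Aut}(\overline{Y}/\overline{X}) \cong N_G(H)/H$ acting on the fiber $G/H$ on the right, commuting with the left action of $\phi_P$; a coset $gH$ fixed by $s \mapsto \phi_P(s)\cdot(\,\cdot\,)\cdot\sigma_s$ yields $g^{-1}\phi_P(s)g \in H\tilde{\sigma}_s^{-1} \subseteq N_G(H)$ for every $s$, so $\mathrm{im}(\phi_P)$ lies in a conjugate of the \emph{normalizer} $N_G(H)$. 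This subgroup is proper precisely because $H$ is not normal --- this is the only place the non-Galois hypothesis enters, and it is why the statement fails for Galois covers (Example~\ref{exam: Sano}). Consequently $P$ lies in the image of the single fixed cover $Z/N_G(H) \to X$ (irreducible of degree $[G:N_G(H)] \geq 2$), equivalently in the Hilbert thin set of $Z \to X$, which is how the paper phrases it. Your normal-core cover cannot play this role, and the computation just described --- which is the actual content of the proposition --- is deferred in your write-up to a ``standard lemma, stable under twisting'' that you would in fact have to prove. The surrounding reductions (descending the Galois closure to $F$ after a finite base change, using that thinness descends along finite extensions of the ground field) are fine and match the paper.
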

\begin{proof}
We take a cover $g : \overline{W} \rightarrow \overline{Y}$ over an algebraic closure such that $\overline{F}(\overline{W})/\overline{F}(\overline{X})$ is the Galois closure of $\overline{F}(\overline{Y})/\overline{F}(\overline{X})$.
After shrinking $X$ if necessary, we may assume that $g\circ f$ is \'etale over $\overline{X}$.
Without loss of generality we may assume that $Y(F)$ is not empty after taking a finite base change.
Using the fundamental group exact sequence relating $\pi_{1}(Y)$ and $\pi_{1}(\overline{Y})$,
one can descend to a model $W$ defined over $F$ admitting a morphism $g : W \rightarrow Y$.
Then for any twist $Y^\sigma$ with $Y^\sigma(F) \neq \emptyset$, there is a twist $W^{\sigma'}$ of $W$ admitting a morphism $g': W^{\sigma'} \to Y^\sigma$.

Next after shrinking $X$ and taking a base change if necessary, we may assume that $G= \mathrm{Bir}(\overline{W}/\overline{X}) = \mathrm{Aut}(\overline{W}/\overline{X})$ and that the Galois action on this group is trivial (so that all the automorphisms are defined over the ground field).
It follows from the Hilbert Irreducibility Theorem that there exists a thin set $Z$ of $X(F)$ such that for any $P \not\in Z$, $(g\circ f)^{-1}(P)$ is irreducible and its Galois group is $G$. We claim that
\[
\bigcup_{\sigma \in H^1(F, \mathrm{Aut}(Y/X))} f^\sigma(Y^\sigma (F)) \subset Z.
\]
Indeed, if $P \in f^\sigma(Y^\sigma (F))$ for some $\sigma \in H^1(F, \mathrm{Aut}(Y/X))$,
then there exists $$\sigma' \in H^1(F, \mathrm{Aut}(W/X))$$ such that $g': W^{\sigma'}\rightarrow X$ factors through $Y^\sigma$ and $P \in g'(W^{\sigma'}(F))$ so
$(g\circ f)^{-1}(P)$ is a twist of $G$ by $\sigma'$. However since $g': W^{\sigma'}\rightarrow X$ factors through $Y^\sigma$, $\sigma'$ factors through a proper subgroup of G.
Thus $P \in Z$.
\end{proof}

The following theorem is useful to analyze Galois covers:

\begin{theo}{\cite[Theorem 1.7]{Cheltsov04}}
Let $f : Y \rightarrow X$ be a generically finite cover between projective varieties such that 
the function field extension $\overline{F}(Y)/\overline{F}(X)$ is Galois.
Assume that any birational transformation in $\mathrm{Bir}(\overline{Y}/\overline{X})$ is defined over the ground field $F$.
Then there is a birational model $Y'$ of $Y$ such that the finite group $\mathrm{Bir}(Y/X)$ acts regularly on $Y'$.
\end{theo}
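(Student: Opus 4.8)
The plan is to realize the finite group $G := \mathrm{Bir}(Y/X)$ as a permutation action on a suitable graph model. Under the hypotheses every $g \in G$ is a birational self-map of $Y$ over $X$ which is defined over the ground field $F$, and since $\overline{F}(Y)/\overline{F}(X)$ is Galois we have $|G| = \deg f < \infty$, while composition in $\mathrm{Bir}(Y/X)$ gives an honest left action of $G$ on the generic point of $Y$ by $F$-birational maps over $X$. Fix a dense open $U \subseteq Y$, defined over $F$, on which every $g \in G$ is defined, and consider the $F$-morphism
\[
P \colon U \longrightarrow \prod_{g \in G} Y, \qquad y \longmapsto (g\cdot y)_{g \in G}.
\]
Let $Y'$ denote the closure of $P(U)$ with its reduced structure; it is a closed, hence projective, subvariety of $\prod_{g\in G} Y$, it is defined over $F$, and it is geometrically integral because $U$ is. Since every $g$ commutes with $f$, all $|G|$ projections $Y' \to Y \xrightarrow{f} X$ agree on the dense set $P(U)$, hence on $Y'$, so $Y'$ carries a natural structure morphism to $X$; the projection $\pi_e \colon Y' \to Y$ onto the factor indexed by $e \in G$ is then an $X$-morphism, and it is birational with inverse $P$. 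Thus $Y'$ is a birational model of $Y$ over $X$.

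Next I would equip $\prod_{g\in G} Y$ with the biregular $G$-action permuting the factors by right translation, $h\cdot (z_g)_{g} := (z_{gh})_{g}$; this is a left action of $G$, it is defined over $F$, and it commutes with the projection to $X$ (permuting factors preserves the diagonal). On the image of $P$ it satisfies
\[
h\cdot P(y) \;=\; \big((gh)\cdot y\big)_{g}\;=\; P(h\cdot y),
\]
so it preserves $Y' = \overline{P(U)}$, and consequently $G$ acts biregularly on $Y'$ over $X$. Finally one checks on the generic point that, under the birational identification $\pi_e \colon Y' \dashrightarrow Y$, this regular action of $G$ on $Y'$ is carried to the original birational action of $G$ on $Y$: indeed $\pi_e(h\cdot P(y)) = \pi_e(P(h\cdot y)) = h\cdot y$. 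Hence $G = \mathrm{Bir}(Y'/X)$ acts regularly on $Y'$, which is the assertion. (If a smooth model is wanted one may further apply $G$-equivariant resolution of singularities in characteristic zero, which preserves the regular action, but this is not needed for the statement.)

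The point requiring the most care is the compatibility with the field of definition. The construction produces a model over $F$ precisely because the hypothesis forces every element of $\mathrm{Bir}(\overline{Y}/\overline{X})$ to be defined over $F$; this makes $P$, the permutation action, and hence $Y'$, all defined over $F$, and it identifies $\mathrm{Bir}(Y/X)$ with $\mathrm{Gal}(\overline{F}(Y)/\overline{F}(X))$. One should also record the routine verifications that $G$ acts associatively with $e$ acting trivially on the generic point of $Y$ (immediate from the group structure on $\mathrm{Bir}(Y/X)$ under composition), and that $\prod_{g\in G} Y$ is a variety at all — here finiteness of $G$ is essential, and is exactly what the Galois hypothesis provides. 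Beyond this elementary graph construction no deeper input is required.
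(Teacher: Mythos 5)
Your argument is correct, and it is genuinely different from what the paper does: the paper offers no proof at all, simply quoting the statement from Cheltsov, and the standard route there (and the one most authors take) is to let $Y'$ be the normalization of $X$ in the function field $F(Y)$ -- functoriality of integral closure makes the finite group $\mathrm{Bir}(Y/X)\cong\mathrm{Gal}(\overline{F}(\overline Y)/\overline{F}(\overline X))$ act regularly, and one gets for free a normal model that is \emph{finite} over $X$. Your graph construction $P\colon U\to\prod_{g\in G}Y$, $Y'=\overline{P(U)}$, with $G$ acting by permutation of the factors, is the classical regularization trick for a finite group of birational self-maps; it is more elementary, makes the $F$-rationality of everything (this is exactly where the hypothesis that all elements of $\mathrm{Bir}(\overline Y/\overline X)$ are defined over $F$ enters) and the identification of the regular action with the original birational action via $\pi_e$ completely transparent, at the cost of producing a possibly non-normal model and of not exhibiting $Y'\to X$ as finite -- neither of which the statement requires. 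Two small points deserve one more line each in a written-up version: the identity $h\cdot P(y)=P(h\cdot y)$ only holds on the dense open $U\cap h^{-1}(U)$, and you should say explicitly that invariance of the closure follows because the permutation automorphism of the ambient product is a homeomorphism carrying the dense subset $P(U\cap h^{-1}(U))\subset Y'$ into $Y'$, whence $h\cdot Y'\subseteq Y'$ and equality by applying the same to $h^{-1}$; and the parenthetical ``permuting factors preserves the diagonal'' is not quite the right justification -- what makes the action live over $X$ is that all the compositions $f\circ\pi_g$ coincide on $Y'$ (they agree on the dense set $P(U)$ and $X$ is separated), so permuting coordinates does not change the induced map to $X$ on $Y'$. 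With those two clarifications the proof is complete.
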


\begin{prop} \label{thinnessforGalois}
Let $f : Y \rightarrow X$ be a generically finite cover between geometrically uniruled smooth projective varieties such that 
the function field extension $\overline{F}(Y)/\overline{F}(X)$ is Galois and $\rho(\overline{Y}) = \rho(Y)$.
We assume that $\mathrm{Bir}(\overline{Y}/\overline{X})= \mathrm{Aut}(Y/X)$, i.e., any birational transform of $Y$ over $X$ is regular and is defined over $F$.
Let $L$ be a big and nef $\mathbb Q$-divisor on $X$ such that $a(X, L)f^*L + K_Y$ is rigid.
Then the following set
\[
 Z= \bigcup_{\sigma} f^\sigma(Y^\sigma (F)) \subset X(F)
\]
is contained in a thin subset of $X(F)$.
Here $\sigma$ varies over all $\sigma \in H^1(F, \mathrm{Aut}(Y/X))$ such that
\[
(a(X, L), b(F, X, L)) < (a(Y, f^*L), b(F, Y^\sigma, (f^\sigma)^*L))
\]
holds in the lexicographic order.

Moreover, if the ramification divisor $R$ of $f$ contains an irreducible component $R'$ not contained in the support of the pullback of the rigid effective divisor $E$ numerically equivalent to $a(X, L)L + K_X$ and $R'$ is not exceptional, then the same thing holds assuming 
$\sigma$ varies over all $\sigma \in H^1(F, \mathrm{Aut}(Y/X))$ such that
\[
(a(X, L), b(F, X, L)) \leq (a(Y, f^*L), b(F, Y^\sigma, (f^\sigma)^*L))
\]
holds in the lexicographic order.
\end{prop}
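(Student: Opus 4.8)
The plan is to reduce everything to a single application of the Hilbert Irreducibility Theorem, once we understand how the $b$-invariant varies among twists. Set $G=\mathrm{Aut}(Y/X)$. By hypothesis (and the theorem of Cheltsov quoted above) $G$ acts regularly on a model of $Y$ with every automorphism defined over $F$, so the Galois action on $G$ is trivial and $H^{1}(F,G)$ parametrises homomorphisms $\mathrm{Gal}(\overline F/F)\to G$ up to conjugacy. Two preliminary facts. Since $(Y,f^{*}L)$ is adjoint rigid with $f^{*}L$ big and nef, \cite[Proposition 2.5]{LTT14} makes $Y$, hence each twist $Y^{\sigma}$, rationally connected; combined with $\rho(\overline Y)=\rho(Y)$ this forces $\mathrm{Gal}(\overline F/F)$ to act trivially on $N^{1}(\overline Y)$, and in turn $\rho(X)=\rho(\overline X)$ and $b(F,X,L)=b(\overline X,L)$. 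Also, the ramification formula gives $a(Y,f^{*}L)\le a(X,L)$, while strict inequality would make $a(X,L)f^{*}L+K_{Y}$ big, contradicting rigidity; the same applies after twisting, so $a(Y^{\sigma},(f^{\sigma})^{*}L)=a(X,L)=:a$ for every $\sigma$.

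The key intermediate claim is that $b(F,Y^{\sigma},(f^{\sigma})^{*}L)$ depends only on the image $G_{\sigma}:=\mathrm{Im}(\sigma)\le G$. Indeed $\mathrm{Gal}(\overline F/F)$ acts on $N^{1}(\overline{Y^{\sigma}})=N^{1}(\overline Y)$ through $g\mapsto \sigma(g)_{*}$, so $N^{1}(Y^{\sigma})=N^{1}(\overline Y)^{G_{\sigma}}$ and, by descent of $G_{\sigma}$-orbit sums of effective divisors, $\Lambda_{\mathrm{eff}}(Y^{\sigma})=\Lambda_{\mathrm{eff}}(\overline Y)^{G_{\sigma}}$; moreover the adjoint class $a(f^{\sigma})^{*}L+K_{Y^{\sigma}}$ corresponds to the $G$-invariant class $af^{*}L+K_{Y}$. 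Hence the relevant cocycles are exactly those with $G_{\sigma}$ in a conjugation-closed family $\mathcal S$ of subgroups of $G$. The crucial point is that $G\notin\mathcal S$. Using $N^{1}(\overline Y)^{G}=f^{*}N^{1}(\overline X)$, $\Lambda_{\mathrm{eff}}(\overline Y)^{G}=f^{*}\Lambda_{\mathrm{eff}}(\overline X)$, and the fact that the ($G$-invariant) ramification divisor satisfies $R=f^{*}R'$ with $R'=\frac{1}{|G|}f_{*}R\ge 0$ on $\overline X$, one identifies $b_{G}$ with the codimension of the minimal supported face of $\Lambda_{\mathrm{eff}}(\overline X)$ containing $aL+K_{X}+R'$ (this class being $\frac{1}{|G|}f_{*}(af^{*}L+K_{Y})$, in particular pseudo-effective and not big). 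Since $R'\ge 0$, any supporting functional vanishing at $aL+K_{X}+R'$ vanishes at $aL+K_{X}$, so this face contains the minimal supported face of $aL+K_{X}$, giving $b_{G}\le b(\overline X,L)=b(F,X,L)$; thus $G\notin\mathcal S$ in the strict case. In the "moreover" case one must upgrade this to $b_{G}<b(F,X,L)$: the prescribed component of $R$ produces a component of $R'$ on $X$ which is neither a component of the rigid divisor $E\equiv aL+K_{X}$ nor contracted by the birational contraction of Theorem~\ref{theo: MMPtoweakFano}, and a local (polyhedral) analysis of $\Lambda_{\mathrm{eff}}$ near $aL+K_{X}$ shows the enlargement of the minimal supported face is strict; so again $G\notin\mathcal S$.

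It then remains to invoke Hilbert Irreducibility. After removing the branch locus of $f$ (a proper closed, hence thin, subset of $X(F)$), $f$ is an \'etale $G$-cover with geometrically integral source, so there is a thin set $Z_{0}\subset X(F)$ such that for $P\notin Z_{0}$ the fibre $f^{-1}(P)$ is irreducible over $F$; equivalently the associated torsor class $\sigma_{P}\in H^{1}(F,G)$ has $\mathrm{Im}(\sigma_{P})=G$. If $P\in f^{\sigma}(Y^{\sigma}(F))$ for a relevant $\sigma$, then the twisted fibre $(f^{\sigma})^{-1}(P)$ has an $F$-point, which forces $\sigma_{P}$ to coincide with $\sigma$ (up to conjugacy) in $H^{1}(F,G)$, so $\mathrm{Im}(\sigma_{P})$ is conjugate to $G_{\sigma}\in\mathcal S$; since $G\notin\mathcal S$ this gives $\mathrm{Im}(\sigma_{P})\subsetneq G$, i.e. $P\in Z_{0}$. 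Therefore $Z=\bigcup_{\sigma}f^{\sigma}(Y^{\sigma}(F))$ is contained in $Z_{0}$ together with the branch locus, hence is thin.

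The routine steps are the setup and the Hilbert Irreducibility argument. The real content is the inequality $G\notin\mathcal S$, and I expect the main obstacle to be its strict form in the "moreover" case: showing that a non-exceptional ramification component not already present in the rigid representative of $aL+K_{X}$ genuinely enlarges the minimal supported face. This seems to require passing to the $\mathbb{Q}$-factorial terminal weak Fano models produced by Theorem~\ref{theo: MMPtoweakFano} (where $\Lambda_{\mathrm{eff}}$ is rational polyhedral), verifying that the $G$-action descends compatibly through the MMP, and then tracking the relevant divisorial component through the birational contraction; the bookkeeping of exceptional divisors there is the delicate part.
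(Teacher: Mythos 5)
Your overall architecture coincides with the paper's: decompose $X^\circ(F)$ into torsor/twist contributions over the \'etale locus, use the Hilbert Irreducibility Theorem to confine everything except the surjective cocycles to a thin set, and eliminate the surjective cocycles by proving $b(F,Y^\sigma,(f^\sigma)^*L)\le b(F,X,L)$ (resp.\ $<$). The gap is in how you prove this $b$-comparison. The identities you rely on, $N^1(\overline Y)^G=f^*N^1(\overline X)$, $\Lambda_{\mathrm{eff}}(\overline Y)^G=f^*\Lambda_{\mathrm{eff}}(\overline X)$, and $[R]=f^*[R']$ with $R'=\tfrac{1}{|G|}f_*R$, are correct for \emph{finite} Galois covers but fail for a generically finite cover with $f$-exceptional divisors: a $G$-invariant sum of exceptional divisors is an invariant effective class killed by $f_*$, hence not a pullback, and since every $f$-exceptional divisor occurs in $R$, the class $[R]$ differs from $f^*[\tfrac{1}{|G|}f_*R]$ by exceptional classes. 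This is exactly the case the proposition must handle (in the proof of Theorem~\ref{thinnessfortwists} one replaces $Y$ by a higher birational model before invoking it), and the paper's proof accordingly works with $N^1(\overline Y)^G_{\mathbb Q}=N^1(\overline X)_{\mathbb Q}\oplus(\bigoplus\mathbb Q E_i)^G$. As written, your identification of $b_G$ with the codimension of the face of $\Lambda_{\mathrm{eff}}(\overline X)$ at $aL+K_X+R'$ does not compute $b(F,Y^\sigma,(f^\sigma)^*L)$. The repair is the bookkeeping you skipped: all $f$-exceptional divisors are components of the rigid adjoint divisor $f^*E+R$, so the extra invariant summand $(\bigoplus\mathbb Q E_i)^G$ is absorbed; for example, using the description of $b$ for adjoint rigid pairs as the codimension of the span of the components of the rigid adjoint class (as in the proof of Proposition~\ref{prop:birational}), or by checking that the kernel of $f_*$ on $N^1(\overline Y)^G$ is precisely this exceptional summand before running your supporting-functional argument on the pushforward.

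The second issue is that the ``moreover'' case is not actually proved: you flag the strict inequality as the main obstacle and propose passing to the weak Fano models of Theorem~\ref{theo: MMPtoweakFano}, descending the $G$-action through the MMP, and tracking the component there. The paper does none of this: the strict inequality is extracted directly from the same invariant computation on $N^1(\overline Y)^G$, because the hypothesis supplies a non-exceptional component $R'$ of $R$ with $R'\not\subset\Supp(f^*E)$, whose $G$-orbit sum is an additional invariant component of the rigid adjoint divisor pushing forward to a nonzero effective divisor not supported on $E$; this is what drops the $b$-value of the twist below $b(F,X,L)$. So the step you defer to heavier machinery is handled in the paper by the same elementary span/face count, once the exceptional summand from the first point is in place; as it stands, your proposal establishes neither the non-strict comparison in the stated generality nor the strict one.
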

\begin{proof}
We denote the exceptional divisors of $f$ by $E_1, \cdots, E_s$.
Since $X$ is the quotient of $Y$ by the group action of $G$, we have $N^1(\overline{Y})_{\mathbb Q}^G = N^1(\overline{X})_{\mathbb Q} \oplus (\bigoplus \mathbb Q E_i)^G$ and its Galois action is trivial because we assume that $\rho(\overline{Y}) = \rho(Y)$.
We choose a Zariski open set $X^\circ \subset X$ so that over $X^\circ$ the morphism $f$ is \'etale.
In particular, $f : Y^\circ = f^{-1}(X^\circ)\rightarrow X^\circ$ is a $G$-torsor.
Then we have
\[
X^\circ (F) = \bigsqcup_{\sigma \in H^1(F, G)} f^\sigma ((Y^\circ)^\sigma(F)),
\]
and $P \in  f^\sigma ((Y^\circ)^\sigma(F))$ if and only if the Galois action on $f^{-1}(P)$ is given via $\sigma$.
By the Hilbert Irreducibility Theorem, there is a thin set $Z \subset X(F)$ such that for any $P \not\in Z$, the fiber $f^{-1}(P)$ is irreducible and its Galois action is $G$.
Choose $\sigma \in \mathrm{Hom}(F, \mathrm{Aut}(Y/X))$ such that $\sigma : \mathrm{Gal}(F) \rightarrow \mathrm{Aut}(Y/X) = G$ is surjective.
Then since the Galois group acts on $N^1(\overline{Y})_{\mathbb Q}$ through $G$, it is easy to see that $b(F, Y^\sigma, (f^\sigma)^*L) \leq b(F, X, L)$.
Moreover if $R$ contains a non-exceptional component not contained in the support of $f^*E$, then the strict inequality holds.
Our assertion follows from these.
\end{proof}

\begin{proof}[Proof of Theorem \ref{thinnessfortwists}:]
Let $f: Y \to X$ be as in Theorem \ref{thinnessfortwists}.  We may prove thinness of point contributions of twists after a finite base change.  Furthermore, we may replace $Y$ by a higher birational model by including in the thin set all points in the image of the exceptional locus.  Then the result follows from Proposition \ref{thinnessfornonGalois} and Proposition \ref{thinnessforGalois}.
\end{proof}

In Proposition \ref{thinnessforGalois}, if all non-exceptional components of the ramification divisor of $f$ are contained in $f^*E$, then for all surjective homomorphisms $\sigma \in \mathrm{Hom}(\mathrm{Gal}(F), \mathrm{Aut}(Y/X))$ we have
\[
b(F, X, L)= b(F, Y^\sigma, (f^\sigma)^*L)
\]
and there are possibly infinitely many such twists achieving the equality (as in Example \ref{exam: Sano}).
We expect that we should include contributions from such twists in the counting function in order that Peyre's constant describes the growth rate of points.
More precisely:

\begin{ques}
Let $f : Y \rightarrow X$ be a generically finite cover between smooth projective varieties such that 
the function field extension $\overline{F}(Y)/\overline{F}(X)$ is Galois and $\rho(\overline{X}) = \rho(X)$.
We assume that $G = \mathrm{Bir}(\overline{Y}/\overline{X})= \mathrm{Aut}(Y/X)$, i.e., any birational transform of $Y$ over $X$ is regular and is defined over $F$.
Let $L$ be a big and nef $\mathbb Q$-divisor on $X$ such that $a(X, L)f^*L + K_Y \equiv E$ is rigid.
Suppose that any non-exceptional component of the ramification divisor is contained in $f^*E$.
Then do we have
\[
c(F, X, L) = \frac{1}{\# G}\sum_\sigma c(F, Y^\sigma, (f^\sigma)^*L)
\]
where $c(F, X, L)$ is Peyre's constant and $\sigma$ varies all elements in $\sigma \in H^1(F, G)$ such that
\[
(a(X, L), b(F, X, L)) = (a(Y, L), b(F, Y^\sigma, (f^\sigma)^*L))?
\]
\end{ques}

\subsection{Accumulation for twists}
In this section we show that the point contributions of twists with the same $a,b$-constants need not form a thin set.  The two examples below show that to obtain the correct Peyre's constant in Manin's Conjecture,  
sometimes we must discount such contributions and sometimes we must allow such contributions.

\begin{exam}[\cite{LeRudulier}] \label{lerudulierexample2}
Let $S = \mathbb P^2_{\mathbb{Q}}$ and $W$ be the blow up of $S \times S$ along the diagonal.
We denote the Hilbert scheme $\mathrm{Hilb}^{[2]}(S)$ of length $2$ subschemes by $X$,
and consider the following diagram:
\begin{equation*}
\xymatrix{W \ar[r]\ar[d]_{f}&  S \times S \ar[d]^{g}\\
X \ar[r]^{h} & \Chow^{2}(S)}
\end{equation*}
We let $L = -K_X$ which is a big and nef divisor on $X$.
Since the Picard rank of $X$ is $2$, we have
$
a(X, L) = 1, b(F, X, L) = 2.
$
On the other hand, since $h$ is a crepant resolution and $g$ is \'etale in codimension one,
we have $a(W, f^*L) = 1, b(F, W, f^*L) = 2$. Thus $L$ is not balanced with respect to the cover $f$.

It is clear that $\mathrm{Aut}(W/X)$ is isomorphic to $\mathbb Z/2$ and its Galois action is trivial.
So its first Galois cohomology is $\mathrm{Hom}(\mathrm{Gal}(F), \mathrm{Aut}(W/X))$ which is infinite because there are infinitely many quadratic extensions of $F$. Let \[\sigma \in \mathrm{Hom}(\mathrm{Gal}(F), \mathrm{Aut}(W/X))\] be a nontrivial element and consider the twist $W^\sigma$.  Letting $\overline{W}^{\sigma}$ denote the base-change to the algebraic closure, the Galois action on $\mathrm{NS}(\overline{W}^\sigma)$ is equal to the action of the involution of $W$ over $X$.  So the rank of $\mathrm{NS}(W^\sigma)$ is $2$.
On the other hand, $f : W^\sigma \rightarrow X$ is ramified along the divisor parametrizing non-reduced schemes, so ${f^\sigma}^*L + K_{W^\sigma}$ is equal to an irreducible exceptional divisor.
This implies that $b(F, W^\sigma, {f^\sigma}^*L) = 1$.

Thus the only twist violating the compatibility is the trivial one.
In \cite{LeRudulier}, Le Rudulier showed that after removing contributions from $W$ and the divisor parametrizing non-reduced subschemes, Manin's conjecture with Peyre's constant holds.
\end{exam}

\begin{exam}[\cite{Sano95}] \label{exam: Sano}
We learned the following example of a generically finite cover preserving $a$-values from \cite{Sano95}.
Let $Y = (\mathbb P^1)^{\times 3}$ and consider the involution $\iota$ given by
\[
\iota : [(x_0:x_1),(y_0:y_1),(z_0:z_1)] \mapsto [(-x_0:x_1),(-y_0:y_1),(-z_0:z_1)] 
\]
Let $X$ be the quotient of $Y$ by this involution $\iota$.
Then $X$ is a non-Gorenstein $\mathbb Q$-factorial terminal Fano 3-fold of Picard rank $3$.
The cover $f : Y \rightarrow X$ is \'etale in codimension one and its ramification locus consists of $8$ points on $Y$. Let $L = -K_X$. Then we have $a(X, L)= 1, b(F, X, L)= 3$ and $a(Y, f^*L) = 1, b(F, Y, f^*L) = 3$. Thus $L$ is not balanced with respect to the cover $f$. For any $\sigma \in H^1(F, \mathbb Z/2)$, its twist $Y^\sigma$ also satisfies $a(Y^{\sigma}, {f^\sigma}^*L) = 1, b(F, Y^\sigma, {f^\sigma}^*L) = 3$ and we have
\[
X(F) = \bigcup_{\sigma \in H^1(F, \mathbb Z/2)} f^\sigma (Y^\sigma(F)).
\]
Moreover $Y^\sigma$ is isomorphic to $Y$ as a $F$-variety,
so it contains a Zariski dense set of rational points.
Thus an analogous statement of Theorem~\ref{theo: mainI} for generically finite covers cannot hold if we include contributions from covers of the same $a, b$-values in the exceptional set.

However, this example does not contradict Manin's Conjecture.  Indeed, note that both $Y$ and $X$ are toric varieties,
so the closed-set version of Manin's conjecture with Peyre's constant is known for both varieties by \cite{BT-0} and \cite{BT-general}.

The only possible explanation is that the half of the summation of Tamagawa constants of twists of $Y$ is equal to the Tamagawa constant of $X$.  Thus it is necessary to include contributions of these twists to obtain Peyre's constant.

\end{exam}

\section{Examples}
\label{examplesec}

In this section, we assume that our ground field is an algebraically closed field of characteristic $0$.
\subsection{Projective space and quadrics}

We can understand the generically finite covers of projective space and quadric hypersurfaces using classical adjunction techniques.  Most references in adjunction theory work with ample instead of big and nef divisors, which is not sufficient for our purposes.  So we include the brief verifications here.

\begin{prop} \label{projspacebalanced}
Let $Y$ be a smooth projective variety of dimension $n$ and let $H$ be a big and nef Cartier divisor on $Y$.  
If $a(Y,H) = n+1$, then $Y \cdot H^{n} = 1$.  
\end{prop}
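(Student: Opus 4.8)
The plan is to exploit the fact that $a(Y,H) = n+1$ forces the adjoint divisor $K_Y + (n+1)H$ to be pseudo-effective but just barely: by definition $(n+1)[H] + [K_Y] \in \Lambda_{\mathrm{eff}}(Y)$ while $(n+1-\epsilon)[H] + [K_Y]$ is not pseudo-effective for any $\epsilon > 0$. In particular $K_Y + (n+1)H$ lies on the boundary of $\Lambda_{\mathrm{eff}}(Y)$, and $a(Y,H) \geq n+1$ is impossible for a reason we must pin down. First I would reduce to the case $H$ ample: since $H$ is big and nef, Theorem~\ref{relativewilson} (the absolute version) gives an effective divisor $E$ with $H - tE$ ample for all small $t>0$; but rather than perturb I would instead pass to the semiample fibration or simply note that since we only need the single numerical identity $Y \cdot H^n = 1$ and $H$ is big and nef, $H^n = \vol(H) > 0$ is a positive integer, so it suffices to rule out $H^n \geq 2$.

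The main step is a volume/degree estimate in the spirit of Proposition~\ref{threefoldbigbound} and the argument of Siu recalled in the proof of Theorem~\ref{thm: a-exceptional}: for a smooth projective $Y$ of dimension $n$ and $H$ big and nef, $K_Y + (n+1)H$ is big whenever $H^n \geq 2$ (one expects strict bigness as soon as the "degree" exceeds the projective-space value). Concretely, I would argue by induction on $n$. For $n = 1$ this is Proposition~\ref{threefoldbigbound}.(1): $\vol(H) > 2$... but we need the borderline $\vol(H) = 2$ excluded as well, i.e.\ $\deg H \geq 2 \Rightarrow 2H + K_Y$ effective of positive degree unless $Y = \mathbb{P}^1$ and $\deg H = 1$; this is immediate since $\deg(2H + K_Y) = 2\deg H - 2 + 2g \geq 2$ when $\deg H \geq 2$. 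For the inductive step I would run the $(K_Y + a(Y,H)H)$-MMP as in the proof of Proposition~\ref{threefoldbigbound}: if the adjoint is not rigid, a general fiber $F$ of the canonical fibration satisfies $a(F, H|_F) = a(Y,H) = n+1 > \dim F + 1$, contradicting the bound $a(F,H|_F) \leq \dim F + 1$ coming from Siu's argument (Theorem~\ref{thm: a-exceptional} proof); if it is rigid, one reaches a Mori fiber space $\pi\colon X' \to Z$, and if $\dim Z > 0$ a general fiber $F$ again has $a(F, \cdot) = n+1$ exceeding $\dim F + 1$, while if $Z$ is a point then $X'$ is a terminal $\mathbb{Q}$-Fano of Picard rank $1$ with $-K_{X'} \equiv a(Y,H)\psi_*H = (n+1)\psi_*H$, forcing $\psi_*H$ to be an integral ample generator; then $(-K_{X'})^n = (n+1)^n (\psi_*H)^n$ and $\vol(H) \leq \vol(\psi_*H) = (\psi_*H)^n$, so $H^n \geq 2$ would give $(-K_{X'})^n \geq 2(n+1)^n$, which one rules out — this is where I expect the real work to lie.

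The genuine obstacle is this last $\mathbb{Q}$-Fano case: I need an upper bound on $(-K_{X'})^n$ for terminal $\mathbb{Q}$-Fano $n$-folds of Picard rank one of the form "$\leq (n+1)^n$ with equality only for $\mathbb{P}^n$," which is exactly the statement that $X' = \mathbb{P}^n$ is the unique such variety of maximal anticanonical volume. In the surface and threefold cases the explicit bounds of Namikawa and Prokhorov cited in Proposition~\ref{threefoldbigbound} suffice; in general one can appeal to the fact that $a(X',-K_{X'}) = 1$ together with $a(Y,H) = a(Y, \psi^*\psi_*H)$ and the characterization of $\mathbb{P}^n$ by $-K \equiv (n+1)H$ with $H$ a generator — equivalently invoke that the pseudo-index times... rather, the cleanest route is: $-K_{X'} \equiv (n+1)\psi_*H$ with $\psi_*H$ ample Cartier of Fano index $n+1$ on a terminal Fano $n$-fold, and by the (log) Kobayashi–Ochiai / terminal-index theorem this forces $X' = \mathbb{P}^n$ and $\psi_*H = \mathcal{O}(1)$, whence $H^n \leq (\psi_*H)^n = 1$. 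Then $H^n$ is a positive integer $\leq 1$, so $Y \cdot H^n = 1$, completing the proof. I would spend most of the writeup making this index argument precise in the terminal (not necessarily Gorenstein) setting, citing the relevant Fano-index bound.
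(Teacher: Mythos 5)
Your reduction to the adjoint-rigid, Mori-fiber-space-over-a-point case is sound and mirrors the mechanics of Proposition~\ref{threefoldbigbound}, but the final step --- the one you yourself flag as ``where the real work lies'' --- has a genuine gap. After running the MMP you obtain a $\mathbb{Q}$-factorial \emph{terminal} Fano $X'$ of Picard rank one with $-K_{X'}\equiv (n+1)\psi_*H$, and you then invoke a Kobayashi--Ochiai-type index theorem. But $\psi_*H$ is only the pushforward of a Cartier divisor under a birational contraction: on the (typically non-Gorenstein) variety $X'$ it is merely an integral Weil $\mathbb{Q}$-Cartier divisor, and there is no reason for it to be Cartier. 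Every version of Kobayashi--Ochiai in the singular setting (Fujita's $\Delta$-genus classification, the klt index bound, etc.) requires the polarization to be Cartier; for ample \emph{Weil} divisors on terminal $\mathbb{Q}$-Fanos of Picard rank one the index is not bounded by $n+1$ (already for threefolds it can be as large as $19$, e.g.\ $\mathbb{P}(3,4,5,7)$), so the ``terminal-index theorem'' you propose to cite does not exist in the form you need. The fallback is a volume bound: $H^n\geq 2$ would force $\vol(-K_{X'})=(n+1)^n(\psi_*H)^n\geq 2(n+1)^n$, which contradicts Namikawa--Prokhorov in dimension $3$ and is trivial in dimension $\leq 2$, but in dimension $\geq 4$ no explicit bound of the shape $\vol(-K_{X'})<2(n+1)^n$ for terminal $\mathbb{Q}$-Fanos of Picard rank one is available (boundedness via \cite{birkar16} gives no usable constant). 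So as written the argument only closes in dimension at most $3$.

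For contrast, the paper's proof avoids the MMP and all classification input. One first shows $\kappa(K_Y+(n+1)H)=0$ exactly as in your first reduction (fibers of the Iitaka fibration would have $a$-value exceeding $\dim+1$). Then, since $rH$ is big and nef and Cartier, Kawamata--Viehweg vanishing gives $h^i(Y,K_Y+rH)=0$ for $i>0$, $r>0$, so $P(r)=h^0(Y,K_Y+rH)$ is a polynomial of degree $n$ with leading coefficient $H^n/n!$. It vanishes at $r=1,\dots,n$ because $a(Y,H)=n+1$, hence $P(r)=\frac{H^n}{n!}(r-1)\cdots(r-n)$ and $P(n+1)=H^n$; since $\kappa=0$ forces $P(n+1)\leq 1$ while bigness forces $H^n\geq 1$, one gets $H^n=1$. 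If you want to keep your MMP route, you would need to either establish Cartier-ness of $\psi_*H$ on $X'$ (which fails in general) or supply the missing volume bound; the vanishing-plus-Hilbert-polynomial argument is both shorter and dimension-free.
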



\begin{proof}
First note that if $a(Y,H)=n+1$ then $\kappa(K_{Y} + (n+1)H) = 0$.  Indeed, if the Iitaka dimension were at least one, then by \cite[Theorem 4.5]{LTT14} there would be a covering family of subvarieties breaking the balanced condition.  But for a smooth variety $F$ of dimension $<n$ and a big and nef divisor $H$ we always have that $K_{F} + (n+1)H$ is big by \cite[Proposition 2.10]{LTT14}.

Suppose that $Y$ satisfies $a(Y,H)=n+1$.  Note that $H^{i}(Y,K_{Y} + rH)$ vanishes for every $r>0$.  
Consider the polynomial defined on positive integers $r$ by $P(r) = \dim H^{0}(Y,K_{Y} + rH)$.  
We have that $P(i) = 0$ for $i=1,2,\ldots,n$ and $P(n+1)=1$.  Thus
\begin{equation*}
P(r) = \left( \begin{array}{c} r-1 \\ n \end{array} \right).
\end{equation*}
By Riemann-Roch we have $Y \cdot H^{n} = 1$.
\end{proof}

\begin{coro}
For any generically finite cover $\pi: Y \to \mathbb{P}^{n}$ from a smooth variety $Y$ of degree $\geq 2$ we have $a(Y,\pi^{*}H) < n+1$.
\end{coro}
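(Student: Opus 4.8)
The plan is to deduce this immediately from Proposition \ref{projspacebalanced} together with the elementary monotonicity of the Fujita invariant under generically finite pullback. Write $H$ for the hyperplane class on $\mathbb{P}^{n}$, so that $a(\mathbb{P}^{n},H)=n+1$, the class $(n+1)H+K_{\mathbb{P}^{n}}$ is trivial, and $H^{n}=1$. Since $\pi$ is a generically finite cover, $Y$ has dimension $n$; since $H$ is ample and $\pi$ is a surjective generically finite morphism, $\pi^{*}H$ is a big and nef Cartier divisor on $Y$, so Proposition \ref{projspacebalanced} is applicable to the pair $(Y,\pi^{*}H)$.

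First I would record that $a(Y,\pi^{*}H)\le n+1$. By the ramification formula we have $K_{Y}=\pi^{*}K_{\mathbb{P}^{n}}+R$ with $R$ an effective divisor, hence $(n+1)\pi^{*}H+K_{Y}=\pi^{*}\bigl((n+1)H+K_{\mathbb{P}^{n}}\bigr)+R=R$ is pseudo-effective, which forces $a(Y,\pi^{*}H)\le n+1$. Next I would rule out equality. Suppose for contradiction that $a(Y,\pi^{*}H)=n+1$. Then Proposition \ref{projspacebalanced} gives $(\pi^{*}H)^{n}\cdot Y=1$. On the other hand, by the projection formula $(\pi^{*}H)^{n}\cdot Y=\deg(\pi)\cdot (H^{n}\cdot\mathbb{P}^{n})=\deg(\pi)\ge 2$, a contradiction. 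Therefore $a(Y,\pi^{*}H)<n+1$.

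The argument is essentially immediate, so there is no genuine obstacle; the only points that merit a word of justification are that $\pi^{*}H$ remains big and nef Cartier (so that Proposition \ref{projspacebalanced} applies verbatim) and the degree computation via the projection formula. If one prefers to phrase the corollary for an arbitrary big and nef $H$ on $\mathbb{P}^{n}$ numerically proportional to the hyperplane class, the same proof works after rescaling, using that the Fujita invariant scales inversely with $H$.
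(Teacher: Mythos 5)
Your proof is correct and is essentially the paper's own argument: equality $a(Y,\pi^{*}H)=n+1$ would force $(\pi^{*}H)^{n}=1$ by Proposition \ref{projspacebalanced}, contradicting $(\pi^{*}H)^{n}=\deg(\pi)\geq 2$. The extra details you supply (the bound $a(Y,\pi^{*}H)\leq n+1$ via the ramification formula and the check that $\pi^{*}H$ is big, nef and Cartier) are implicit in the paper's one-line proof.
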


\begin{proof}
If $a(Y,\pi^{*}H)=n+1$ then by Proposition \ref{projspacebalanced} we have $Y \cdot \pi^{*}H^{n} = 1$, a contradiction to the degree assumption.
\end{proof}

Combining with \cite[Example 5.1]{LTT14}, we have:

\begin{coro}
Projective space is strongly $a$-balanced with respect to any generically finite morphism and with respect to subvarieties.
\end{coro}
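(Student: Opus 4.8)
The plan is to assemble this corollary from inputs already available: the computation $a(\mathbb{P}^{n},H) = n+1$ (which holds because $-K_{\mathbb{P}^{n}} \equiv (n+1)H$ and $\Lambda_{\mathrm{eff}}(\mathbb{P}^{n}) = \mathbb{R}_{\geq 0}[H]$), the statement \cite[Proposition 2.10]{LTT14} already invoked in the proof of Proposition~\ref{projspacebalanced}, and the corollary proved immediately above together with \cite[Example 5.1]{LTT14}. There is no genuine obstacle; the content is just to run through the cases.

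First I would treat subvarieties. If $Y \subsetneq \mathbb{P}^{n}$ is a proper subvariety, then $\dim Y < n$ and $H|_{Y}$ is ample, so applying \cite[Proposition 2.10]{LTT14} to a resolution of $Y$ shows that $K_{Y} + (n+1)H|_{Y}$ is big, i.e.\ $a(Y,H|_{Y}) < n+1 = a(\mathbb{P}^{n},H)$. Hence $\mathbb{P}^{n}$ is strongly $a$-balanced with respect to all proper subvarieties, with the proper closed exceptional set $V$ in the definition taken to be empty; this recovers \cite[Example 5.1]{LTT14}.

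Next I would treat a generically finite morphism $f : Y \to \mathbb{P}^{n}$ from a smooth projective variety $Y$. If $f$ is not dominant, then $\dim Y < n$ and, since $H$ is ample, $f^{*}H$ is big and nef on $Y$; so the same reference \cite[Proposition 2.10]{LTT14} applies directly to the pair $(Y,f^{*}H)$ and gives $a(Y,f^{*}H) < n+1$, uniformly and with no need for a separate ramification estimate. If $f$ is dominant of degree $\geq 2$, then the corollary proved immediately above gives $a(Y,f^{*}H) < n+1$, which is precisely the strong $a$-balanced inequality. Finally, if $f$ is birational, then $a(Y,f^{*}H) = a(\mathbb{P}^{n},H) = n+1$ by the invariance of the Fujita invariant under pullback by a birational morphism (\cite[Proposition 2.7]{HTT15}); this is the unavoidable boundary case, where $f^{*}H$ is $a$-balanced but the inequality is an equality. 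Collecting these cases completes the proof.

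The only step requiring any care is the bookkeeping above: observing that non-dominant generically finite maps are handled uniformly by the same big-and-nef statement \cite[Proposition 2.10]{LTT14} used for $(\mathbb{P}^{n},H)$ in Proposition~\ref{projspacebalanced}, so that no new argument is needed for them, and recording that the birational case is the expected degeneration of the strict inequality to an equality. All of the substantive work---the Riemann--Roch computation of Proposition~\ref{projspacebalanced} and the degree $\geq 2$ corollary above---is already done.
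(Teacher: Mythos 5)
Your proposal is correct and follows essentially the same route as the paper, whose entire proof is to combine the preceding degree $\geq 2$ corollary with \cite[Example 5.1]{LTT14} for the subvariety (and non-dominant) cases; your case analysis just makes explicit the details the paper leaves to that citation. The caveat you flag about birational morphisms giving equality rather than strict inequality is real, but it is a looseness in the statement shared by the paper (which tacitly reads ``generically finite morphism'' as excluding birational maps, as in the degree $\geq 2$ corollary above), not a defect of your argument.
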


\begin{prop} \label{quadricbalanced}
Let $Q$ be a smooth quadric hypersurface $Q$ of dimension $n \geq 3$ and let $H$ denote the hyperplane class.  
Then $(Q,H)$ is strongly $a$-balanced with respect to any generically finite morphism and with respect to subvarieties.
\end{prop}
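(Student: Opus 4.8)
The plan is to mimic the argument for projective space (Proposition \ref{projspacebalanced} and its corollaries), replacing the Riemann--Roch computation there by one adapted to a smooth quadric $Q \subset \mathbb{P}^{n+1}$. The two statements to be proved are: (i) for any generically finite cover $\pi : Y \to Q$ of degree $\geq 2$ from a smooth variety $Y$ one has $a(Y,\pi^{*}H) < a(Q,H)$, and (ii) for any subvariety $Z \subsetneq Q$ not contained in $\mathbf{B}_{+}(H)$ one has $a(Z, H|_{Z}) < a(Q,H)$. Since $-K_{Q} = nH$, we have $a(Q,H) = n$; so in both cases it suffices to rule out $a = n$, i.e. to rule out the ``maximal'' value.

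First I would establish the analogue of Proposition \ref{projspacebalanced}: if $Y$ is a smooth projective variety of dimension $m$ and $H$ is big and nef with $a(Y,H) = m$, then I claim $(Y,H)$ is adjoint rigid and $Y \cdot H^{m} \leq 2$, with equality forced in the relevant borderline case. The adjoint rigidity follows exactly as in Proposition \ref{projspacebalanced}: if $\kappa(K_{Y} + mH) \geq 1$ then by \cite[Theorem 4.5]{LTT14} there is a covering family of subvarieties $F$ (of dimension $< m$) breaking the balanced property, so $a(F, H|_{F}) \geq m$; but by \cite[Proposition 2.10]{LTT14} a big and nef divisor on a smooth variety of dimension $d$ always has $a \leq d < m$, a contradiction. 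Given adjoint rigidity, $H^{i}(Y, K_{Y} + rH) = 0$ for $i > 0$ and $r > 0$ by Kawamata--Viehweg vanishing (as $rH - (K_{Y} + mH)$ is big and nef for $r > m$, and one checks the small cases directly), and $\dim H^{0}(Y, K_{Y}+rH) = 0$ for $1 \leq r \leq m-1$ while $\dim H^{0}(Y, K_{Y} + mH) = 1$ by rigidity. The Hilbert polynomial $P(r) = \chi(Y, K_{Y} + rH)$ then agrees with $\binom{r-1}{m-1}$ for $r \geq 1$ (it has the prescribed $m-1$ roots $1, \dots, m-1$ and value $1$ at $r = m$, and is monic of degree $m-1$ up to the factor $H^{m}/m!$ on its leading term), which forces $Y \cdot H^{m} = 1$ — wait, this is too strong; the point is rather to extract the bound $Y \cdot H^m \le$ (small constant) from the leading coefficient, so for the quadric case I would instead run the argument so as to conclude $\deg(\pi) \cdot (Q \cdot H^{n}) = Y \cdot \pi^{*}H^{n}$ is incompatible with $a(Y, \pi^*H) = n$ once $\deg \pi \geq 2$, since $Q \cdot H^{n} = 2$ already.

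Concretely, for (i): if $a(Y, \pi^{*}H) = n$ then by the above $(Y, \pi^{*}H)$ is adjoint rigid, and running the $(K_{Y} + n\pi^{*}H)$-MMP (Theorem \ref{theo: MMPtoweakFano}) produces a $\mathbb{Q}$-factorial terminal weak Fano $Y'$ with $-K_{Y'} \equiv n\,\phi_{*}\pi^{*}H$; then $\vol(\pi^{*}H) \leq \vol(\phi_{*}\pi^{*}H) = \vol(-K_{Y'})/n^{n}$, and one needs $\vol(-K_{Y'}) \leq n^{n} \cdot 2$ to get a contradiction with $\vol(\pi^{*}H) = \deg(\pi) \cdot 2 \geq 4$ when $n \geq 3$ fails — so this volume route needs the sharp bound $\vol(-K_{Y'}) < 2 n^{n} \cdot \deg(\pi)/\cdots$, which is not obviously available. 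The cleaner route, and the one I would actually pursue, is the Riemann--Roch/adjunction one: from $a(Y, \pi^{*}H) = n$ and adjoint rigidity we get $h^{0}(Y, K_{Y} + r\pi^{*}H) = \binom{r-1}{n}$ for all $r \geq 1$ (same polynomial identity as in Proposition \ref{projspacebalanced}, now in dimension $n$ with the roots $1, \dots, n$), hence the leading coefficient gives $Y \cdot \pi^{*}H^{n} = 1$; but $Y \cdot \pi^{*}H^{n} = \deg(\pi) \cdot (Q \cdot H^{n}) = 2\deg(\pi) \geq 4$, a contradiction. Hmm — but this requires $a(Y, \pi^*H) = n = \dim Y$, i.e. using Proposition \ref{projspacebalanced}'s argument verbatim with ``$n+1$'' replaced by ``$n$'' and noting $\dim Y = \dim Q = n$; that is exactly the setting, so it goes through. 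For (ii): a subvariety $Z$ of dimension $d < n$ has $a(Z, H|_{Z}) \leq d < n$ by \cite[Proposition 2.10]{LTT14} directly, so only $Z = Q$ needs checking, which is the trivial case. For the subvariety statement one also uses \cite[Example 5.1]{LTT14} or the cited corollary pattern to package it.

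The main obstacle is getting the vanishing $H^{i}(Y, K_{Y} + r\pi^{*}H) = 0$ for $1 \leq i \leq n-1$, $1 \leq r \leq n-1$ to hold so that the Hilbert polynomial is pinned down by the $n$ values $P(1) = \dots = P(n-1) = 0$, $P(n) = 1$: for $r \geq n$ Kawamata--Viehweg applies since $\pi^{*}H$ is big and nef and $r\pi^{*}H - (K_{Y} + n\pi^{*}H) = (r-n)\pi^{*}H - K_{Y}$ is big and nef once we know $-K_{Y}$ is suitably positive — but $-K_{Y}$ need not be nef. The resolution, exactly as implicitly used in Proposition \ref{projspacebalanced}, is that $\kappa(K_{Y} + n\pi^{*}H) = 0$ forces $K_{Y} + n\pi^{*}H \equiv D \geq 0$ a fixed effective divisor, so $K_{Y} + r\pi^{*}H \equiv (r-n)\pi^{*}H + D$; for $1 \leq r \leq n-1$ one argues $h^{0} = 0$ because a section would force $(n - r)\pi^{*}H - D$ — no: instead, directly, $h^{0}(K_{Y} + r\pi^{*}H) \leq h^{0}(K_{Y} + (n-1)\pi^{*}H)$ and the latter is $0$ since $a(Y, \pi^{*}H) = n$ means $K_{Y} + t\pi^{*}H \notin \Lambda_{\mathrm{eff}}$ for $t < n$. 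That disposes of the vanishing of the $h^{0}$'s; the higher cohomology vanishing for small $r$ is then not needed if one argues with $h^{0}$ and the polynomial directly, comparing with the known behaviour for $r \geq n$ where $\chi = h^{0}$ by Kawamata--Viehweg applied to $(r - n + \epsilon)$ scaling. I would write the final argument entirely in terms of $h^{0}$ and the numerical identity $P(r) = \binom{r-1}{n}$, extract $Y \cdot \pi^{*}H^{n} = 1$, and conclude.
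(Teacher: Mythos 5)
Your proposal has two genuine gaps, and both are at the heart of the statement. First, the subvariety half is not trivial, and you dismiss it with a false bound: you claim a proper subvariety $Z \subset Q$ of dimension $d$ satisfies $a(Z,H|_{Z}) \leq d$ ``by \cite[Proposition 2.10]{LTT14} directly,'' but that result only gives $a(Z,H|_{Z}) \leq d+1$ (indeed $a(\mathbb{P}^{d},\mathcal{O}(1)) = d+1$). So a divisor in $Q$ could a priori have $a = n = a(Q,H)$, and this is exactly the case that must be excluded: the paper reduces subvarieties to divisors as in \cite[Example 5.3]{LTT14}, applies Proposition \ref{projspacebalanced} to a resolution $D$ of such a divisor to conclude that $H|_{D}$ has top self-intersection $1$, so the image would be a linear $\mathbb{P}^{n-1}$ contained in $Q$, which is impossible once $\dim Q \geq 3$. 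The same false bound also breaks your adjoint-rigidity step for covers: when $\kappa(K_{Y}+n\pi^{*}H) \geq 1$ the Iitaka fibers have dimension $\leq n-1$, and the correct bound only gives $a \leq n$, which is no contradiction; the paper instead pushes the fibers forward into $Q$ and invokes the (nontrivial) subvariety case just described.

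Second, the numerical identity your cover argument rests on is false. From $a(Y,\pi^{*}H)=n$ and adjoint rigidity you only get $P(1)=\cdots=P(n-1)=0$ and $P(n)=1$, i.e.\ $n$ conditions on a degree-$n$ polynomial, which do not determine it; the identity $P(r)=\binom{r-1}{n}$ you assert is even inconsistent with $P(n)=1$ (that binomial vanishes at $r=n$), and the derived conclusion $Y\cdot\pi^{*}H^{n}=1$ is already false for $\pi=\mathrm{id}$, since $a(Q,H)=n$, $(Q,H)$ is adjoint rigid, and $Q\cdot H^{n}=2$. So no contradiction with $Y\cdot\pi^{*}H^{n}=2\deg\pi$ can be extracted this way. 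The missing ingredient is precisely one more value of $P$: the paper shows $P(n+1)=n+2$ by cutting with general members of $|\pi^{*}H|$ and inducting on dimension down to the case of a conic, where the conditions force $Y$ to be a smooth rational curve with $\deg\pi^{*}H=2$, contradicting the strong $a$-balancedness of $\mathbb{P}^{1}$ (equivalently, incompatible with $\deg\pi\geq 2$). Your argument would need this slicing/induction step, or some equivalent source of the extra numerical constraint, before it can close.
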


\begin{proof}
We first show that $Q$ is strongly $a$-balanced with respect to subvarieties.  
Arguing as in \cite[Example 5.3]{LTT14} it suffices to consider divisors.  
Let $D$ be the resolution of a divisor in $Q$ and suppose that $a(D,H)=n$.  Proposition \ref{projspacebalanced} shows that $H|_{D}$ has top self-intersection $1$.  So the image of $D$ in $Q$ must be a codimension $2$ hyperplane of $\mathbb{P}^{n+1}$, an impossibility once $\dim Q \geq 3$.

We next show that $Q$ is balanced with respect to covers.  
Let $\pi: Y \to X$ be any generically finite cover such that $a(Y,\pi^{*}H)=n$.  
If $\kappa(K_{Y} + n\pi^{*}H)=i \geq 1$, then (up to birational equivalence) $Y$ admits a fibration 
whose general fiber $F$ is a smooth $(n-i)$-fold with $a$-value $n$.  
The images of $F$ in $Q$ give a family of divisors breaking the $a$-balanced condition, an impossibility by the first paragraph.

Now suppose that $\kappa(K_{Y} + n\pi^{*}H) = 0$ and $a(Y,\pi^*H)=n$.  
Note that $H^{i}(K_{Y} + r\pi^{*}H)$ vanishes for every $r>0$.  
Consider the polynomial defined on positive integers $r$ by $P(r) = \dim H^{0}(Y,K_{Y} + r\pi^{*}H)$.  
We have that $P(i) = 0$ for $i=1,2,\ldots,n-1$ and $P(n)=1$. 

We claim that $P(n+1) = n+2$.  
Cutting down $Y$ by general hyperplane sections and using an easy inductive argument on LES of cohomology, 
it suffices to prove this when $Q$ and $Y$ have dimension $1$. 
Indeed, let $Y'$ be the pullback of a general hyperplane on $Q$.
By Bertini's theorem, we can assume that $Y'$ is smooth.
Using the exact sequence, we can prove that $\dim H^0(Y', K_{Y'} + r\pi^{*}H) = 0$ for any $0\leq r < n-1$, $\dim H^0(Y', K_{Y'} + (n-1)\pi^{*}H) = 1$
and $\dim H^0(Y', K_{Y'} + n\pi^{*}H) = P(n+1)-1$.
So we may assume that $Q$ is a smooth conic in $\mathbb P^2$ and $Y$ is a smooth curve
with the property $\dim H^0(Y, K_{Y}) = 0$, $\dim H^0(Y, K_{Y} + \pi^{*}H) = 1$ and $\dim H^0(Y, K_{Y} + 2\pi^{*}H) = P(n+1)-n+1$.
This implies that $Y$ is a smooth rational curve and $\mathcal O(H) = \mathcal O(2)$, so $P(n+1) = n+2$.
However we have $a(Q, H) = 1$ and $a(Y, H) =1$ so this contradicts with the fact that $\mathbb P^1$ is strongly $a$-balanced.
\end{proof}

\subsection{Toric varieties}

For toric varieties, the balanced property is established for toric subvarieties in \cite{HTT15}. Using Proposition~\ref{prop: genfinofmaps}, one can extend results in \cite{HTT15} to arbitrary subvarieties which are not contained in the boundary.

\begin{theo}
Let $X$ be a smooth projective toric variety and $L$ a big and nef $\mathbb Q$-divisor on $X$.
Let $Y$ be a subvariety of dimension $d$ on $X$ which is not contained in the boundary of $X$.
Then $L$ is weakly balanced with respect to $Y$.
Moreover if $a(X, L)L + K_X$ is rigid, then $L$ is balanced with respect to $Y$.
\end{theo}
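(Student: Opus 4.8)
The plan is to deduce the statement from the toric computations for orbit closures in \cite{HTT15}, using the torus $T\subset X$ to generate families of translates and Proposition \ref{prop: genfinofmaps} to control them. Write $\partial X:=X\setminus T$ for the torus-invariant boundary. Two preliminary observations use only toricity. First, $\mathbf B_{+}(L)$ depends only on the numerical class of $L$, hence is torus-invariant and, being a proper closed subset (as $L$ is big), is a union of orbit closures of positive codimension, so $\mathbf B_{+}(L)\subseteq\partial X$; thus $L|_{Y}$ is big whenever $Y\not\subseteq\partial X$. Second, by Theorem \ref{firstfinitefamily} there is a proper closed $V\subset X$ outside of which every subvariety has $a$-value at most $a(X,L)$; if some $Y\not\subseteq\partial X$ had $a(Y,L|_{Y})>a(X,L)$ it would lie in $V$, hence so would all of its translates $tY$ ($t\in T$, using $t^{*}L\sim L$), but these cover $X$ while $V$ is proper -- a contradiction. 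So $a(Y,L|_{Y})\le a(X,L)=:a$ for every $Y\not\subseteq\partial X$; if the inequality is strict then $(a(Y,L|_{Y}),b(Y,L|_{Y}))<(a(X,L),b(X,L))$ and $L$ is balanced with respect to $Y$, so assume $a(Y,L|_{Y})=a$.

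Next I would reduce to the case in which both $(Y,L|_{Y})$ and $(X,L)$ are adjoint rigid. For $Y$: if $(Y,L|_{Y})$ is not adjoint rigid, apply Lemma \ref{lemm: canonical} to a resolution of $Y$ -- a general fiber $F$ of the canonical fibration satisfies $a(F,L|_{F})=a$ and $b(Y,L|_{Y})\le b(F,L|_{F})$, is adjoint rigid, and still meets the open torus -- so it suffices to treat such $F$. For $X$: if $(X,L)$ is not adjoint rigid, pass to a general fiber $X'$ of the canonical fibration of $(X,L)$; after a toric resolution $X'$ is again a smooth toric variety of strictly smaller dimension, with $a(X',L|_{X'})=a$ and $b(X',L|_{X'})=b(X,L)$, the latter equality combining Lemma \ref{lemm: canonical} with the description of $\ker(N^{1}(\widetilde X)\to N^{1}(X'))$ by vertical effective divisors (Lemma \ref{rcfibers}) and pointedness of the pseudo-effective cone. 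One then argues, as in \cite{HTT15} and by induction on $\dim X$, that the adjoint rigid $Y\not\subseteq\partial X$ with $a(Y,L|_{Y})=a$ are accounted for by the corresponding subvarieties of the general canonical fiber $X'$.

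For the main case, with $(X,L)$ and $(Y,L|_{Y})$ both adjoint rigid, let $W$ be the normalization of the closure in $\mathrm{Chow}(X)$ of the $T$-orbit of $[Y]$, let $p:\mathcal U\to W$ be the pullback of the universal family, and let $s:\mathcal U\to X$ be the evaluation map; the members of $p$ are exactly the torus-translates of $Y$, so they remain adjoint rigid with $a$-value $a$, and $s$ is dominant because $Y$ meets the open torus. If $s$ is \emph{not} generically finite, then Proposition \ref{prop: genfinofmaps} forces $p:\mathcal U\to W$ not to break the balanced property; since conditions (1)--(4) of Definition \ref{defi: breaking balanced} manifestly hold, condition (5) must fail, i.e.\ $b(Y,L|_{Y})<b(X,L)$, so $L$ is balanced with respect to $Y$. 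If instead $s$ \emph{is} generically finite, a dimension count gives $\dim\mathrm{Stab}_{T}(Y)=\dim Y$, so the identity component $S$ of $\mathrm{Stab}_{T}(Y)$ is a subtorus of dimension $\dim Y$ and $Y$ is a torus-translate of the closure $\overline S$ of $S$ in $X$; since divisor classes on $X$ are torus-invariant, $(Y,L|_{Y})$ is isomorphic as a polarized variety to $(\overline S,L|_{\overline S})$, a complete toric variety whose fan is the ``slice'' $\{\sigma\cap N_{S,\mathbb R}\}$ of the fan of $X$. I would then compute $a(\overline S,L|_{\overline S})$ and $b(\overline S,L|_{\overline S})$ from this sliced fan by the same combinatorial recipe that \cite{HTT15} applies to orbit closures, and compare with $a(X,L)$ and $b(X,L)$, obtaining $(a(\overline S,L|_{\overline S}),b(\overline S,L|_{\overline S}))\le(a(X,L),b(X,L))$, with strict inequality when $(X,L)$ is adjoint rigid.

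The hard part will be the combinatorial comparison for subtorus closures: carrying over the analysis of \cite{HTT15} from orbit closures to the slice fan of an arbitrary subtorus, tracking the restricted boundary divisors and the restricted polytope of $aL+K_{X}$ so as to pin down how $a$ and $b$ change. Also somewhat delicate is making the non-adjoint-rigid reduction for $(X,L)$ precise, since the canonical fibration of a toric variety can have reducible or non-reduced special fibers; a clean way around this may be to show directly that, when $(X,L)$ is not adjoint rigid, every adjoint rigid $Y\not\subseteq\partial X$ with $a(Y,L|_{Y})=a$ is already a torus-translate of a subtorus closure, which would make the non-rigid case a special instance of the previous paragraph.
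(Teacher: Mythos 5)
Your argument is essentially the paper's proof: reduce to $(Y,L|_Y)$ adjoint rigid via Lemma~\ref{lemm: canonical}, form the family of torus translates of $Y$, and apply Proposition~\ref{prop: genfinofmaps} to conclude either that condition (5) fails, giving $b(Y,L|_Y)<b(X,L)$, or that the evaluation map is generically finite, so the stabilizer has dimension $d$ and $Y$ is a translated subtorus closure. The only genuine divergence is the last step: what you flag as ``the hard part'' --- the $(a,b)$-comparison for closures of translated subtori --- is exactly what the paper quotes from \cite{HTT15}, whose ``toric subvarieties'' are precisely such subtorus compactifications (not merely boundary orbit closures), so no new slice-fan combinatorics is required. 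Your preliminary step ruling out $a(Y,L|_Y)>a(X,L)$ by translating into the exceptional set, and your concern about the adjoint-rigidity hypothesis on $(X,L)$ in Proposition~\ref{prop: genfinofmaps}, are legitimate points of care that the paper's short proof passes over in silence, but they do not change the route.
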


Note that this geometric result is compatible with the results of \cite{BT-0} on Manin's conjecture for toric varieties, showing that one only needs to remove points in the torus-fixed locus.

\begin{proof}
Suppose that $L$ is not weakly balanced with respect to $Y$.
After replacing $Y$ by a general fiber of the Iitaka fibration from $Y$, we may assume that the pair $(Y, L)$ is adjoint rigid. We denote the torus acting on $X$ by $T$.
By translating $Y$ by $T$ we can form a family of subvarieties breaking the balanced property.
We denote the stabilizer of $Y$ by $T'$.
Then the induced map $T/T' \hookrightarrow \mathrm{Chow}(X)$ is injective,
so it follows from Proposition~\ref{prop: genfinofmaps} that the evaluation map must be generically finite. This means that $T'$ has dimension $d$ and $Y$ is a toric variety of $T'$.
The balanced property for toric subvarieties is established in \cite{HTT15}.
\end{proof}

\subsection{Le Rudulier's example} \label{lerudulierexample}


Let $S$ denote the surface $\mathbb{P}^{1} \times \mathbb{P}^{1}$, and let $H_{1}$ and $H_{2}$ denote the fibers of the two projections.  
Consider the Hilbert scheme $X := \Hilb^{2}(S)$; this is a weak Fano fourfold.  
We have $a(X,-K_{X}) = 1$ and $b(X,-K_{X})=3$.  Consider the variety $W$ which is the blow-up of $S \times S$ along the diagonal.  
These fit into a diagram
\begin{equation*}
\xymatrix{W \ar[r]\ar[d]_{f}&  S \times S \ar[d]^{g}\\
X \ar[r] & \Chow^{2}(S)}
\end{equation*}
where the vertical maps are the natural symmetric quotients.  Then $a(W,-f^{*}K_{X})=1$ and $b(W,-f^{*}K_{X}) = 4$, 
so that the predicted growth rate of points of bounded height on $W$ is larger than on $X$.  
In \cite{LeRudulier} Le Rudulier verifies that the $a,b$ constants for $W$ control the growth rate of rational points on $X$, 
but if you remove the thin set of points coming from $W$ then you recover the expected growth rate.

We will verify that $X$ is weakly balanced with respect to general subvarieties.  
Our main statement is

\begin{theo}
The anticanonical divisor $-K_X$ is weakly balanced with respect to general subvarieties,
but not balanced. Let $\pi_i : W \rightarrow S \times S \rightarrow \mathbb P^1$ be the projections to $\mathbb P^1$. Then the $\pi_i$, equipped with the natural map to $X$, are the universal families of subvarieties breaking the balanced property for $-K_X$.
\end{theo}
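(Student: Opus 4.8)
The plan is to compute the invariants of $X$, exhibit the $\pi_i$ as families breaking the balanced property (which immediately gives non-balancedness), and then use Theorem~\ref{theo: universal families} together with the explicit geometry of $X = \Hilb^2(\mathbb P^1 \times \mathbb P^1)$ to show these are the only such families and that no subvariety outside a fixed proper closed set has larger invariants.

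First I would record the invariants. Since $S \times S$ is \'etale in codimension one over $\Chow^2(S)$ and $X \to \Chow^2(S)$ is a crepant resolution, $X$ is a weak Fano fourfold; hence $a(X,-K_X) = 1$, the pair $(X,-K_X)$ is adjoint rigid, and since $\rho(X) = \rho(S) + 1 = 3$ we get $b(X,-K_X) = \rho(X) = 3$. On $W$, writing $\rho \colon W \to S \times S$ for the blow-down with exceptional divisor $E$, one has $K_W = \rho^* K_{S\times S} + E$; comparing the two maps $W \to \Chow^2(S)$ gives $f^* K_X = \rho^* K_{S \times S}$, so $-f^* K_X = \rho^*(-K_{S\times S})$, the adjoint divisor $K_W - f^* K_X$ equals the rigid divisor $E$, and running the $E$-MMP contracts $E$ to recover $S \times S = (\mathbb P^1)^4$. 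Thus $a(W,-f^*K_X) = 1$, the pair is adjoint rigid, and $b(W,-f^*K_X) = 4$. This records the numerical incompatibility, but we need the subvariety statement.

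Next I would check that each $\pi_i$ (there are two, indexed by the two rulings of $S$, the four coordinate projections $S\times S \to \mathbb P^1$ being identified in pairs by the involution) breaks the balanced property. Fix a ruling of $S$ and a fiber $C_t \cong \mathbb P^1$; the fiber of $\pi_i$ over $t$ is $W_t = \Bl_{\ell_t}(C_t \times S)$, where $\ell_t = \Delta \cap (C_t\times S)$, the image of $C_t$ under $x \mapsto (x,x)$. Since $C_t$ has trivial normal bundle in $S$, adjunction gives $(-K_{S\times S})|_{C_t\times S} = -K_{C_t\times S} = -K_{(\mathbb P^1)^3}$, hence $(-f^*K_X)|_{W_t} = \rho_t^*(-K_{(\mathbb P^1)^3})$ for the blow-down $\rho_t \colon W_t \to (\mathbb P^1)^3$. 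Therefore $K_{W_t} + (-f^*K_X)|_{W_t} = E_t$ is rigid, $a(W_t,(-f^*K_X)|_{W_t}) = 1$, and contracting $E_t$ shows $b(W_t,(-f^*K_X)|_{W_t}) = \rho((\mathbb P^1)^3) = 3$. Writing $Y_t = \overline{s_i(W_t)} \subset X$, a divisor with $W_t \to Y_t$ birational, we obtain $a(Y_t,-K_X) = 1 = a(X,-K_X)$, the pair $(Y_t,-K_X|_{Y_t})$ adjoint rigid, and $b(Y_t,-K_X) = 3 = b(X,-K_X)$; the $Y_t$ cover $X$, since every length-two subscheme meets a fiber of the chosen ruling. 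So $\pi_i$ with its evaluation map breaks the balanced property, and therefore $-K_X$ is not balanced with respect to general subvarieties.

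Finally, for weak balancedness and universality I would invoke Theorem~\ref{theo: universal families}: there is a proper closed $V \subset X$ and finitely many universal families $\pi'_j \colon \mathcal U'_j \to W'_j$ breaking the balanced property, through which $\pi_1$ and $\pi_2$ factor by the universal property recorded after that theorem. It remains to see that, after enlarging $V$, each $\pi'_j$ is birationally $\pi_1$ or $\pi_2$ and that no subvariety outside $V$ has $(a,b)$-values exceeding $(1,3)$. By Proposition~\ref{prop: genfinofmaps} the evaluation maps are generically finite, and by Theorem~\ref{firstfinitefamily} the general fiber $Z$ of each $\pi'_j$ is adjoint rigid with $a(Z,-K_X) = 1$; a rational curve has $b = 1 < 3$, so $\dim Z \geq 2$. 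At this point one uses the explicit geometry of $\Hilb^2(\mathbb P^1\times\mathbb P^1)$, namely that its N\'eron-Severi group is spanned by the two divisors pulled back from the rulings of $S$ together with the exceptional divisor, that $\mathbf B_+(-K_X)$ is the locus of non-reduced subschemes, and that the anticanonical model is $\Chow^2(S)$, together with the Fujita-type bounds of Proposition~\ref{threefoldbigbound} (used to bound $\vol(-K_X|_Z)$ and hence the family), the boundedness of Corollary~\ref{coro: bounded}, and invariance of plurigenera, in order to enumerate the adjoint-rigid subvarieties $Z$ with $a(Z,-K_X)=1$ and $b(Z,-K_X)\geq 3$ that sweep out $X$. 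These turn out to be, up to birational equivalence, exactly the fibers $Y_t$ of $\pi_1$ and $\pi_2$, all with $b = 3$, while the remaining subvarieties with large invariants are non-moving and can be absorbed into $V$; this yields weak balancedness and identifies $\pi_1,\pi_2$ as the universal families. The main obstacle is precisely this last classification: controlling the cones of divisors on $X$, ruling out any subvariety outside the non-reduced locus with $b \geq 4$, and matching the surviving families with the $\pi_i$, which is where the volume estimates and boundedness statements carry the weight.
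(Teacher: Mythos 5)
Your setup and the ``not balanced'' half are correct: the computation of $(a,b)=(1,3)$ for $X$ and $(1,4)$ for $W$, the identification of the fibers of $\pi_i$ as $\Bl_{C_t}(C_t\times S)$ with $-f^*K_X$ restricting to $\rho_t^*(-K_{(\mathbb P^1)^3})$, and the resulting values $a=1$, adjoint rigid, $b=3$ for the general member are all right, and this does show the $\pi_i$ break the balanced property. (In fact your fiber computation is cleaner than the paper's own treatment of the corresponding divisor class $H_1[2]$ pulled back from a tangent line of the discriminant conic, where the paper asserts the normalization has Picard rank $2$; your model $\Bl_{C_t}((\mathbb P^1)^{\times 3})$ gives rank $3$, which is what the theorem statement actually requires.)

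The genuine gap is that the other half of the theorem --- weak balancedness with respect to general subvarieties, and the claim that the $\pi_i$ are the \emph{only} universal families --- is not proved but only described. Reducing via Theorem~\ref{theo: universal families} and Theorem~\ref{firstfinitefamily} to classifying dominant families of adjoint-rigid subvarieties with $a=a(X)$ is a legitimate first step, but the classification itself is the entire content of the proof, and your proposal replaces it with ``these turn out to be exactly the fibers $Y_t$.'' Concretely, what is missing is: (i) the determination of the full cone of pseudo-effective surface classes in the $6$-dimensional space $N_2(X)$ (generators $S_1,S_2,V_1,V_2,T_1,T_2,FP,L_{1,2}$, the intersection pairing, and the identity $\mathrm{Nef}_2(X)=\overline{\mathrm{Mov}}_2(X)$), which is what allows one to bound $L^2\cdot Y$ for a movable surface and apply Proposition~\ref{threefoldbigbound}(2) to conclude that only the classes $L_{1,1},L_{2,2},FP,L_{1,2}$ can be adjoint rigid with $a=a(X)$, each of which must then be shown to have $b=2<3$; and (ii) the case analysis for divisors after Corollary~\ref{threefoldbigboundimproved} reduces to classes $H_1[2]$, $H_2[2]$ and divisors covered by $F_i(1,2)$-curves, split according to whether the preimage in $W$ is reducible or irreducible and whether the corresponding line in $\mathrm{Sym}^2(\mathbb P^1)$ is transverse or tangent to the discriminant conic --- each case needing a separate argument (monodromy on the N\'eron--Severi group of the $\mathbb P^1\times\mathbb P^1$ fibers to force $b=1$, normalization of the non-normal members, the canonical bundle formula of \cite{fm00}). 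Without carrying out (i) and (ii) one cannot rule out a dominant family with $b\geq 4$, nor exhibit that the only subvarieties achieving equality are the members and fibers of the $\pi_i$, so neither weak balancedness nor universality is established.
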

Thus Le Rudulier's example is fundamentally different from the example of \cite{BT-cubic} in  that the unexpectedly high rate of growth of rational points of bounded height can not be explained via subvarieties.  Henceforth we will freely use the results of \cite{BertramCoskun} describing the divisor theory of $X$.

\bigskip

\textbf{Divisors:} $N^{1}(X)$ is three dimensional; a basis is given by
\begin{itemize}
\item $H_{1}[2]$, the divisor parametrizing all subschemes where at least one point is contained if $F$ for some fixed fiber $F$ of the first projection.
\item $H_{2}[2]$, the analogous divisor for the second projection.
\item $E$, which is one-half of the divisor $B$ of non-reduced schemes.
\end{itemize}
The pseudo-effective cone is simplicial.  It is generated by $E$ and
\begin{itemize}
\item $D_{1} = H_{1}[2] - E$, the divisor parametrizing all subschemes supported on some (not fixed) fiber of the first projection.
\item $D_{2} = H_{2}[2] - E$, the analogous divisor for the second projection.
\end{itemize}
The nef cone is also simplicial and it coincides with the movable cone.  It is generated by $H_{1}[2]$, $H_{2}[2]$, and
\begin{itemize}
\item $X_{1,1} = H_{1}[2] + H_{2}[2] - E$, 
any degree $2$ subscheme determines a dimension $2$-subspace of $H^{0}(S,\mathcal{O}(1,1))$, 
and this divisor is the pullback of the corresponding ample generator of the Grassmannian $G(2,4)$.
\end{itemize}

\bigskip

\textbf{Curves:} $N_{1}(X)$ is three dimensional, and all the cones are simplicial.  The pseudo-effective cone is generated by:
\begin{itemize}
\item $J_{1}$ which is the set of non-reduced schemes (scheme-theoretically) contained in a fixed fiber of the first projection.
\item $J_{2}$ which is defined similarly.
\item $C$ which is the curve of double structures at a fixed point.
\end{itemize}
The nef cone is generated by:
\begin{itemize}
\item $F_{1}(1,2)$ which is the curve of subschemes containing one fixed point and letting the other point vary on some fixed fiber of the first projection.
\item $F_{2}(1,2)$, the analogue for the second projection.
\item $R(1,1)$ which is the curve of subschemes found by fixing a general element of $\mathcal{O}(1,1)$ (which is a rational curve on $S$), fixing a $2:1$ map to $\mathbb{P}^{1}$, and taking the fibers.
\end{itemize}

The intersection matrix for divisors and curves is:
\begin{equation*}
\begin{array}{c|c|c|c|c|c|c}
& X_{1,1} & H_{1}[2] & H_{2}[2] &  E & D_{1} & D_{2} \\ \hline
F_{1}(1,2) & 1 & 0 & 1 & 0 & 0 & 1 \\ \hline
F_{2}(1,2) & 1 & 1 & 0 & 0 & 1 & 0 \\ \hline
R(1,1) & 1 & 1 & 1 & 1 & 0 & 0 \\ \hline
C & 1 & 0 & 0 & -1 & 1 & 1 \\ \hline
J_{1} & 0 & 0 & 1 & 1 & -1 & 0 \\ \hline
J_{2} & 0 & 1 & 0 & 1 & 0 & -1 \\ \hline
P_{1} & 2 & 0 & 2 & 0 & 0 & 2 \\ \hline
P_{2} & 2 & 2 & 0 & 0 & 2 & 0
\end{array}
\end{equation*}

\bigskip

\textbf{Surfaces:}  $N_{2}(X)$ is 6-dimensional.  $\Sym^{2}(N^{1}(X))$ gives a five dimensional subspace (with relation $H_{1}[2]H_{2}[2] + E^{2} = H_{1}[2]E + H_{2}[2]E$).  Setting $FP$ to be the class of the surface parametrizing all degree $2$ subschemes of $S$ whose support contains a fixed point, we have:   
\begin{equation*}
\begin{array}{c|c|c|c|c|c|c}
& H_{1}[2]^{2} & H_{1}[2]H_{2}[2] & H_{2}[2]^{2} &  H_{1}[2]E & H_{2}[2]E & FP \\ \hline
H_{1}[2]^{2} & 0 & 0 & 2 & 0 & 0 & 0 \\ \hline
H_{1}[2]H_{2}[2] & 0 & 2 & 0 & 0 & 0 & 1 \\ \hline
H_{2}[2]^{2} & 2 & 0 & 0 & 0 & 0 & 0 \\ \hline
H_{1}[2]E & 0 & 0 & 0 & 0 & -2 & 0 \\ \hline
H_{2}[2]E & 0 & 0 & 0 & -2 & 0 & 0 \\ \hline
FP & 0 & 1 & 0 & 0 & 0  & 1
\end{array}
\end{equation*}


Before discussing the pseudo-effective and nef cones, we need to have a better understanding of the geometry of $X$. 
It carries several group actions.  First, it has an action of $\mathrm{PGL}_{2} \times \mathrm{PGL}_{2}$ 
arising from the fiberwise action on $\mathbb{P}^{1} \times \mathbb{P}^{1}$.  
This action has orbits $X-D_{1}-D_{2}-E$, $D_{1}-E|_{D_{1}}$, $D_{2}-E|_{D_{2}}$, $E - D_{1}|_{E} - D_{2}|_{E}$, $E \cap D_{1}$, $E \cap D_{2}$.  
Any irreducible surface which has the expected dimension of intersection with every orbit will be nef due to \cite[2 Theorem (i)]{Kle74}.

The variety $X$ also carries two natural involutions.  
There is an involution $\iota$ induced by the involution of $S$ which switches the two factors.  
This swaps $H_1[2]$ and $H_2[2]$ and fixes $E$ and the class of $FP$.  The other involution is induced by the birational map $X \to G(2,4)$.  
More precisely, let $V = H^0(S, \mathcal O(H_1+H_2))$ which is a $4$-dimensional vector space.
Fix a basis $s_1, s_2, s_3, s_4$ for $V$ and consider the symmetric pairing on $V$ 
whose intersection matrix with respect to $s_1, s_2, s_3, s_4$ is the identity.
This induces a natural involution on $\mathrm{Gr}(2, V)$ given by
\[
 \sigma : \mathrm{Gr}(2, V) \ni W \mapsto W^\perp \in \mathrm{Gr}(2, V).
\]
Let $m_1, m_2$ be a basis for $H^0(S, \mathcal O(H_1))$ and $l_1, l_2$ a basis for $H^0(S, \mathcal O(H_2))$.
We choose $s_1, s_2, s_3, s_4$ to be $m_1l_1, m_1l_2, m_2l_1, m_2l_2$.
The image $C_i$ of $D_i$ on $\mathrm{Gr}(2, V)$ is a smooth conic,
and the involution $\sigma$ fixes $C_i$.  Hence the involution $\sigma$ lifts to the involution on $X$.

We now identify several important classes of surfaces on $X$.  We will write these classes in terms of the basis $(H_{1}[2]^{2},H_{1}[2]H_{2}[2],H_{2}[2]^{2},H_{1}[2]E,H_{2}[2]E,FP)$ and describe the degree-$2$ subschemes parametrized by an open subset of a representative effective surface.  Recall that $H_{1}$, $H_{2}$ denote (any) fibers of the two projections on $S$.
\begin{equation*}
\begin{array}{c|c|c}
S_{1} & (1/2,0,0,-1/2,0,0) & \textrm{distinct points in a fixed $H_{1}$} \\ \hline
S_{2} & (0,0,1/2,0,-1/2,0) & \textrm{distinct points in a fixed $H_{2}$} \\ \hline
V_{1} & (0,0,0,1,0,0) & \textrm{non-reduced, contained set-theoretically in a fixed $H_{1}$} \\ \hline
V_{2} & (0,0,0,0,1,0) & \textrm{non-reduced, contained set-theoretically in a fixed $H_{2}$}\\ \hline
T_{1} & (0,1,0,0,-1,0) & \textrm{non-reduced, contained scheme-theoretically in any $H_{1}$} \\ \hline
T_{2} & (0,1,0,-1,0,0) & \textrm{non-reduced, contained scheme-theoretically in any $H_{2}$}  \\ \hline
FP & (0,0,0,0,0,1) &  \textrm{support contains a fixed point} \\ \hline
L_{1,2} & (0,1,0,0,0,-1) & \textrm{one point in each of a fixed $H_{1}$ and a fixed $H_{2}$} \\ \hline
L_{1,1} & (1,0,0,0,0,0) & \textrm{one point in each of two fixed $H_{1}$s}  \\ \hline
L_{2,2} & (0,0,1,0,0,0) & \textrm{one point in each of two fixed $H_{2}$s}  \\ \hline
M_{1,1} & (1/2,1,1/2,-1/2,-1/2,-1) & \textrm{the pullback of $\sigma_{1,1}$ from $G(2,4)$}  \\ \hline
M_{2} & (1/2,0,1/2,-1/2,-1/2,1) & \textrm{the pullback of $\sigma_{2}$ from $G(2,4)$} \\ \hline
\alpha_1 & (1/2,1,0,-1/2,0,-1) & \\ \hline
\alpha_2 & (0,1,1/2,0,-1/2,-1) & \\ \hline
\alpha_3 & (1/2,0,0,-1/2,0,1) & \\ \hline
\alpha_4 & (0,0,1/2,0,-1/2,1)
\end{array}
\end{equation*}


We claim that the cone of pseudo-effective surfaces $\mathrm{PEff}_2(X)$ is generated by 
\[
 S_1, S_2, V_1, V_2, T_1, T_2, FP, L_{1,2}.
\]
We verify the claim as follows.  Let $M$ be an irreducible surface in $X$.
If $M$ is contained in $D_1$, $D_2$, or $E$, it is easy to verify directly (using divisor theory on these projective bundles) that $M$ is contained in the cone generated by $S_1$, $S_{2}$, $T_{1}$, $T_{2}$, $V_{1}$, $V_{2}$.  Otherwise, $M$ meets with $X\setminus (D_1\cup D_2 \cup E)$.
In this case, for $i=1,2$ the intersection $M \cap D_i$ is $1$-dimensional, and we deduce that the intersection of $M$ with a general $S_i$ is $0$-dimensional because $S_i$ is base point free in $D_i$.  This implies that $M$ has nonnegative intersection with $S_i$.  Using a similar argument, $M$ has nonnegative intersection with $V_i$.  And since $FP$ and $L_{1,2}$ are nef classes, $M$ has nonnegative intersection with those classes.  Let $\mathcal{C}$ denote the cone generated by the $S_i$, $V_i$, $FP$, and $L_{1,2}$.  We have shown that in this case the class of $M$ is contained in $\mathcal{C}^{\vee}$, and one easily verifies that $\mathcal{C}^{\vee} \subset \mathcal{C}$.  This finishes the proof of the claim.  Dually, the nef cone is generated by
\[
 FP, L_{1,2}, L_{1,1}, L_{2,2}, M_{1,1}, M_{2,2}, \alpha_{1}, \alpha_{2}, \alpha_{3}, \alpha_{4}.
 \]




\bigskip

Recall that the movable cone of surfaces $\overline{\mathrm{Mov}}_2(X)$ is defined to be the closure of the cone generated by classes of effective surfaces such that every component deforms to cover $X$.  
By \cite[Remark 3.2]{FL13}, a necessary condition for $\alpha$ to be movable is that $\alpha \cdot L$ is a pseudo-effective curve class for every pseudo-effective divisor $L$.  Since the classes $S_i, V_i, T_i$ can be defined as the intersection of a pseudo-effective divisor class with a nef divisor class, 
we see that any movable class $\alpha$ must intersect with $S_i, V_i, T_i$ non-negatively.  This implies that $\alpha$ is nef, and we conclude that $\mathrm{Nef}_2(X)=\overline{\mathrm{Mov}}_2(X)$.


\subsubsection{Verifying balanced condition for subvarieties}

We next verify the weakly balanced condition with respect to subvarieties.  
Set $L = H_{1}[2] + H_{2}[2]$.  Recall that $K_{X} = -2H_{1}[2] - 2H_{2}[2]$, so $a(X,L) = 2$ and $b(X,L)=3$.

For simplicity we will only work with those subvarieties which deform to cover $X$ (although most likely it suffices to take $E$ as the exceptional set).  Using the group action, this is equivalent to focusing on subvarieties which meet $X  \backslash D_{1} \cup D_{2} \cup E$.  We will consider in turn subvarieties of dimension $=1,2,3$.

{\bf Curves}: For any rational curve $C$ on $X$ we have $a(C,L|_{C}) = \frac{2}{L \cdot C} \leq 2$ and that $b(C,L|_{C}) = 1$.  Thus $X$ is balanced with respect to curves.

It will be useful later on to classify the movable curves achieving $a=2$.  The movable classes have the form $\alpha = aF_1(1,2) + bF_2(1,2) + c R(1,1)$, and $\alpha$ is an integral class if and only if $a,b,c$ are integers.  In particular $a(C,L|_{C}) = 2$ exactly when the class of $C$ is $F_1(1,2)$ or $F_2(1,2)$.

{\bf Surfaces}: Let $Y$ be an irreducible surface which is not contained in $D_1\cup D_2 \cup E$. Then $Y$ is in the movable cone, so we have
\[
 Y \equiv aFP+bL_{1,1} + cL_{2,2} + dL_{1,2}+ eM_{1,1}+ fM_2+ g_1\alpha_1 + g_2\alpha_2+ g_3\alpha_3 + g_4\alpha_4,
\]
where all coefficients are nonnegative rational numbers. Moreover since $Y$ is an integral class, by taking intersection numbers with
$S_1, S_2, V_1, V_2, L_{1,2}, FP$, we conclude that $c, b, e+f+g_2+g_4, e+f+g_1+g_3, d+e+g_1+g_2, a+f+g_3+g_4$ are non-negative integers.
We have
\[
L^2.Y=2b+2c+(e+f+g_1+g_3)+(e+f+g_2+g_4)+2(a+f+g_3+g_4)+2(d+e+g_1+g_2).
\]
So only classes such that $L^2.Y <3$ are $L_{1,1}$, $L_{2,2}$, $FP$ or $L_{1,2}$.  Applying Theorem \ref{threefoldbigbound} to $(Y,L)$, we see that we can have $a(Y,L)=2$ only if either $(Y,a(Y,L)L)$ is not rigid or if $Y$ has class $L_{1,1}$, $L_{2,2}$, $FP$ or $L_{1,2}$.  In the first case, since $\kappa(Y,K_{Y} + a(Y,L)L) > 0$ we have $b(Y,L) = 1$ and the balanced condition is verified.

The surfaces of class $L_{1,1}$ or $L_{2,2}$ are fibers of the morphism $\mathrm{Sym}^2(S) \rightarrow \mathrm{Sym}^2(\mathbb P^1) \cong \mathbb P^2$ which is induced by
projections of $S$. Hence they are isomorphic to $\mathbb{P}^{1} \times \mathbb{P}^{1}$, so that $a(Y,L) = b(Y,L) = 2$.
When $Y$ is the class of $FP$, then the image of the morphism $X\rightarrow \mathrm{Sym}^2(S) \rightarrow \mathrm{Sym}^2(\mathbb P^1) \cong \mathbb P^2$ is a line $l$.  Thus, under the map $X \to \mathbb{P}^{2} \times \mathbb{P}^{2}$ induced by the projections we have that the image of $Y$ is $\mathbb{P}^{1} \times \mathbb{P}^{1}$.  An intersection calculation shows that this map is birational.  Since $L$ is pulled-back from the base, we can compute $a(Y,L) = b(Y,L) = 2$ directly.  Finally, the surfaces of class $L_{1,2}$ are mapped to the surfaces of class $FP$ under the involution $\sigma$, so they have the same $a,b$-values.

Altogether we see that $X$ is balanced with respect to surfaces.

{\bf Divisors}: \cite[Example 4.8]{BertramCoskun} verifies that the movable cone $\overline{\mathrm{Mov}}_3(X)$ of divisors coincides with the nef cone $\mathrm{Nef}_3(X)$.  So it is spanned by $H_{1}[2], H_{2}[2], H_{1}[2] + H_{2}[2] - E$.  We have $(2L)^{3} \cdot (aH_{1}[2] + bH_{2}[2] + c(H_{1}[2]+H_{2}[2] - E)) = 48a + 48b + 96c$.  By Corollary~\ref{threefoldbigboundimproved}, we only need to worry about divisors of class $H_{1}[2]$, $H_{2}[2]$, or divisors covered by curves of type $F_{1}(1,2)$ or $F_{2}(1,2)$.  We separate these into several cases:
\begin{itemize}
\item Divisors of class $H_{1}[2]$ or $H_{2}[2]$.  

Suppose that $Y \subset X$ is an irreducible divisor numerically equivalent to $H_1[2]$. 
Then $Y$ is a member of $H^0(X, \mathcal O(H_1[2]))$. Let $\pi : X \rightarrow \mathrm{Sym}^2(S)$ be the Chow morphism. 
Then we have $H^0(X, \mathcal O(H_1[2])) = H^0(\mathrm{Sym}^2(S), \mathcal O(H_1[2]))$, 
so $Y$ is the pullback of an irreducible divisor $Y'$ on $\mathrm{Sym}^2(S)$. 
Note that since the pullback is irreducible, we must have that $\Delta \not\subset Y'$ (where $\Delta$ denotes the image of the diagonal in $\Sym^{2}(S)$). 
We denote the first projection of $S$ by $p_1 : S\rightarrow \mathbb P^1$. This induces a morphism
\[
g : \mathrm{Sym}^2(S) \rightarrow \mathrm{Sym}^2(\mathbb P^1) \cong \mathbb P^2.
\]
This is the complete linear series of $H_1[2]$, so in particular, $Y'$ is the pullback of a line $l$ on $\mathbb P^2$. 
Also note that the image of $\Delta$ by $g$ is a smooth conic $C$ in $\mathbb P^2$.
For any $x \not\in C$, the fiber $g^{-1}(x)$ is isomorphic to $\mathbb P^1 \times \mathbb P^1$. 
For any $x \in C$, the fiber $g^{-1}(x)$ is a non-reduced scheme whose underlying reduced scheme is isomorphic to $\mathbb P^2$.
The group $\mathrm{PGL}_2$ acts on $\mathrm{Sym}^2(\mathbb P^1) \cong \mathbb P^2$,
and the action is transitive on $\mathbb P^2 \setminus C$ and $C$.
The group $\mathrm{PGL}_2$ also acts on $(\mathbb P^2)^\vee$, and it consists of two orbits:
the set of lines meeting $C$ transversally and the set of lines tangent to $C$.

Suppose that $Y'$ is the pullback of a line $l$ meeting $C$ transversally.
Then $Y'$ is singular along $Y'\cap \Delta$ which is scheme-theoretically the disjoint union of two smooth conics.
The divisor $Y \subset X$ is the blowup of $Y'$ along $Y'\cap \Delta$.
It follows from this description that $Y$ is smooth.
By the adjunction formula, we conclude that $a(Y, L|_Y) = 2$.
The semiample fibration associated to $aL|_Y +K_Y$ is a morphism $Y \rightarrow l$.
Let $x \in l$ be a general fiber. Then we have a homomorphism
\[
 h: \mathrm{NS}(Y) \rightarrow \mathrm{NS}(Y_x \cong \mathbb P^1 \times \mathbb P^1).
\]
If we denote the kernel of this map by $V$, then it follows from \cite[Proposition 2.18]{HTT15} that 
$b(Y, L|_Y) = \dim \, \mathrm{NS}(Y) - \dim \, V$. 
On the other hand, $\mathrm{NS}(Y_x)$ is isomorphic to $\mathbb Z^2$,
and there is a non-trivial monodromy $\mathbb Z/2$-action on the N\'eron-Severi space.
This implies that $b(Y, L|_Y) = 1$. Hence $L$ is balanced with respect to $Y$.

Suppose that $Y'$ is the pullback of a line tangent to $C$.
Then $Y'$ is singular along the non-reduced fiber, and in particular $Y'$ and $Y$ are not normal.
Let $\beta : \tilde{Y}' \rightarrow Y'$ be the normalization of $Y'$, then one can show that $\tilde{Y}'$ is smooth.
Because of the group action by $\mathrm{PGL}_2^2$, the divisor $Y'$ is movable. 
In particular it follows from \cite[Theorem 5.6]{LTT14} that $a(Y', L|_{Y'}) \leq 2$
so that $2\beta^*L + K_{\tilde{Y}'}$ is pseudo-effective.
On the other hand, let $F$ be a general fiber of the fibration $\tilde{Y}' \rightarrow l$.
Then we have
\[
 (2\beta^*L + K_{\tilde{Y}'})|_{F} = 2(H_2+H_4) + K_F =0. 
\]
Thus $2\beta^*L + K_{\tilde{Y}'}$ is not big, so we conclude that $a(Y', L|_{Y'}) =2$.
It is clear that $\tilde{Y}'$ has Picard rank $2$, so this implies that $b(Y',L|_{Y'}) \leq 2$.
Hence $L$ is balanced with respect to $Y$.

\item Irreducible divisors $Y$ covered by curves of the form $F_{1}(1,2)$ or $F_{2}(1,2)$ such that $f^{-1}(Y)$ is reducible.  
Note that $a(Y,L)=2$ in this case.  Choosing a component $Y'$ of the preimage, 
we see that the induced map $f: Y' \to Y$ is birational, and hence we can work with $Y'$ instead.  
Furthermore, since $L$ is pulled back from $(\mathbb{P}^{1})^{\times 4}$ and $Y'$ goes through a general point of $W$, 
we can instead work with the strict transform on $(\mathbb{P}^{1})^{\times 4}$.  
For simplicity we will call this strict transform $Y$, and we let $L$ the sum of the pullbacks of $\mathcal{O}(1)$ from the four components.

By symmetry, it suffices to suppose that $Y$ is covered by fibers of the first projection $(\mathbb{P}^{1})^{\times 4} \to (\mathbb{P}^{1})^{\times 3}$.  
So $Y = \mathbb{P}^{1} \times Z$ for some $Z \subset (\mathbb{P}^{1})^{\times 3}$.  
By passing to a birational model $Z'$, we may suppose that $Y' = \mathbb{P}^{1} \times Z'$ is smooth.  
Let $\psi: Y' \dashrightarrow Q$ be the map defined by the section ring of $K_{Y'} + 2L$.  We claim that $\psi$ factors birationally through $Z'$: 
letting $\rho_{1}$, $\rho_{2}$ denote the two projections on $Y$,
\begin{equation*}
K_{Y'} + 2L = \rho_{2}^{*}(K_{Z'} + 2H_{2} + 2H_{3} + 2H_{4}) + \rho_{1}^{*}(K_{\mathbb{P}^{1}} + 2H_{1}).
\end{equation*}
Taking $\rho_{2}$-sheaf pushforwards, we see the section ring of $K_{Y'} + 2L$ can be identified 
with the section ring of $K_{Z'} + 2H_{2} + 2H_{3} + 2H_{4}$.  
In particular, if we choose the surface $Z'$ so that it admits a morphism to the canonical model, then $Y'$ does as well, 
and one can show that
\begin{align*}
b(Y',L)-1 = &\textrm{ the codimension of the minimal face containing} 
\end{align*}
$$K_{Z'} + 2H_{2} + 2H_{3} + 2H_{4}.$$
When $K_{Z'} + 2H_{2} + 2H_{3} + 2H_{4}$ is big, then this codimension is zero, so our assertion follows.
Suppose that the Iitaka dimension of $K_{Z'} + 2H_{2} + 2H_{3} + 2H_{4}$ is less than $2$.
It follows from Proposition~\ref{threefoldbigbound} that this happens exactly when $Z$ is covered by fibers of the projection
from $(\mathbb P^1)^{\times 3}$ to $(\mathbb P^1)^{\times 2}$.
One can show that $b(Z', H_2+H_3+H_4) = 1$ unless $Y$ is equal to $(\mathbb P^1)^{\times 3}$.
When $Y$ is equal to $(\mathbb P^1)^{\times 3}$, $L$ is weakly balanced with respect to $Y$, but not balanced.

\item Irreducible divisors $Y$ covered by curves of the form $F_{1}(1,2)$ or $F_{2}(1,2)$ such that $f^{-1}(Y)$ is irreducible.  
Note that $a(Y,L)=2$.  Consider the projections $S \to \mathbb{P}^{1}$; 
they induce maps $\Hilb^{2}(S) \to \Hilb^{2}(\mathbb{P}^{1}) \cong \mathbb{P}^{2}$.  
Our condition on $Y$ is equivalent to saying that the image of $Y$ under one of these maps to $\Hilb^{2}(\mathbb{P}^{1})$ 
(without loss of generality the morphism corresponding to the first projection on $S$) is a curve $C$.  
We pass to a resolution $C'$ and consider the base changed $Y' \to C'$.  
Since $Y$ goes through a general point of $X$, the general fiber of this map is isomorphic to $\mathbb{P}^{1} \times \mathbb{P}^{1}$ 
and the map is generically smooth. 
We then compose with a resolution to obtain $\rho: \widehat{Y} \to C'$ which is isomorphic to $Y' \to C'$ over an open subset of $C'$.

Let $\psi: \widehat{Y} \dashrightarrow Q$ be the map induced by the section ring of $K_{\widehat{Y}} + 2L$.  
Similarly to the previous case, we argue that $\psi$ factors birationally through $\rho$.  
Note that $K_{\widehat{Y}} + 2H_{2}[2]$ is trivial along the general fiber.  
Applying the canonical bundle formula of \cite[Theorem 4.5]{fm00}, 
we see that there is an effective divisor $D$ on $C'$ such that $K_{\widehat{Y}} + 2H_{2}[2] - \rho^{*}(K_{C'} + D)$ is pseudo-effective.  
Thus we see that $K_{\widehat{Y}} + 2H_{1}[2] + 2H_{2}[2]$ is the pullback of an ample divisor on $C'$ unless the image $C$ is a line in $\mathbb{P}^{2}$.  
This special case is handled explicitly above.  
Otherwise, $\rho$ is the canonical morphism.  
Applying \cite[Theorem 4.5]{LTT14} we see that $b(Y',L)$ is bounded above by $b(F,L)$ for a general fiber $F$.  
Since $F$ is a quadric, we see that $b \leq 2$.  

\end{itemize}

The double cover $W$ admits a fibration to $\mathbb P^1$ and this provides us the universal families of subvarieties 
breaking the balanced property for $L$.

\nocite{*}
\bibliographystyle{alpha}
\bibliography{balancedIII}

\end{document}